\def\bigstab{{\rm bigstab}}
\def\SL{{\rm SL}}
\def\GL{{\rm GL}}
\def\PGL{{\rm PGL}}
\def\Stab{{\rm Stab}}
\def\Sym{{\rm Sym}}
\def\Jac{{\rm Jac}}
\def\O{{\mathcal O}}
\def\SO{{\rm SO}}
\def\P{{\mathbb P}}
\def\Disc{{\rm Disc}}
\def\s{{\rm stab}}
\def\ls{{\rm ls}}
\def\rk{{\rm rk}}
\def\red{{\rm red}}
\def\Aut{{\rm Aut}}
\def\irr{{\rm irr}}
\def\inv{{\rm inv}}
\def\red{{\rm red}}
\def\Vol{{\rm Vol}}
\def\R{{\mathbb R}}
\def\F{{\mathbb F}}
\def\FF{{\mathcal F}}
\def\RR{{\mathcal R}}
\def\CC{{\mathcal C}}
\def\Q{{\mathbb Q}}
\def\H{{\mathcal H}}
\def\C{{\mathcal C}}
\def\W{{\mathcal W}}
\def\Z{{\mathbb Z}}
\def\P{{\mathbb P}}
\def\F{{\mathbb F}}
\def\Q{{\mathbb Q}}
\def\C{{\mathbb C}}
\def\H{{\mathcal H}}
\def\BB{{\mathcal B}}
\def\Gm{\mathbb{G}_\mathrm{m}}
\def\Zp{{\mathbb{Z}_p}}
\def\ZZp{\mathbb{Z}_p}
\def\Qp{{\mathbb{Q}_p}}
\def\Fp{{\mathbb{F}_p}}
\def\height{{\mathrm{ht}}}
\def\Pic{{\mathrm{Pic}}}
\def\equalcases{{$1$, $2$, $4$, and~$5$  of Table~$\ref{table:Invariants}$}}
\theoremstyle{plain}
  \newtheorem{theorem}{Theorem}[section]
  \newtheorem{proposition}[theorem]{Proposition}
  \newtheorem{corollary}[theorem]{Corollary}
  \newtheorem{lemma}[theorem]{Lemma}
\theoremstyle{definition}
  \newtheorem{definition}[theorem]{Definition}
\theoremstyle{remark}
\title{On average sizes of Selmer groups and ranks in \\
families of elliptic curves having  marked points}
\author{Manjul Bhargava and Wei Ho}
\date{}
\begin{document}
\maketitle
%\tableofcontents
%\begin{abstract}
%We determine average sizes/bounds for the $2$- and $3$-Selmer groups in various families of elliptic curves with marked points, thus confirming several cases of the Poonen--Rains heuristics. As a consequence, we deduce that the average ranks of the elliptic curves in all of these families are bounded. Our proofs are uniform and make use of parametrizations involving various forms of $2 \times 2 \times 2 \times 2$ and $3 \times 3 \times 3$ matrices that we studied in a previous paper.
%
%We also deduce that $100\%$ of genus one curves of the form $y^2 = Ax^4 + Bx^2 z^2 + Cz^4$ with $A, B, C \in \Z$, when ordered by $\max\{|B|^2,|AC|\}$, fail the Hasse principle. Other forthcoming applications include proofs that a positive proportion of integers are (respectively, are not) the sum of two rational cubes, and a positive proportion of genus one curves in $\P^1 \times \P^1$ over $\Q$ fail the Hasse principle.
%\end{abstract}

\section{Introduction} \label{sec:intro}
Let $F_0$, $F_1$, $F_1(2)$, $F_1(3)$, and $F_2$ denote the following
families of elliptic curves over $\Q$:
\begin{eqnarray*}
  F_0 & = & \left\{y^2 = x^3+a_4x+a_6 \;|\; a_4,a_6\in\Z, \;\Delta \neq 0 \right\}; \\[8pt]
  F_1 & = & \left\{y^2 + a_3 y = x^3+a_2x^2+a_4x \;|\; a_2,a_3,a_4\in\Z, \;\Delta \neq 0 \right\}; \\[8pt]
  F_1(2) & = &  \left\{y^2 = x^3+a_2x^2+a_4x \;|\; a_2,a_4 \in\Z, \;\Delta \neq 0 \right\}; \\[8pt]
  F_1(3) & = & \left\{y^2 + a_1 xy + a_3 y =x^3 \;|\; a_1,a_3 \in\Z, \;\Delta \neq 0 \right\}; \mbox{ and } \\[8pt]
  F_2 & = & \left\{y^2+a_1 x y + a_3 y = (x-a_2)(x-a_2')(x-a_2'') \;|\; \right.\\[2.25pt]
  &  &\left. \;\;\;\;\;\;\;\;\;\; a_1, a_2, a_2', a_2'', a_3 \in \Z, \; a_2 + a_2' + a_2'' = 0, \;\Delta \neq 0 \right\},
\end{eqnarray*}
where, in each case, $\Delta$ is the {\em discriminant} polynomial in the coefficients $a_i, a_i', a_i''$ whose
nonvanishing is equivalent to the curve being nonsingular.
Then we see that $F_0$ is the family of all elliptic curves;  $F_1$ is the family of all elliptic curves with a marked rational point (at (0,0)); 
$F_1(2)$ is the family of all elliptic curves with a marked rational point of order two (at (0,0));
$F_1(3)$ is the family of all elliptic curves with a marked rational point of order three (at (0,0)); and
$F_2$ is the family of all elliptic curves with two marked rational points (at, e.g., $(a_2, 0)$ and $(a_2',0)$). In fact,  we will show in \S\ref{sec:markedpts} that $100\%$ of the curves in $F_1$ (respectively, $F_2$) have rank at least $1$ (resp., $2$) when ordered by height, to be defined below.

The aim of this article is to demonstrate that the average rank of elliptic curves in each of these five families is bounded above.  In fact, we will prove the stronger result that average size of the $2$- and/or $3$-Selmer groups of elliptic curves in each of these families is bounded above.  

To state these results more precisely, define the {\em $($naive$)$ height} of a Weierstrass elliptic curve~$E$ in any of the families $F_0$, $F_1$, $F_1(2)$, or $F_1(3)$ by
$\height(E) := \max\{|a_i|^{12/i}\}$; similarly, define the {\em $($naive$)$ height} of an elliptic curve $E$ in the family~$F_2$ by $\height(E):=\max\{|a_i|^{12/i},|a_2'|^{6}, |a_2''|^{6}\}.$
We prove the following theorem. 

\begin{theorem} \label{thm:SelmerAverages}
When elliptic curves in the families $F_0$, $F_1$, $F_1(3)$, $F_1(2)$, $F_2$ are ordered by  height:
\begin{enumerate}[\quad \rm (a)]
\item
The average size of the $2$-Selmer group in $F_0$ is $3$.
\item
The average size of the $3$-Selmer group in $F_0$ is $4$.
\item
The average size of the $2$-Selmer group in $F_1$ is at most $6$.
\item
The average size of the $3$-Selmer group in $F_1$ is $12$.
\item
The average size of the $3$-Selmer group in $F_1(2)$ is $4$.
\item
The average size of the $2$-Selmer group in $F_1(3)$ is at most $3$.
\item
The average size of the $2$-Selmer group in $F_2$ is at most $12$.
\end{enumerate}
\end{theorem}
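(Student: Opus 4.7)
The plan, for each family $F$ in the theorem and each relevant $n \in \{2,3\}$, is to apply the paradigm developed by Bhargava and Shankar: identify a coregular representation $(G, V)$ defined over $\Z$ whose ring of polynomial invariants is isomorphic to the coordinate ring of the family $F$, and such that, for any $E \in F$, the nontrivial elements of $\mathrm{Sel}_n(E)$ correspond to $G(\Q)$-orbits on $V(\Q)$ with invariants determined by $E$ and satisfying a local solubility condition. Computing the average size of $\mathrm{Sel}_n$ across $F$ then reduces to counting integer orbits $G(\Z) \backslash V(\Z)$ of invariants corresponding to curves of height at most $X$, which can be carried out by Bhargava's geometry-of-numbers averaging over a fundamental domain for $G(\Z)$ on $G(\R)$, followed by a sieve to restrict to the locally soluble orbits.

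Parts (a) and (b) are the classical Bhargava--Shankar theorems, obtained using $\mathrm{PGL}_2$ acting on binary quartic forms and $\mathrm{PGL}_3$ acting on ternary cubic forms, respectively. For each remaining part, the first step is to find the representation appropriate for that particular family and choice of $n$. I would draw these from the arithmetic invariant theory of representations whose moduli parametrize elliptic curves with the relevant marked-point structure: for example, $3 \times 3 \times 3$ cubic tensors under $(\mathrm{SL}_3)^3$ for 3-Selmer of $F_1$, giving precisely part (d); pairs of ternary quadratic forms (or an equivalent $2 \times 2 \times 4$-type space) for 2-Selmer of $F_1$; and smaller, rigidified spaces for the families $F_1(2)$, $F_1(3)$, and $F_2$, in which the marked 2-torsion, 3-torsion, or additional rational point already determines part of the orbit data. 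In parts (c), (f), and (g) we expect the orbit-to-Selmer map to be only surjective, which is why upper bounds rather than exact averages are claimed there.

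Given the representation in each case, the proof then proceeds in five standard steps: (i) establish the orbit-Selmer correspondence and lift rational orbits to integral orbits modulo controlled local obstructions; (ii) count $G(\Z)$-orbits of bounded invariants by integrating the indicator of the relevant region against a fundamental domain for $G(\Z)$ on $G(\R)$; (iii) handle the cusp (orbits supported near the walls of the fundamental domain) by averaging over a compact set of fundamental domains, as in Bhargava's original argument for binary quartics; (iv) apply an Ekedahl--Bhargava-type upper-bound sieve, with uniform estimates at each prime, to cut down to the locally soluble orbits; (v) divide by the count of curves in $F$ with $\height \le X$ (a straightforward lattice-point count given the definition of $\height$) and pass to the limit $X \to \infty$.

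I anticipate three main obstacles. First, identifying the correct representation and proving the precise orbit-Selmer bijection for each family requires genuinely new algebraic-geometric input beyond the classical cases, and in the upper-bound cases one must pin down which Selmer classes may fail to be represented by an orbit. Second, and most technical, is the control of the cusp in the geometry-of-numbers count: one must show that reducible or degenerate integer orbits contribute only $o(X^d)$ to the leading term of order $X^d$, uniformly in the family; in the marked-point families this is complicated by the ``distinguished'' orbits coming from the marked points themselves, which must be isolated and handled separately so as not to artificially inflate the average. Third, for the exact averages in parts (d) and (e), one must evaluate the product of local densities via a Tamagawa-type mass formula; for the upper-bound parts (c), (f), and (g), it suffices to bound each local factor from above, which avoids a full local computation but still requires uniform control of bad-prime contributions.
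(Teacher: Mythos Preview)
Your overall strategy---coregular representations, geometry-of-numbers counting over fundamental domains, and sieving to locally soluble orbits---matches the paper. However, you misidentify \emph{why} parts (c), (f), and (g) yield only upper bounds. The orbit-to-Selmer correspondence is a genuine \emph{bijection} in every case (Theorem~\ref{thm:parametrizations}), not merely a surjection: locally soluble $G(\Q)$-orbits with given invariants biject onto $S_d(E)$, and the reducible orbits correspond exactly to the subgroup $S'(E)$ generated by the marked points. What fails in cases (c), (f), (g) is the \emph{lower-bound sieve}: to upgrade the upper bound to an equality one needs a uniform tail estimate bounding orbits with $p^2\mid\Delta_{\red}$ for large $p$ (Proposition~\ref{propunif}), and the paper only establishes this in Cases 1, 2, 4, 5. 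Relatedly, the reducible orbits in the cusp are not an artifact to be suppressed: they contribute the summand $|S'(E)|$ to the average, so that the final answer decomposes as $\tau(G)+|S'(E)|$, with $\tau(G)$ from irreducible orbits in the main body and $|S'(E)|=d^j$ for $100\%$ of $E\in F_j$ (the latter requiring a separate argument, Theorem~\ref{inforder}, that the marked points are generically independent of infinite order).

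Your guess for the representation in part (c) is also off: the paper uses bidegree $(2,2)$ forms $\Sym^2(2)\otimes\Sym^2(2)$ under $\PGL_2^2$, not pairs of ternary quadratic forms or a $2\times2\times4$ space. For the remaining cases the representations are $3\otimes\Sym_2(3)$ under $\SL_3^2/\mu_3$ for (e), $2\otimes\Sym_3(2)$ under $\SL_2^2/\mu_2$ for (f), and $2\otimes2\otimes2\otimes2$ under $\SL_2^4/\mu_2^3$ for (g); these are drawn from the classification in \cite{BH}, and getting them right is essential since the cusp-cutting and reducibility arguments (Lemmas~\ref{hyperred}--\ref{rubik2red}, Tables~2(a)--(e)) are carried out representation by representation.
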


\noindent In particular, these Selmer bounds and averages agree with and confirm several instances of the Poonen--Rains heuristics for families with marked points (see \cite[Remark  1.9]{poonenrains}).

For cases (c), (f), and (g), the statement that the average size is at most a given integer~$N$ means that the limsup of the corresponding ratio is at most $N$.
Note that, in each of these averages, we are in fact counting certain
isomorphism classes of elliptic curves infinitely many times.
However, for each such family $F$, we observe that an elliptic curve $E\in
F$ always has a unique representative in $F$ of minimal height, which we then call a {\it minimal} element of
$F$.  For example, a curve $E$ in $F_0$ is minimal if and only if there is
no prime $p$ such that $p^4\mid a_4$ and $p^6 \mid a_6$.  In general, for each of 
our families $F$, a curve is minimal in $F$ exactly when, for every
prime $p$, it is not the case that $p^i\mid a_i$ (and $p^i\mid a_i'$ and $p^i \mid a_i''$)
for all $i$.

If we restrict ourselves only to the minimal curves in each of our
families $F$, then any $\Q$-isomorphism class of elliptic curves in any
such family will be represented exactly once.  We prove that the
averages and upper bounds in Theorem \ref{thm:SelmerAverages} do not change even when one averages only over
these minimal curves (i.e., over all isomorphism classes of curves in
these families, ordered by their minimal heights); moreover, these averages and upper bounds do not change even when any additional finite set of congruence conditions are imposed on the coefficients of the elliptic curves. 

More generally, for $F=F_0$, $F_1$, $F_1(2)$,
  $F_1(3)$, or $F_2$, let $\Phi$ be any subfamily of $F$ that is defined, for each prime $p$, by congruence conditions modulo some power of $p$.  
We say that such a subfamily~$\Phi$ of elliptic curves over $\Q$ is {\em large at $p$}
if the congruence conditions defining $\Phi$ at $p$ contains all elliptic curves $E$ in the family such that $p^2$ does not divide the {\em reduced discriminant} $\Delta_{\red}(E)$ of $E$; and we say that the subfamily $\Phi$ of $F$ is {\em large} if it is large at all but finitely many primes $p$. The {\em reduced discriminant} $\Delta_{\red}$ is the polynomial that is the squarefree part of the discriminant polynomial $\Delta$. We prove the following strengthening of Theorem~\ref{thm:SelmerAverages}:

\begin{theorem} \label{congmain} The average values and upper bounds given in Theorem
  $\ref{thm:SelmerAverages}$ for the families $F_0$, $F_1$, $F_1(2)$,
  $F_1(3)$, and $F_2$ remain the same even when one averages over any
  large subfamily of one of these families.
\end{theorem}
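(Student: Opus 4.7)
The strategy is to rerun the geometry-of-numbers argument underlying Theorem~\ref{thm:SelmerAverages}, now incorporating the congruence constraints imposed by~$\Phi$. For each family and each Selmer group in question, that argument expresses the $n$-Selmer count ($n\in\{2,3\}$) as a count of $G(\Z)$-orbits on $V(\Z)^{\irr}$, for an appropriate representation $(G,V)$ whose orbits parametrize Selmer elements; the orbit count is evaluated asymptotically as a product of an archimedean volume and a convergent product of $p$-adic local densities. A key input to that argument is a local computation showing that, at each prime~$p$, the local contribution to the average Selmer size is unchanged under any congruence condition modulo a power of~$p$. Consequently, imposing the conditions $\Phi_p$ at the finitely many primes $p\in S$ at which $\Phi$ is not large leaves the global average unchanged.

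The remaining, and more delicate, step is to handle the infinitely many primes $p\notin S$. Since $\Phi$ is large at such~$p$, the only Selmer elements excluded by $\Phi$ at~$p$ are those whose underlying elliptic curve $E$ satisfies $p^2\mid\Delta_{\red}(E)$. The plan is to sieve these out by proving a uniform tail estimate of the form
\[
\#\bigl\{v\in G(\Z)\backslash V(\Z)^{\irr} : \height(v)<X,\; p^2\mid\Delta_{\red}(v)\bigr\}
\;=\; O\!\left(\frac{X^{d}}{p^{1+\delta}}\right),
\]
uniformly in $X$ and $p$, for some $\delta>0$, where $X^d$ is the order of magnitude of the full orbit count. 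Summed over $p>M$, this contributes $O(X^d/M^\delta)$; letting $M\to\infty$ after $X\to\infty$ shows that the excluded Selmer elements at large primes have density zero, so the average for $\Phi$ agrees with the average for the full family.

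The main obstacle is proving the uniformity estimate for each of the five representations. The standard method -- carried out for $F_0$ in prior work of Bhargava--Shankar -- is to stratify the locus $\{v\in V(\F_p):p^2\mid\Delta_{\red}(v)\}$ according to the singularity type of its reduction modulo~$p$, parametrize each stratum either by a lower-dimensional auxiliary representation or by a Hensel lift of a fixed mod-$p$ form, and apply Davenport-style bounds to each piece. For the representations attached to $F_1$, $F_1(2)$, $F_1(3)$, and $F_2$, the invariant-theoretic structure is parallel, so the same approach should succeed, though the strata of higher codimension (e.g.\ those corresponding to additive reduction, or to Kodaira type~$I_n$ for large~$n$) typically require a secondary slicing inside~$V$ to obtain the crucial $p^{1+\delta}$ saving rather than just~$p$. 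Once the uniformity bound is in hand for each family, the sieve closes and Theorem~\ref{congmain} follows.
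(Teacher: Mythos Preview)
Your outline captures the overall sieve architecture, but there are two substantive gaps.

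First, you propose to prove a uniform tail estimate of the shape $O(X^d/p^{1+\delta})$ for \emph{all} of the relevant representations. The paper does not do this, and explicitly says it does not know how to (see the remark following Proposition~\ref{propunif}). The uniformity estimate is established only in Cases~\equalcases---precisely the cases where Theorem~\ref{thm:SelmerAverages} asserts an exact average. In the remaining Cases~3, 6, and~7 (the $2$-Selmer statements for $F_1$, $F_1(3)$, $F_2$), Theorem~\ref{thm:SelmerAverages} asserts only an \emph{upper bound}, and for that direction no tail estimate is needed at all: one simply uses the containment $U(\Phi)\subset\bigcap_{p<Y} U_p(\Phi)$ and lets $Y\to\infty$, as in~(\ref{eq371}). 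Your proposal does not distinguish these two regimes, and as written commits you to proving something the paper neither obtains nor requires.

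Second, you omit the denominator entirely. One must also count elliptic curves in $\Phi$ of bounded height (Theorem~\ref{ccount}), and this too requires a squarefree sieve (Proposition~\ref{prop:denombound}). For $F_0$, $F_1$, $F_1(2)$ this is handled by embedding $F$ into $V$ via the Kostant section and invoking Proposition~\ref{propunif}; for $F_1(3)$ it is elementary. But for $F_2$ it is genuinely hard: the paper devotes most of \S\ref{sec:sieveSelmer} to it, adapting the ``$Q$-invariant'' method of~\cite{squarefree-BSW,squarefree-BSW2} by embedding $F_2$ into the distinguished locus of the hypercube representation and controlling an auxiliary invariant. This is not subsumed by the numerator argument.

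A smaller point: in the cases where the paper does prove the tail estimate (Cases~4 and~5), the method is not ``parametrize strata by lower-dimensional auxiliary representations.'' Rather, one splits $\W_p = \W_p^{(1)}\cup\W_p^{(2)}$ according to whether $p^2\mid\Delta'$ for mod-$p$ or mod-$p^2$ reasons; the mod-$p$ part is handled by the geometric sieve of~\cite{geosieve}, and the mod-$p^2$ part is reduced to the mod-$p$ part via an explicit boundedly-many-to-one, discriminant-preserving $G(\Q)$-transformation $\W_p^{(2)}\to\W_p^{(1)}$.
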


\noindent
Note that the sets of all curves and the sets of all minimal curves in
$F_0$, $F_1$, $F_1(2)$, $F_1(3)$, and~$F_2$ are large. 
So too are the subfamilies of all elliptic curves (resp., all minimal curves) 
in these families defined by any finite set of congruence conditions, as are those having semistable reduction in the families $F_0$, $F_1$, and $F_2$. 
Thus Theorem~\ref{congmain} applies to quite general subfamilies of
these families of elliptic curves.

Using the fact that the $p$-rank of the $p$-Selmer group of an elliptic
curve bounds its algebraic rank, we obtain the following:

\begin{theorem} \label{thm:AverageRanksBounded}
  When elliptic curves  in any large subfamily of one of the families $F_0$, $F_1$, $F_1(2)$, $F_1(3)$, and
  $F_2$ are ordered by height $($resp., minimal height$)$, the average
  rank is bounded.
\end{theorem}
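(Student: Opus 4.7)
The plan is to deduce Theorem~\ref{thm:AverageRanksBounded} directly from Theorem~\ref{congmain} by invoking the standard pointwise bound of the Mordell--Weil rank by the size of a $p$-Selmer group. First, I will recall from descent theory the fundamental exact sequence
$$0 \to E(\Q)/pE(\Q) \to \mathrm{Sel}_p(E) \to \Sha(E)[p] \to 0,$$
and combine it with the Mordell--Weil decomposition $E(\Q) \cong \Z^{\mathrm{rk}\, E(\Q)} \oplus E(\Q)_{\mathrm{tors}}$ to obtain the inequality $p^{\mathrm{rk}\, E(\Q)} \le |\mathrm{Sel}_p(E)|$ for every prime $p$ and every elliptic curve $E/\Q$.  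Since $\log_p x \le x$ for all $x \ge 1$, this yields the simpler pointwise bound
$$\mathrm{rk}\, E(\Q) \le |\mathrm{Sel}_p(E)|.$$

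Next, for each of the five families I will choose a prime $p \in \{2,3\}$ for which Theorem~\ref{congmain} supplies a (lim sup) upper bound on the average size of the $p$-Selmer group: take $p=2$ for $F_0$ (bound $3$), for $F_1$ (bound $6$), for $F_1(3)$ (bound $3$), and for $F_2$ (bound $12$); take $p=3$ for $F_1(2)$ (bound $4$).  For any large subfamily $\Phi$ of one of these families, ordered either by height or by minimal height, averaging the pointwise inequality $\mathrm{rk}\, E(\Q) \le |\mathrm{Sel}_p(E)|$ over the curves in $\Phi$ transfers the Selmer average bound from Theorem~\ref{congmain} directly to a bound on the average rank, which is exactly the conclusion of Theorem~\ref{thm:AverageRanksBounded}.

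Because Theorem~\ref{congmain} is already stated with the full flexibility needed --- any large subfamily and either ordering --- there is essentially no obstacle in this reduction beyond the observations that averaging respects a pointwise inequality and that the lim sup of an inequality is an inequality.  All of the substantive content of Theorem~\ref{thm:AverageRanksBounded} is thus absorbed into the proof of Theorem~\ref{congmain}; the passage from bounded average Selmer sizes to bounded average ranks is entirely formal.
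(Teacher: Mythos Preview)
Your proof is correct and follows essentially the same approach as the paper: both deduce the theorem from Theorem~\ref{congmain} via the pointwise inequality $\mathrm{rk}\,E(\Q)\le \dim_{\F_p}\mathrm{Sel}_p(E)$ and then average. The only difference is that the paper sharpens the passage from $|\mathrm{Sel}_p(E)|$ to $\dim_{\F_p}\mathrm{Sel}_p(E)$ (using, e.g., the inequality $s\le \tfrac{1}{6}\,3^s+\tfrac{1}{2}$ for $p=3$) to extract the explicit constants $7/6$, $13/6$, $7/6$, $3/2$, $7/2$, whereas your cruder bound $\mathrm{rk}\,E(\Q)\le |\mathrm{Sel}_p(E)|$ already suffices for the qualitative boundedness asserted in the theorem.
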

Indeed, we obtain the explicit bounds of 7/6, 13/6, 7/6, 3/2, and 7/2
for the limsup of the average ranks of the curves in any large subfamily of $F_0$, $F_1$, $F_1(2)$, $F_1(3)$,
and $F_2$, respectively.

We note that considering only the $2$-Selmer group is {\it not} sufficient to prove a finite bound on the average rank in the family $F_1(2)$ of elliptic curves over $\Q$ with a marked rational $2$-torsion point.  As shown by Klagsbrun and Lemke-Oliver~\cite{KLO}, the average size of the $2$-Selmer group in the family $F_1(2)$ is infinite.  Moreover, if $\phi$ denotes the $2$-isogeny of elliptic curves $E\to E'$ associated to the marked rational $2$-torsion point on the elliptic curve $E$ in $F_1(2)$, and $\hat\phi$ denotes the dual isogeny, then Kane and Klagsbrun~\cite{kane-klagsbrun}  have shown that the average sizes of the $\phi$- and $\hat\phi$-Selmer groups in $F_1(2)$ are also each unbounded.  Since Theorem~\ref{thm:SelmerAverages}(e) shows that the average size of $3^{\rk(E)}$, and therefore $2^{\rk(E)}$, is bounded for the elliptic curves in $F_1(2)$, it follows that most (indeed, a density of 100\% of) elements in the $2$-Selmer groups, and the $\phi$- and $\hat\phi$-Selmer groups, of elliptic curves in $F_1(2)$ must correspond to nontrivial elements of the Tate--Shafarevich group.  

\begin{theorem}\label{noratlpt}
When all $2$-Selmer elements $($resp., $\phi$- or $\hat\phi$-Selmer elements$)$ of elliptic curves over~$\Q$ with a marked rational $2$-torsion point are ordered by the heights of these elliptic curves, a density of $100\%$ have no rational point, i.e., correspond to nontrivial elements of the Tate--Shafarevich group. 
\end{theorem}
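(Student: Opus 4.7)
The plan is to compare, for each height cutoff $X$, two quantities attached to curves in $F_1(2)$ of height at most $X$: the total number of elements in the relevant Selmer group, summed over such curves, versus the number of those elements arising from rational points on the curve. Writing $N(X)=\#\{E\in F_1(2):\height(E)\le X\}$, I would show that the first total grows strictly faster than $N(X)$, while the second is $O(N(X))$, so the ratio tends to $0$.

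To bound the ``rational'' count, note that for $\mathrm{Sel}_2(E)$ with $E\in F_1(2)$, the rational elements form the image of $E(\Q)/2E(\Q)$, of size $|E(\Q)[2]|\cdot 2^{\rk(E)}\le 4\cdot 2^{\rk(E)}$ (the factor of at most $4$ coming from the marked rational $2$-torsion). For $\mathrm{Sel}_\phi(E)$, the standard descent exact sequence
$$0\to E'(\Q)/\phi(E(\Q))\to \mathrm{Sel}_\phi(E)\to \Sha(E)[\phi]\to 0$$
together with $[2]=\hat\phi\circ\phi$ and $\rk(E')=\rk(E)$ yields a bound of $O(2^{\rk(E)})$ on the rational part, and the same holds for $\hat\phi$. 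Since $3^{\rk(E)}\le|\mathrm{Sel}_3(E)|$, Theorem~\ref{thm:SelmerAverages}(e) gives $\sum_{\height(E)\le X}3^{\rk(E)}=O(N(X))$, and hence also $\sum_{\height(E)\le X}2^{\rk(E)}=O(N(X))$. So the total number of rational Selmer elements over curves of height at most $X$ is $O(N(X))$ in all three Selmer cases.

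For the denominator, I would invoke the cited unboundedness results: by Klagsbrun--Lemke-Oliver \cite{KLO}, $\frac{1}{N(X)}\sum_{\height(E)\le X}|\mathrm{Sel}_2(E)|\to\infty$, and by Kane--Klagsbrun \cite{kane-klagsbrun} the same divergence holds for $\mathrm{Sel}_\phi$ and $\mathrm{Sel}_{\hat\phi}$ over $F_1(2)$. Dividing the $O(N(X))$ bound from the previous paragraph by these divergent totals forces the proportion of rational Selmer elements to tend to zero, so a density of $100\%$ of Selmer elements (in each of the three cases) admits no rational lift and therefore corresponds to a nontrivial class in $\Sha$.

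The main thing to be careful about is checking that the cited works yield the strict divergence $\frac{1}{N(X)}\sum|\mathrm{Sel}(E)|\to\infty$, rather than merely an unbounded $\limsup$; this is what is needed for a true density statement (as opposed to a lim sup density statement). Beyond that bookkeeping, the argument requires no further input: the theorem follows from pairing the $3$-Selmer bound of Theorem~\ref{thm:SelmerAverages}(e) with the prior unboundedness results for the $2$-, $\phi$-, and $\hat\phi$-Selmer groups over $F_1(2)$.
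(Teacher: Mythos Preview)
Your proposal is correct and follows essentially the same route as the paper: the paper's argument (given in the paragraph preceding Theorem~\ref{noratlpt}) is exactly to combine the bound on the average of $3^{\rk(E)}$ (hence $2^{\rk(E)}$) from Theorem~\ref{thm:SelmerAverages}(e) with the unboundedness results of \cite{KLO} and \cite{kane-klagsbrun}. You have simply unpacked more of the bookkeeping, in particular the observation that $2E'(\Q)\subset\phi(E(\Q))$ forces $|E'(\Q)/\phi(E(\Q))|\le |E'(\Q)/2E'(\Q)|=O(2^{\rk(E)})$, and you correctly flag that one needs genuine divergence (not merely an unbounded $\limsup$) from the cited references.
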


As a consequence, we also obtain:

\begin{theorem}
When elliptic curves over $\Q$ with a marked rational $2$-torsion point are ordered by height, the average size of the $2$-torsion $($resp., $\phi$- or $\hat\phi$-torsion$)$ subgroup of the Tate--Shafarevich group is infinite. 
\end{theorem}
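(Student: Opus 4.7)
Plan: The plan is to focus on $\Sha(E)[2]$; the $\Sha(E)[\phi]$ and $\Sha(E')[\hat\phi]$ cases will be entirely parallel, using $\phi$- and $\hat\phi$-descent on $E$ and $E'$ in place of $2$-descent, and the Kane--Klagsbrun theorem in place of the Klagsbrun--Lemke-Oliver theorem. The starting pointwise identity
$$|\mathrm{Sel}_2(E)| = |E(\Q)/2E(\Q)| \cdot |\Sha(E)[2]|,$$
together with $|E(\Q)/2E(\Q)| = 2^{\rk(E)}|E(\Q)[2]| \leq 4\cdot 2^{\rk(E)}$, yields the pointwise bound $|\Sha(E)[2]| \geq |\mathrm{Sel}_2(E)|/(4\cdot 2^{\rk(E)})$. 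The three ingredients I will combine are: Theorem~\ref{thm:SelmerAverages}(e), which gives $\mathrm{avg}\,2^{\rk(E)} \leq \mathrm{avg}\,3^{\rk(E)} \leq 4$ and in particular (via Markov) the bound $4/3^R$ on the density of $\{\rk(E) \geq R\}$; the Klagsbrun--Lemke-Oliver theorem, asserting $\mathrm{avg}\,|\mathrm{Sel}_2(E)| = \infty$ in $F_1(2)$; and the preceding Theorem~\ref{noratlpt}, which records that this Sel-growth is overwhelmingly due to Sha.

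The key step is to convert these ingredients into $\mathrm{avg}\,|\Sha(E)[2]| = \infty$, and for this I would use the \emph{distributional} form of the Klagsbrun--Lemke-Oliver and Kane--Klagsbrun estimates: their proofs show that the infinite mean of $|\mathrm{Sel}_2|$ (respectively $|\mathrm{Sel}_\phi|$, $|\mathrm{Sel}_{\hat\phi}|$) is produced not by sparse outliers but by a genuine upward shift, so that for every $T > 0$ there is a subfamily $A_T \subset F_1(2)$ of positive density $\delta_T$ on which $|\mathrm{Sel}_2(E)| \geq T$. Choosing $R = R(T)$ with $4/3^R < \delta_T/2$ and intersecting $A_T$ with the low-rank set $\{\rk(E) < R\}$ yields a positive-density subfamily on which $|\mathrm{Sel}_2(E)| \geq T$ and $2^{\rk(E)} < 2^R$ hold simultaneously, and hence $|\Sha(E)[2]| \geq T/(4 \cdot 2^R)$ there; letting $T \to \infty$ with the KLO/KK quantitative bounds on $\delta_T$ then forces $\mathrm{avg}\,|\Sha(E)[2]| = \infty$.

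The main obstacle is exactly this last step: the bare averages ``$\mathrm{avg}\,|\mathrm{Sel}_2| = \infty$'' and ``$\mathrm{avg}\,2^{\rk(E)}$ bounded'' do not by themselves formally imply $\mathrm{avg}\,|\Sha[2]| = \infty$ from the pointwise bound, because a product of two random variables can have infinite mean even when both factors have bounded mean (if they are positively correlated); in principle the Sel-infinity could be driven by a negligible-density set of high-rank curves, on which $E(\Q)/2E(\Q)$ absorbs all the growth. What rules out this pathology in our setting is the positive-density nature of the KLO/KK lower bounds, which guarantees that Sel is large on a set of curves whose ranks are simultaneously controlled. Once this coupling has been extracted from the Kane--Klagsbrun and Klagsbrun--Lemke-Oliver arguments, the conclusions for $\mathrm{avg}\,|\Sha[2]|$, $\mathrm{avg}\,|\Sha[\phi]|$, and $\mathrm{avg}\,|\Sha[\hat\phi]|$ all follow identically.
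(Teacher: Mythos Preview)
The paper does not give a proof of this statement beyond the phrase ``As a consequence, we also obtain'' following Theorem~\ref{noratlpt}; the intended argument is evidently that the Selmer averages are infinite (by \cite{KLO}, \cite{kane-klagsbrun}) while the contribution from $E(\Q)/2E(\Q)$ has bounded average (by Theorem~\ref{thm:SelmerAverages}(e)). Your proposal is therefore not an alternative to the paper's argument so much as a fleshing-out of it, and you have been more scrupulous than the paper on the one nontrivial point: as you correctly observe, the pointwise identity $|\mathrm{Sel}_2|=|E(\Q)/2E(\Q)|\cdot|\Sha[2]|$ together with ``$\mathrm{avg}\,|\mathrm{Sel}_2|=\infty$'' and ``$\mathrm{avg}\,|E(\Q)/2E(\Q)|<\infty$'' does \emph{not} formally yield ``$\mathrm{avg}\,|\Sha[2]|=\infty$'', since the Selmer growth could in principle be concentrated on a zero-density set of high-rank curves where $E(\Q)/2E(\Q)$ absorbs it. (One can write down toy sequences satisfying all the average constraints, including the $3$-Selmer Markov bound on ranks, in which $\mathrm{avg}\,|\Sha[2]|$ remains bounded.) Your fix---intersecting the positive-density large-Selmer sets from the distributional results of \cite{KLO}, \cite{kane-klagsbrun} with the low-rank set coming from Markov applied to Theorem~\ref{thm:SelmerAverages}(e)---is the right way to close this.

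One simplification is available for your final step. As written, you need $T\cdot\delta_T^{\,1+\log 2/\log 3}\to\infty$, which requires a quantitative decay rate on $\delta_T$, not just $\delta_T>0$. But the Klagsbrun--Lemke-Oliver theorem gives more: since the normalized Tamagawa ratio converges to a standard normal with normalizing factor $\sqrt{\log\log X}\to\infty$, for any \emph{fixed} $k$ the proportion of curves up to height $X$ with $\dim\mathrm{Sel}_\phi-\dim\mathrm{Sel}_{\hat\phi}\ge k$ tends to $1/2$ (and hence, via the standard isogeny-descent exact sequence, so does the proportion with $\dim\mathrm{Sel}_2\ge k-1$). Thus $\liminf\delta_T\ge 1/2$ uniformly in $T$, and you may fix a single $R$ with $4/3^R<1/4$ once and for all; the conclusion $\mathrm{avg}\,|\Sha[2]|\gg T$ for every $T$ then follows without tracking $\delta_T$ against $2^{R(T)}$. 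The $\phi$- and $\hat\phi$-cases are handled identically.
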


Theorem~\ref{noratlpt} can be made more explicit in the case of $\phi$-Selmer groups. Indeed, elements of the $\phi$-Selmer group of the elliptic curve $E:y^2=x^3+a_2x^2+a_4x$ in $F_1(2)$ can be represented by the genus one curves $Y_{A,B,C}:y^2=Ax^4+Bx^2z^2+Cz^4$ that have points locally at every place, where $A,B,C\in\Z$, $AC=a_4$, and $B=a_2$. We conclude from Theorem~\ref{noratlpt} that 100\% of such curves $Y_{A,B,C}$ in fact have no rational point when ordered by the heights of their Jacobians. 

\begin{theorem}\label{XABC}
When curves $y^2=Ax^4+Bx^2z^2+Cz^4$ $(A,B,C\in\Z)$ having points everywhere locally are ordered by the height $\max\{|B|^2,|AC|\}$, a density of $100\%$ have no rational point.
\end{theorem}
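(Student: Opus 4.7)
The plan is to obtain Theorem~\ref{XABC} as a direct reformulation of the statement made in the paragraph immediately before it, which itself follows from Theorem~\ref{noratlpt}. Recall from that paragraph that a locally solvable integer curve $Y_{A,B,C}: y^2 = Ax^4 + Bx^2z^2 + Cz^4$ represents a $\phi$-Selmer element of its Jacobian $E_{B,AC}: y^2 = x^3 + Bx^2 + (AC)x$ in $F_1(2)$; moreover, $Y_{A,B,C}$ has a rational point if and only if this Selmer class is trivial in $\Sha(E_{B,AC})[\phi]$. That paragraph also already records, as a consequence of Theorem~\ref{noratlpt}, that a density of $100\%$ of such locally solvable curves $Y_{A,B,C}$ have no rational point when ordered by the height of their Jacobians.

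It therefore suffices to verify that ordering curves by $\max\{|B|^2, |AC|\}$ is equivalent to ordering their Jacobians by height. The naive height of $E_{B,AC}$ in $F_1(2)$ is $\max\{|B|^6, |AC|^3\} = \max\{|B|^2, |AC|\}^3$, so the bound $\max\{|B|^2, |AC|\} \le X$ on the curve is precisely the bound $\height(E_{B,AC}) \le X^3$ on the Jacobian. Moreover, since this quantity depends only on $(B, AC) = (a_2, a_4)$ and not on the particular factorization $AC = a_4$, all curves $Y_{A,B,C}$ sharing a common Jacobian are grouped together in both orderings. Applying the density statement from the preceding paragraph with cutoff $X^3$ then immediately yields Theorem~\ref{XABC}.

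The only point requiring some care is the consistency of counting. Distinct integer triples $(A, B, C)$ can parametrize the same $\phi$-Selmer class via the $\Q$-isomorphism $Y_{A, B, C} \cong Y_{As^2, B, C/s^2}$ for $s \in \Q^\times$ with $As^2, C/s^2 \in \Z$. Since this isomorphism preserves both the height $\max\{|B|^2, |AC|\}$ and whether $Y_{A,B,C}$ has a rational point, the $100\%$ density conclusion is preserved whether one counts integer triples with multiplicity or $\Q$-isomorphism classes of curves; the multiplicity of integer models of a given Selmer class depends only on the divisor structure of $AC$ and on the squarefree representative of the class, independent of whether the class is rational. The main (and only) obstacle is thus to verify these compatibilities of orderings and equivalences, after which Theorem~\ref{XABC} follows from the parametrization of $\phi$-Selmer elements together with Theorem~\ref{noratlpt}.
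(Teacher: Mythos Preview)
Your proposal is correct and follows the paper's approach exactly: the paper offers no separate proof of Theorem~\ref{XABC}, presenting it as a direct restatement of the sentence immediately preceding it (itself a consequence of Theorem~\ref{noratlpt} together with the parametrization of $\phi$-Selmer elements by the curves $Y_{A,B,C}$), with the height $\max\{|B|^2,|AC|\}$ being the cube root of the naive height of the Jacobian in $F_1(2)$. Your added discussion of the compatibility between counting integer triples and counting Selmer classes goes slightly beyond what the paper makes explicit.
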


We remark that it is precisely the curves appearing in Theorem~\ref{XABC} that are studied in first courses on descent on elliptic curves having a rational $2$-torsion point.  Theorem~\ref{XABC} states that most (in fact, $100\%$ of) such torsors for elliptic curves have no rational points.

The above theorems on the average sizes of Selmer groups in these families have several other known applications, including that a positive proportion of integers are (respectively, are not) the sum of two rational cubes \cite{sumofcubes} and that a positive proportion of bidegree $(2,2)$ genus one curves in $\P^1 \times \P^1$ over $\Q$ fail the Hasse principle \cite{hasse22}.

\subsection*{Method of proof} 

The proof of Theorems \ref{thm:SelmerAverages}--\ref{XABC} rely on parametrizations of Selmer elements of elliptic curves having marked points, via orbits in certain {\em coregular representations}, which we obtained in \cite{BH}. A representation of an algebraic group $G$ on a
vector space $V$ is called {\it coregular} if the ring of invariants
is a free polynomial ring.  In \cite{BH}, we classified the orbits of
$G(K)$ on $V(K)$ for a field $K$ for many coregular representations,
and showed that the $K$-orbits  in these cases 
correspond to genus one curves over $K$ together with 
extra data, such as line bundles on these curves and marked points on their Jacobians. 

Specifically, we proved that the nondegenerate $\Q$-orbits having integral
invariants in these coregular representations involving ``Rubik's cubes'', ``hypercubes'', and their symmetrizations (see Table~\ref{table:Invariants}) naturally correspond to line bundles of degree 2 or 3 on principal
homogeneous spaces for elliptic curves in these families.  By a result of Cassels~\cite{Cassels}, 
elements of the $p$-Selmer group of an elliptic curve $E$ may
be realized as isomorphism classes of {\it locally soluble}
principal homogeneous spaces $C$ for $E$ along with a degree $p$ line bundle on
the curve $C$.  We conclude that the ``locally soluble'' $\Q$-orbits of these representations correspond
to elements of the $2$- or $3$-Selmer groups for elliptic curves
in these families. (Note that for the family $F_0$ these parametrizations were classically known 
and were used in \cite{BS} and \cite{BS2} to prove Theorems \ref{thm:SelmerAverages}--\ref{thm:AverageRanksBounded}
for $F_0$.) In order to obtain the averages in Theorem \ref{thm:SelmerAverages},
we are thereby reduced to counting $\Q$-orbits in these representations that are
both locally soluble and have bounded integral invariants.
 
To successfully count these $\Q$-orbits, we select a representative integral orbit,
i.e., an orbit of $G(\Z)$ on $V(\Z)$, for each $\Q$-orbit.  In
particular, we show that for any locally soluble element of $V(\Q)$ with integral invariants,
there exists an integral representative in $V(\Z)$ with essentially the same
invariants (i.e., up to absolutely bounded factors).  Thus, to count rational orbits corresponding
to Selmer elements of elliptic curves having bounded invariants, it suffices to count
integer orbits corresponding to Selmer elements having essentially those same
invariants.

By a suitable adaptation of the counting techniques of \cite{dodpf}
and \cite{BS}, we first carry out a count of the {total
  number} of integral orbits in these representations having bounded
height satisfying the appropriate {\it irreducibility} conditions (to
be defined).  The primary obstacle in this counting, as in previous
representations encountered in, e.g., \cite{dodpf,BS}, is that the
fundamental region in which one has to count points is not bounded but
has a complex system of cusps going off to infinity.  A priori, it
could be difficult to obtain exact counts of points of bounded height
in the cusps of these fundamental regions.  We show, however, that most
of the points in the cusps corresponding to Selmer group elements
lie on subvarieties given by certain algebraic conditions on
the associated geometric data, and we show that these {\it
reducible} points can be counted by a different argument.  The
problem then reduces to counting points, corresponding to Selmer
elements, in the {\it main body} of the fundamental region, which we show
are predominantly irreducible.

Since not all of the genus one curves correspond to Selmer elements, to
obtain the averages in Theorem \ref{thm:SelmerAverages} requires a 
sieve to restrict to the locally soluble $\Q$-orbits.  
The upper bound sieves are easier; the lower bounds are much harder, and we accomplish them in cases (a), (b), (d), and (e) of Theorem~\ref{thm:SelmerAverages}.  We note that only the upper bounds are required for the upper bounds on average rank.  By carrying out these sieves, we prove that
each of the averages or bounds in Theorem \ref{thm:SelmerAverages} arises naturally as the sum of two contributions: one from the ``main body'' of the fundamental region, which is essentially the
Tamagawa number of the group acting; and one from the cusp of the
fundamental region, which is essentially a count of the number of elements 
corresponding to a certain subgroup $S'$ of the Selmer group $S$, namely, the image
in $S$ of the subgroup of the Mordell--Weil group generated by the marked points.

We summarize these cusp ($S'$) and Tamagawa (non-$S'$) contributions in Table~\ref{table:Invariants}.  The second and third columns of Table~\ref{table:Invariants}
give (the isomorphism class of) the algebraic group $G$ acting and the representation $V$ of $G$, respectively.  The
fourth column gives the interpretation of orbits of $G(K)$ on $V(K)$
for a field $K$, as classified in \cite{BH}, namely, as an elliptic curve $E$ in the appropriate family with an $E$-torsor $C$ and a line bundle $L$ (on $C$), whose degree is given in the fifth column.  The sixth column lists
generators for the ring of invariants,
and the degrees of these invariants, for the action
of $G$ on $V$.
(These generators are also the coefficients of the corresponding elliptic
curves in the family $F$.)  The seventh column gives the dimension of $V$, and the
eighth gives the degree of the height function on $V$.  The ninth
column gives the number of elements in the subgroup $S'$ of $S$,
and the tenth column lists the Tamagawa number of $G$.  Finally, the
sum of the ninth and tenth columns yields the averages or upper bounds given in the
eleventh column of the table, and in Theorems~\ref{thm:SelmerAverages} and~\ref{congmain}. 

We explain more precisely the details of our strategy to prove the
contents of Table \ref{table:Invariants} (and thus Theorems \ref{thm:SelmerAverages}
and \ref{thm:AverageRanksBounded}) in \S \ref{sec:outline}.

\begin{landscape}
\begin{table} \label{table:Invariants}
\begin{center}
\begin{tabular}{|r|l|l|l|c|c|c|c||c|c|c|}\hline
\multicolumn{11}{|c|}{\bf Coregular spaces and Selmer groups}\\ \hline
\,\#\hspace{.14in}\!
& \multicolumn{1}{l|}{Group $G$} & \multicolumn{1}{|l|}{Representation $V$} & 
\multicolumn{1}{|c|}{$\Q$-orbits $\leftrightarrow$} & 
\multicolumn{1}{|c|}{deg.} &
\multicolumn{1}{|l|}  {Invariants $a_i,$ $a'_i$} &
\multicolumn{1}{|c|}{dim.} &
\multicolumn{1}{|c||}{\!\!ht.\!\! deg.\!\!} &
\multicolumn{1}{|c|}{cusp} &
\multicolumn{1}{c|}{\!\!Tamagawa\!\!} &
\multicolumn{1}{|c|}{\!Total\!} \\
& & & \multicolumn{1}{|c|}{$(C,E,L)$} & \multicolumn{1}{|c|}{of $L$} &
\multicolumn{1}{|l|}  {and their degrees} &
\multicolumn{1}{|c|}{$n$} &
\multicolumn{1}{|c||}{$k$} &
\multicolumn{1}{|c|}{\!contr.\!} &
\multicolumn{1}{c|}{number} &
\multicolumn{1}{|c|}{} 
\\ \hline\hline 
1.\; & $\PGL_2$     &  $\Sym^4(2)$   & $E\in F_0$   & 2 &  $a_4$, $a_6$ & 5 & 6 & 1 & 2 & 3\\
     &                          &  binary quartic forms                             &  & &  2, 3                 &    &   &    &    &  \\ \hline 
2.\; & $\PGL_3$     &  $\Sym^3(3)$   & $E\in F_0$   & 3 & $a_4$, $a_6$ & 10 & 12 & 1 & 3 & 4\\
     &                          & ternary cubic forms                              & & &  4, 6                 &    &   &    &    &  \\ \hline 
3.\; & $\PGL_2^2$   &  $\Sym^2(2) \otimes \Sym^2(2)$  & $E\in F_1$  & 2 & $a_2$, $a_3$, $a_4$ & 9 & 12 & 2 & 4 & 6\\
     &                       & bidegree $(2,2)$ forms                              &  &  & 2, 3, 4                 &    &   &    &    &  \\ \hline 
4.\; & $\SL_3^3/\mu_3^2$  &  $3 \otimes 3 \otimes 3$ &  $E\in F_1$   &  3 & $a_2$, $a_3$, $a_4$ & 27 & 36 & 3 & 9 & 12\\
     &                       & Rubik's cubes                              & & & 6, 9, 12                 &    &   &    &    &  \\ \hline 
5.\; & $\SL_3^2/\mu_3$  &  $3 \otimes\Sym_2(3)$   &  $E\in F_1(2)$   &  3 & $a_2$, $a_4$ & 18 & 36 & 1 & 3 & 4 \\
     &                       & doubly symmetric Rubik's cubes                              &  & & 6, 12                &    &   &    &    &  \\ \hline 
6.\; & $\SL_2^2/\mu_2$  &  $2 \otimes \Sym_3(2)$ & $E\in F_1(3)$  & 2 & $a_1$, $a_3$ & 8 & 24 &  1 & 2 & 3\\
     &                       & triply symmetric hypercubes                              &  &  & 2, 6              &    &   &    &    &  \\ \hline 
7.\; & $\SL_2^4/\mu_2^3$  &  $2 \otimes 2 \otimes 2 \otimes 2$  & $E\in F_2$   & 2 & $a_1$, $a_2$, $a_2'$, $a_3$ &  16 & 24 & 4 & 8 & 12\\
     &                       & hypercubes                              &  & & 2, 4, 4, 6                 &    &   &    &    &  \\ \hline 
\end{tabular}
\end{center}
\caption{Coregular spaces and Selmer groups} % can move caption to before the table by just moving this line
\vspace{0.5in}
{\it Notation:} For each row, the nondegenerate $\Q$-orbits (that is, elements of $G(\Q)\setminus V(\Q)$) correspond to data in the fourth column, denoted $(C,[E],L)$, where
$C$ is a genus one curve, $L$ is a line bundle of degree $2$ or $3$
(as specified) on $C$, and $E$ is an elliptic curve with a model in 
the specified family $F$ with $\Jac(C) \cong E$.  (For a more precise description of the action of $G$ on $V$, see  \S \ref{sec:parametrizations}.)

\vspace{.1in}
In the language of \cite{BH}, Lines 4 and 5 of Table~\ref{table:Invariants} are the representations corresponding to Rubik's cubes and doubly symmetric Rubik's cubes, respectively; meanwhile, Lines 7 and 6 of Table~\ref{table:Invariants} are the representations corresponding to hypercubes and triply symmetric hypercubes, respectively. Note that the spaces in Lines 1, 2, and 3 may also be viewed as subspaces of these, namely quadruply symmetric hypercubes, triply symmetric Rubik's cubes, and doubly doubly symmetric hypercubes, respectively.

\end{table}
\end{landscape}

\section{Outline of proof} \label{sec:outline}

Let $(G,V)$ denote any one of the representations listed in Table \ref{table:Invariants}.
Let $K$ be a field not of characteristic $2$ or $3$. In Section~\ref{sec:parametrizations}, 
to any (nondegenerate) orbit of $G(K)$ on $V(K)$, we
describe how to associate an elliptic curve $E$
in the corresponding family $F$, but where the coefficients of $E$ lie in $K$
rather than $\Z$; we denote this family of elliptic curves $F(K)$.  
Namely, for a particular $v \in V(K)$, the model of $E$ in $F(K)$ has coefficients $a_i$ (and $a_2'$ for $F = F_2$) given by the generators of the invariant ring for the action of $G$ on $V$.
These invariants $a_i$ are fixed polynomials in $\Z[V]$.

The {\it discriminant} $\Delta(v)$ of $v\in V(K)$ is the
usual discriminant polynomial $\Delta$ in the coefficients $a_i$,
whose nonvanishing is equivalent to the curve $E$ being
nonsingular.  The discriminant on $V(K)$ is thus invariant under
$G(K)$.
We use $V^\s(K) \subset V(K)$ to denote the subset of {\it stable} (or
{\it nondegenerate}) points in $V(K)$, i.e., the points in $V(K)$
where the discriminant is nonzero.
The height $H(v)$ of a point
$v\in V(\Z)$ (or $v\in V(\R)$, using the same formulas) 
is the height $H(E)$ of the corresponding elliptic curve.

We review in \S \ref{sec:parametrizations} the theorems from
\cite{BH} which state that the $G(\Q)$-orbits of
points $v\in V^\s(\Q)$ are in canonical bijection with triples $(E,C,L)$,
where $C$ is a principal homogeneous space for an elliptic curve $E$ in
$F(\Q)$ for one of the families $F = F_0,\ldots, F_2$, and $L$ is a line bundle on $C$ of degree 2 or 3.
The aforementioned elliptic curve is isomorphic to the Jacobian of the genus one curve $C$ obtained 
via these bijections.  Given a specific $v \in V^\s(\Q)$, there is a
natural choice for a model of this elliptic curve with coefficients given by the generators $a_i$, $a_i'$ of the ring of
invariants of the action of $G$ on $V$.
Thus, given $v \in V^\s(\Q)$, there exists a model $E$ of the
elliptic curve in $F(\Q)$ such that the discriminant $\Delta$, height $H$, and invariants
$a_i$, $a'_i$ of $v$ and $E$ agree.

Among the $G(\Q)$-orbits on $V^\s(\Q)$, only the {\it
locally soluble} orbits yield ($2$- or $3$-) Selmer group elements for
the corresponding elliptic curve $E$.  Let $V^\ls(\Q) \subset V^\s(\Q)$
denote those $G(\Q)$-orbits on $V(\Q)$ corresponding to triples $(C,E,L)$ 
that are in the $2$- or $3$-Selmer group of the elliptic curve $E$.  
Thus, the set of all $G(\Q)$-orbits on $V^\ls(\Q)$ yielding a
given elliptic curve $E$ naturally has the structure of a finite
abelian $2$- or $3$-group.  We denote this group by
$S(E)$.  The group $S(E)$ naturally has a subgroup $S'(E)$, namely,
the subgroup generated by the images in $S(E)$ of the marked points on $E$.
We also define a notion of {\it irreducible} for points $v\in V(\Z)$, 
and we show that the points in $$V^\ls(\Z) := V(\Z)\cap V^\ls(\Q)$$ that do
not correspond to elements of $S'(E)$ are all
irreducible. 

Let us now restrict ourselves to those orbits in $V^\s(\Q)$ having
{\it integer} invariants, so that the elliptic curve $E$ associated to a
point $v$ in this subset of $V^\s(\Q)$ is genuinely in
the family $F$.  In order to count the orbits of $G(\Q)$ on $V^\ls(\Q)$
having integral invariants and bounded height --- thereby yielding a count
of the total number of $2$- or $3$-Selmer group elements for elliptic curves of
bounded height in $F$ --- we carry out the following steps:

\begin{enumerate}[(1)]
\item We show that for each $v' \in V^\ls(\Q)$ with integral invariants,
  there exists an {\it integral representative} $v \in V(\Z)$ having
  the identical integral invariants (up to absolutely bounded factors).
  In this way, we may associate an integral orbit (the $G(\Z)$-orbit of $v \in V(\Z)$)
  to the subset in a rational orbit with fixed integral invariants. (In previous
  work like \cite{BS, BS2}, this step was a consequence of minimization
  results of Cremona, Fisher, and Stoll \cite{CFS, fisher-minred5coverings}. See also
  recent work of Fisher and Radi\u{c}evi\'{c} \cite{fisherradicevic} for minimization
  algorithms of some of the spaces considered in this paper.)
\item We construct fundamental domains for the action of $G(\Z)$ on $V(\R)$:
\begin{enumerate}[(a)]
\item First, we show that we may construct a fundamental set $L$
  for the action of $\R^\times \times G(\R)$ on $V(\R)$ that is an
  absolutely bounded set in $V(\R)$.
\item Second, we construct a fundamental domain $\FF$ for the
  action of $G(\Z)\backslash G(\R)$ that is contained in a Siegel
  domain.  Then for any $g\in G(\R)$, we see that $\FF g L\subset
  V(\R)$ yields a finite union of fundamental domains for the action of
  $G(\Z)$ on $V(\R)$.  

  We choose $g$ to vary in a compact set
  $G_0\subset G$ that is the closure of some open set in $G$.  This
  yields a compact but continuously-varying set of fundamental domains
  $\FF g L$ for the action of $G(\Z)$ on $V(\R)$.
\end{enumerate}
\item  As we want to count the irreducible $G(\Z)$-orbits on $V^\ls(\Z)$, we give some sufficient conditions
for reducibility.

\item We give an asymptotic count of {\it all} irreducible
  $G(\Z)$-orbits on $V^\s(\Z)$.  First, by adapting the techniques of
  \cite{dodpf} and \cite{BS}, we show that: (a) the cusps of the
  fundamental regions $\FF gL$ have a negligible number of irreducible
  points; (b) the main body of the fundamental regions $\FF gL$ have a
  negligible number of points that are reducible; and (c) the total number
  of irreducible points in the main bodies of the fundamental regions
  $\FF gL$ having height less than $X$ is asymptotically equal to the
  Euclidean volume of $\FF gL\cap \{v\in V(\R):H(v)<X\}$.
\item The elements of $V(\Z)$ that are in $V^\ls(\Z)$ are defined
  by infinitely many congruence conditions.  In order to count just
  those irreducible $G(\Z)$-orbits on $V^\s(\Z)$ that are locally
  soluble, one must perform a sieve.  The sieve relies in particular on a
  certain ``geometric sieve'', which originates in the work of
  Ekedahl~\cite{Ekedahl} and was further developed by Poonen~\cite{Po} and
  in \cite{geosieve}; it also relies on the application of certain transformations that change ``mod $p^2$'' conditions to ``mod $p$'' conditions as developed in \cite{geosieve,BS2}.  This sieve allows us to obtain a count  of just the irreducible $G(\Z)$-orbits of bounded height in
  $V^\ls(\Z)$.

\item In order to obtain the averages in Theorem \ref{thm:SelmerAverages}, we
  must count the total number of curves of height less than $X$
  in the family $F$. For almost all families, these counts are fairly straightforward;
  however, for the family $F_2$, we establish the required uniformity estimate by embedding the  family into the cusp region of a larger space and use additional invariants, extending the ``Q-invariant method'' from \cite{squarefree-BSW,squarefree-BSW2}.
  
  By using the next step (7), we may add back to the count in
  (5) the number of Selmer elements in $S(E)$, over all curves $E\in
  F$ of height $<X$, that reduce to the identity in $S'(E)$ (i.e., the number of reducible orbits on $V^\ls(\Z)$ having
  height $<X$).  The total number of Selmer elements in $S(E)$
  over all elliptic curves $E\in F$ of height less than $X$, divided by the
  total number of elliptic curves $E\in F$ of height less than $X$, as $X$ tends
  to infinity, then yields the desired averages in Theorem \ref{thm:SelmerAverages}.

\item We prove an auxiliary lemma which shows that, when 
  elliptic curves in the families $F_1$ and $F_2$ are ordered by
  height, a density of $100\%$ of the marked points on these curves 
  have infinite order.  This implies that, for $100\%$ of the curves
  $E$ in  $F_i$, we have that $|S'(E)|=p^i$, where
  $p=2$ or 3.  In other words, for $100\%$ of the elliptic curves $E$
  in the family $F=F_i$, there are $p^i$
  reducible orbits on $V^\ls(\Z)$ corresponding to the elliptic curve
  $E$.  In the cases corresponding to the families $F_1(\cdot)$, the group
  $S'(E)$ is trivial.
\end{enumerate}
We will carry out the details of item ($j$) above in Section~$j+3$ below.  

\section{Parametrizations of Selmer elements}
\label{sec:parametrizations}

We recall (and appropriately modify) the relevant results from \cite{BH}.  In particular, we describe the bijections between the nondegenerate $G(\Q)$-orbits of $V(\Q)$ and the principal homogeneous spaces $C$ for elliptic curves in the corresponding families, including how these parametrization results simplify when only considering {\em locally soluble} $C$ and {\em irreducible} orbits.  In this paper, we only need these results over $\Q$, $\Qp$, and $\mathbb{R}$, but the bijections hold over any base field not of characteristic $2$ or $3$ (as shown in \cite {BH}), and the statements we make in this section about the case of locally soluble curves also extend to other number fields.

We first make a few definitions to state the main parametrization theorem. Recall that a genus one curve over $\Q$ is {\em locally soluble} if it has points over $\Q_p$ for every prime $p$ and over $\R$. For each line in Table \ref{table:Invariants}, let $V^\ls(\Q)$ denote the elements of $V(\Q)$ where the associated genus one curve is locally soluble; as we will see, this set is preserved by $G(\Q)$.

Also, for each case, we show in \cite{BH} that an element of $V(\Q)$ gives rise to a number of binary quartic forms or ternary cubic forms over $\Q$ (for $d = 2$ or $3$, respectively). For $v\in V(\Q)$, we say that $v$ is {\em irreducible} if each of its covariant binary quartic forms (resp., covariant ternary cubic forms) has no rational linear factor (resp., defines a smooth cubic curve in $\P^2$ with no rational flex); we say that $v$ is {\em reducible} otherwise.

We obtain the following bijections for each representation (the descriptions of the group actions may be found at the end of this section):

\begin{theorem} \label{thm:parametrizations}
Consider one of the following lines from Table $\ref{table:Invariants}$:
\begin{center}
\begin{tabular}{c|c|c|c|c|c}
number & group $G$ & representation $V$ & degree $d$ & family $F$ & marked point(s)\\
\hline
$1.$ & $\PGL_2$ & $\Sym^4(2)$ & $2$ & $F_0$ & none \\
$2.$ & $\PGL_3$ & $\Sym^3(3)$ & $3$ & $F_0$ & none \\
$3.$ & $\PGL_2^2$ & $\Sym^2(2) \otimes \Sym^2(2)$ & $2$ & $F_1$ & $(0,0)$\\
$4.$ & $\SL_3^3 / \mu_3^2$ & $3 \otimes 3 \otimes 3$ & $3$ & $F_1$ & $(0,0)$\\
$5.$ & $\SL_3^2 / \mu_3$ & $3 \otimes \Sym_2(3)$ & $3$ & $F_1(2)$ & $(0,0)$ \\
$6.$ & $\SL_2^2 / \mu_2$ & $2 \otimes \Sym_3(2)$ & $2$ & $F_1(3)$ & $(0,0)$ \\
$7.$ & $\SL_2^4 / \mu_2^3$ & $2 \otimes 2 \otimes 2 \otimes 2$ & $2$ & $F_2$ & $(a_2,0)$, $(a_2',0)$
\end{tabular}
\end{center}
\begin{enumerate}
\item[{\rm (a)}] There exists a bijection between the nondegenerate $G(\Q)$-orbits of $V(\Q)$ and isomorphism classes of triples $(E,C,L)$, where $E$ is an elliptic curve in the family $F$ with marked point(s) as indicated, $C$ is an $E$-torsor over $\Q$, $L$ is a line bundle of degree $d$ on $C$, and the marked point(s) represent rational degree $0$ divisors of $C$. Two such triples $(E,C,L)$ and $(E',C',L')$ are isomorphic if $E$ and $E'$ refer to the same elliptic curve in the family $F$ and there is an isomorphism between $C$ and $C'$ preserving both the line bundles and the torsor structure.
\item[{\rm (b)}] There exists a bijection between the nondegenerate $G(\Q)$-orbits of $V^\ls(\Q)$ and pairs $(E,\xi)$, where $E$ is an elliptic curve in $F$ and $\xi$ is an element of the $d$-Selmer group $S_d(E)$ of $E$.
\item[{\rm (c)}] For an elliptic curve $E \in F$, let $S'(E)$ denote the subgroup of $S_d(E)$ generated by the images of the marked points on $E$ under the natural map $E(\Q)/dE(\Q) \to S_d(E)$. Then the nondegenerate $G(\Q)$-orbits of the irreducible elements of $V^\ls(\Q)$ are in bijection with pairs $(E,\xi)$, where $E$ is an elliptic curve in $F$ and $\xi \in S_d(E) \setminus S'(E)$.
\end{enumerate}
\end{theorem}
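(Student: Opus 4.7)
The plan is to reduce the theorem line-by-line to the parametrization results of our earlier paper \cite{BH}, then overlay the local solubility and irreducibility conditions from representation theory onto the geometric side.

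Part (a) essentially restates the main theorems of \cite{BH} for each of the seven lines of Table~\ref{table:Invariants}, where the nondegenerate $G(K)$-orbits on $V(K)$ (for $K$ not of characteristic $2$ or $3$) are put in bijection with isomorphism classes of genus one curves $C$ equipped with a degree $d$ line bundle $L$ and an identification of $\Jac(C)$ with an elliptic curve $E$ in a specific Weierstrass form. The first step is to verify that the particular Weierstrass model produced by these bijections lies in the family $F$ of the corresponding line, so that $E$ automatically inherits the specified marked rational point(s). This is a direct computation at the level of invariants: one checks that the generators $a_i$, $a_i'$ of the invariant ring of $G$ on $V$ are precisely the coefficients of the Weierstrass equation defining $F$. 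The marked rational points on $E$ then give canonical rational degree $0$ divisor classes on $C$ via the $E$-torsor structure, completing (a).

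For part (b), we combine (a) with the classical theorem of Cassels~\cite{Cassels}, which identifies the $d$-Selmer group $S_d(E)$ with the set of equivalence classes of pairs $(C, L)$ where $C$ is a locally soluble $E$-torsor and $L$ is a degree $d$ line bundle on $C$, up to the natural $E$-equivariant equivalence. It remains to observe that our definition of $V^\ls(\Q)$---those $v \in V(\Q)$ whose associated genus one curve is locally soluble---is tautologically compatible with Cassels' notion. Part (b) is then obtained by restricting the bijection of part (a) to locally soluble orbits and partitioning the result by the underlying elliptic curve $E$.

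For part (c), we need to identify precisely which Selmer elements $\xi \in S_d(E)$ yield orbits whose covariant binary quartic (resp.\ ternary cubic) has a rational linear factor (resp.\ a rational flex). For a marked rational point $P$ on $E$, the image of $P$ in $S_d(E)$ under the Kummer map corresponds, via (b), to the trivial torsor $C = E$ equipped with a specific rational degree $d$ line bundle $L_P$ whose class in $\Pic^d(E)$ depends linearly on $P$. Because both $C$ and $L_P$ admit rational structure based at $P$, the covariant quartic acquires a rational branch point (hence a rational linear factor), and the covariant cubic acquires a rational flex at the corresponding point. More generally, any $\xi \in S'(E)$ is by construction a combination of such marked-point images, and one verifies that the corresponding triple $(E, C, L)$ inherits the same rational structure on $(C,L)$, forcing the covariant form to be reducible. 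Conversely, a rational linear factor of the covariant quartic (or rational flex of the covariant cubic) can be used to construct a rational divisor class on $C$ whose image in $E$ must lie in the subgroup generated by the marked points, showing $\xi \in S'(E)$.

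The main obstacle is part (c): systematically translating the algebraic reducibility conditions on covariant forms into the geometric statement that the Selmer element lies in $S'(E)$. For each of the seven lines, one must describe the covariant construction explicitly, identify the rational structure it imposes on $(C,L)$, and verify bijectivity of the resulting correspondence on each $S'(E)$-coset. Cases in which the marked points have finite order (lines 5 and 6) require particular care to track how the torsion structure on $E$ interacts with the degree $d$ line bundle on $C$.
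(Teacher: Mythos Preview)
Your treatment of parts (a) and (b) matches the paper's: (a) is a citation to \cite{BH} together with the observation that the invariant ring generators are exactly the Weierstrass coefficients of the family $F$, and (b) follows from (a) together with Cassels' lemma that on a locally soluble torsor every rational divisor class is represented by a rational divisor.

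Part (c), however, has a genuine gap. Your claim that for $\xi=[P]$ the triple $(E,C,L)=(E,E,L_P)$ ``admits rational structure based at $P$'' and therefore the covariant quartic ``acquires a rational branch point'' is false in general. For $d=2$, say, the binary quartic attached to $(E,\mathcal O(O+P))$ has a rational linear factor precisely when $\mathcal O(O+P)\cong\mathcal O(2Q)$ for some $Q\in E(\Q)$, i.e., precisely when $[P]=0$ in $E(\Q)/2E(\Q)$; if $P$ is a marked point of infinite order this fails. So the orbit corresponding to $\xi=[P]$ is \emph{not} detected as reducible by looking at the covariant attached to $(C,L)$ itself.

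What you are missing is the role of the \emph{several} covariant binary quartics (resp.\ ternary cubics), one for each $\GL_2$- (resp.\ $\GL_3$-) factor of $G$. As recorded at the end of \S\ref{sec:parametrizations}, these covariants correspond not just to $(C,L)$ but to $(C,L\otimes P_i)$ as $P_i$ ranges over the marked points (viewed in $\Pic^0(C)$). Reducibility of $v$ means, by definition, that \emph{at least one} of these covariants has a rational linear factor (resp.\ rational flex), i.e., that at least one of the pairs $(C,L\otimes P_i)$ represents the trivial Selmer element. This happens exactly when $\xi+[P_i]=0$ for some $i$, i.e., when $\xi\in S'(E)$. That is the entire content of (c); no separate analysis of ``rational structure'' on $(C,L)$ is needed, and the torsion cases (lines 5 and 6) fall out automatically once one notes that a $2$-torsion point has trivial image in $E(\Q)/3E(\Q)$ and a $3$-torsion point has trivial image in $E(\Q)/2E(\Q)$.
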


Theorem \ref{thm:parametrizations}(a) is proved for each case in \cite{BH}, over any fields not of characteristic $2$ or $3$, although the exact statements are slightly different.  There, we study the action of a larger group $G'$ on the corresponding representation $V$ for which the $a_i$ are {\em relative} invariants.  The geometric data parametrized includes the elliptic curve in the family {\em only up to isomorphism}.  Here, the group $G$ is the subgroup of $G'$ that fixes these invariants $a_i$, which correspond to the coefficients of the elliptic curve.

Part (b) of Theorem \ref{thm:parametrizations} follows directly from part (a) by the following lemma, whose proof may be found in, e.g., \cite[Theorem 1.2]{Cassels}:

\begin{lemma}
Let $C$ be a genus one curve over $\Q$.  If $C$ is locally soluble, then any rational point $P$ on the Jacobian of $C$ represents a rational divisor on $C$, not just a rational divisor class.
\end{lemma}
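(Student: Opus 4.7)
The plan is to realize the obstruction to a Galois-invariant divisor class on $C_{\bar\Q}$ being representable by a $\Q$-rational divisor as an element of the Brauer group $\Br(\Q)$, show that local solubility forces this Brauer class to vanish at every place, and then conclude via the Hasse principle for the Brauer group. Set $G_\Q = \mathrm{Gal}(\bar\Q/\Q)$ and $E = \Jac(C)$; by the defining property of the Jacobian, a rational point $P\in E(\Q)$ is precisely a Galois-invariant element of $\Pic^0(C_{\bar\Q})$, so the issue is whether the associated $G_\Q$-stable divisor class lifts to a $G_\Q$-stable divisor rather than merely a class.

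First I would write down the exact sequence
$$0 \to \Pic(C) \to \Pic(C_{\bar\Q})^{G_\Q} \xrightarrow{\,\delta\,} \Br(\Q) \to \Br(C)$$
coming from the Hochschild--Serre (equivalently, Leray) spectral sequence for the structure morphism $C\to \mathrm{Spec}\,\Q$ with coefficients in $\Gm$, using Hilbert 90 to kill the $H^1(G_\Q,\bar\Q^*)$ term. The content of this sequence is that the image of $\delta$ records exactly the failure of a Galois-invariant divisor class to come from a $\Q$-rational divisor; in particular, $P$ is represented by a rational divisor on $C$ if and only if $\delta(P)=0$. I would also record that $\delta$ is compatible with base change, so that for each place $v$ there is an analogous local sequence over $\Q_v$ with boundary $\delta_v$, and the image of $\delta(P)$ in $\Br(\Q_v)$ equals $\delta_v(P)$.

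The local computation then proceeds as follows: a point $P_v\in C(\Q_v)$ supplies a section of the structure map $C_{\Q_v}\to \mathrm{Spec}\,\Q_v$, and pullback along this section furnishes a retraction of the edge map $\Br(\Q_v)\to \Br(C_{\Q_v})$, making the latter injective. Exactness then forces $\delta_v = 0$, so local solubility gives that $\delta(P)$ has trivial image at every completion. By the Albert--Brauer--Hasse--Noether theorem, the diagonal $\Br(\Q) \hookrightarrow \bigoplus_v \Br(\Q_v)$ is injective, so $\delta(P) = 0$ globally, and $P$ lifts to a $\Q$-rational divisor on $C$. The main technical obstacle is the careful identification of $\delta$ from the spectral sequence and the verification that a rational point genuinely produces the splitting of the edge map that kills $\delta_v$; once these bookkeeping items are in hand, the local vanishing argument and the global Hasse principle combine to finish the proof with no further subtlety.
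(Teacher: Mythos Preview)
Your argument is correct and is the standard Brauer-group proof of this fact; the paper does not give its own proof but simply cites Cassels~\cite{Cassels}, whose argument is essentially the one you have written out (in more classical cohomological language). There is nothing to compare: you have supplied precisely the proof the paper elects to outsource.
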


Finally, reducibility or irreducibility for an element in $V(\Q)$ is a $G(\Q)$-invariant notion. An element $v \in V(\Q)$ is {\em reducible} if and only if at least one of the covariant binary quartic forms (resp., ternary cubic forms) is isomorphic to its Jacobian elliptic curve, in other words, if and only if it corresponds to the trivial element in the $2$-Selmer (resp., $3$-Selmer) group of the elliptic curve associated to $v$. We thus obtain part (c) of Theorem \ref{thm:parametrizations}.

\subsection*{The groups and representations} \label{sec:reps}

We more precisely describe the group actions for each of the spaces in Table \ref{table:Invariants} and Theorem \ref{thm:parametrizations}, over any field of characteristic not $2$ or $3$.
\begin{enumerate}[1.]
\item {\bf Binary quartic forms.} For a $2$-dimensional vector space $V$, the $\GL(V)$-action on the representation $\Sym^4(V) \otimes (\wedge^2 V)^{-2}$ factors through a $\PGL(V)$-action on $\Sym^4(V)$.

\item {\bf Ternary cubic forms.} For a $3$-dimensional vector space $W$, the $\GL(W)$-action on the space $\Sym^3(W) \otimes (\wedge^3 W)^{-1}$ factors through an action of $\PGL(V)$ on the space $\Sym^3(W)$.

\item {\bf Bidegree $(2,2)$ forms.} Let $V_1$ and $V_2$ be $2$-dimensional vector spaces. The $\GL(V_1) \times \GL(V_2)$-action on $\Sym^2(V_1) \otimes (\wedge^2 V_1)^{-1} \otimes \Sym^2(V_2) \otimes (\wedge^2 V_2)^{-1}$ factors through an action of $\PGL(V_1) \times \PGL(V_2)$ on $\Sym^2(V_1) \otimes \Sym^2(V_2)$.

\item {\bf Rubik's cubes.} Let $W_1, W_2,$ and $W_3$ by $3$-dimensional vector spaces.  The group $\GL(W_1) \times \GL(W_2) \times \GL(W_3)$ naturally acts on $W_1 \otimes W_2 \otimes W_3$. Consider the subgroup $G' \subset \prod_{i=1}^3 \GL(W_i)$ consisting of triples $(g_1, g_2, g_3)$ with $\prod_{i=1}^3 \det (g_i) = 1$. Let $\Gm$ denote the center of each $\GL(W_i)$. Then each element of the representation is stabilized by the kernel of the multiplication map $\Gm^3 \cap G' \to \mu_3$. We let $G$ be the quotient of $G'$ by this kernel and its representation $W_1 \otimes W_2 \otimes W_3$. Note that $G$ is isomorphic to $\SL_3^3 / \mu_3^2$.

\item {\bf Doubly symmetric Rubik's cubes.} Let $W_1$ and $W_2$ be $3$-dimensional vector spaces.  Let $G'$ be the subgroup of $\GL(W_1) \times \GL(W_2)$ consisting of pairs $(g_1, g_2)$ such that $(\det g_1)(\det g_2)^2 = 1$. The stabilizer of the action of $G'$ on $W_1 \otimes \Sym_2 (W_2)$ is the kernel of the multiplication map sending $(\gamma_1, \gamma_2) \in \Gm^2 \cap G'$ to $\gamma_1 \gamma_2^2$. We will consider the quotient $G$ of $G'$ by this kernel and its action on $W_1 \otimes \Sym_2(W_2)$. Note that $G$ is isomorphic to $\SL_3^2 / \mu_3$.

\item {\bf Triply symmetric hypercubes.} Let $V_1$ and $V_2$ be $2$-dimensional vector spaces.  As before, we consider the subgroup $G'$ of $\GL(V_1) \times \GL(V_2)$ of pairs $(g_1, g_2)$ such that $(\det g_1)(\det g_2)^3 = 1$, and we consider the quotient of $G'$ by the stabilizer of its action on $V_1 \otimes \Sym_3(V_2)$. The group is isomorphic to $\SL_2^2/\mu_2$.

\item {\bf Hypercubes.} Let $V_i$ be a $2$-dimensional vector space for $i = 1, 2, 3, 4$ with the usual $\GL(V_i)$-action on each factor.  Let $G'$ be the subgroup of $\prod_{1 \leq i \leq 4} \GL(V_i)$ consisting of tuples $(g_1, g_2, g_3, g_4)$ with $\prod_{i=1}^4 \det g_i = 1$. Let $G$ be the quotient of $G'$ by the stabilizer of its action on $V_1 \otimes V_2 \otimes V_3 \otimes V_4$, i.e., by the kernel of the multiplication map $\Gm^4 \cap G' \to \mu_2$. The group $G$ is isomorphic to $\SL_2^4 / \mu_2^3$.
\end{enumerate}

In each case, we show in \cite{BH} that the covariant binary quartic forms or ternary cubic forms correspond to the genus one curve $C$ and line bundles $L$ or $L \otimes P$, where $P$ refers to the marked point(s) as elements of $\Pic^0(C)$.  These are obtained in general by determinantal constructions, and there is one binary quartic or ternary cubic form for each factor of $\GL_2$ or, respectively, $\GL_3$.

%%%%%%%%%%%%%%%%%%%%%%%%%%%%%%%%%%%%%%%%%%%%%%%%%%%%%%%%%%%%%%%%%%%%%%
% Integral representatives for Selmer elements
%%%%%%%%%%%%%%%%%%%%%%%%%%%%%%%%%%%%%%%%%%%%%%%%%%%%%%%%%%%%%%%%%%%%%%

\section{Integral representatives for Selmer elements} \label{sec:integralreps}

In this section, we describe how to find representatives in $V(\Z)$ for rational orbits with integral invariants.  More precisely, given $v' \in V^\ls(\Q)$ such that the $G(\Q)$-invariants of $v'$ are integers, we show that there exists an element $v \in V(\Z)$ with the same invariants, up to absolutely bounded factors.  To find such an integral representative $v$, we first consider the question locally, i.e., we find an element of $V(\Zp)$ in the same $G(\Qp)$-orbit as $v'$.  Then strong approximation implies that these local representatives may be glued together to obtain an integral element of $V(\Z)$. Our method to find $v$ will heavily rely on the parametrizations of \S \ref{sec:parametrizations}.

Let $v'$ be an element of $V^\ls(\Q)$ such that the $G(\Q)$-invariants of $v'$ are integral, i.e., the generators of the $G(\Q)$-invariant ring, when evaluated on $v'$, are integral.  By Theorem \ref{thm:parametrizations}, the $G(\Q)$-orbit of $v'$ corresponds to (the equivalence class of) an elliptic curve $E$ in the family $F$ and a torsor $C$ for $E$ with a line bundle $L$ of degree $d$ on $C$.  In particular, the elliptic curve $E$ has an affine integral Weierstrass model $\mathcal{E}^\circ$, as an element of $F$ with coefficients equal to the invariants of $v'$. 

Let $p$ be any prime.  By assumption, the genus one curve $C$ has a $\Qp$-point, so $C$ is isomorphic to its Jacobian $E$ over $\Qp$. We may write the line bundle $L_{\Qp}$ on $C_{\Qp} \cong E_{\Qp}$ as $\mathcal{O}(D)$ for a divisor $D = (d-1) \cdot O + Q$ with $Q \in E(\Qp)$.  There exists an automorphism of $C_{\Qp}$ (as a genus one curve) taking $D = (d-1) \cdot O + Q$ to $(d-1) \cdot O + (Q + d Q')$ for any point $Q' \in E(\Qp)$, so for the equivalence class of $(C,L)$, the point $Q$ in the divisor $D$ is only determined up to $d E(\Qp)$. We claim that we may choose a representative of the $dE(\Qp)$-coset of any $Q\in E(\Qp)$ that is either the point $O$ at infinity or (almost) integral as a point on $\mathcal{E}^\circ(\Qp)$.

\begin{lemma} \label{lem:almostintegral}
Let $p$ be a prime and let $d = 2$ or $3$. Let $E$ be an elliptic curve over $\Qp$ with an affine integral Weierstrass model $\mathcal{E}^\circ$ $($with coefficients in $\Zp$$)$.
\begin{enumerate}
\item[{\rm (a)}] If $p \nmid d$, then every coset $E(\Qp)/dE(\Qp)$ has a representative that is either the point $O$ at infinity or a point $(x,y) \in \mathcal{E}^\circ(\Qp)$ with $x, y \in \Zp$.
\item[{\rm (b)}] If $p = d = 2$, then every coset $E(\Qp)/dE(\Qp)$ has a representative that is either $O$ or a point $(x,y) \in \mathcal{E}^\circ(\Qp)$ with $2^4 x, 2^6 y \in \Zp$.
\item[{\rm (c)}] If $p = d = 3$, then every coset $E(\Qp)/dE(\Qp)$ has a representative that is either $O$ or a point $(x,y) \in \mathcal{E}^\circ(\Qp)$ with $3^2 x, 3^3 y \in \Zp$.
\end{enumerate}
\end{lemma}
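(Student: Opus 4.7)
The plan is to exploit the formal group of $\mathcal{E}^\circ$ to produce the desired representatives coset by coset. For each $n \geq 1$, set
\[
E_n^\circ(\Qp) := \{O\} \cup \{(x,y) \in \mathcal{E}^\circ(\Qp) : v_p(x) \leq -2n\}.
\]
In the formal parameter $T = -x/y$ this subgroup is identified with $\hat{E}^\circ(p^n \Zp)$, since on any non-identity point in $E_1^\circ$ the Weierstrass equation forces $v_p(x) = -2m$ and $v_p(y) = -3m$ for some $m \geq 1$, hence $v_p(T) = m$. The strategy is to show that, for the appropriate threshold $n$ in each case, the subgroup $E_n^\circ(\Qp)$ is contained in $dE(\Qp)$, while every point outside this subgroup is either $O$ or (almost) integral on $\mathcal{E}^\circ$. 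Given such a dichotomy, any coset $P + dE(\Qp)$ can be represented either by $O$ (when $P \in E_n^\circ$, by writing $P = dQ$ and replacing $P$ by $P - dQ$) or by $P$ itself.

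For the divisibility step, the formal group law on $\mathcal{E}^\circ$ satisfies $[d](T) = dT + O(T^2)$. When $p \nmid d$ (case (a)), the linear term $d$ is a unit in $\Zp$, so Hensel's lemma makes $[d]$ a bijection on $\hat{E}^\circ(p\Zp)$, giving $E_1^\circ(\Qp) \subseteq dE(\Qp)$. When $p \mid d$, I would invoke the formal logarithm, which converges to an isomorphism $\log : \hat{E}^\circ(p^n \Zp) \xrightarrow{\sim} p^n \Zp$ whenever $n > 1/(p-1)$, and intertwines $[d]$ with honest multiplication by $d$. For $p = d = 2$ this forces $n \geq 2$, so $E_3^\circ(\mathbb{Q}_2) = [2] E_2^\circ(\mathbb{Q}_2) \subseteq 2E(\mathbb{Q}_2)$; for $p = d = 3$ it holds already at $n = 1$, so $E_2^\circ(\mathbb{Q}_3) = [3] E_1^\circ(\mathbb{Q}_3) \subseteq 3E(\mathbb{Q}_3)$.

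For the integrality step, viewing the Weierstrass equation of $\mathcal{E}^\circ$ as a monic quadratic in $y$ with coefficients in $\Zp[x]$ and comparing valuations on the two sides, one sees that $v_p(x) \geq 0$ forces $v_p(y) \geq 0$, while $v_p(x) = -2m < 0$ forces $v_p(y) = -3m$. Consequently, points outside $E_1^\circ$ in case (a) are either $O$ or have both coordinates in $\Zp$; points outside $E_3^\circ(\mathbb{Q}_2)$ in case (b) satisfy $v_2(x) \geq -4$ and $v_2(y) \geq -6$, i.e.\ $2^4 x, 2^6 y \in \mathbb{Z}_2$; and points outside $E_2^\circ(\mathbb{Q}_3)$ in case (c) satisfy $v_3(x) \geq -2$ and $v_3(y) \geq -3$, i.e.\ $3^2 x, 3^3 y \in \mathbb{Z}_3$. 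Combined with the divisibility step, this proves the three parts of the lemma.

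I do not expect a serious obstacle here: the formal-group machinery is classical. The only point requiring a little care is that the convergence threshold $n > 1/(p-1)$ for $\log$, and hence the choice of $n$ in each case, depends only on $p$; in particular the argument goes through regardless of whether the given Weierstrass model $\mathcal{E}^\circ$ happens to be minimal at $p$.
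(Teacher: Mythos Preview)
Your proposal is correct and follows essentially the same approach as the paper's proof: both use the filtration of the formal group by the subgroups you call $E_n^\circ$ (the paper's $K_n$), invoke the formal logarithm isomorphism $\hat{E}^\circ(p^n\Zp)\cong p^n\Zp$ for $n>1/(p-1)$ to establish $d$-divisibility at the right level, and read off the valuation bounds on $x,y$ from the Weierstrass equation for points outside that level. The only cosmetic difference is that the paper phrases case~(a) via the reduction map and $d$-divisibility of the kernel rather than Hensel's lemma, but these amount to the same argument.
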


\begin{proof}
Let $\mathcal{E}$ denote the projective closure of the affine Weierstrass model $\mathcal{E}^\circ$ in $\P^2$, and let $\widetilde{\mathcal{E}}$ denote the reduction of $\mathcal{E}$ modulo $p$.

If $p \nmid d$, then we only need consider the standard reduction map
$$\mathcal{E}(\Qp) \to \widetilde{\mathcal{E}}(\Fp).$$
All of the points in $\mathcal{E}(\Qp)$ that are not in the preimage of the reduction of $O$ are integral (i.e., the coordinates on $\mathcal{E}^\circ$ are in $\Zp$) by definition of the reduction map. On the other hand, the kernel consists of exactly the $\Zp$-points of the formal group $\hat{E}$ for $E$, which is $d$-divisible (in itself), and all of these points are in the $dE(\Qp)$-coset of $O$.

If $p = d$, then this kernel is no longer $d$-divisible, but we only need to slightly modify this argument to allow for bounded denominators. For positive integers $i$, let $\hat{E}_i$ denote the $\Z/p^i\Z$-points of the formal group $\hat{E}$, so the canonical reduction maps $\hat{E}_{i+1} \to \hat{E}_i$ are surjective with kernels $\Z/p\Z$. Note that $\hat{E}(\Zp) = \varprojlim \hat{E}_i$, and $\hat{E}_1$ is trivial by definition. Let $K_i$ denote the kernel of the surjective map $\hat{E}(\Zp) \to \hat{E}_i$, which corresponds precisely to the subset of points in $\mathcal{E}(\Qp)$ that reduce to $O$ modulo $p^i$. Let $\nu_p$ denote the $p$-adic valuation of an element of $\Qp$. If $(x,y) \in \mathcal{E}^\circ(\Qp)$ with $\nu_p(y) \leq -3i$, then $\nu_p(x) = \frac{2}{3} \nu_p(y) \leq -2i$ and the point $(x,y)$ corresponds to a point in $K_i$. Therefore, for all $(x,y) \not\in K_i$, we have that 
\begin{equation} \label{eq:valpxy}
\nu_p(x) \geq - 2(i-1)  \qquad \mathrm{ and } \qquad \nu_p(y) \geq - 3(i-1)
\end{equation}
since valuations are integers and $3\nu_p(x) = 2\nu_p(y)$ when either valuation is negative.

The argument above for $p \nmid d$ uses the fact that $K_1 = \hat{E}(\Zp)$ is $d$-divisible. For $p = d$, we claim that $K_i$ is $p$-divisible in $K_{i-1}$ for $i \geq 3$ when $p = 2$ and $i \geq 2$ when $p = 3$. This follows from the fact that for all such $i$, the formal logarithm induces an identification of $K_{i-1}$ with $p^{i-1}\Zp$ in a manner compatible with inclusion as $i$ changes \cite[Theorem IV.6.4(b)]{Sil}.
Thus, $K_i$ is $p$-divisible in $K_{i-1}$, so the points in $K_i$ are in the coset of $O$ in $E(\Qp)/dE(\Qp)$.

Therefore, for $p = d = 2$ or $3$, we see that the natural map
$$(\mathcal{E}(\Qp) \setminus K_i) \cup \{O\} \hookrightarrow E(\Qp) \to E(\Qp) / p E(\Qp)$$
is surjective for $i = 3$ or $2$, respectively. When $p = 2$, the inequalities \eqref{eq:valpxy} imply that $2^4 x, 2^6 y \in \Z_2$ for $(x,y) \in \mathcal{E}^\circ(\Q_2)$ not in $K_3$; similarly, for $p = 3$, all $(x,y) \not\in K_2$ have $3^2 x, 3^3 y \in \Z_3$, as desired.
\end{proof}

For the remainder of this section, we call a point $(x,y) \in \mathcal{E}^\circ(\Qp)$ as in Lemma \ref{lem:almostintegral} {\em almost integral}. 
Given the elliptic curve $E$ over $\Qp$ with affine integral model $\mathcal{E}^\circ$ and the line bundle $d \cdot O$ or $(d-1) \cdot O + Q$, where $Q$ is an almost integral point of $\mathcal{E}^\circ(\Qp)$, we need to exhibit a corresponding element of $V(\Zp)$ (or more precisely, an element of $V(\Qp)$ with bounded denominators at $2$ and $3$). We list below explicit such elements in each case; note that we may control the powers of $2$ and $3$ in the denominators of all the entries:

\begin{enumerate}
\item[1.] {\bf Binary quartic forms.} We slightly modify the formulas found in \cite[Section 3.2]{CFS}.  Let $\mathcal{E}$ be the elliptic curve $y^2 = x^3 + a_4 x + a_6$, where $a_4, a_6 \in \Zp$ and the discriminant $\Delta = -16 (4 a_4^3 + 27 a_6^2)$ is nonzero.
	\begin{enumerate}
	\item Suppose $L \cong \mathcal{O}(O+Q)$ for an almost integral $Q = (x_1, y_1) \in \mathcal{E}^\circ(\Qp)$.  Set the
	binary quartic form
		\begin{equation} \label{eq:BQformula-OQ}
			f(w_1,w_2) = \frac{1}{4} w_1^4 - \frac{3}{2} x_1 w_1^2 w_2^2 - 2 y_1 w_1 w_2^3 + (-\frac{3}{4} x_1^2 - a_4) w_2^4.
		\end{equation}
	\item Suppose $L \cong \mathcal{O}(2O)$.  Set the binary quartic form
		\begin{equation} \label{eq:BQformula-2O}
			f(w_1,w_2) = w_1^3 w_2 + a_4 w_1 w_2^3 + a_6 w_2^4.
		\end{equation}
	\end{enumerate}
	In both cases, the binary quartic form $f$ determines a genus one curve isomorphic to $\mathcal{E}$ (i.e., the normalization of $z^2 = f(w_1,w_2)$),
	along with the appropriate line bundle: in the first case, $\mathcal{O}(O+Q)$ where $O$ and $Q$ are taken to
	$(w_1, w_2, z) = (1, 0, \frac{1}{2})$ and $(1,0,-\frac{1}{2})$, respectively, and in the second case, 
	$\mathcal{O}(2O)$ where $O$ is taken to the point $(z,w_1,w_2) = (0,1,0)$ on this model.  
	The usual degree $2$ and $3$ invariants $I$ and $J$ of $f$ are $-3 a_4$ and $-27 a_6$, respectively, implying
	that the Jacobian of these models is the original elliptic curve.
\item[2.] {\bf Ternary cubic forms.} Again, we modify the formulas found in \cite[Section 3.2]{CFS}.  Let $\mathcal{E}$ be the elliptic curve given by $y^2 = x^3 + a_4 x + a_6$, where $a_4, a_6 \in \Zp$ and the discriminant is nonzero.
	\begin{enumerate}
	\item Suppose $L \cong \mathcal{O}(2O+Q)$ for an almost integral $Q = (x_1, y_1) \in \mathcal{E}^\circ(\Qp)$. The ternary cubic form
		\begin{equation} \label{eq:TCformula-2OQ}
			Y^2 Z - X^2 Y + 3 x_1 Y Z^2 + 2 y_1 X Z^2 + (3 x_1^2 + a_4) Z^3
		\end{equation}
	where the pullback of $\mathcal{O}(1)$ from $\mathbb{P}^2$ is isomorphic to $\mathcal{O}(2O+Q)$, where $O$ and $Q$ are taken to the points
	$[X,Y,Z] = [0,1,0]$ and $[1,0,0]$, respectively.
	\item Suppose $L \cong \mathcal{O}(3O)$.  Then the ternary cubic form
		\begin{equation} \label{eq:TCformula-3O}
			- Y^2 Z + X^3 + a_4 X Z^2 + a_6 Z^3
		\end{equation}
		clearly cuts out a curve in $\mathbb{P}^2$ that is isomorphic to $\mathcal{E}$, and the pullback of $\mathcal{O}(1)$
		from $\mathbb{P}^2$ is isomorphic to $\mathcal{O}(3O)$, where $O$ is taken to the point $[X,Y,Z] = [0,1,0]$.
	\end{enumerate}
\item[3.] {\bf Bidegree $(2,2)$ forms.} Let $\mathcal{E}$ be the elliptic curve $y^2 + a_3 y = x^3 + a_2 x^2 +a_4 x$ with the point $P = (0,0)$ on $\mathcal{E}^\circ(\Qp)$.  We first show that there exist divisors for both the line bundle $L$ and the bundle $L \otimes P$ that are the sum of almost integral points and/or the point $O$ at infinity.

We may assume from the earlier argument that the line bundle $L$ either is $\mathcal{O}(O+Q)$ for some almost integral point $Q = (x_1, y_1)$ on $\mathcal{E}^\circ(\Qp)$ or is $\mathcal{O}(2O)$.  In the former case, the line bundle $L \otimes P$ then is isomorphic to $\mathcal{O}(O+Q')$, where $Q' = (x_2, y_2)$ is the sum of $Q$ and $P$; we have
\begin{align} \label{eq:coordsforadding00}
x_2 = \frac{y_1^2}{x_1^2} - a_2 - x_1 && \mathrm{and} && y_2 = -\frac{y_1^3}{x_1^3} + a_2 \frac{y_1}{x_1} + y_1 - a_3.
\end{align}
If $Q'$ is not in $2 \mathcal{E}(\Qp)$, then $Q'$ is also almost integral as a point on $\mathcal{E}^\circ(\Qp)$, which implies that $\frac{y_1}{x_1} \in \Zp$ if $p \neq 2$ and $\frac{y_1}{x_1} \in \frac{1}{4}\Z_2$ if $p = 2$.  Otherwise, we may translate the elliptic curve by half of $Q'$ to obtain isomorphisms $L \otimes P \cong \mathcal{O}(2O)$ and $L \cong \mathcal{O}(O + [-P])$; note that $-P = (0,-a_3)$ is also integral.  We may use the automorphism of $\mathcal{E}$ fixing $O$ and sending $P$ to $-P$ to reduce to the case where $L \cong \mathcal{O}(2O)$ (though the roles of $L$ and $L \otimes P$ will be switched).  We thus reduce to two cases:
	\begin{enumerate}
	\item Suppose the line bundle $L$ is isomorphic to $\mathcal{O}(O+Q)$, and $L \otimes P$ is isomorphic to $\mathcal{O}(O+Q')$, where $Q = (x_1,y_1)$ and $Q' = (x_2,y_2) = Q + P$ are almost integral points on $\mathcal{E}^\circ(\Qp)$.  The coordinates $x_2$ and $y_2$ are computed by \eqref{eq:coordsforadding00}, and we have that $y_1 / x_1 \in \Zp$ (or $\frac{1}{4}\Z_2$).  Then the following bidegree $(2,2)$ form, with coordinates $([w_1,w_2],[z_1,z_2])$, recovers the elliptic curve $\mathcal{E}$ with the same line bundles, while having the correct polynomial invariants:
	\begin{equation}
	\begin{pmatrix} w_1^2 & w_1 w_2 & w_2^2 \end{pmatrix} \cdot
	\begin{pmatrix}
		0 & \frac{1}{2} & -\frac{y_1}{2 x_1} \\
		-\frac{1}{2} & \frac{y_1}{x_1} & -\frac{a_2}{2}-x_1-\frac{x_2}{2} \\
		-\frac{y_1}{2x_1} & \frac{a_2}{2} + \frac{x_1}{2} + x_2 & \frac{y_2 - y_1}{2}
	\end{pmatrix}
	\cdot \begin{pmatrix} z_1^2 \\ z_1 z_2 \\ z_2^2 \end{pmatrix}.
	\end{equation}
	\item Suppose the line bundle $L$ is isomorphic to $\mathcal{O}(2O)$, and $L \otimes P$ is isomorphic to $\mathcal{O}(O+P)$.
	Then the bidegree $(2,2)$ form below
	\begin{equation}
	\begin{pmatrix} w_1^2 & w_1 w_2 & w_2^2 \end{pmatrix} \cdot
	\begin{pmatrix}
		0 & 0 & -\frac{1}{2} \\
		\frac{1}{2} & 0 & -\frac{a_2}{2} \\
		0 & -\frac{a_3}{2} & -\frac{a_4}{2}
	\end{pmatrix}
	\cdot \begin{pmatrix} z_1^2 \\ z_1 z_2 \\ z_2^2 \end{pmatrix}
	\end{equation}
	has the desired properties.
	\end{enumerate}

\item[4.] {\bf Rubik's cubes.} The geometric data here is almost identical to bidegree $(2,2)$ forms (case 3), except that the degree of the line bundle is different.  Let $\mathcal{E}$ be the elliptic curve $y^2 + a_3 y = x^3 + a_2 x^2 + a_4 x$ with the point $P = (0,0)$.  By the same arguments as before, there are two cases:
	\begin{enumerate}
	\item Suppose the line bundle $L$ is isomorphic to $\mathcal{O}(2O+Q)$ and $L \otimes P$ is isomorphic to $\mathcal{O}(2O+Q')$, where $Q = (x_1,y_1)$ and 
	$Q' = (x_2,y_2) = Q + P$ are almost integral points on $\mathcal{E}^\circ(\Qp)$.  Recall that $y_1/x_1$ is integral (or in $\frac{1}{3}\Z_3$ for $p = 3$). Then the following element of 
	$\Q_p^3 \otimes \Q_p^3 \otimes \Q_p^3$ recovers this geometric data with the correct polynomial invariants:
	\begin{equation}
	\begin{pmatrix}
	y_2 - y_1 & x_2 - x_1 & 0 \\
	x_2 - x_1 & 0 & 1 \\
	0 & -1 & 0	
	\end{pmatrix} \qquad  
	\begin{pmatrix}
	x_2 - x_1 & 0 & 1 \\
	0 & 0 & 0 \\
	-1 & 0 & 0
	\end{pmatrix} \qquad 
	\begin{pmatrix}
	-a_2 - 2 x_1 - x_2 & y_1 / x_1 & 0 \\
	- y_1 / x_1 & 1 & 0 \\
	-1 & 0 & 0
	\end{pmatrix}
	\end{equation}
	where these three matrices represent the $3 \times 3 \times 3$ array of coefficients.
	One checks that two of the ternary cubics obtained from this element are
	$$Y^2 Z  - X^2 Y + (a_2 + 3 x_i) Y Z^2 + (a_3 + 2 y_i) X  Z^2 +  (a_4 + 2 a_2 x_i + 3 x_i^2) Z^3$$
	for $i = 1$ and $2$, which are the embeddings of $\mathcal{E}$ by $\mathcal{O}(2O +Q)$ and $\mathcal{O}(2O+Q')$, respectively.
	\item Suppose the line bundle $L$ is isomorphic to $\mathcal{O}(3O)$ and $L \otimes P \cong \mathcal{O}(2O+P)$.
	Then the following element of $\ZZp^3 \otimes \ZZp^3 \otimes \ZZp^3$ recovers $\mathcal{E}$ with these line bundles and the correct polynomial invariants:
	\begin{equation} \label{eq:RCformula-3O}
	\begin{pmatrix}
	1 & 0 & 0 \\
	0 & 0 & -1 \\
	0 & 0 & -a_3	
	\end{pmatrix} \qquad  
	\begin{pmatrix}
	0 & 1 & 0 \\
	-1 & 0 & 0 \\
	0 & a_2 & a_4
	\end{pmatrix} \qquad 
	\begin{pmatrix}
	0 & 0 & 1 \\
	0 & 0 & 0 \\
	0 & -1 & 0
	\end{pmatrix}
	\end{equation}
	where these three matrices represent the $3 \times 3 \times 3$ array of coefficients.  
	It is easy to check that two of the ternary cubics obtained from this Rubik's cube are
	exactly those giving the embeddings of $\mathcal{E}$ by $\mathcal{O}(3O)$ and $\mathcal{O}(2O+P)$.
	\end{enumerate}
\item[5.] {\bf Doubly symmetric Rubik's cubes.} Let $\mathcal{E}$ be the elliptic curve $y^2 = x^3 + a_2 x^2 + a_4 x$ with the $2$-torsion point $P = (0,0)$.  We again have two cases, where either $L \cong \mathcal{O}(2O+Q)$, for an almost integral point $Q = (x_1,y_1)$ or $L \cong \mathcal{O}(3O)$.
	\begin{enumerate}
	\item Suppose $L \cong \mathcal{O}(2O+Q)$ for an almost integral point $Q = (x_1, y_1)$ on $\mathcal{E}^\circ(\Qp)$,
	and recall that we may assume $y_1 / x_1$ is also integral (or in $\frac{1}{3}\Z_3$ for $p = 3$).  Then the following
	element of $\Q_p^3 \otimes \Sym_2(\Q_p^3)$, represented as a triple of symmetric $3 \times 3$ matrices, has (almost) integral coefficients
	and the desired minimal invariants $a_2$ and $a_4$:
	\begin{equation}
	\begin{pmatrix}
	0 & 1 & 0 \\
	1 & 0 & 0 \\
	0 & 0 & 0
	\end{pmatrix} \qquad
	\begin{pmatrix}
	-x_1 & 0 & 0 \\
	0 & 1 & -1 \\
	0 & -1 & 1
	\end{pmatrix} \qquad
	\begin{pmatrix}
	-a_2 - x_1 & \frac{y_1}{x_1} & - \frac{y_1}{x_1} \\
	\frac{y_1}{x_1} & 0 & 0 \\
	- \frac{y_1}{x_1} & 0 & -1
	\end{pmatrix}.
	\end{equation}
	The ternary cubic form $$-X^2 Y + X^2 Z + x_1 Y^2 Z + (a_2 + x_1) Y Z^2 + 2 \frac{y_1}{x_1} X Z^2 + \frac{y_1^2}{x_1^2} Z^3$$ obtained
	from this element is $\PGL_3(\Qp)$-equivalent to the ternary cubic form
	$$ Y^2 Z -X^2 Y + (a_2 + 3 x_1) Y Z^2 + 2 y_1 X Z^2 + (a_4 + 2 a_2 x_1 + 3 x_1^2) Z^3,$$
	which by \eqref{eq:TCformula-2OQ} corresponds to the embedding of $\mathcal{E}$ via the line bundle $\mathcal{O}(2O+Q)$. 
	Therefore, the genus one curve and degree $3$ line bundle obtained via Theorem \ref{thm:parametrizations} is exactly $\mathcal{E}$ with
	the line bundle $\mathcal{O}(2O+Q)$.
	\item Suppose $L \cong \mathcal{O}(3O)$. Then the following element of $\ZZp^3 \otimes \Sym_2(\ZZp^3)$, 
	represented as a triple of symmetric $3 \times 3$ matrices, has the desired minimal invariants $a_2$ and $a_4$:
	\begin{equation} \label{eq:2SymRCformula-3O}
	\begin{pmatrix}
	a_6 & 0 & 1 \\
	0 & 1 & 0 \\
	1 & 0 & 0	
	\end{pmatrix} \qquad  
	\begin{pmatrix}
	0 & 1 & 0 \\
	1 & 0 & 0 \\
	0 & 0 & 0
	\end{pmatrix} \qquad 
	\begin{pmatrix}
	a_4 & 0 & 0 \\
	0 & 0 & 0 \\
	0 & 0 & -1
	\end{pmatrix}
	\end{equation}
	The genus one curve and degree $3$ line bundle obtained via Theorem \ref{thm:parametrizations} corresponds to the ternary cubic form
	$X^3 + a_2 X^2 Z - Y^2 Z + a_4 X Z^2$, which by \eqref{eq:TCformula-3O} is exactly $\mathcal{E}$ embedded using
	the line bundle $\mathcal{O}(3O)$.
	\end{enumerate}

\item[6.] {\bf Triply symmetric hypercubes.} Let $\mathcal{E}$ be the elliptic curve $y^2 + a_1 x y + a_3 y = x^3$; note that the point $P = (0,0)$ on $\mathcal{E}^\circ$ has order $3$ and is an element of $2\mathcal{E}(\Qp)$.  Just as before, we may reduce to the cases where $L$ is isomorphic to $\mathcal{O}(O + Q)$, for some almost integral point $Q = (x_1,y_1)$ not equal to $O$ or $P$, or where $L$ is isomorphic to $\mathcal{O}(2O)$.
	\begin{enumerate}
	\item For $L \cong \mathcal{O}(O+Q)$, with $Q = (x_1, y_1)$, the pair of binary cubic forms with coefficients
		\begin{equation} \label{eq:3symhypminfirst}
		(y_1, 0, 0, 1) \qquad \textrm{and}  \qquad (0, -y_1, -x_1, -a_1)
		\end{equation}
	is close to the desired integral element, but the invariants of this pair have extra factors of $2$ and $y_1$ (namely, $2^i y_1^i a_i$ instead of $a_i$).
	
	By applying transformations of the form $\left(\begin{smallmatrix} p^\alpha & 0 \\ 0 & p^\beta \end{smallmatrix}\right)$ in the two copies of $\GL_2(\Qp)$ to \eqref{eq:3symhypminfirst},
	we obtain the following element with the desired minimal invariants, up to powers of $2$ and units in $\Zp$:
		\begin{equation} \label{eq:3symhypmin}
		(y_1 p^{-v}, 0, 0, p^{3w-v}) \qquad \textrm{and} \qquad (0,-y_1p^{-2w}, -x_1 p^{-w}, -a_1)
		\end{equation}
	where $w$ is the $p$-adic valuation of $x_1$ and $v$ is the $p$-adic valuation of $y_1$. Because $y_1$ divides $x_1^3$, we have that $3w -v$ is nonnegative, and all of the coordinates
	of \eqref{eq:3symhypmin} are integral. Dividing \eqref{eq:3symhypmin} by $2 y_1 p^{-v}$, which is a unit in $\Qp$ when $p \neq 2$ or $3$, produces a pair of binary cubic forms with
	fundamental polynomial invariants $a_1$ and $a_3$ and gives rise to the desired binary quartic form
	corresponding to the elliptic curve $\mathcal{E}$ with the marked point $P$ and the line bundle $L$. (If $p = 2$ or $3$, then since the valuation of $2y_1 p^{-v}$ is bounded, 
	we still obtain an element of $V(\Qp)$ with bounded denominators.)
	
	\item For $L \cong \mathcal{O}(2O)$, the pair of binary cubics
		\begin{equation}
		  (1, 0, 0, a_3) \qquad \textrm{and} \qquad (0, 0, 1, a_1)
		\end{equation}
	is an element of $V(\Zp)$ with the invariants $a_1$ and $a_3$, and via Theorem \ref{thm:parametrizations}
	and \eqref{eq:BQformula-2O}, it is easy to compute that
	it gives rise to the elliptic curve $\mathcal{E}$ with the marked point $P$ and the line bundle $\mathcal{O}(2O)$.
	\end{enumerate}

\item[7.] {\bf Hypercubes.} Let $\mathcal{E}$ be the elliptic curve $y^2 + a_1 x y + a_3 y = (x-a_2)(x-a_2')(x-a_2'')$ with the three points $(a_2,0)$, $(a_2',0)$, and $(a_2'',0)$ summing to the identity point. Again, we consider the two cases where $L \cong \O(O+Q)$, for an almost integral point $Q = (x_1, y_1) \in \mathcal{E}^\circ(\Qp)$, or $L \cong \mathcal{O}(2O)$.
	\begin{enumerate}
	\item Suppose $L \cong \O(O+Q)$ for an almost integral point $Q = (x_1, y_1)$ of $\mathcal{E}^\circ(\Qp)$. Then the following element of $\Q_p^2 \otimes \Q_p^2 \otimes \Q_p^2 \otimes \Q_p^2$, represented as a $2 \times 2$ matrix of $2 \times 2$ matrices, gives the elliptic curve $\mathcal{E}$ (up to isomorphism) and the line bundle $L$:
	\begin{equation} \label{eq:hypminfirst}
	\begin{array}{cc}
	\left(
		\begin{array}{cc}
		y_1 & 0 \\
		0 & 0
		\end{array}
	\right) &
	\left(
		\begin{array}{cc}
		0 & 0 \\
		0 & 1
		\end{array}
	\right) \\[.175in]
	\left(
		\begin{array}{cc}
		0 & -y_1 \\
		-y_1 & a_2 - x_1
		\end{array}
	\right) &
	\left(
		\begin{array}{cc}
	-y_1 & a_2' - x_1 \\
	a_2'' - x_1 & -a_1
		\end{array}
	\right)
	\end{array}
	\end{equation}
	In other words, the natural binary quartic arising from \eqref{eq:hypminfirst}, via Theorem \ref{thm:parametrizations}, corresponds 
	to the map of $\mathcal{E}$ to $\P^1$ via the sections of $L$.
	The invariants coming from \eqref{eq:hypminfirst} are not quite the desired minimal invariants $a_1, a_2, a_2', a_2'', a_3$, however, 
	as they are scaled by powers of $y_1$ (and $2$); namely, instead of $a_i$, we obtain $2^i y_1^i a_i$.
	By applying transformations of the form $\left(\begin{smallmatrix} p^\alpha & 0 \\ 0 & p^\beta \end{smallmatrix}\right)$ in the four copies of $\GL_2(\Qp)$ to \eqref{eq:hypminfirst},
	we obtain the following element with the desired minimal invariants, up to powers of $2$ and units in $\Zp$:
	\begin{equation*}
	\begin{array}{cc}
	\left(
		\begin{array}{cc}
		y_1 p^{-v} & 0 \\
		0 & 0
		\end{array}
	\right) &
	\left(
		\begin{array}{cc}
		0 & 0 \\
		0 & p^{w+w'+w''-v}
		\end{array}
	\right) \\[.175in]
	\left(
		\begin{array}{cc}
		0 & -y_1 p^{-w-w'} \\
		-y_1 p^{-w-w''} & (a_2 - x_1) p^{-w}
		\end{array}
	\right) &
	\left(
		\begin{array}{cc}
	-y_1 p^{-w'-w''} & (a_2' - x_1)p^{-w'} \\
	(a_2'' - x_1)p^{-w''} & -a_1
		\end{array}
	\right)
	\end{array}
	\end{equation*}
	where $v$ is the $p$-adic valuation $v_p(y_1)$ and $w = v_p(a_2 - x_1)$, $w' = v_p(a_2'-x_1)$, and $w'' = v_p(a_2''-x_1)$. Because $y_1$ divides $(x_1-a_2)(x_1-a_2')(x_1-a_2'')$,
	the inequality $v \leq w + w' + w''$ holds, and every entry of the above array is indeed integral.
	Dividing the top two $2 \times 2$ matrices by $2 y_1 p^{-v}$, which is a unit in $\Qp$ when $p \neq 2$ or $3$, gives a hypercube with the precise invariants $a_1, a_2, a_2', a_2'', a_3$.
	(If $p = 2$ or $3$, like in case 4, we see that the $p$-adic valuation of $2 y_1 p^{-v}$ is still bounded, so the same hypercube still gives the correct invariants and has bounded denominators.)

	\item Suppose $L \cong \O(2O)$. Then the following element of $\ZZp^2 \otimes \ZZp^2 \otimes \ZZp^2 \otimes \ZZp^2$,
	represented as a $2 \times 2$ matrix of $2 \times 2$ matrices, recovers the elliptic curve $\mathcal{E}$ and $L$ and has the desired invariants:
	\begin{equation} \label{eq:hypmin2O}
	\begin{array}{cc}
	\left(
		\begin{array}{cc}
		1 & 0 \\
		0 & -a_2
		\end{array}
	\right) &
	\left(
		\begin{array}{cc}
		0 & -a_2' \\
		-a_2'' & a_3
		\end{array}
	\right) \\[.175in]
	\left(
		\begin{array}{cc}
		0 & 0 \\
		0 & 1
		\end{array}
	\right) &
	\left(
		\begin{array}{cc}
		0 & 1 \\
		1 & a_1
		\end{array}
	\right)
	\end{array}
	\end{equation}
	The binary quartic obtained from \eqref{eq:hypmin2O} via Theorem \ref{thm:parametrizations} is
	  $$w_1^3 w_2 + \frac{1}{4} a_1^2 w_1^2 w_2^2 + (\frac{1}{2} a_1 a_3 + a_2 a_2' + a_2' a_2'' + a_2 a_2'') w_1 w_2^3 + (-a_2 a_2' a_2'' + \frac{a_3^2}{4}) w_2^4,$$
	which matches \eqref{eq:BQformula-2O} in this case after changing the equation of $\mathcal{E}$ into short Weierstrass form and applying a change of variables to the quartic.
	\end{enumerate}

\end{enumerate}

Therefore, in each case, from an element of $V^\ls(\Qp)$, we obtain a $G(\Qp)$-equivalent element of $V^\ls(\Qp)$ with absolutely bounded denominators and the same invariants. In other words, we may find a $G(\Qp)$-equivalent {\em integral} element in $V(\Zp)$ with invariants up to absolutely bounded factors of $2$ and $3$. In each case, a standard argument on strong approximation (see, e.g., \cite[Lemmas 3.2 and 3.3] {fisher-minbqtc} and \cite[Theorem 4.17]{CFS}) allows us to patch together these local integral representatives into an integral model over $\Q$:

\begin{theorem}
For each of the cases in Table $\ref{table:Invariants}$, for an element of $V^\ls(\Q)$ such that its $G(\Q)$-invariants are in $\Z$, there is a $G(\Q)$-equivalent element of $V^\ls(\Z)$ with the same invariants, up to absolutely bounded factors of $2$ and $3$.
\end{theorem}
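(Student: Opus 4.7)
The plan is to combine the local constructions developed in the earlier parts of this section with a standard strong approximation argument for the acting group~$G$. Lemma~\ref{lem:almostintegral} together with the explicit formulas worked out in cases~$1$--$7$ produce, at each prime~$p$, an element $v_p \in V(\Qp)$ that is $G(\Qp)$-equivalent to~$v'$, shares its $G$-invariants, and has denominators bounded absolutely in powers of~$2$ and~$3$. For all but finitely many primes~$p$, the element $v'$ itself already lies in $V(\Zp)$, and there we may take $v_p = v'$; the finite exceptional set $S$ consists of the primes in the denominators of~$v'$ together with $\{2,3\}$.

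To globalize, write $v_p = g_p \cdot v'$ with $g_p \in G(\Qp)$ for each $p \in S$, and $g_p = 1$ otherwise. The aim is to find a single $g \in G(\Q)$ with $g \equiv g_p \pmod{p^{N_p}}$ simultaneously for each $p \in S$, where the $N_p$ are chosen large enough that the congruence $g \cdot v' \equiv v_p$ holds in $V(\Zp)$ to the precision needed to preserve the bounded $2,3$-denominator tolerance. Granted such a~$g$, the element $v := g \cdot v' \in V(\Q)$ lies in $V(\Zp)$ at every prime up to bounded factors of~$2$ and~$3$, and hence in $V(\Z)$ globally up to those factors; since $G(\Q)$-invariants and local solubility are both preserved by the $G(\Q)$-action, $v$ has the same invariants as~$v'$ and lies in $V^\ls(\Z)$, as required.

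The substantive step is to produce such a $g$, i.e., to invoke strong approximation for the specific groups in Table~\ref{table:Invariants}. For the simply connected covers---products of $\SL_n$'s in cases~$4$--$7$---strong approximation away from the archimedean place is classical (Kneser, Platonov), so simultaneous $p$-adic approximation of a finite collection $\{g_p\}_{p \in S}$ by a single rational element is immediate. The groups $G$ in the table are quotients of such products (or of $\GL_n$-products) by finite central subgroups, and in cases~$1$--$3$ are the adjoint groups $\PGL_2$, $\PGL_3$, and $\PGL_2^2$. In each case the required density of $G(\Q)$ in the restricted product is obtained by lifting the~$g_p$ to the simply connected cover, applying strong approximation there, and absorbing the resulting finite central ambiguity into the stabilizer of the triple $(E,C,L)$ corresponding to~$v'$. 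This is exactly the patching technique already used in \cite[Lemmas~3.2--3.3]{fisher-minbqtc} and \cite[Theorem~4.17]{CFS} for the classical cases of binary quartic and ternary cubic forms, and it extends with only notational changes to the remaining five representations; I expect this to be the main obstacle only in a bookkeeping sense, since no new group-theoretic ingredients are needed beyond those already assembled in the local-to-global argument above.
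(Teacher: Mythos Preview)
Your proposal is correct and follows essentially the same approach as the paper: combine the local constructions from Lemma~\ref{lem:almostintegral} and the explicit case-by-case formulas to get $p$-adic integral representatives (with bounded denominators at $2$ and $3$), then patch them globally via strong approximation, citing exactly the same references \cite[Lemmas~3.2--3.3]{fisher-minbqtc} and \cite[Theorem~4.17]{CFS}. The paper's own proof is in fact terser than yours on the strong approximation step, simply calling it ``a standard argument''; your additional remarks about lifting to the simply connected cover and handling the central quotient are reasonable elaborations of what that standard argument entails.
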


The exact factors of $2$ and $3$ differ for each case but may be explicitly computed by the formulas in this section, though they are not needed for the later counting arguments. We obtain the following:

\begin{corollary}
Consider any of the cases in Table $\ref{table:Invariants}$.  Let $E$ be an elliptic curve in the family $F$ and let $d$ be the degree of the associated line bundle. Then the elements in the $d$-Selmer group $S_d(E)$ of $E$ are in bijection with $G(\Q)$-equivalence classes on the set $V^\ls(\Z)$ of locally soluble elements having invariants equal to the coefficients $M^i a_i$ $($and $M^i a_i')$ of $E$, for some absolutely bounded integer $M$.
\end{corollary}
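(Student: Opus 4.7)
The plan is to read off the corollary directly from Theorem~\ref{thm:parametrizations}(b) combined with the immediately preceding theorem on integral representatives. First, Theorem~\ref{thm:parametrizations}(b) identifies $S_d(E)$ with the set of nondegenerate $G(\Q)$-orbits on $V^\ls(\Q)$ whose $G$-invariants evaluate to the coefficients $a_i$ (and $a_i'$, when applicable) of the fixed model of $E$ in $F$. Second, the theorem just proved produces, for each such orbit, a $G(\Q)$-equivalent element lying in $V(\Z)$ whose invariants differ from $a_i$ only by an absolutely bounded factor supported on $\{2,3\}$.

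I would then repackage this bounded factor as a single scaling. Because $V$ carries the homothety action $v \mapsto \lambda v$ under which an invariant of degree $i$ is multiplied by $\lambda^i$, the bounded factors can be absorbed into one positive integer $M$, a product of small powers of $2$ and $3$ whose exponent is controlled purely by the representation $(G,V)$ and not by $E$ or the particular Selmer element. The resulting integral representative of each orbit then has invariants exactly $M^i a_i$ and $M^i a_i'$, for $M$ drawn from a finite set independent of $E$.

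Finally I would verify both halves of the asserted bijection. For injectivity, two elements of $V^\ls(\Z)$ with invariants $M^i a_i$ that are $G(\Q)$-equivalent are a fortiori equivalent in $V^\ls(\Q)$; after rescaling by $M^{-1}$ they lie in the same $G(\Q)$-orbit with invariants $a_i$ and so, by Theorem~\ref{thm:parametrizations}(b), correspond to the same element of $S_d(E)$. For surjectivity, starting from any $\xi \in S_d(E)$, Theorem~\ref{thm:parametrizations}(b) supplies the corresponding $G(\Q)$-orbit on $V^\ls(\Q)$, and the integral-representative theorem, together with the scaling argument above, yields an element of $V^\ls(\Z)$ with invariants $M^i a_i$ in this orbit. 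No substantive obstacle remains, since all technical content has been absorbed into the theorem just established; the corollary is merely its translation into the form needed for the counting arguments in the sections that follow, where one replaces a count of Selmer elements over $\Q$ with a count of integer orbits on $V^\ls(\Z)$ of bounded height.
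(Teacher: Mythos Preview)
Your proposal is correct and matches the paper's approach: the corollary is stated without proof precisely because it is immediate from Theorem~\ref{thm:parametrizations}(b) together with the preceding integral-representative theorem, and you have spelled out exactly that deduction. One small imprecision worth tightening: the subscript $i$ in $a_i$ is the Weierstrass weight, not the polynomial degree of the invariant on $V$ (e.g., in Case~1 the invariant $a_4$ has polynomial degree $2$); what makes your scaling argument go through is that in each row of Table~\ref{table:Invariants} the ratio (polynomial degree)$/$(Weierstrass subscript) is constant across all the generating invariants, so a homothety $v\mapsto \lambda v$ indeed sends each $a_i$ to $M^i a_i$ for a single $M$ depending only on $\lambda$ and the case.
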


%%%%%%%%%%%%%%%%%%%%%%%%%%%%%%%%%%%%%%%%%%%%%%%%%%%%%%%%
%%%%%%%%%%%%%%%%%%%%%%%%%%%%%%%%%%%%%%%%%%%%%%%%%%%%%%%%
%%%%%%%%%%%%%%   Fundamental domains      %%%%%%%%%%%%%%
%%%%%%%%%%%%%%%%%%%%%%%%%%%%%%%%%%%%%%%%%%%%%%%%%%%%%%%%
%%%%%%%%%%%%%%%%%%%%%%%%%%%%%%%%%%%%%%%%%%%%%%%%%%%%%%%%

\section{Construction of fundamental domains}

For each case in Table~\ref{table:Invariants}, we wish to count
irreducible $G(\Z)$-orbits on $V(\Z)$ having bounded
height.  We will accomplish this by counting suitable integer points
in fundamental domains for the action of $G(\Z)$ on $V(\R)$.

\subsection{Fundamental sets for the action of $G(\R)$ on $V(\R)$}

In this subsection, we would like to find a bounded fundamental set for the action of the group $G(\R) \times \R^\times_{> 0}$ on $V(\R)$, or equivalently, a bounded fundamental set for the action of $G(\R)$ on the elements of height at most $1$ in $V(\R)$. We break up $V^\s(\R)$, considered with its real topology, into its connected components $V^{(i)}$ for $i\in\{1,\ldots,N\}$.  We prove the following:

\begin{theorem} \label{thm:fundsets}
There exists an absolutely bounded fundamental set for the action of the group $G(\R) \times \R^\times_{>0}$ on each component $V^{(i)}$.
\end{theorem}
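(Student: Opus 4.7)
The plan is to first use the scaling action of $\R^\times_{>0}$ to reduce to the task of producing a bounded subset of $V^{(i)}\cap\{H=1\}$ that meets every $G(\R)$-orbit, and then to construct such a set using the parametrization of Section~\ref{sec:parametrizations}. The scalar action $v\mapsto\lambda v$ multiplies each invariant $a_j$ of degree $d_j$ by $\lambda^{d_j}$ and the height $H(v)=\max_j |a_j(v)|^{k/d_j}$ by $\lambda^k$, where $k$ is the height degree from column~$8$ of Table~\ref{table:Invariants}; hence every $\R^\times_{>0}$-orbit on $V^s(\R)$ contains a unique point with $H=1$. By the real analogue of Theorem~\ref{thm:parametrizations}, the $G(\R)$-orbits on $V^{(i)}\cap\{H=1\}$ are in bijection with isomorphism classes of triples $(E,C,L)$ over $\R$ with invariants satisfying $|a_j|,|a'_j|\le 1$; on a single component $V^{(i)}$, the class of $C$ in $H^1(\R,E)$ is locally (hence globally) constant, and after choosing a real point of $C$ we may identify $C\cong E$ over $\R$, so that the remaining parameter is $L\in\Pic^d(E)$ modulo translation by the finite group $E(\R)/dE(\R)$.

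To produce absolutely bounded representatives, I adapt the explicit formulas of Section~\ref{sec:integralreps} to the real setting. Each such formula writes an element $v\in V(\R)$ as a polynomial in the invariants $a_j,a'_j$ and the affine coordinates $(x_1,y_1)$ of a point $Q\in E(\R)$ representing the degree-zero part of $L$ modulo $dE(\R)$. The invariants are bounded by $1$ on $\{H=1\}$; one must control the choice of $Q$. The identity coset of $dE(\R)$ is represented by $Q=O$, for which one uses the separate ``$L\cong\O(dO)$'' formula that involves only the bounded invariants. For $d=3$ this is the only coset, since the real Lie group $E(\R)$, being isomorphic to $\R/\Z$ or $\R/\Z\times\Z/2\Z$, is always $3$-divisible. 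For $d=2$ the only other coset (which arises only when $E(\R)$ is disconnected) admits a representative at a real $2$-torsion point $(r,0)$, where $r$ is a root of the Weierstrass cubic and hence absolutely bounded, by Cauchy's root bound applied to the bounded Weierstrass coefficients. Plugging these uniformly bounded coordinates into the formulas of Section~\ref{sec:integralreps} produces an absolutely bounded element of $V(\R)$ for each of the finitely many cosets.

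The main technical point is avoiding the unbounded affine identity component of $E(\R)$: naively chosen affine representatives of a coset can drift to infinity as $E$ varies within the compact invariant slice $\{H=1\}$. We sidestep this by always using the point $O$ at infinity to represent the identity coset (via the dedicated ``$L\cong\O(dO)$'' formulas) and by using a $2$-torsion point on the bounded ``egg'' component of $E(\R)$ to represent the at-most-one remaining coset. In each case, the invariant and $(x_1,y_1)$ data feeding the formulas of Section~\ref{sec:integralreps} remain uniformly bounded, so assembling these finitely many bounded families produces the desired absolutely bounded fundamental set on each $V^{(i)}$.
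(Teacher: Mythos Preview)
Your overall architecture matches the paper's proof: reduce via the $\R^\times_{>0}$-action, use the real parametrization to identify $G(\R)$-orbits with cosets in $E(\R)/dE(\R)$, handle the identity coset with the ``$L\cong\O(dO)$'' formulas (and note $3$-divisibility for $d=3$), and for $d=2$ with $E(\R)$ disconnected, feed a point $Q$ on the non-identity component into the ``$L\cong\O(O+Q)$'' formulas of Section~\ref{sec:integralreps}.

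The point of divergence is your choice of $Q$ as a $2$-torsion point ``$(r,0)$'', and here there is a genuine gap. The formulas of Section~\ref{sec:integralreps} are written in the family-specific long Weierstrass coordinates for $F_1$, $F_1(3)$, $F_2$, where a $2$-torsion point has $y_1=-(a_1x_1+a_3)/2$, not $0$; so your Cauchy-bound remark does not by itself control what you actually plug in. More seriously, those formulas either contain $y_1/x_1$ explicitly (Case~3) or produce an element whose invariants are $(2y_1)^{i}a_i$ rather than $a_i$ (Cases~6 and~7), so your height-$1$ reduction forces a rescaling by $1/(2y_1)$. Neither $y_1/x_1$ nor $1/y_1$ is automatically bounded for a $2$-torsion point on the egg: for instance, in $F_1$ with $a_4\neq 0$ fixed and $a_3\to 0$, one egg root $x_1\sim -a_3^2/(4a_4)$ while $y_1=-a_3/2$, giving $y_1/x_1\sim 2a_4/a_3\to\infty$. (One can rescue Case~3 by always taking the egg root of larger absolute value, and Cases~6--7 by abandoning the height-$1$ normalization and using the unscaled bounded element directly as a $G(\R)\times\R^\times_{>0}$-representative --- but neither fix is in your write-up.)

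The paper sidesteps all of this by choosing $Q$ via an explicit algebraic formula in the short-Weierstrass invariants $I,J$, namely
\[
(x_1,y_1)=\Bigl(-\tfrac{\sqrt{I}}{3},\ \tfrac{I^{3/4}\sqrt{2-I^{-3/2}J}}{3\sqrt{3}}\Bigr),
\]
and then checking directly that both $y_1$ and $y_1/x_1=-I^{1/4}\sqrt{2-I^{-3/2}J}/\sqrt{3}$ are bounded on the positive-discriminant locus (since $|I^{-3/2}J|<2$ there). This single uniform choice feeds into the Section~\ref{sec:integralreps} formulas for all four $d=2$ cases without further casework.
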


A point $v \in V^\s(\R)$ is called {\em $\R$-soluble} (resp., {\em $\R$-insoluble}) if the genus one curve $C$ arising from $v$ under the parametrization theorems of \S \ref{sec:parametrizations} has a real point (resp., does not have a real point). Note that when $d = 2$, a point $v$ is $\R$-insoluble if and only if the corresponding binary quartic form is negative definite, and for $d = 3$, all points are $\R$-soluble. In each component $V^{(i)}$, the points $v \in V^{(i)}$ are either all $\R$-soluble or all not, and we need only study the $\R$-soluble components.

Let $m$ denote the number of independent polynomial invariants for the action of $G$ on $V$, i.e., the real dimension of $V(\R)/G(\R)$ or the complex dimension of $V(\C)/G(\C)$.  There exists a $G(\R)$-invariant map $\pi: V^{(i)} \to \R^m$ sending an element to its invariant vector $\vec{a}$, or equivalently, a map $\overline{\pi}: V^{(i)}/G(\R) \to \R^m$ from orbits to invariant vectors.

For invariant vectors $\vec a \in \R^m$, we define several properties, such as height and discriminant, in the same way as for the corresponding elliptic curves $E_{\vec a}$ with coefficients $\vec a$ in the family $F$ as in \S \ref{sec:intro}. For example, let the height of $\vec{a} \in \R^m$ be the maximum of the values $|a_j|^{12/j}$, where $j$ denotes the degree of the invariant $a_j$. Let $\R^m_{H  < 1}$ denote the set of nondegenerate invariant vectors of height at most $1$, where nondegenerate means that the corresponding discriminant form is nonzero (so the vector yields a genuine elliptic curve $E_{\vec a}$). It is clear that $\R^m_{H  < 1}$ is a bounded set in $\R^m$.

In \S \ref{sec:integralreps}, for a given vector $\vec{a} \in \R^m$ of invariants, we gave explicit formulas for points $v(\vec a)\in V^{(i)}$ having $\pi(v(\vec a)) = \vec a$, in other words, sections of $\pi$. Note that these $v(\vec a)$ are all $\R$-soluble by construction. Because $\R^m_{H  < 1}$ is bounded, we would like to use the algebraic formulas for these sections to construct bounded fundamental sets in $V^{(i)}$ for the elements of height at most $1$; the only complication is that the fibers of $\overline{\pi}$ are not always a single orbit.

Recall that in the parametrization theorems of \S \ref{sec:parametrizations}, for an element $v \in V^{(i)}$, the invariants $\pi(v)$ precisely correspond to the coefficients of the elliptic curve $E$ arising from $v$. Thus, given a nondegenerate vector $\vec{a}$ of invariants, the $G(\R)$-orbits in $V(\R)$ with invariant vector $\vec{a}$ correspond to isomorphism classes of pairs $(C,L)$ associated to the elliptic curve $E_{\vec a}$ with coefficients $\vec{a}$. For $d = 2$, there are either one or two $G(\R)$-orbits associated to any nondegenerate $\vec{a}$, depending on whether the elliptic curve $E_{\vec a}$ has one or two real components, respectively; since this condition is locally constant, the nonempty fibers of $\overline{\pi}$ are of constant size (either $1$ or $2$) for each $V^{(i)}$. For $d = 3$, the situation is simpler, since there is always only one $G(\R)$-orbit for each nondegenerate invariant vector $\vec{a}$.

In the cases where $d = 3$, because the nonempty fibers of $\overline{\pi}$ have size $1$, we may directly use the algebraic formulas from \S \ref{sec:integralreps} to find representatives $v(\vec{a}) \in V^{(i)}$ for each orbit corresponding to nondegenerate $\vec{a} \in \R^m_{H  <  1}$. For simplicity, we may use the formulas in part (b) of each case, where $L \cong \O(3O)$; it is clear that applying the formulas (which are polynomial in the invariants) to $\R^m_{H < 1}$ produces a bounded fundamental set. In other words, we may break up $G(\R)\backslash V^\s(\R)$ into $R^{(i)}\subset V^{(i)}$ for
$i\in\{1,\ldots,N\}$, where
\begin{equation}\label{eqrij}
   R^{(i)}:=\{\lambda\cdot v(\vec a):\lambda\in\R_{>0},\;\Delta(\vec a)>0\}
\end{equation}
or
\begin{equation}\label{eqrij2}
   R^{(i)}:=\{\lambda\cdot v(\vec a):\lambda\in\R_{>0},\;\Delta(\vec a)<0\}
\end{equation}
in accordance with whether discriminants are positive or negative on
$V^{(i)}$, respectively.

For $d = 2$, an identical argument constructs the sets $R^{(i)}$ for the components $V^{(i)}$ for which the fibers of $\overline{\pi}$ have size $1$, by using the formulas from \S \ref{sec:integralreps} where $L \cong \O(2O)$. However, for the components $V^{(i)}$ where the fibers have size $2$, we need to find a representative $w(\vec{a}) \in V^{(i)}$ that represents the other orbit in $\overline{\pi}^{-1}(\vec{a})$, for any $\vec{a} \in \R^m_{H < 1}$. In terms of the geometric data, these orbits correspond to elliptic curves $E$ with two real components (and a trivial torsor $C$), where the degree $2$ line bundles $L$ are isomorphic to $\O(O+Q)$ for any point $Q$ on the non-identity component of $E(\R)$. Such elliptic curves over $\R$ are exactly those with positive discriminant and may be written in the form
$$E_{I,J}: y^2 = x^3 - \frac{I}{3} x - \frac{J}{27}$$
with $4I^3 - J^2 > 0$ for $I, J \in \R$. The point $Q$ may be taken to be\footnote{One may derive the formula \eqref{eq:Qforfunddomain} for the point $Q$ from the binary quartic forms in $L^{(1)}$ in \cite[Table 1]{BS}, since those quartics yield genus one curves isomorphic to the elliptic curves $E_{1,J}$ with hyperelliptic map given by the line bundle $\O(O+Q)$ for $I = 1$.}
\begin{equation} \label{eq:Qforfunddomain}
(x_1,y_1) = \left(-\frac{\sqrt{I}}{3}, \frac{I^{3/4} \sqrt{2-I^{-3/2} J}}{3\sqrt{3}}\right).
\end{equation}
Note that the positive discriminant of $E$ implies that $-2 < I^{-3/2} J < 2$, so $Q \in E_{I,J}(\R)$.

Thus, in each of the remaining $V^{(i)}$, for a vector $\vec a \in \R^m_{H  <  1}$ where $\overline{\pi}^{-1}(\vec a)$ consists of two orbits, the elliptic curve $E_{\vec a}$ is isomorphic to an elliptic curve $E_{I,J}$ for some $I, J \in \R$ with $4I^3 - J^2 > 0$. A simple change of variables to put $E_{\vec a}$ into short Weierstrass form yields polynomial formulas for $I$ and $J$ in terms of the $a$-invariants. Then using the formulas in \S \ref{sec:integralreps} with $L \cong \O(O+Q)$ for $Q$ as in \eqref{eq:Qforfunddomain}, we obtain $w(\vec a) \in V^{(i)}$ as desired. It remains to check that this process gives a bounded set of $w(\vec a)$ in $V^{(i)}$; the formulas are algebraic in the $a$-invariants but there are some negative exponents. Observe, however, that the only denominators occur in the definition of $y_1$ in \eqref{eq:Qforfunddomain} and in the expression $y_1/x_1$ in some of the formulas. For the first, as noted above, the term under the squareroot in the formula for $y_1$ is bounded between $0$ and $4$, so the possible values for $y_1$ itself are bounded (since $I$ is also bounded). For the second, observe that $y_1/x_1 = -I^{1/4}\sqrt{2-I^{-3/2} J}/\sqrt{3}$, which is also bounded for similar reasons. Therefore, in these cases, the union of the set of $w(\vec{a})$ and the set of $v(\vec{a})$, for $\vec a \in \R^m_{H < 1}$ is a bounded fundamental set for $G(\R) \times \R^\times$ acting on $V(\R)$. For these components $V^{(i)}$, in analogy with \eqref{eqrij} and \eqref{eqrij2}, we thus have 
\begin{equation*} \label{eqrij3}
R^{(i)} := \{ \lambda \cdot v(\vec a): \lambda \in \R_{> 0}, \Delta(\vec a) > 0 \}\cup \{ \lambda \cdot w(\vec a): \lambda \in \R_{> 0}, \Delta(\vec a) > 0 \}
\end{equation*}
as fundamental sets.

Let $R^{(i)}(X)$ be the set of all elements in $R^{(i)}$ having height less than
$X$. Since $H(\lambda\cdot v)=\lambda^{k}H(v)$, we see that the
coefficients of all the elements in $R^{(i)}(X)$ are bounded by $O(X^{1/k})$. Note also that for any $g\in G(\R)$,
the set $g\cdot R^{(i)}$ is also a fundamental domain for the action of
$G(\R)$ on $R^{(i)}$. Moreover, for any compact set $G_0\subset G(\R)$ and $g\in G_0$,
the coefficients of elements in $g\cdot R^{(i)}$ having
height less than $X$ are bounded by $O(X^{1/k})$, where the implied
constant depends only on $G_0$.

Note that in each component, the groups $E[d](\R)$ and $E(\R)/d E(\R)$ are locally constant, where $E$ refers to the elliptic curve arising from a given point $v$. By the parametrization theorems, the generic stabilizer in $G(\R)$ for an element in $V^{(i)}$ corresponds to $E[d](\R)$.  We denote the cardinality of the generic stabilizer in $G(\R)$ for an element in $V^{(i)}$ by $n_i$.

\subsection{Arithmetic reduction theory: fundamental domains for
$G(\Z)$ acting on $V(\R)$}

Let $\FF$ denote a fundamental domain for the left action of $G(\Z)$ on
$G(\R)$ that is Haar-measurable and contained in a standard Siegel set~\cite[\S2]{BoHa}.
As the group $G$ is a finite index quotient of the product of finitely many
$\SL_2$'s or $\SL_3$'s, we may write $\FF$ naturally as a subset of a
product of $\FF_2$'s or $\FF_3's$, where $\FF_2$ (resp., $\FF_3$) denotes a
fundamental domain for the left action of $\SL_2(\Z)$ on $\SL_2(\R)$
(resp., $\SL_3(\Z)$ on $\SL_3(\R)$).  To arrange $\FF$ to lie in a
Siegel domain, we simply take $\FF_2$ and $\FF_3$
to lie in Siegel domains; explicitly, we may take $\FF_2= \{\nu\alpha
\kappa:\nu(x)\in N'(\alpha),\alpha(s)\in A',\kappa\in K\}$, where
\begin{equation*}\label{nak}
N'(\alpha)= \left\{\left(\begin{array}{cc} 1 & {} \\ {x} & 1 \end{array}\right):
        x\in I(\alpha) \right\}    , \;\;
A' = \left\{\left(\begin{array}{cc} s^{-1} & {} \\ {} & s \end{array}\right):
       s\geq \sqrt3/2 \right\}, \;\;
\end{equation*}
and $K$ is as usual the (compact) real orthogonal group ${\rm SO}_2(\R)$;
here $I(\alpha)$ is a union of one or two subintervals of
$[-\frac12,\frac12]$ depending only on the value of $\alpha\in A'$.  
Similarly, $\FF_3=\{\nu\alpha\kappa:\linebreak \nu(x,x',x'')\in N'(\alpha),\,\alpha(t,u)\in A',\,k\in K\}$, 
where
\begin{eqnarray*}
  &K&\!=\!\;\;\;{\mbox{subgroup $\SO_3(\R)\subset\GL_3^+(\R)$
of orthogonal transformations; }} 
\\[.05in]
  &A'&\!=\!\!\;\;\,\,\{\alpha(t,u):t,u>c\},\\
  &&\;\;\;\;\;\;\;\;{\rm where}\;\alpha(t,u)=\left(\begin{array}{ccc}
{t^{-2}u^{-1}} & {} & {} \\ {} & {tu^{-1}} & {} \\ {} & {} & {tu^2}
\end{array}\right);\\[.025in]
&N'&\!=\!\!\;\;\,\,\{\nu(x,x',x''):(x,x',x'')\in I'(a)\},\;\\
&&\;\;\;\;\;\;\;\;{\rm where}\; n(x,x',x'')=\left(\begin{array}{ccc} 
{1} & {} & {} \\ {x} & {1} & {} \\ {x'} & {x''} & {1}
\end{array}\right);
\end{eqnarray*}
here $I'(a)$ is a measurable subset of $[-1/2,1/2]^3$ dependent only
on $\alpha\in A'$, and $c>0$ is an absolute constant.

When $\FF$ is a subset of a product of multiple $\FF_2's$ or
$\FF_3$'s, then we use subscripts to distinguish the coordinates on
these $\FF_2$'s or $\FF_3$'s.  Thus, for example, in the case
$V=2\otimes 2\otimes2\otimes2$, the coordinates on $G(\Z)\backslash
G(\R)$ will be the $s_i$
($i\in\{1,2,3,4\}$), $x_i$ ($i\in\{1,2,3,4\}$), and the $\kappa_i\in
K_i$ ($i\in\{1,2,3,4\}$).

We may now construct fundamental sets for the action of $G(\Z)$ on
$V(\R)$.  Namely, for any $g\in G(\R)$ we see that $\FF hR^{(i)}$ is
the union of $n_i$ fundamental domains for the action of $G(\Z)$ on
$V^{(i)}$; here, we regard $\FF hR^{(i)}$ as a multiset, where the
multiplicity of a point $x$ in $\FF hR^{(i)}$ is given by the
cardinality of the set $\{g\in\FF\,:\,x\in ghR^{(i)}\}$.  (See
\cite[\S2.1]{BS} for a more detailed explanation.) Thus a
$G(\Z)$-equivalence class $x$ in $V^{(i)}$ is represented in this
multiset $\sigma(x)$ times, where
$\sigma(x)=\#\Stab_{G(\R)}(x)/\#\Stab_{G(\Z)}(x)$. In particular,
$\sigma(x)$ for $x\in V^{(i)}$ is always a number between 1 and $n_i$.

For any $G(\Z)$-invariant set $S\subset V^{(i)}\cap V(\Z)$, let $N(S;X)$
  denote the number of $G(\Z)$-equivalence classes of irreducible
  elements $B\in S$ satisfying $H(B)<X$.
  Then we conclude that, for any $h\in G(\R)$, the product ${n_i}\cdot
N(S;X)$ is exactly equal to the number of irreducible integer
points in $\FF hR^{(i)}$ having height less than $X$,
with the slight caveat that the (relatively rare---see
Lemma~\ref{gzbigstab}) points with $G(\Z)$-stabilizers of
cardinality $r$ ($r>1$) are counted with weight $1/r$.

As mentioned earlier, the main obstacle to counting integer points of
bounded height in a single domain $\FF hR^{(i)}$ is that the relevant
region is not bounded, but rather has cusps going off to infinity.  We
simplify the counting in this cuspidal region by ``thickening'' the
cusp; more precisely, we compute the number of integer points of
bounded height in the region $\FF hR^{(i)}$ by averaging over lots of
such fundamental regions, i.e., by averaging over the domains $\FF
hR^{(i)}$ where $h$ ranges over a certain compact subset $G_0\in
G(\R)$.  The method, which is an extension of the method
of~\cite{dodpf}, is described in Section~7.

\section{Some sufficient conditions for reducibility in $V(\Z)$}

A simple sufficient condition for a binary quartic form
$ax^4+bx^3y+cx^2y^2+dxy^3+ey^4$ to be reducible is that its
coefficient $a$ of $x^4$ is $0$.  Similarly, a ternary cubic form $f(x,y,z)$ is reducible if the coefficients of $x^3, x^2y,$ and $xy^2$ all
simultaneously vanish, or if the coefficients of $x^3, x^2y,$ and $x^2z$ all
simultaneously vanish;  indeed, in both of these cases, we see that $[1:0:0]$ is then a flex of $f$, i.e., both $f$ and the Hessian of $f$ vanish at $[1:0:0]$.

For each of the representations $V$ in Cases 3--7 of Table \ref{table:Invariants}, we now provide some analogous
sufficient conditions which guarantee that
a point in $V(\Q)$ is reducible.  We begin with the space of hypercubes $(b_{ijk\ell})$ 
($i,j,k,\ell\in\{1,2\}$) in $V(\Q)=\Q^2\otimes\Q^2\otimes\Q^2\otimes\Q^2$.  Note that $V(\Q)$ has a natural action by $S_4$ given by permuting the tensor factors.

\begin{lemma}\label{hyperred}
Let $B=(b_{ijk\ell})$ be an element in $V(\Q)=\Q^2\otimes\Q^2\otimes\Q^2\otimes\Q^2$ such that, after a suitable action by an element of $S_4$, all the coordinates in at least one of the following sets vanish:
\begin{itemize}
\item[{\rm (i)}]
$\{b_{1111},b_{1112},b_{1121},b_{1122}\}$
\item[{\rm (ii)}]
$\{b_{1111},b_{1112},b_{1121},b_{1211}\}$
\end{itemize}
Then $B$ is reducible.
\end{lemma}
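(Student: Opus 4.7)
The plan is to exhibit, in each case, a covariant binary quartic of $B$ that has $w_2$ as a rational linear factor. Since the $S_4$-action on $V$ permutes the four tensor factors together with the four associated covariants, reducibility is $S_4$-invariant, and we may assume $B$ itself satisfies (i) or (ii). It then suffices to show that the covariant binary quartic $f_1(w_1,w_2)$ associated to the first $\Q^2$ factor has zero coefficient of $w_1^4$.

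Writing $B = w_1 B_1 + w_2 B_2$ with $B_i = (b_{ijk\ell})_{j,k,\ell} \in \Q^2\otimes\Q^2\otimes\Q^2$ denoting the two $2\times 2\times 2$ slices of $B$ along the first factor, I would identify the $w_1^4$-coefficient of $f_1$ with a nonzero scalar multiple of Cayley's hyperdeterminant $\mathrm{Det}(B_1)$. Indeed, this coefficient equals $f_1(1,0)$, so it depends only on $B_1$ and, by the $G$-covariance of $f_1$, is an $\SL_2^3$-invariant of $B_1$ of degree $4$. The invariant ring of $\SL_2^3$ acting on $\Q^2\otimes\Q^2\otimes\Q^2$ is a polynomial ring in a single generator, namely Cayley's hyperdeterminant $\mathrm{Det}$, in degree $4$; hence the coefficient is a scalar multiple of $\mathrm{Det}(B_1)$, with scalar nonzero because $f_1$ is not divisible by $w_2$ for generic $B$. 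The problem reduces to verifying $\mathrm{Det}(B_1) = 0$ under either hypothesis.

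For this I would use the standard expression of the hyperdeterminant as a discriminant: writing $B_1 = (c_{jk\ell})$, $M = (c_{jk1})_{j,k}$, and $N = (c_{jk2})_{j,k}$, the binary quadratic form $\det(xM+yN) = x^2\det(M) + xyT + y^2\det(N)$ has $T = M_{11}N_{22} + M_{22}N_{11} - M_{12}N_{21} - M_{21}N_{12}$, and
\[
\mathrm{Det}(B_1) = T^2 - 4\det(M)\det(N).
\]
In case (i), the hypothesis $b_{11k\ell}=0$ forces the entire $j=1$ face of $B_1$ to vanish, so both $M$ and $N$ have zero first row; hence $\det(xM+yN) \equiv 0$ and $\mathrm{Det}(B_1) = 0$. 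In case (ii), the hypothesis kills $c_{111} = c_{112} = c_{121} = c_{211}$, so every monomial of $T$ carries one of these as a factor (giving $T = 0$), and $\det(M) = c_{111}c_{221} - c_{121}c_{211} = 0$; hence again $\mathrm{Det}(B_1) = T^2 - 4\det(M)\det(N) = 0$.

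It follows that the coefficient of $w_1^4$ in $f_1$ vanishes in both cases, so $w_2 \mid f_1$, exhibiting a rational linear factor of $f_1$ and proving $B$ is reducible. I anticipate no serious obstacles; the only nontrivial structural input from \cite{BH} is the identification of the leading coefficient of the first covariant binary quartic with the hyperdeterminant of the pencil $w_1 B_1 + w_2 B_2$ up to a nonzero scalar.
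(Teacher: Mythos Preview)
Your proof is correct and follows essentially the same approach as the paper. The paper's one-line proof states that ``$y$ is a linear factor of $\Disc(b_{1ijk}x+b_{2ijk}y)$'', where $\Disc$ denotes Cayley's hyperdeterminant of the $2\times 2\times 2$ cube $b_{1ijk}x+b_{2ijk}y$; this is exactly your covariant $f_1$, and checking that $y$ (your $w_2$) divides it amounts precisely to your verification that $\mathrm{Det}(B_1)=0$ in each case, which you carry out with the explicit discriminant formula where the paper leaves it to the reader.
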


\begin{proof}
In both cases (i) and (ii), one checks that $y$ is a linear factor of $\Disc(b_{1ijk}x+b_{2ijk}y)$, and hence $B$ is reducible.
\end{proof}

Note that the two spaces $V(\Q)=\Q^2\otimes\Sym_3\Q^2$ and $V(\Q)=\Sym^2\Q^2\otimes\Sym^2\Q^2$ may be viewed as the spaces of triply symmetric and doubly doubly symmetric hypercubes $(b_{ijk\ell})$ over $\Q$, respectively.  Thus the reducibility criteria given in Lemma~\ref{hyperred} apply to these spaces too, by viewing each as a subspace of the space $\Q^2\otimes\Q^2\otimes\Q^2\otimes\Q^2$ of hypercubes.  However, there are some cases for these two spaces that are not quite covered by Lemma~\ref{hyperred}, and hence we state lemmas for these two spaces separately below.

\begin{lemma}\label{hyper3red}
Let $B=(b_{ijk\ell})$ be an element in $V(\Q)=\Q^2\otimes\Sym_3\Q^2$ such that all the coordinates in at least one of the following sets vanish:
\begin{itemize}
\item[{\rm (i)}]
$\{b_{1111},b_{1112}\}$
\item[{\rm (ii)}]
$\{b_{1111},b_{2111}\}$
\end{itemize}
Then $B$ is reducible.
\end{lemma}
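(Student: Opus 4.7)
The plan is to mimic the one-line argument of Lemma~\ref{hyperred}: produce a rational linear factor of an appropriate covariant binary quartic form of~$B$. Writing $B=(b_{ijk\ell})\in\Q^2\otimes\Sym_3\Q^2$ as a pair of binary cubic forms indexed by $i\in\{1,2\}$,
\[
f_i(X,Y)=b_{i111}X^3+3b_{i112}X^2Y+3b_{i122}XY^2+b_{i222}Y^3,
\]
the covariant binary quartic associated to the first (non-symmetric) tensor factor is $Q(x,y):=\Disc(xf_1+yf_2)$, a form of degree $4$ in $(x,y)$. By the reducibility criterion recalled in \S\ref{sec:parametrizations}, it suffices to exhibit a rational linear factor of $Q$.

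For case (i), the triple symmetry in the last three indices gives $b_{1112}=b_{1121}=b_{1211}$, so $b_{1111}=b_{1112}=0$ automatically forces $b_{1111}=b_{1112}=b_{1121}=b_{1211}=0$. This is exactly the hypothesis of case (ii) of Lemma~\ref{hyperred} applied to $B$ viewed as an element of $\Q^2\otimes\Q^2\otimes\Q^2\otimes\Q^2$, so the reducibility of $B$ is immediate. This case is therefore pure bookkeeping and requires no new computation.

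For case (ii), no triple-symmetry identification makes the hypothesis match either of the sets in Lemma~\ref{hyperred}, so I would compute $Q$ directly. The vanishing of $b_{1111}$ and $b_{2111}$ means that the coefficient of $X^3$ in every member of the pencil $xf_1+yf_2$ is $xb_{1111}+yb_{2111}=0$. Specializing the binary cubic discriminant formula to leading coefficient zero gives $\Disc(BX^2Y+CXY^2+DY^3)=B^2(C^2-4BD)$, so
\[
Q(x,y)=9(b_{1112}x+b_{2112}y)^2\Bigl(9(b_{1122}x+b_{2122}y)^2-12(b_{1112}x+b_{2112}y)(b_{1222}x+b_{2222}y)\Bigr),
\]
and the rational linear form $b_{1112}x+b_{2112}y$ is a factor of $Q$. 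Hence $B$ is reducible.

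The only real content is this short direct calculation. The single mild thing to verify along the way is that the covariant $Q$ used here agrees with the form $\Disc(b_{1ijk}x+b_{2ijk}y)$ implicit in Lemma~\ref{hyperred} once the hypercube is specialized to be triply symmetric; this holds because Cayley's $2\times2\times2$ hyperdeterminant restricts, up to a nonzero scalar, to the classical discriminant of the binary cubic obtained from a totally symmetric $3$-tensor on $\Q^2$.
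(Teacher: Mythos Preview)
Your proof is correct. Case (i) matches the paper exactly: both reduce to Lemma~\ref{hyperred}(ii) via the triple symmetry $b_{1112}=b_{1121}=b_{1211}$. For case (ii), however, the paper argues differently: rather than factoring the cubic discriminant explicitly, it uses the $G(\Q)$-action on the first tensor factor to replace $b_{1jk\ell}$ by a suitable $\Q$-linear combination of $b_{1jk\ell}$ and $b_{2jk\ell}$ so as to force $b_{1112}=0$ (while $b_{1111}$ remains zero, since $b_{1111}=b_{2111}=0$), thereby reducing to case (i). Your explicit linear factor $b_{1112}x+b_{2112}y$ is precisely the form that this change of basis sends to $y$, so the two arguments are equivalent in content; the paper's version has the minor advantage that it stays entirely within the framework of Lemma~\ref{hyperred} and so does not need the identification of Cayley's hyperdeterminant with the binary-cubic discriminant that you flag at the end.
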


\begin{proof}
In case (i), we see that $y$ is a factor of $\Disc(b_{1ijk}x+b_{2ijk}y)$.  In case (ii), by replacing the cubical matrix $b_{1ijk}$ by a suitable $\Q$-linear combination of $b_{1ijk}$ and $b_{2ijk}$, we may transform $B$ (by an element of $G(\Q)$) so that $b_{1112}$ is zero.  Since $b_{1111}$ will remain zero, we are then in case (i).  Hence $B$ is reducible in either case.
\end{proof}

The space $V(\Q)=\Sym^2\Q^2\otimes\Sym^2\Q^2$ has a natural action by $S_2$, given by permuting the tensor factors.

\begin{lemma}\label{hyper22red}
Let $B=(b_{ijk\ell})$ be an element in $V(\Q)=\Sym^2\Q^2\otimes\Sym^2\Q^2$ such that, after a suitable action by an element of $S_2$, we have $b_{1111} = b_{1112} = 0$.
Then $B$ is reducible.
\end{lemma}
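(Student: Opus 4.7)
The plan is to follow the same strategy as in Lemmas \ref{hyperred} and \ref{hyper3red}: I will exhibit one of the covariant binary quartic forms attached to $B$ and show it has $w_2$ as an explicit rational linear factor. For $B \in \Sym^2\Q^2 \otimes \Sym^2\Q^2$, one natural covariant binary quartic is obtained by viewing $B$ as a binary quadratic form in the $z$-variables whose coefficients are binary quadratic forms in the $w$-variables, and then taking the discriminant of that quadratic form (an element of $\Sym^4\Q^2$, i.e., a binary quartic in $w$).

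First, I will write
\[
B(w,z) \;=\; A(w)\, z_1^2 \;+\; 2B_0(w)\, z_1 z_2 \;+\; C(w)\, z_2^2,
\]
where $A,B_0,C\in\Sym^2\Q^2$ have coefficients given by sums of the $b_{ijk\ell}$. Concretely, the $w_1^2$ coefficient of $A$ is $b_{1111}$, and the $w_1^2$ coefficient of $B_0$ is (up to the symmetry $b_{ij12}=b_{ij21}$) the quantity $b_{1112}$. The resulting covariant binary quartic is $Q(w) = B_0(w)^2 - A(w)\, C(w)$.

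Next comes the key observation. The hypothesis $b_{1111}=b_{1112}=0$ says that the $w_1^2$ coefficients of both $A(w)$ and $B_0(w)$ vanish, so $A(w)$ and $B_0(w)$ are each divisible by $w_2$. Therefore $B_0(w)^2$ is divisible by $w_2^2$ and $A(w)C(w)$ is divisible by $w_2$, whence $w_2 \mid Q(w)$. This means $w_2$ is a rational linear factor of the covariant binary quartic $Q$, and by the definition of irreducibility recalled in \S\ref{sec:parametrizations}, $B$ is reducible.

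There is no real obstacle here; the statement is purely formal once the covariant binary quartic is correctly identified. The only care required is to fix conventions (namely that the correct covariant is indeed the discriminant of $B$ viewed as a quadratic in $z$ with quadratic-in-$w$ coefficients, and likewise for the $S_2$-swapped version)—this matches the pattern already used in the proofs of Lemmas \ref{hyperred} and \ref{hyper3red}, where the analogous covariant is a discriminant. Once that identification is made, everything reduces to reading off which coefficients of $A$ and $B_0$ must vanish, and the divisibility of $Q(w)$ by $w_2$ is immediate.
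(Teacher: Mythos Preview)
Your argument is correct and follows essentially the same approach as the paper: both show that one of the covariant binary quartics of $B$ has $w_2$ as a rational linear factor. The paper's one-line proof simply asserts that ``the covariant binary quartics have no quartic or cubic terms and thus are reducible,'' while you carry out the computation explicitly by writing $Q(w)=B_0(w)^2-A(w)C(w)$ and observing that $b_{1111}=b_{1112}=0$ forces $w_2\mid A$ and $w_2\mid B_0$, hence $w_2\mid Q$.
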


\begin{proof}
The covariant binary quartics have no quartic or cubic terms and thus are reducible.
\end{proof}

We next turn to the space of Rubik's cubes $(b_{ijk})$ 
($i,j,k\in\{1,2,3\}$) in $V(\Q)=\Q^3\otimes\Q^3\otimes\Q^3$.  The space $V(\Q)$ has a natural action by $S_3$, again given by permuting the tensor factors.

\begin{lemma}\label{rubikred}
Let $B=(b_{ijk})$ be an element in $V(\Q)=\Q^3\otimes\Q^3\otimes\Q^3$ such that, after a suitable action by an element of $S_3$, all the coordinates in at least one of the following sets vanish:
\begin{itemize}
\item[{\rm (i)}]
$\{b_{111},b_{112},b_{113},b_{121},b_{122},b_{123}\}$
\item[{\rm (ii)}]
$\{b_{111},b_{112},b_{113},b_{121},b_{122},b_{131},b_{211},b_{212},b_{221}\}$
\item[{\rm (iii)}]
$\{b_{111},b_{112},b_{113},b_{121},b_{131},b_{211},b_{311}\}$
\item[{\rm (iv)}]
$\{b_{111},b_{112},b_{121},b_{122},b_{211},b_{212},b_{221},b_{222}\}$
\end{itemize}
Then $B$ is reducible.
\end{lemma}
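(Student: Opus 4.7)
The plan is, in each of the four cases, to exhibit one of the three covariant ternary cubic forms $f_1,f_2,f_3$ of $B$ that either has a rational linear factor (and hence defines a singular cubic curve) or admits $[1:0:0]$ as a rational flex, making $B$ reducible by the criterion of Section~\ref{sec:parametrizations}. Writing $M^{(\ell)}_i$ for the $i$-th slice of $B$ along the $\ell$-th tensor factor and $f_\ell(x_1,x_2,x_3)=\det\bigl(x_1M^{(\ell)}_1+x_2M^{(\ell)}_2+x_3M^{(\ell)}_3\bigr)=\sum c_{abc}x_1^ax_2^bx_3^c$, I will use the fact (from the preliminary remark preceding Lemma~\ref{hyperred}) that $[1:0:0]$ is a flex of $f_\ell$ whenever either $(c_{300},c_{210},c_{201})=(0,0,0)$ or $(c_{300},c_{210},c_{120})=(0,0,0)$, since the Hessian matrix of $f_\ell$ at $[1:0:0]$ is then singular.

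The verification in each case will be a combinatorial check: by multilinearity of the determinant, each $c_{abc}$ of $f_\ell$ is a sum of $3\times 3$ row-determinants obtained by distributing the three row-slots with multiplicities $a,b,c$ among the matrices $M^{(\ell)}_1,M^{(\ell)}_2,M^{(\ell)}_3$, and any such determinant vanishes whenever its matrix has a zero row, a zero column, or two rows in a common one-dimensional subspace. In Case~(i), the hypothesis makes the first two rows of $M^{(1)}_1$ identically zero; equivalently, for the slice matrices $N^{(2)}_j$ defining $f_2$, the first rows of both $N^{(2)}_1$ and $N^{(2)}_2$ vanish, so $y_3\mid f_2$. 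In Case~(iv), the hypothesis forces the top-left $2\times 2$ block of both $M^{(1)}_1$ and $M^{(1)}_2$ to vanish, so setting $x_3=0$ in the defining matrix of $f_1$ yields a matrix with a zero top-left $2\times 2$ block and hence zero determinant; thus $x_3\mid f_1$.

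In Case~(iii), where $b_{ijk}=0$ whenever at least two of $i,j,k$ equal $1$, I will observe that $M^{(1)}_1$ has zero first row and zero first column, while $M^{(1)}_2$ and $M^{(1)}_3$ each have zero $(1,1)$-entry. Every row-determinant summand contributing to $c_{300},c_{210},c_{201}$ of $f_1$ is therefore either trivially zero (containing the first row of $M^{(1)}_1$) or has zero first column, so $[1:0:0]$ is a flex of $f_1$ via the first criterion. Case~(ii) is analogous: $M^{(1)}_1$ has zero first row and zero first column, $M^{(1)}_2$ has $(1,1),(1,2),(2,1)$-entries all equal to zero, and a row-by-row check---using additionally that in one of the summands for $c_{120}$ the two rows coming from $M^{(1)}_1$ and $M^{(1)}_2$ both lie in the span of the last standard basis vector---shows $c_{300}=c_{210}=c_{120}=0$ in $f_1$, yielding the flex by the second criterion.

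The main obstacle is purely bookkeeping: choosing the appropriate covariant $f_\ell$ in each of the four cases and verifying vanishing for each of the at most three row-determinants making up each relevant $c_{abc}$. No technique is needed beyond the two flex criteria recalled above and the elementary linear-algebra observation that a $3\times 3$ determinant vanishes when it has a zero row, a zero column, or two rows in a common one-dimensional subspace.
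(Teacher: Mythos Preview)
Your proof is correct and follows essentially the same route as the paper. The paper's proof is terser still: it uses only the single covariant $f_1(x,y,z)=\det(b_{1jk}x+b_{2jk}y+b_{3jk}z)$ in all four cases and simply asserts that the curve $f_1=0$ is either not smooth or has a flex at $(1:0:0)$, leaving the coefficient checks implicit. Your switch to $f_2$ in Case~(i) is unnecessary (for $f_1$ one already has $c_{300}=c_{210}=c_{201}=0$ there, since $M^{(1)}_1$ has two zero rows), but it is harmless and your more explicit bookkeeping is a welcome expansion of the paper's one-line argument.
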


\begin{proof}
In all cases, we see that the curve in $\P^2$ defined by $\det(b_{1ij}x+b_{2ij}y+b_{3ij}z)=0$ is not smooth or has a flex at the point $(1:0:0)$.  Hence $B$ is reducible in all cases.
\end{proof}

Finally, the space $V(\Q)=\Q^3\otimes\Sym^2\Q^3$ may be viewed as the space $(b_{ijk})$ of doubly symmetric  Rubik's cubes over $\Q$, and thus the reducibility criteria in Lemma~\ref{rubikred} apply also to this space.  However, again, there is a case that we will need that is not quite covered by Lemma~\ref{rubikred}, and so we state the corresponding lemma for $\Q^3\otimes\Sym^2\Q^3$ separately.

\begin{lemma}\label{rubik2red}
Let $B=(b_{ijk})$ be an element in $V(\Q)=\Q^3\otimes\Sym_2\Q^3$ such that all the coordinates in at least one of the following sets vanish:
\begin{itemize}
\item[{\rm (i)}]
$\{b_{111},b_{112},b_{113},b_{122},b_{123}\}$
\item[{\rm (ii)}]
$\{b_{111},b_{112},b_{113},b_{122},b_{211},b_{212}\}$
\item[{\rm (iii)}]
$\{b_{111},b_{112},b_{113},b_{211},b_{212},b_{213}\}$
\item[{\rm (iv)}]
$\{b_{111},b_{112},b_{122},b_{211},b_{212},b_{222}\}$
\item[{\rm (v)}]
$\{b_{111},b_{211},b_{311}\}$
\end{itemize}
Then $B$ is reducible.
\end{lemma}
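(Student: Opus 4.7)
The plan is to exhibit, in each of the five cases, a covariant ternary cubic for which $[1:0:0]$ is either a singular point or a flex; by the reducibility criteria recalled at the start of this section, this forces $B$ to be reducible. A doubly symmetric Rubik's cube $B=(b_{ijk})$ with $b_{ijk}=b_{ikj}$ has two natural such covariants: the discriminant cubic of the associated net of conics
\[
f_1(x,y,z)=\det(xA_1+yA_2+zA_3), \qquad (A_i)_{jk}=b_{ijk},
\]
and its Jacobian cubic
\[
f_2(x,y,z)=\det(xB_1+yB_2+zB_3), \qquad (B_j)_{ik}=b_{ijk}.
\]

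For cases (i)--(iv) I would use $f_1$. In case~(i), the hypothesis and the $2$--$3$ symmetry leave at most the $(3,3)$-entry of $A_1$ nonzero, so $A_1$ has rank at most $1$ and $\mathrm{adj}(A_1)=0$; the coefficients of $x^3$, $x^2y$, and $x^2z$ in $f_1$---namely $\det(A_1)$, $\mathrm{tr}(\mathrm{adj}(A_1)A_2)$, $\mathrm{tr}(\mathrm{adj}(A_1)A_3)$---therefore all vanish, so $[1:0:0]$ is a singular point of $f_1$. In case~(iii) the $2$--$3$ symmetry forces both $A_1$ and $A_2$ to have a vanishing first row and column, so the first row of $xA_1+yA_2+zA_3$ is $z$ times that of $A_3$, and cofactor expansion along this row yields $z\mid f_1$. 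In case~(iv) the top-left $2\times 2$ blocks of both $A_1$ and $A_2$ vanish; a direct expansion of $\det(xA_1+yA_2+zA_3)$ then shows that every term carries a factor of $z$, so again $z\mid f_1$. In case~(ii), where only $A_1$ has vanishing first row and column, one computes directly that the coefficients of $x^3$, $x^2y$, and $xy^2$ in $f_1$ all vanish (using, respectively, that $\det A_1=0$, that $b_{211}=0$, and that the adjugate of $A_2$ has vanishing $(2,3)$, $(3,2)$, and $(3,3)$ entries), so $[1:0:0]$ is a flex of $f_1$.

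Case~(v) is the one requiring the second covariant $f_2$. The hypothesis $b_{111}=b_{211}=b_{311}=0$ says exactly that the first column of $B_1$ is zero, so $\det(B_1)=0$ and the $x^3$-coefficient of $f_2$ vanishes. Expanding by multilinearity in the first column, the $x^2y$-coefficient of $f_2$ reduces to $\det\bigl((B_2)_{\cdot,1},(B_1)_{\cdot,2},(B_1)_{\cdot,3}\bigr)$, since the other two multilinear contributions each carry the zero column $(B_1)_{\cdot,1}$. The $2$--$3$ symmetry yields $(B_j)_{i,k}=b_{ijk}=b_{ikj}=(B_k)_{i,j}$, so $(B_2)_{\cdot,1}=(B_1)_{\cdot,2}$ and the determinant has two equal columns; the same identity with $(B_3)_{\cdot,1}=(B_1)_{\cdot,3}$ kills the $x^2z$-coefficient. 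Thus $[1:0:0]$ is a singular point of $f_2$, and $B$ is reducible.

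The main obstacle is case~(v): the first covariant $f_1$ is nondegenerate there, and the key point is to pass to the Jacobian cubic $f_2$ and exploit the identification of the first columns of $B_2$ and $B_3$ with the second and third columns of $B_1$ coming from the $2$--$3$ symmetry. The remaining cases are routine linear-algebra checks once the structural feature---a zero row or column, or a low-rank or zero block in the slice matrices $A_i$---is spotted.
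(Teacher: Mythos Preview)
Your argument is correct. For cases (i)--(iv) you proceed exactly as the paper does: both of you show that the first covariant cubic $f_1(x,y,z)=\det(xA_1+yA_2+zA_3)$ either fails to be smooth or has a flex at $[1:0:0]$, and your case-by-case verification simply fleshes out what the paper asserts in one sentence.

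Case (v) is where you and the paper diverge. The paper does not touch the second covariant at all; instead it observes that since $b_{111}=b_{211}=b_{311}=0$, the three first rows $(0,b_{i12},b_{i13})$ of $A_1,A_2,A_3$ span at most a $2$-dimensional space, so some nonzero $\Q$-linear combination of the $A_i$ has vanishing first row. Taking this combination as the new $A_1$ (via an element of $G(\Q)$ acting on the first factor) produces an equivalent $B'$ with $b'_{111}=b'_{112}=b'_{113}=0$ while $b'_{211}=b'_{311}=0$ persist; by the $2$--$3$ symmetry this is exactly case~(iii) of Lemma~\ref{rubikred}, already proved. Your route is more self-contained: you pass to the second covariant $f_2$ (slicing along the symmetric index) and use the identity $(B_2)_{\cdot,1}=(B_1)_{\cdot,2}$, $(B_3)_{\cdot,1}=(B_1)_{\cdot,3}$ coming from the symmetry to exhibit $[1:0:0]$ as a singular point of $f_2$ directly. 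This avoids the transformation step and the implicit rank argument needed to justify it, at the cost of invoking the second covariant; the paper's approach keeps everything in terms of $f_1$ and recycles the earlier lemma.
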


\begin{proof}
In cases (i)--(iv), we see that the curve in $\P^2$ defined by
$\det(b_{1ij}x+b_{2ij}y+b_{3ij}z)=0$ is not smooth or has a flex at
the point $(1:0:0)$.  In case (v), by replacing the matrix $b_{1ij}$
by a suitable $\Q$-linear combination of $b_{1ij}$, $b_{2ij}$, and
$b_{3ij}$, we may transform $B$ (by an element of $G(\Q)$) so that
$b_{112}$ and $b_{113}$ are zero.  Since $b_{111}$ will remain zero,
we are then in case (iii) of Lemma~\ref{rubikred}.  Hence $B$ is
reducible in all cases (i)--(v).
\end{proof}

\section{Counting irreducible elements of bounded height}\label{countsec}

In this section, we derive asymptotics for the number of
$G(\Z)$-equivalence classes of irreducible elements of $V(\Z)$ having
bounded invariants.  We also describe how these asymptotics change
when we restrict to counting elements in $V(\Z)$ satisfying a finite
set of congruence conditions.

Let $V^{(i)}$ ($i\in\{1,\ldots,N\}$) denote again the components of $V^\s(\R)$,
and let 
\[c_{i} = \frac{\Vol(\FF R^{(i)}\cap \{v\in V(\R):H(v)<1\})}{n_i}.\]
Then in this section we prove the following theorem:

\begin{theorem}\label{thmcount}
 Fix $i\in\{1,\ldots,N\}$.  
 For any $G(\Z)$-invariant set $S\subset V(\Z)^{(i)}:=V(\Z)\cap V^{(i)}$, let $N(S;X)$
  denote the number of $G(\Z)$-equivalence classes of irreducible
  elements $B\in S$ satisfying $H(B)<X$. Then
$$N(V(\Z)^{(i)};X)=c_{i}X^{n/k}+o(X^{n/k}).$$
\end{theorem}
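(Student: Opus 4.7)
The plan is to follow the averaging-plus-Davenport strategy pioneered in \cite{dodpf} and developed further in \cite{BS}. Fix a compact subset $G_0\subset G(\R)$ of positive Haar measure. For any $h\in G_0$, the multiset $\mathcal{F}hR^{(i)}$ is a union of $n_i$ fundamental domains for the action of $G(\Z)$ on $V^{(i)}$, so averaging over $h$ yields
\[
n_i\cdot N(V(\Z)^{(i)};X)\;=\;\frac{1}{\int_{G_0}dh}\int_{G_0}\#\bigl\{v\in V(\Z)\cap\mathcal{F}hR^{(i)}:H(v)<X,\;v\text{ irreducible}\bigr\}\,dh,
\]
up to a negligible error coming from the sparse set of points with nontrivial $G(\Z)$-stabilizer (cf.\ Lemma~\ref{gzbigstab}). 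Interchanging the order of integration and summation converts the right-hand side into an integral over the ``thickened'' region $\mathcal{F}G_0R^{(i)}\cap\{H<X\}$, weighted by a density function that is bounded above and below by absolute positive constants.

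I then use the Iwasawa parametrization to stratify the Siegel set $\mathcal{F}$ by its torus coordinates (the $s$-coordinates on each $\SL_2$ factor and the $(t,u)$-coordinates on each $\SL_3$ factor). Fix a parameter $T>1$ and decompose $\mathcal{F}=\mathcal{F}_{\rm main}(T)\sqcup\mathcal{F}_{\rm cusp}(T)$, where $\mathcal{F}_{\rm main}(T)$ is the compact subset on which every torus coordinate is at most $T$. On the main body, a standard application of Davenport's lattice-point lemma (as in \cite[Prop.~2.13]{dodpf}) shows that the number of integer points in $\mathcal{F}_{\rm main}(T)\cdot G_0\cdot R^{(i)}\cap\{H<X\}$ is asymptotic to its Euclidean volume, with an error of strictly smaller order than $X^{n/k}$. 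Since $\mathcal{F}R^{(i)}(X)$ has total Euclidean volume $n_ic_i X^{n/k}$ by definition of $c_i$, letting $T\to\infty$ after $X\to\infty$ yields a main-body contribution of $n_ic_iX^{n/k}+o(X^{n/k})$.

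The principal obstacle is the cuspidal region $\mathcal{F}_{\rm cusp}(T)$. In each cuspidal stratum, some subset of the torus coordinates tends to infinity, causing a specific collection of coordinates of $v\in V(\R)$ (in the standard tensor basis) to become very small and the complementary ones to grow. If all of the ``small'' coordinates are bounded by $1$ in absolute value, then an integer point $v$ must have those coordinates equal to zero. The reducibility criteria of Lemmas~\ref{hyperred}--\ref{rubik2red} are tailored so that, in every cuspidal direction, the set of coordinates forced to vanish contains one of the listed vanishing configurations; consequently every integer point in that stratum with all small coordinates less than $1$ in absolute value is reducible. Irreducible integer points in the cusp must therefore have at least one nonzero coordinate in a small slot, and the usual slicing argument (replacing that axis by its discrete count, which is bounded by $\max(1,\text{width})$) then shows that the contribution of irreducible integer points in the cusp is $o(X^{n/k})$.

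Combining the main-body asymptotic with the negligible cuspidal count produces the desired estimate $N(V(\Z)^{(i)};X)=c_iX^{n/k}+o(X^{n/k})$. The hardest part of the argument is this cusp analysis: for each of Cases 3--7 of Table~\ref{table:Invariants} one must verify, direction-by-direction in the Siegel torus, that the small slots really do contain a vanishing set appearing in Lemmas~\ref{hyperred}--\ref{rubik2red}, and extract the reducibility conclusion even along those directions where very few coordinates are forced to vanish. This matching of the polytope of Siegel inequalities with the poset of tensor-coordinate weights is routine but must be carried out case by case.
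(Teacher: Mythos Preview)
Your overall strategy matches the paper's---averaging over a compact $G_0$, applying Davenport on a bounded main body, and handling the cusp via the reducibility lemmas plus a case analysis.  However, there is one genuine gap.  On the main body you estimate the number of \emph{all} lattice points by the Euclidean volume via Davenport, but $N(V(\Z)^{(i)};X)$ counts only \emph{irreducible} orbits.  Davenport's lemma cannot distinguish irreducible from reducible points (irreducibility is a $\Q$-rationality condition on the covariant forms, not a congruence condition), so your main-body volume estimate yields only an upper bound for the irreducible count.  For the matching lower bound you must separately prove that the number of \emph{reducible} integer points with $b_{\min}\neq 0$ is $o(X^{n/k})$.  The paper carries this out in Proposition~\ref{hard2} by an entirely different mechanism: a reducible element stays reducible modulo every prime $p$, one establishes $\mu_p(S^{\rm red}_p)\le 1-\delta+O(p^{-1/2})$ for the density of elements reducible mod $p$ (computed case by case for each of the seven representations using the parametrization and the Hasse bound), and then one lets the product over $p<Y$ tend to zero.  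This step is essential and is absent from your proposal.

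Your account of the cusp analysis is also imprecise.  It is not true that ``in every cuspidal direction'' the forced vanishing pattern already matches one of the configurations in Lemmas~\ref{hyperred}--\ref{rubik2red}; in many shallow strata (e.g.\ Case~1 of Table~2(a), where only $b_{1111}=0$) the vanishing is too mild to force reducibility.  The paper instead bounds the number of (possibly irreducible) integer points in each such stratum directly by a volume computation, terminating the tree of subcases either when a reducibility pattern from the lemmas is reached \emph{or} when the volume integral (possibly after multiplying the integrand by an auxiliary product $\pi$ of nonzero small-weight coordinates, as in~(\ref{estv2s})--(\ref{estv2s2})) already yields $O_\varepsilon(X^{(n-1)/k+\varepsilon})$.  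Your ``slicing'' remark gestures toward this but does not capture the actual mechanism.
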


\subsection{Averaging over fundamental domains}\label{avgsec}

Let $G_0$ be a compact, semialgebraic, left $K$-invariant set in
$G(\R)$ that is the closure of a nonempty open set and in which every
element has determinant greater than or equal to $1$. Let $V(\Z)^\irr$
denote the subset of elements of $V(\Z)$ that are irreducible.  Then
for any $i\in\{1,\ldots,N\}$, we may write
\begin{equation*}
N(V(\Z)^{(i)};X)=\frac{\int_{h\in G_0}\#\{x\in \FF hR\cap V(\Z)^{\irr}:
  H(x)<X\}dh\;}{n_i\cdot \int_{h\in G_0}dh},
\end{equation*}
where $V(\Z)^\irr$ denotes the set of irreducible elements in
$V(\Z)^\irr$ and $R$ is equal to $R^{(i)}$.  The denominator of the
latter expression is an absolute constant $C_{G_0}^{(i)}$ greater than
zero.

More generally, for any $G(\Z)$-invariant subset $S \subset
V(\Z)^{(i)}$, let $N(S;X)$ denote the number of irreducible
$G(\Z)$-orbits in $S$ having height less than $X$. Let $S^{\irr}$
denote the subset of irreducible points of $S$. Then $N(S;X)$ can be
similarly expressed as

\begin{equation}\label{eq9}
N(S;X)=\frac{\int_{h\in G_0}\#\{x\in \FF hR\cap S^{\irr}: H(x)<X\}dh\;}{C_{G_0}^{(i)}}.
\end{equation}
We use (\ref{eq9}) to define $N(S;X)$ even for
sets $S\subset V(\Z)$ that are not necessarily $G(\Z)$-invariant.

As in \cite[Thm.\ 2.5]{BS}, we may write $N(S;X)$ alternatively as
\begin{equation*}
N(S;X) =  \frac1{C_{G_0}^{(i)}}\int_{g\in N'(\alpha)A' K}
\#\{x\in S^\irr\cap \nu \alpha\kappa G_0R:H(x)<X\}\,dg
\end{equation*}
where $dg$ is a Haar measure on $G(\R)$.  Explicitly, if we write $G$
as a finite quotient of $\prod_i\SL_{2\mbox{ or }3}$, where the $i$-th
factor of $\SL$ has Iwasawa decomposition $N_iA_iK_i$, then we have
\begin{equation*}
dg \,=\,  \prod_i s_i^{-2}\,du_i\, d^\times s_i \,d\kappa_i \;\mbox{ or }\; 
\prod_i t_i^{-6}u_i^{-6}\,d\nu_i \,d^\times t_i\, d^\times u_i\,
d\kappa_i 
\,,\end{equation*} respectively,
where $d\nu_i$ and $d\kappa_i$ are invariant measures on $N_i$ and $K_i$, respectively.  
We normalize the invariant measure $d\kappa_i$ on $K_i$ so that $\int_{K_i} d\kappa_i=1.$

Let us write $E(\nu,\alpha,X) = \nu \alpha G_0R\cap\{x\in
V^{(i)}:H(x)<X\}$, again viewed as a multiset.  
As $KG_0=G_0$ and $\int_K dk =1 $, we have
\begin{equation}\label{avg}
N(S;X) = \frac1{C_{G_0}^{(i)}}\int_{g\in N'(a)A'}                              
\#\{x\in S^\irr\cap E(\nu,\alpha,X)\}\,dg.
\end{equation}

We note that the same counting method may be used even if we are interested in counting both
reducible and irreducible orbits in $V(\Z)$. For any set $S\subset V^{(i)}$, let $N^\ast(S;X)$ be defined by (\ref{avg}), but
where the superscript ``irr'' is removed:
\begin{equation}\label{avg2}
N^\ast(S;X) = \frac1{C_{G_0}^{(i)}}\int_{g\in N'(s)A'}                              
\#\{x\in S\cap E(\nu,\alpha,X)\}\,dg.
\end{equation}
 Thus for a $H(\Z)$-invariant set $S\subset V^{(i)}$, $N^\ast(S;X)$ counts 
the total (weighted) number of $H(\Z)$-orbits in $S$ having height less than $X$ (not just the irreducible ones). 

The expression (\ref{avg}) for $N(S;X)$, and its analogue (\ref{avg2}) for $N^\ast(S,X)$, will be useful to us in the sections
that follow.

\subsection{An estimate from the geometry of numbers}

To estimate the number of lattice points in the multiset $E(\nu,\alpha,X)$, we
use the following result due to Davenport~\cite{Davenport1}.

\begin{proposition}\label{davlem}
  Let $\RR$ be a bounded semialgebraic multiset in $\R^n$
  having maximum multiplicity $m$ and defined by at most $k$
  polynomial inequalities each having degree at most $\ell$.  Let $\RR'$
  denote the image of $\RR$ under any $($upper or lower$)$ triangular
  unipotent transformation of $\R^n$.  Then the number of integral
  lattice points, counted with multiplicity, contained in the
  region $\mathcal R'$ is
\[\Vol(\mathcal R)+ O(\max\{\Vol(\bar{\mathcal R}),1\}),\]
where $\Vol(\bar{\mathcal R})$ denotes the greatest $d$-dimensional 
volume of any projection of $\mathcal R$ onto a coordinate subspace
obtained by equating $n-d$ coordinates to zero, for any $d$ between
$1$ and $n-1$.  The implied constant in the second summand depends
only on $n$, $m$, $k$, and $\ell$.
\end{proposition}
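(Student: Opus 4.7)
My plan is to follow Davenport's original argument \cite{Davenport1}, proceeding by induction on the dimension $n$ and exploiting the fact that semialgebraic sets defined by polynomial inequalities of bounded complexity have a bounded number of connected components when sliced by any line.

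First, I would reduce to the case where the triangular unipotent transformation is the identity. Say the transformation is lower triangular: $x_i \mapsto x_i + \sum_{j<i} u_{ij} x_j$. This transformation preserves $n$-dimensional volume, and, crucially, it preserves the lattice $\mathbb{Z}^n$ in the sense that counting integer points in $\mathcal{R}'$ amounts to counting, for each fixed integer choice of $x_1,\ldots,x_{n-1}$, the number of integers $x_n$ in a translated slice of $\mathcal{R}$. Thus the unipotent change of variables only affects how the counting is iterated and does not change the main term or, after careful bookkeeping, the shape of the error term.

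The induction then proceeds as follows. For the base case $n=1$, a bounded semialgebraic subset of $\mathbb{R}$ cut out by at most $k$ polynomial inequalities of degree $\leq \ell$ is a union of at most $O_{k,\ell}(1)$ intervals (by the standard fact that the sign of a polynomial of degree $\ell$ changes at most $\ell$ times), so the number of integer points equals the total length plus $O_{k,\ell,m}(1)$. For the inductive step, fix an integer point $(x_1,\ldots,x_{n-1}) \in \mathbb{Z}^{n-1}$ in the projection of $\mathcal{R}$ onto the first $n-1$ coordinates, and count integer points in the one-dimensional fiber by the base case; the fiber contributes length plus $O(1)$. Summing the ``length'' contributions and applying the inductive hypothesis in dimension $n-1$ to handle the ``plus $O(1)$'' terms (i.e., counting the integer points in the projection itself) yields $\text{Vol}(\mathcal{R})$ as the main term, together with an error bounded by the volume of the projection onto $\{x_n = 0\}$, which inductively is bounded by the stated maximum over all coordinate projections.

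The main technical obstacle is controlling the error term uniformly so that it depends only on $n$, $m$, $k$, and $\ell$, and not on the specific coefficients of the defining polynomials. This is handled by observing that projections of semialgebraic sets defined by at most $k$ polynomial inequalities of degree $\leq \ell$ are themselves semialgebraic with complexity bounded in terms of $n$, $k$, and $\ell$ (by quantifier elimination, or more concretely by Tarski--Seidenberg type bounds), so the inductive hypothesis applies uniformly. The multiplicity parameter $m$ enters only through the base case, where a one-dimensional slice of a multiset of multiplicity at most $m$ consists of at most $O_{k,\ell,m}(1)$ intervals (each counted with multiplicity $\leq m$), and the extra factor of $m$ is absorbed into the implied constant. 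The use of ``$\max\{\text{Vol}(\bar{\mathcal R}),1\}$'' rather than $\text{Vol}(\bar{\mathcal R})$ itself accommodates the case where the projections have very small volume, ensuring the stated $O(1)$ error remains valid even when $\mathcal{R}$ itself is of bounded size.
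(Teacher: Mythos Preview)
The paper does not prove this proposition itself but simply cites Davenport~\cite{Davenport1}, remarking that his argument extends without significant change to bounded semialgebraic multisets and to their images under unipotent triangular transformations. Your sketch correctly reproduces Davenport's inductive slicing argument and is consistent with what the paper invokes.
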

Although Davenport states Lemma \ref{davlem} only for compact
semialgebraic sets, his proof adapts without
significant change to the more general case of a bounded semialgebraic
{\em multiset} $\mathcal R\subset\R^n$, with the same estimate applying also to
any image $\mathcal R'$ of $\mathcal R$ under a unipotent triangular
transformation.

\subsection{Cutting off the cusps}

The following proposition shows that the number of points in $B\in\FF
hR^{(i)}\cap V(\Z)$ having bounded height, where the coordinate of
lowest weight (namely, $b_{1111}$ or $b_{111}$) vanishes, is
negligible.

\begin{proposition}\label{hard}
Let $h$ take a random value in $G_0$ uniformly with respect to the Haar measure
$dg$.  Then the expected number of 
irreducible elements $B\in\FF h R^{(i)}\cap V(\Z)$ such that 
$H(B)< X$  and $b_{1111}=0$ $($resp., $b_{111}=0)$ 
is $O_\varepsilon(X^{(n-1)/k+\varepsilon})$.
\end{proposition}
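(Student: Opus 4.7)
The plan is to apply the averaging formula (\ref{avg}) to $S = \{B \in V(\Z)^{(i)} : b_{1111}(B) = 0\}$ (or the analogous set with $b_{111}(B) = 0$ for Cases~$4$ and~$5$), so that the expected count in the statement equals, up to the bounded factor $n_i \cdot \mathrm{vol}(G_0)$, the quantity
\[
N(S;X) \,=\, \frac{1}{C_{G_0}^{(i)}} \int_{g \in N'(\alpha) A'} \#\bigl\{x \in S^{\irr} \cap E(\nu, \alpha, X) \bigr\} \, dg.
\]
This reduces the proposition to bounding lattice-point counts in the codimension-one slice $\{b_{1111} = 0\}$ (resp.\ $\{b_{111}=0\}$) of each region $E(\nu, \alpha, X)$, averaged over the Siegel set $N'(\alpha) A'$.

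First I would compute, for each coordinate $b_{ijkl}$ of $V$, its weight $\lambda_{ijkl}(\alpha)$ under the diagonal torus in $A'$, so that any $x \in E(\nu, \alpha, X)$ satisfies $|b_{ijkl}(x)| = O(X^{1/k} \lambda_{ijkl}(\alpha))$. In every case $b_{1111}$ (resp.\ $b_{111}$) carries the uniquely smallest weight, so it is the coordinate most easily forced to vanish on integer points in the cusp. I would then stratify $A'$ according to the set
\[
T(\alpha) \,:=\, \bigl\{(i,j,k,l) : X^{1/k} \lambda_{ijkl}(\alpha) \geq 1 \bigr\}
\]
of coordinates that can take nonzero integer values in $E(\nu,\alpha,X)$; its complement $T(\alpha)^c$ is the set of coordinates forced to be zero for any integer point.

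On strata where $T(\alpha)^c$ contains one of the vanishing configurations in Lemmas~\ref{hyperred}, \ref{hyper3red}, \ref{hyper22red}, \ref{rubikred}, or~\ref{rubik2red} (depending on the representation), every integer point in $E(\nu, \alpha, X)$ is automatically reducible, contributing nothing to $S^{\irr}$. For the remaining strata, the requirement that $T(\alpha)^c$ avoid all such configurations forces $\alpha$ into a bounded subregion of $A'$. On this bounded subregion I would apply Davenport's Proposition~\ref{davlem} to the slice $\{b_{1111} = 0\} \cap E(\nu, \alpha, X)$: its volume is bounded by a constant times $X^{(n-1)/k} \prod_{(i,j,k,l) \neq (1,1,1,1)} \lambda_{ijkl}(\alpha)$, and since $G$ is unimodular the product of all weights $\prod \lambda_{ijkl}(\alpha)$ equals $1$, so this volume is $O(X^{(n-1)/k} / \lambda_{1111}(\alpha))$. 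Integrating against $dg$ over the bounded $\alpha$-region, together with the lower-order Davenport boundary contributions, yields the claimed bound $O_\varepsilon(X^{(n-1)/k + \varepsilon})$; the $\varepsilon$ absorbs logarithmic factors arising at transitional strata where some $\lambda_{ijkl}(\alpha)$ is precisely of order $X^{-1/k}$.

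The main obstacle is the case-by-case combinatorial verification that the lemmas of Section~6 are collectively strong enough: for each of the five representations one must check that every stratum in which sufficiently many low-weight coordinates are forced to vanish (i.e., every deep-enough cusp of $A'$) has $T(\alpha)^c$ containing a vanishing pattern matched by the corresponding lemma, so that all its integer points are indeed reducible. This exhaustion argument is the crux; once it is in place, the volume estimates and Siegel-measure integration are routine adaptations of the method of~\cite{dodpf, BS}.
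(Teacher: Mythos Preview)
Your outline has the right architecture---stratify the torus by which low-weight coordinates are forced to vanish, discard the strata where a reducibility pattern from Section~6 is forced, and estimate lattice points via Davenport on the rest---and this is indeed what the paper does. But there is a genuine gap in the execution.

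The problem is your lattice-point estimate on the ``remaining'' strata. You bound the count of integer points in the codimension-one slice $\{b_{1111}=0\}\cap E(\nu,\alpha,X)$ by its $(n-1)$-dimensional volume $X^{(n-1)/k}/\lambda_{1111}(\alpha)$ and dismiss the Davenport projection terms as lower order. This is only valid on the shallowest stratum, where \emph{every} coordinate other than $b_{1111}$ has range $\gg 1$. But the non-reducible region is not just this shallow stratum: for instance, in the hypercube case the set $T(\alpha)^c=\{b_{1111},b_{1112},b_{1121}\}$ contains no pattern from Lemma~\ref{hyperred}, and in the Rubik's cube case the tables go eight layers deep before reducibility is forced. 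On such deeper strata the Davenport error (the maximum projection volume) strictly dominates the slice volume---a box of side-lengths $c_b$ has $\asymp\prod_b \max(1,c_b)$ lattice points, which exceeds $\prod_b c_b$ whenever some $c_b<1$. So your bound $X^{(n-1)/k}/\lambda_{1111}$ is an \emph{under}estimate of the true count there, and the argument breaks.

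The paper's fix is exactly the finer case-by-case analysis you anticipated as the ``main obstacle'': one stratifies by the precise vanishing set $T_0$, uses the correct count $X^{(n-|T_0|)/k}\prod_{b\notin T_0}w(b)$ (valid once the minimal-weight nonvanishing set $T_1$ is constrained to have $c_b\geq 1$), and integrates. Crucially, for several strata (those marked with a factor $\pi$ in Tables~2(a)--(e)) this integral against Haar measure still has a nonnegative torus exponent and would diverge; the paper then multiplies the integrand by a monomial $\pi$ in the $T_1$-variables (each $\geq 1$ in absolute value) to push all exponents negative, at the cost of raising the $X$-power to $X^{(n-|T_0|+\#\pi)/k}$, which is still $\leq X^{(n-1)/k}$ in every case. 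This ``factor trick'' is the step your sketch is missing.
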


\begin{proof}
We follow the method in \cite{dodpf}.  Namely, we divide the set of all $B\in V(\Z)$
into a number of cases depending on which initial coordinates are zero
and which are nonzero.  These cases are described in the second columns
of Tables 2(a)--(e).  The vanishing conditions in the various subcases of
Case~$m+1$ are obtained by setting equal to 0---one at a time---each
variable that was assumed to be nonzero in Case~$m$.  If such a
resulting subcase satisfies the reducibility conditions of the corresponding lemma among
Lemmas~\ref{hyperred}--\ref{rubik2red}, it is not listed.  In this way, it becomes clear that
any irreducible element in $V(\Z)$ must satisfy precisely
one of the conditions enumerated in the second column of the corresponding table.

 Let $T$ denote the set of all $n$ variables
$b_{ijk\ell}$ (or $b_{ijk}$) corresponding to the coordinates on $V(\Z)$.  
For a subcase $\CC$ of the corresponding table among Tables~2(a)--(e), we use
$T_0=T_0(\CC)$ to denote the set of variables in $T$ assumed to be 0
in Subcase~$\CC$, and $T_1$ to denote the set of variables in $T$
assumed to be nonzero.  

Each variable $b\in T$ has a {\it weight}, defined as follows. The
action of $r=(s_1,s_2,\ldots)$ or $r=(t_1,u_1,t_2,u_2,\ldots)$ on $B\in
V$ causes each variable $b$ to multiply by a certain weight which we
denote by $w(b)$.  These weights $w(b)$ are evidently rational
functions in $s_1,s_2,\ldots$ or $t_1,u_1,t_2,u_2,\ldots$.  

Let $V(\CC)$ denote the set of $B\in V(\R)$ such that
$B$ satisfies the vanishing and nonvanishing conditions of
Subcase~$\CC$.  For example, in Subcase~3b of Table 2(d) we have
$T_0(\mbox{3b})=\{b_{111}, b_{112}, b_{121}\}$ and
$T_1(\mbox{3b})=\{b_{113}, b_{122}, b_{131},b_{211}\}$; thus $V(\mbox{3b})$
denotes the set of all $B\in V(\Z)=\Z^3\otimes\Z^3\otimes\Z^3$ such that $b_{111}=b_{112}= b_{121}=0$
but $b_{113}, b_{122}, b_{131},b_{211}\neq 0$. 

For each subcase $\CC$ of Case $m$ ($m>0$), we wish to show that
$N(V(\CC);X)$ is $O_\varepsilon(X^{(n-1)/k+\varepsilon})$.  
Since $N'(\alpha)$ is absolutely bounded, the equality (\ref{avg2}) implies that
\begin{equation*}
N^*(V(\CC);X)\ll
\int_{s_1,s_2,\cdots=c}^\infty
\sigma(V(\CC))
\, s_1^{-2}s_2^{-2}\cdots\,d^\times s_2 \,d^\times s_1 
\end{equation*}
or 
\begin{equation*}
N^*(V(\CC);X)\ll
\int_{t_1,u_1,t_2,u_2,\cdots=c}^\infty
\sigma(V(\CC))
\, t_1^{-6}u_1^{-6}t_2^{-6}u_2^{-6}\cdots\,d^\times u_2 \,d^\times
t_2\,d^\times u_1 \,d^\times t_1, 
\end{equation*}
where $\sigma(V(\CC))$ denotes the number of integer points
in the region $E(\nu,\alpha,\lambda,X)$ that also satisfy the conditions 
\begin{equation}\label{cond}
\mbox{$b=0$ for $b\in T_0$ and $|b|\geq 1$ for $b\in T_1$}.
\end{equation}

Now for an element $B\in H(\nu,\alpha,X)$, we evidently have
\begin{equation}\label{condt}
|b|\leq J{w(b)}X^{1/k}
\end{equation}
for some absolute constant $J>0$, and therefore the number of integer points in $H(\nu,\alpha,\lambda,X)$
satisfying (\ref{cond}) will be nonzero only if we have
\begin{equation}\label{condt1}
J{w(b)}X^{1/k}\geq 1
\end{equation}
for all weights $w(b)$ such that $b\in T_1$.  Now the sets $T_1$ in
each subcase of Table~2 have been chosen to be precisely the set of
variables having the minimal weights $w(b)$ among the variables $t\in
T\setminus T_0$; by ``minimal weight'' in $T\setminus T_0$, we mean
that there is no other variable $b\in T\setminus T_0$ with weight having
equal or smaller exponents for all parameters $
s_1,s_2, \ldots$ (resp., $t_1,u_1,t_2,u_2, \ldots$).  Thus if
the condition \eqref{condt1} holds for all weights $w(b)$
corresponding to $b\in T_1$, then---by the very choice of $T_1$---we
will also have $Jw(b)\gg 1$ for all weights $w(b)$ such that $b\in
T\setminus T_0$.

Therefore, if the region $\H=\{B\in
H(\nu,\alpha,X):b=0\;\;\forall b\in T_0;\;\; |b|\geq 1\;\; \forall
b\in T_1\}\subset\R^{n-|T_0|}$ contains an integer point, then (\ref{condt1}) and
Lemma~\ref{davlem} together imply that the number of integer points
in $\H$ is $O(\Vol(\H))$, since the volumes of all the projections of
$\nu^{-1}\H$ will in that case also be $O(\Vol(\H))$.  
Now clearly
\[\Vol(\H)=O\Bigl(J^{n-|T_0|}X^{\frac{n-|T_0|}k}\prod_{b\in T\setminus T_0} w(b)\Bigr),\]
so we obtain
\begin{equation}\label{estv1s}
N(V(\CC);X)\ll
\int_{s_1,s_2,\cdots=c}^\infty
X^{\frac{n-|T_0|}k}
\prod_{b\in T\setminus T_0}w(b) 
\,\, s_1^{-2}s_2^{-2}\cdots \,\,\,
d^\times\! s_2\,d^\times\!s_1 
\end{equation}
or
\begin{equation}\label{estv1s2}
N(V(\CC);X)\ll 
\int_{t_1,u_1,t_2,u_2,\cdots=c}^\infty
X^{\frac{n-|T_0|}k}
\prod_{b\in T\setminus T_0} w(b)
\, t_1^{-6}u_1^{-6}t_2^{-6}u_2^{-6}\cdots\,d^\times u_2 \,d^\times
t_2\,d^\times u_1 \,d^\times t_1.
\end{equation}

The latter integral can be explicitly carried out for each of the
subcases in Table~2.  It suffices, however, to have a simple
estimate of the form $O_\varepsilon(X^r)$, with $r\leq (n-1)/k+\varepsilon$, for the integral
corresponding to each subcase.  For example, if the total exponent of
$s_i$ (resp., $t_i,u_i$) in \eqref{estv1s} or \eqref{estv1s2} is negative for all $i$,
then it is clear that the resulting integral will be at most
$O(X^{(n-|T_0|)/k})$ in value.  This condition holds for many of the
subcases in Table 2 (indicated in the fourth column by ``-''),
immediately yielding the estimates given in the third column.

For cases where this negative exponent condition does not hold, the
estimate given in the third column can be obtained as follows.  The
factor $\pi$ given in the fourth column is a product of variables in
$T_1$, and so it is at least one in absolute value.  The integrand in
\eqref{estv1s} or \eqref{estv1s2} may thus be multiplied by $\pi$ without harm, and the
estimates \eqref{estv1s} and \eqref{estv1s2} will remain true; we may then apply the
inequalities \eqref{condt} to each of the variables in $\pi$, yielding
\begin{equation}\label{estv2s}
N(V(\CC);X)\ll
\int_{s_1,s_2,\cdots=c}^\infty
X^{\frac{n-|T_0| + \#\pi}{k}}
\prod_{b\in T\setminus T_0} w(b)\;w(\pi)
\,\, s_1^{-2}s_2^{-2}\cdots \,\,\,
d^\times\! s_2\,d^\times\!s_1 
\end{equation}
or
\begin{equation}\label{estv2s2}
N(V(\CC);X)\ll
\int_{t_1,u_1,t_2,u_2,\cdots=c}^\infty
X^{\frac{n-|T_0| + \#\pi}{k}}
\prod_{b\in T\setminus T_0} w(b)\;w(\pi)
\, t_1^{-6}u_1^{-6}t_2^{-6}u_2^{-6}\cdots\,d^\times u_2 \,d^\times
t_2\,d^\times u_1 \,d^\times t_1 
\end{equation}
where $\#\pi$ denotes the total number of variables of $T$ appearing
in $\pi$ (counted with multiplicity), and we extend the notation $w$
multiplicatively, i.e., $w(ab)=w(a)w(b)$.  In each subcase of Table~2,
we have chosen the factor $\pi$ so that the total exponent of each
$s_i$ (resp., each $t_i$ and each $u_i$) in \eqref{estv2s} (resp.,
\eqref{estv2s2}) is negative.  Thus we obtain from \eqref{estv2s} and \eqref{estv2s2} that
$N(V(\CC);X)=O(X^{(n-\#T_0(\CC)+\#\pi)/k})$, 
and this is precisely the estimate given in the third
column of Table~2.  In every subcase, aside from Case~0, we see that
$n-\#T_0+\#\pi<n$, as desired.
\end{proof}

\pagebreak
\begin{center}
\begin{tabular}{|r|l|c|c|}\hline
Case & The set $S\subset V(\Z)$ defined by & $N(S;X)\ll$ & Use factor \\ 
\hline\hline 
0. & ${b_{1111}}\neq0\,$ & $X^{16/24}$ &  --  \\
\hline
1. & ${b_{1111}}=0\,;$ & $X^{15/24}$ &  --  \\
  & ${b_{1112}, b_{1121},b_{1211},b_{2111}}\neq 0$ &  &  \\ \hline
2. & ${b_{1111}, b_{1112}}=0\,;$ & $X^{14/24 + \varepsilon}$ &  --  \\
  & ${b_{1121}, b_{1211}, b_{2111}}\neq 0$ &  &  \\ \hline
3. & ${b_{1111}, b_{1112}, b_{1121}}=0\,;$ & $X^{14/24 + \varepsilon}$ &  $b_{1122}$ \\
  & ${b_{1122}, b_{1211}, b_{2111}}\neq 0$ &  &  \\ \hline
\end{tabular}
\end{center}

\vspace{-.1in}
\begin{center}
{\bf Table 2(a).} Estimates for $2 \otimes 2 \otimes 2 \otimes 2$.
\end{center}

\vspace{.2in}

\begin{center}
\begin{tabular}{|r|l|c|c|}\hline
Case & The set $S\subset V(\Z)$ defined by & $N(S;X)\ll$ & Use factor \\ 
\hline\hline 
0. & ${b_{1111}}\neq0\,$ & $X^{8/24}$ &  --  \\
\hline
1. & ${b_{1111}}=0\,;$ & $X^{22/72+\varepsilon}$ &  $(b_{2111})^{1/3}$  \\
  & ${b_{1112}, b_{2111}}\neq 0$ &  &  \\ \hline
\end{tabular}
\end{center}

\vspace{-.1in}
\begin{center}
{\bf Table 2(b).} Estimates for $2 \otimes \Sym_3(2)$.
\end{center}

\vspace{.2in}

\begin{center}
\begin{tabular}{|r|l|c|c|}\hline
Case & The set $S\subset V(\Z)$ defined by & $N(S;X)\ll$ & Use factor \\ 
\hline\hline 
0. & ${b_{1111}}\neq0\,$ & $X^{9/12}$ &  --  \\
\hline
1. & ${b_{1111}}=0\,;$ & $X^{8/12+\varepsilon}$ &  --  \\
  & ${b_{1112}, b_{1211}}\neq 0$ &  &  \\ \hline
\end{tabular}
\end{center}

\vspace{-.1in}
\begin{center}
{\bf Table 2(c).} Estimates for $\Sym^2(2)\otimes\Sym^2(2)$.
\end{center}

\vspace{.2in}

\begin{center}
\begin{tabular}{|r|l|c|c|}\hline
Case & The set $S\subset V(\Z)$ defined by & $N(S;X)\ll$ & Use factor \\ 
\hline\hline 
0. & ${b_{111}}\neq0\,$ & $X^{27/36}$ &  --  \\
\hline
1. & ${b_{111}}=0\,;$ & $X^{26/36}$ &  --  \\
  & ${b_{112}, b_{121},b_{211}}\neq 0$ &  &  \\ \hline
2. & ${b_{111}, b_{112}}=0\,;$ & $X^{25/36}$ &  --  \\
  & ${b_{113}, b_{121}, b_{211}}\neq 0$ &  &  \\ \hline
3a. & ${b_{111}, b_{112},b_{113}}=0\,;$ & $X^{24/36+\varepsilon}$ &  --  \\
  & ${b_{121}, b_{211}}\neq 0$ &  &  \\ \hline
3b. & ${b_{111}, b_{112}, b_{121}}=0\,;$ & $X^{24/36+\varepsilon}$ &  --  \\
  & ${b_{113}, b_{122}, b_{131},b_{211}}\neq 0$ &  &  \\ \hline
4a. & ${b_{111}, b_{112}, b_{113},b_{121}}=0\,;$ & $X^{24/36+\varepsilon}$ &  $b_{122}$  \\
  & ${b_{122}, b_{131},b_{211}}\neq 0$ &  &  \\ \hline
4b. & ${b_{111}, b_{112}, b_{121},b_{122}}=0\,;$ & $X^{24/36+\varepsilon}$ &  $b_{113}$  \\
  & ${b_{113}, b_{131}, b_{211}}\neq 0$ &  &  \\ \hline
4c. & ${b_{111}, b_{112}, b_{121},b_{211}}=0\,;$ & $X^{23/36}$ &  --  \\
  & ${b_{113}, b_{122},b_{131},b_{212},b_{221},b_{311}}\neq 0$ &  &  \\ \hline
\end{tabular}
\end{center}

\vspace{-.1in}
\begin{center}
{\bf Table 2(d).} {Subcases 0--4c of estimates for $3 \otimes 3 \otimes 3$.}
\end{center}

\pagebreak

\begin{center}
\begin{tabular}{|r|l|c|c|}\hline
Case & The set $S\subset V(\Z)$ defined by & $N(S;X)\ll$ & Use factor \\ 
\hline\hline 
5a. & ${b_{111}, b_{112}, b_{113}, b_{121},b_{122}}=0\,;$ & $X^{24/36+\varepsilon}$ & $b_{123}^2$ \\
  & ${b_{123}, b_{131},b_{211}}\neq 0$ &  &  \\ \hline
5b. & ${b_{111}, b_{112}, b_{113},b_{121},b_{131}}=0\,;$ & $X^{24/36+\varepsilon}$ & $b_{122}^2$ \\
  & ${b_{122}, b_{211}}\neq 0$ &  &  \\ \hline
5c. & ${b_{111}, b_{112}, b_{113},b_{121},b_{211}}=0\,;$ & $X^{24/36+\varepsilon}$ & $b_{122} b_{212}$ \\
  & ${b_{122}, b_{131}, b_{212}, b_{221},b_{311}}\neq 0$ &  &  \\ \hline
5d. & ${b_{111}, b_{112}, b_{121},b_{122},b_{211}}=0\,;$ & $X^{24/36+\varepsilon}$ & $b_{113}b_{131}$  \\
  & ${b_{113},b_{131},b_{212},b_{221},b_{311}}\neq 0$ &  &  \\ \hline
6a. & ${b_{111}, b_{112}, b_{113}, b_{121},b_{122},b_{131}}=0\,;$ & $X^{26/36}$ &  $b_{123}^2 b_{132}^2 b_{211}$  \\
  & ${b_{123}, b_{132}, b_{211}}\neq 0$ &  &  \\ \hline
6b. & ${b_{111}, b_{112}, b_{113}, b_{121},b_{122},b_{211}}=0\,;$ & $X^{24/36+\varepsilon}$ & $b_{123} b_{131} b_{212}$  \\
  & ${b_{123}, b_{131}, b_{212},b_{221},b_{311}}\neq 0$ &  &  \\ \hline
6c. & ${b_{111}, b_{112}, b_{113},b_{121},b_{131},b_{211}}=0\,;$ & $X^{24/36+\varepsilon}$ & $b_{122}^2 b_{311}$ \\
  & ${b_{122},b_{212},b_{221},b_{311}}\neq 0$ &  &  \\ \hline
6d. & ${b_{111}, b_{112}, b_{113},b_{121},b_{211},b_{221}}=0\,;$ & $X^{21/36+\varepsilon}$ &  --  \\
  & ${b_{122}, b_{131}, b_{212},b_{311}}\neq 0$ &  &  \\ \hline
6e. & ${b_{111}, b_{112}, b_{121},b_{122},b_{211},b_{212}}=0\,;$ & $X^{21/36+\varepsilon}$ &  --  \\
  & ${b_{113}, b_{131}, b_{221},b_{311}}\neq 0$ &  &  \\ \hline
7a. & ${b_{111}, b_{112}, b_{113}, b_{121},b_{122},b_{131},b_{211}}=0\,;$ & $X^{24/36+\varepsilon}$ &  $b_{123}^2 b_{132} b_{311}$  \\
  & ${b_{123}, b_{132}, b_{212},b_{221},b_{311}}\neq 0$ &  &  \\ \hline
7b. & ${b_{111}, b_{112}, b_{113}, b_{121},b_{122},b_{211},b_{212}}=0\,;$ & $X^{24/36+\varepsilon}$ &  $b_{123} b_{213} b_{131} b_{311}$  \\
  & ${b_{123}, b_{131}, b_{213},b_{221},b_{311}}\neq 0$ &  &  \\ \hline
7c. & ${b_{111}, b_{112}, b_{113}, b_{121},b_{122},b_{211},b_{221}}=0\,;$ & $X^{21/36+\varepsilon}$ &  $b_{123}$  \\
  & ${b_{123}, b_{131}, b_{212},b_{311}}\neq 0$ &  &  \\ \hline
7d. & ${b_{111}, b_{112}, b_{113},b_{121},b_{131},b_{211},b_{212}}=0\,;$ & $X^{24/36+\varepsilon}$ & $b_{122}^2 b_{213} b_{311}$ \\
  & ${b_{122},b_{213},b_{221},b_{311}}\neq 0$ &  &  \\ \hline
7e. & ${b_{111}, b_{112}, b_{121},b_{122},b_{211},b_{212},b_{221}}=0\,;$ & $X^{21/36+\varepsilon}$ &  $b_{222}$  \\
  & ${b_{113}, b_{131}, b_{222},b_{311}}\neq 0$ &  &  \\ \hline
8a. & ${b_{111}, b_{112}, b_{113}, b_{121},b_{122},b_{131},b_{211},b_{212}}=0\,;$ & $X^{24/36+\varepsilon}$ &  $b_{123} b_{132}^2 b_{213} b_{311}$  \\
  & ${b_{123}, b_{132}, b_{213},b_{221},b_{311}}\neq 0$ &  &  \\ \hline
8b. & ${b_{111}, b_{112}, b_{113}, b_{121},b_{122},b_{211},b_{212},b_{221}}=0\,;$ & $X^{24/36+\varepsilon}$ &  $b_{123} b_{132} b_{213} b_{221} b_{311}$  \\
  & ${b_{123}, b_{131}, b_{213},b_{222},b_{311}}\neq 0$ &  &  \\ \hline
8c. & ${b_{111}, b_{112}, b_{113},b_{121},b_{131},b_{211},b_{212},b_{221}}=0\,;$ & $X^{24/36+\varepsilon}$ & $b_{122}^2 b_{213} b_{231} b_{311}$ \\
  & ${b_{122},b_{213},b_{222},b_{231},b_{311}}\neq 0$ &  &  \\ \hline
\end{tabular}
\end{center}

\vspace{-.1in}
\begin{center}
{\bf Table 2(d) cont'd.} {Subcases 5a--8c of estimates for $3 \otimes 3 \otimes 3$.} 
\end{center}

\pagebreak

\begin{center}
\begin{tabular}{|r|l|c|c|}\hline
Case & The set $S\subset V(\Z)$ defined by & $N(S;X)\ll$ & Use factor \\ 
\hline\hline 
0. & ${b_{111}}\neq0\,$ & $X^{18/36}$ &  --  \\
\hline
1. & ${b_{111}}=0\,;$ & $X^{17/36}$ &  --  \\
  & ${b_{112}, b_{211}}\neq 0$ &  &  \\ \hline
2a. & ${b_{111}, b_{112}}=0\,;$ & $X^{16/36}$ &  --  \\
  & ${b_{113}, b_{122},b_{211}}\neq 0$ &  &  \\ \hline
2b. & ${b_{111}, b_{211}}=0\,;$ & $X^{17/36}$ &  $b_{311}$  \\
  & ${b_{112}, b_{311}}\neq 0$ &  &  \\ \hline
3a. & ${b_{111}, b_{112},b_{113}}=0\,;$ & $X^{15/36+\varepsilon}$ &  --  \\
  & ${b_{122}, b_{211}}\neq 0$ &  &  \\ \hline
3b. & ${b_{111}, b_{112},b_{122}}=0\,;$ & $X^{15/36+\varepsilon}$ &  --  \\
  & ${b_{113},b_{211}}\neq 0$ &  &  \\ \hline
3c. & ${b_{111}, b_{112},b_{211}}=0\,;$ & $X^{16/36}$ &  $b_{311}$  \\
  & ${b_{113}, b_{122},b_{212},b_{311}}\neq 0$ &  &  \\ \hline
4a. & ${b_{111}, b_{112},b_{113},b_{122}}=0\,;$ & $X^{15/36+\varepsilon}$ &  $b_{123}$  \\
  & ${b_{123}, b_{211}}\neq 0$ &  &  \\ \hline
4b. & ${b_{111}, b_{112},b_{113},b_{211}}=0\,;$ & $X^{15/36+\varepsilon}$ &  $b_{311}$  \\
  & ${b_{122}, b_{212},b_{311}}\neq 0$ &  &  \\ \hline
4c. & ${b_{111}, b_{112},b_{122},b_{211}}=0\,;$ & $X^{15/36+\varepsilon}$ &  $b_{311}$  \\
  & ${b_{113},b_{212},b_{311}}\neq 0$ &  &  \\ \hline
4d. & ${b_{111}, b_{112},b_{211},b_{212}}=0\,;$ & $X^{15/36+\varepsilon}$ &  $b_{311}$  \\
  & ${b_{113},b_{122},b_{311}}\neq 0$ &  &  \\ \hline
5a. & ${b_{111}, b_{112},b_{113},b_{122},b_{211}}=0\,;$ & $X^{15/36+\varepsilon}$ &  $b_{123} b_{311}$  \\
  & ${b_{123}, b_{212},b_{311}}\neq 0$ &  &  \\ \hline
5b. & ${b_{111}, b_{112},b_{113},b_{211},b_{212}}=0\,;$ & $X^{15/36+\varepsilon}$ &  $b_{213} b_{311}$ \\
  & ${b_{122}, b_{213},b_{311}}\neq 0$ &  &  \\ \hline
5c. & ${b_{111}, b_{112},b_{122},b_{211},b_{212}}=0\,;$ & $X^{15/36+\varepsilon}$ &  $b_{222} b_{311}$  \\
  & ${b_{113},b_{222},b_{311}}\neq 0$ &  &  \\ \hline
\end{tabular}
\end{center}

\vspace{-.1in}
\begin{center}
{\bf Table 2(e).} {Estimates for $3 \otimes\Sym_2(3)$.} 
\end{center}

\subsection{The number of irreducible points in the main body} \label{sec:irrmain}

We now give an estimate on the number of 
reducible elements $B\in \FF h R \cap V(\Z)$, on average, satisfying
$b_{1111}\neq 0$ (resp., $b_{111}\neq 0$):

\begin{proposition}\label{hard2}
Let $h$ take a random value in $G_0$ uniformly with respect to the measure
$dg$.  Then the expected number of 
reducible elements $B\in\FF h R^{(i)}\cap V(\Z)$ such that 
$H(B)< X$  and $b_{1111}\neq 0$ $($resp., $b_{111}\neq 0)$
is $o(X^{n/k})$.
\end{proposition}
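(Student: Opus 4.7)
The strategy generalizes the argument used in \cite{BS} for binary quartic forms: a reducible element in the main body can be moved into the cusp by a $G(\Z)$-transformation, reducing the count to the estimates already established in Proposition \ref{hard}.

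By the characterization of reducibility at the end of Section \ref{sec:parametrizations} (and the statements of Lemmas \ref{hyperred}--\ref{rubik2red}), a reducible $v \in V(\Z)$ has at least one covariant binary quartic form (for $d = 2$) with a rational linear factor, or a covariant ternary cubic form (for $d = 3$) with a rational flex. Let this rational factor (resp., flex) have coprime coordinates $[r:s]$ (resp., $[r:s:t]$). Choose $\gamma \in \SL_2(\Z)$ (resp., $\SL_3(\Z)$) with first column $(r,s)^t$ (resp., $(r,s,t)^t$); then $\gamma^{-1}$ sends the factor/flex to the standard point $[0:1]$ (resp., $[0:0:1]$). Acting by $\gamma^{-1}$ in the tensor factor on which the chosen covariant depends---consistently across any symmetrized factors in the cases involving $\Sym$---extends to a $G(\Z)$-transformation compatible with the determinant and symmetry constraints defining $G$. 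This produces an integer element $v' := \gamma^{-1}\cdot v$ in the same $G(\Z)$-orbit as $v$, of the same height, but with vanishing of specific coordinates.

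One checks, using the explicit covariant formulas, that the vanishing pattern on $v'$ matches either one of the subcases enumerated in Tables~2(a)--(e) or one of the automatically reducible patterns ruled out by Lemmas \ref{hyperred}--\ref{rubik2red}. Since the transformation preserves $G(\Z)$-orbits and heights, the number of reducible $G(\Z)$-orbits whose representatives in $\FF h R^{(i)}$ lie in the main body is bounded above by the total number of $G(\Z)$-orbits with bounded height lying in the union of these cusp subcases. The Davenport-type estimate established in the proof of Proposition \ref{hard} bounds this count by $O_\varepsilon(X^{(n-1)/k+\varepsilon}) = o(X^{n/k})$; although the statement of Proposition \ref{hard} is phrased for irreducible elements, the underlying integral estimates \eqref{estv2s}--\eqref{estv2s2} control all integer points in the specified cusp regions. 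Integrating the resulting bound over $h \in G_0$ via \eqref{avg} preserves the estimate.

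The chief technical obstacle is verifying, case by case for each of the seven representations of Table \ref{table:Invariants}, that applying $\gamma^{-1}$ in the appropriate factor(s) genuinely extends to $G(\Z)$ and genuinely sends the rational factor/flex to the standard position, and that the induced vanishing pattern on $v'$ falls into one of the subcases of Tables~2(a)--(e) (or among the automatically reducible cases). For the symmetric cases (Cases 3, 5, 6), the transformation must be applied to all symmetrized factors simultaneously; for the general cases (Cases 4, 7), there is freedom to choose any tensor factor on which the relevant covariant depends. In each situation the verification reduces to a direct computation with the explicit covariant formulas, paralleling the treatment in \cite{BS}.
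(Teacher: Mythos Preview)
Your approach has a genuine gap. The Davenport--type estimates in the proof of Proposition~\ref{hard} (and the entries of Tables~2(a)--(e)) bound $N^*(V(\CC);X)$, which by definition \eqref{avg2} is the \emph{average count of elements of $V(\CC)$ lying in the region $E(\nu,\alpha,X)$}, i.e., inside the chosen (thickened) fundamental domain. When you take a reducible $B$ in the main body and apply $\gamma^{-1}\in G(\Z)$ to produce $B'$ with the desired vanishing pattern, the element $B'$ lies in the same $G(\Z)$-orbit but is \emph{no longer in the fundamental domain}. Hence $B'$ is not counted by $N^*(V(\CC);X)$, and the cusp estimates give you no control over how many such $B'$ there are. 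Equivalently: the sets $V(\CC)$ are not $G(\Z)$-invariant, so ``number of orbits intersecting $V(\CC)$'' and ``$N^*(V(\CC);X)$'' are entirely different quantities, and only the latter is bounded by Proposition~\ref{hard}.

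The paper's proof avoids this issue by a completely different route: it exploits that reducibility over $\Q$ forces reducibility modulo every prime $p$, and then shows that for each $p$ the density of mod-$p$-reducible elements in $V(\Fp)$ is at most $1-\delta+O(p^{-1/2})$ for an absolute $\delta>0$ (computed case by case using the geometry of the covariant quartics/cubics and the Hasse bound). Applying the congruence-counting Theorem~\ref{cong2} to the $G(\Z)$-invariant set $S^\red(Y)=\bigcap_{p<Y}S^\red_p$ and letting $Y\to\infty$ drives the density to zero. This mod-$p$ argument works precisely because the sets involved are $G(\Z)$-invariant, so the averaging formula genuinely counts orbits.
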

\noindent We defer the proof of this proposition to the end of the section.

 We also have the following proposition which bounds the number of
$G(\Z)$-equivalence classes of integral elements in $\FF hR^{(i)}$ with height less than $X$ that have
large stabilizers inside $G(\Q)$; we again
defer the proof to the end of the section.

\begin{proposition}\label{gzbigstab}
  Let $h\in G_0$ be any element, where $G_0$ is any fixed compact
  subset of $G(\R)$.  Then the number of integral
  elements $B\in \RR_X(h)$ whose stabilizer in $G(\Q)$ has size greater
  than $1$ (for $B \in V^{(i)}$) is $o(X^{n/k})$.
\end{proposition}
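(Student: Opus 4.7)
By Theorem~\ref{thm:parametrizations}, the stabilizer in $G(\Q)$ of any nondegenerate $B\in V(\Q)$ is canonically identified with $E[d](\Q)$, where $E$ is the associated elliptic curve in the family $F$ and $d\in\{2,3\}$ is the degree of the associated line bundle from Table~\ref{table:Invariants}. Hence an integer element $B\in V(\Z)$ has nontrivial $G(\Q)$-stabilizer if and only if the associated~$E$ has a nontrivial rational $d$-torsion point. The plan is to bound the number of such~$B$ by (i)~bounding the number of elliptic curves $E\in F$ of height less than~$X$ with $E[d](\Q)\neq 0$, and (ii)~bounding the number of $G(\Z)$-orbits on $V(\Z)$ lying over each such~$E$.

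For (i), the condition $E[d](\Q)\neq 0$ is equivalent to the existence of a rational root~$x_0$ of the $d$-division polynomial on~$E$. Adjoining~$x_0$ as an affine coordinate, the locus of tuples $(\vec a,x_0)$ with this property is cut out by a single polynomial equation, so forms a codimension-one subvariety of affine space. Since the defining equation is monic of bounded degree in~$x_0$, the possible~$x_0$ for minimal models are integral and bounded polynomially in the~$a_i$; applying Davenport's lemma (Proposition~\ref{davlem}) case by case to the resulting bivariate equation then shows that, in each of the seven cases of Table~\ref{table:Invariants}, the number of integer invariant vectors of height less than~$X$ with $E[d](\Q)\neq 0$ is $O_\epsilon(X^{(n-\delta)/k+\epsilon})$ for some absolute $\delta>0$. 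In Case~1, for concreteness, the count is $O(X^{1/2+\epsilon})$ compared with the full count $X^{5/6}=X^{n/k}$, so $\delta=2$ suffices there, and analogous calculations yield a positive~$\delta$ in every remaining row.

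For (ii), Theorem~\ref{thm:parametrizations}(b) identifies the $G(\Q)$-orbits on $V^\s(\Q)$ over a fixed~$E$ with the $d$-Selmer group $S_d(E)$. Each $G(\Q)$-orbit contains $O(1)$ integer $G(\Z)$-orbits, and each such orbit contributes at most $\sigma(B)\le n_i$ integer representatives to the multiset $\mathcal R_X(h)$. The classical bound $\dim_{\F_d}S_d(E)=O(\omega(\Delta(E)))$ coming from $d$-descent, combined with $\omega(m)=O(\log m/\log\log m)$, gives $|S_d(E)|=O_\epsilon(|\Delta(E)|^\epsilon)=O_\epsilon(X^\epsilon)$ uniformly for $H(E)<X$. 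Multiplying the bounds from~(i) and~(ii) yields a total of $O_\epsilon(X^{(n-\delta)/k+2\epsilon})$, which is $o(X^{n/k})$ for $\epsilon<\delta/(2k)$, as desired.

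The principal technical obstacle is the case-by-case codimension count in step~(i). While it is conceptually obvious that ``rational $d$-torsion'' is a nontrivial algebraic condition on the invariants—so that the subfamily of curves with rational $d$-torsion has a strictly smaller height-counting exponent than~$F$—making this quantitative across all seven cases of Table~\ref{table:Invariants} requires carefully tracking how the distinct weights of the several invariants~$a_i$ interact with the range of the auxiliary coordinate~$x_0$. The cleanest approach is uniform: apply Davenport's lemma directly to the bivariate defining equation in $(\vec a,x_0)$ and use the monicity of the equation in~$x_0$ to control its contribution, extracting a positive~$\delta$ for each family in turn.
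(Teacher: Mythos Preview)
Your step~(i) is reasonable in spirit: the elliptic curves $E\in F$ of height $<X$ with $E[d](\Q)\neq0$ do indeed form a set of size $O(X^{(n-\delta)/k+\varepsilon})$ for some $\delta>0$, and a direct lattice-point count along the lines you sketch would establish this case by case.

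The argument breaks down in step~(ii), however, in two places. First, Theorem~\ref{thm:parametrizations}(b) identifies only the \emph{locally soluble} $G(\Q)$-orbits over a fixed $E$ with $S_d(E)$; the full set of $G(\Q)$-orbits on $V^\s(\Q)$ with fixed invariants is in general much larger, as Theorem~\ref{thm:parametrizations}(a) parametrizes them by triples $(E,C,L)$ with $C$ an arbitrary torsor, not just a locally soluble one. Second, and more seriously, the claim that ``each $G(\Q)$-orbit contains $O(1)$ integer $G(\Z)$-orbits'' is unjustified and in fact false in general: the quantity $n(B)$ (equivalently $m(B)=\prod_p m_p(B)$, cf.~\eqref{wtmult}) can pick up a nontrivial factor at every prime $p$ with $p^2\mid\Delta(B)$ (see Proposition~\ref{prop:psquareddivides}), so it is not uniformly bounded. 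Without such a bound there is no way to pass from a count of admissible \emph{invariant vectors} $\vec a$ to a count of \emph{integer points} in $\RR_X(h)$ lying over them.

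The paper avoids fibering over $E$ altogether. It observes that if $B\in V(\Z)$ has nontrivial $G(\Q)$-stabilizer, then its reduction modulo every prime $p$ has nontrivial $G(\F_p)$-stabilizer, i.e., $B\in S_p^\bigstab$ for all~$p$. One then shows $\mu_p(S_p^\bigstab)\le 1-\delta'+O(p^{-1/2})$ for a fixed $\delta'>0$, by counting elliptic curves over $\F_p$ without $\F_p$-rational $d$-torsion. Applying the congruence count of \S\ref{sec:cong} to $\cap_{p<Y}S_p^\bigstab$ and letting $Y\to\infty$ then forces the density to zero. This sieve argument works directly on $V(\Z)$ and requires no pointwise control on the fibers over individual curves~$E$.
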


\subsection{The main term} \label{sec:mainterm}

Fix again $i\in\{1,\ldots,N\}$ and let $R=R^{(i)}$.  The results of \S \ref{sec:irrmain} show that, in order to obtain
Theorem~\ref{thmcount}, it suffices to count those integral elements
$B\in \FF h R$ of bounded height for which $b_{1111}\neq 0$ (resp.,
$b_{111}\neq 0$), as $h$ ranges over $G_0$.

Let $\RR_X(h)$ denote the region $\FF h R\cap \{B\in
V(\R):H(B)<X\}$; let $\RR_X:=\RR_X(1)$.  Then we have the following result counting the
number of integral points in $\RR_X(h)$, on average, satisfying
$b_{1111}\neq 0$ (resp., $b_{111}\neq0$):

\begin{proposition}\label{nonzerob11}
  Let $h$ take a random value in $G_0$ uniformly with
  respect to the Haar measure $dg$.  Then the expected
  number of elements $B\in\FF hR\cap V(\Z)^{(i)}$ such that
  $|H(B)|< X$ and $b_{1111}\neq0$ $($resp., $b_{111}\neq0)$ is $\Vol(\RR_X)
  + O(X^{(n-1)/k})$.
\end{proposition}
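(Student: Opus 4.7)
The plan is to follow the averaging strategy of \cite{dodpf, BS}. Starting from the formula \eqref{avg2}, I would write
\begin{equation*}
N^\ast(\{B \in V(\Z)^{(i)} : b_{1111} \neq 0\}; X) = \frac{1}{C_{G_0}^{(i)}} \int_{g \in N'(\alpha) A'} \#\{B \in V(\Z) \cap E(\nu, \alpha, X) : b_{1111} \neq 0\} \, dg,
\end{equation*}
and then estimate the inner lattice-point count by means of Davenport's lemma. Since $E(\nu, \alpha, X) = \nu \alpha G_0 R \cap \{H(B) < X\}$ and $\alpha G_0 R$ is (after the unipotent shift $\nu$) an absolutely bounded semialgebraic set, Proposition \ref{davlem} applies to the sheared region $\nu^{-1} E(\nu, \alpha, X)$.

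The decisive observation is that $b_{1111}$ (respectively $b_{111}$) has the smallest weight among all coordinates of $V$: every other coordinate weight $w(b)$ dominates $w(b_{1111})$ termwise in the Iwasawa parameters. Consequently, the constraints $1 \leq |b_{1111}| \leq J \, w(b_{1111}) \, X^{1/k}$ from \eqref{condt} force $J \, w(b) \, X^{1/k} \gg 1$ for every coordinate $b$, and Davenport's lemma therefore yields
\begin{equation*}
\#\{B \in V(\Z) \cap E(\nu, \alpha, X) : b_{1111} \neq 0\} = \Vol\bigl(E(\nu, \alpha, X) \cap \{|b_{1111}| \geq 1\}\bigr) + O\!\left(\max_d \Vol\bigl(\overline{E(\nu, \alpha, X)}_d\bigr)\right),
\end{equation*}
where the $\overline{(\cdot)}_d$ range over $(n-1)$-dimensional coordinate projections. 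Integrating the main term over $g \in N'(\alpha)A'$, invoking the Iwasawa factorization of $dg$ together with the $G(\R)$-invariance of Lebesgue measure on $V(\R)$, and using the normalization $C_{G_0}^{(i)} = \int_{G_0} dg$, recovers $\Vol(\mathcal{R}_X)$ as the main term; the adjustment for excising the slab $\{|b_{1111}| < 1\}$ is itself $O(X^{(n-1)/k})$.

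The main obstacle is verifying that the integrated projection error is $O(X^{(n-1)/k})$. This follows by exactly the weight-counting argument used in the proof of Proposition \ref{hard}: each $(n-1)$-dimensional projection omits some coordinate $b_0$, so the projection volume is bounded by a constant times $X^{(n-1)/k} \prod_{b \neq b_0} w(b)$; integrating this product against the Haar measure on $N'(\alpha)A'$ produces a convergent integral of size $O(X^{(n-1)/k})$, because the minimality of $w(b_{1111})$ guarantees that the total exponent of every Iwasawa parameter $s_i$ (respectively $t_i, u_i$) remains strictly negative. Summing the contributions of all projections and combining with the main term yields the stated estimate $\Vol(\mathcal{R}_X) + O(X^{(n-1)/k})$.
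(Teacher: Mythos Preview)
Your proposal is correct and follows essentially the same approach as the paper: apply Davenport's lemma to the slab $\{|b_{\min}|\geq 1\}$ inside $E(\nu,\alpha,X)$, use the minimality of the weight $w(b_{\min})$ to control all projection volumes by the single projection onto $\{b_{\min}=0\}$, integrate the main term to recover $\Vol(\RR_X)$, and absorb both the slab correction and the Davenport error into $O(X^{(n-1)/k})$. The paper makes two points slightly more explicit than you do: first, that lower-dimensional projections (not just the $(n-1)$-dimensional ones) are also dominated by the projection onto $\{b_{\min}=0\}$ once all side lengths are $\gg 1$; and second, that the final convergence of the integrated error term (i.e., that $1/w(b_{\min})$ times the Haar density has all negative exponents) is simply asserted as ``one easily computes'' rather than deduced from minimality alone---your phrasing suggests minimality \emph{implies} negativity, whereas in fact it identifies the worst case, which must then be checked directly.
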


\begin{proof}
Following the proof of Lemma~\ref{hard}, let $V^{(i)}(\phi)$ denote the subset of $V(\R)$ such that $b_{1111}\neq 0$ (resp., $b_{111}\neq 0$).  
We wish to show that
\begin{equation*}
N^*(V^{(i)}(\phi);X)=\Vol(\RR_X) + O(X^{(n-1)/k}).
\end{equation*}
We have
\begin{equation*}
  N^*(V^{(i)}(\phi);X)=\frac{1}{C^{(i)}_{G_0}}
\int_{s_1,s_2,\cdots=c}^\infty
\int_{\nu\in    N'(s)} 
  \sigma(V(\phi))
\, s_1^{-2}s_2^{-2}\cdots\,d^\times s_2 \,d^\times s_1 
\end{equation*}
or 
\begin{equation*}
  N^*(V^{(i)}(\phi);X)=\frac{1}{C^{(i)}_{H_0}}
\int_{t_1,u_1,t_2,u_2,\cdots=c}^\infty
\int_{\nu\in    N'(s)} 
  \sigma(V(\phi))
t_1^{-6}u_1^{-6}t_2^{-6}u_2^{-6}\cdots\,d^\times u_2 \,d^\times
t_2\,d^\times u_1 \,d^\times t_1, 
\end{equation*}
where $\sigma(V(\phi))$ denotes the number of integer points
in the region $E(\nu,\alpha,X)$ satisfying $|b_{\min}|\geq 1$, where $b_{\min}$ denotes $b_{1111}$ (resp., $b_{111})$.
Evidently, the number of integer points in $E(\nu,\alpha,X)$
with $|b_{\min}|\geq 1$ can be nonzero only if we have
\begin{equation}\label{condt2}
J{w(b_{\min})}X^{1/k}\geq 1.
\end{equation}
Therefore, if the region $\BB=\{B\in
E(\nu,\alpha,X):|b_{\min}|\geq 1\}$
 contains an integer point, then \eqref{condt2} and
Lemma~\ref{davlem} imply that the number of integer points in $\BB$
is $\Vol(\BB)+O(\Vol(\BB)/(w(b_{\min})X^{1/k}))$, since all
smaller-dimensional projections of $u^{-1}\BB$ are clearly bounded by
a constant times the projection of $\BB$ onto the hyperplane $b_{\min}=0$ (since
$b_{\min}$ has minimal weight).

Therefore, since $\BB=E(\nu,\alpha,X)-\bigl(E(\nu,\alpha,X)-\BB\bigr)$, 
we may write
\begin{eqnarray}\label{bigint}\nonumber
\!\!\!\!\!\!\!N^\ast(V^{(i)}(\phi);X) &\!\!\!\!\!\!\!=\!\!\!\!\!\!& \!\frac1{C^{(i)}_{H_0}}
\int_{s_1,s_2,\cdots=c}^{\infty} \int_{\nu\in N'(\alpha)}\!\!
\Bigl(\Vol\bigl(E(\nu,\alpha,X)\bigr)\!-\!\Vol\bigl(B(\nu,\alpha,X)\!-\!\BB\bigr) 
 \\[.085in] & & 
+O(\max\{X^{\frac{n-1}k}s_1^2s_2^2\cdots ,1\})
\Bigr)   
\, s_1^{-2}s_2^{-2}\cdots\,d\nu\,d^\times s_2 \,d^\times s_1 
\end{eqnarray} 
or
\begin{eqnarray} \label{bigint2}
\!\!\!\!\!\!\!N^\ast(V^{(i)}(\phi);X) &\!\!\!\!\!\!\!=\!\!\!\!\!\!& \!\frac1{C^{(i)}_{H_0}}
\int_{t_1,u_1,t_2,u_2,\cdots=c}^{\infty} \int_{\nu\in N'(\alpha)}\!\!
\Bigl(\Vol\bigl(E(\nu,\alpha,X)\bigr)\!-\!\Vol\bigl(E(\nu,\alpha,X)\!-\!\BB\bigr) 
 \\[.085in] & & \nonumber
+O(\max\{X^{\frac{n-1}k}t_1^6u_1^6t_2^6u_2^6\cdots ,1\})
\Bigr)   
t_1^{-6}u_1^{-6}t_2^{-6}u_2^{-6}\cdots\,d\nu\,d^\times\! u_2
\,d^\times\! t_2\,d^\times \!u_1 \,d^\times\! t_1.
\end{eqnarray}
The integral of the first term in \eqref{bigint} or \eqref{bigint2} is $\int_{h\in G_0}
\Vol(\RR_X(h))dg$.
Since $\Vol(\RR_X(h))$ does not
depend on the choice of $h\in G_0$,
the latter integral is simply $C^{(i)}_{G_0}\cdot \Vol(\RR_X)$.

To estimate the integral of the second term in \eqref{bigint} or \eqref{bigint2}, let $\BB'=
E(\nu,\alpha,X)-\BB$, and for each $|b_{\min}|\leq 1$, let $\BB'(b_{\min})$ be
the subset of all elements $B\in\BB'$ with the given value of
$b_{\min}$.  Then the $(n-1)$-dimensional volume of $\BB'(b_{\min})$ is at most 
$O\Bigl(X^{\frac{n-1}k}\prod_{b\in T\setminus\{b_{\min}\}}w(b)\Bigr)$, and so we
have the estimate 
\[\Vol(\BB') \ll \int_{-1}^1 X^{\frac{n-1}k}\prod_{b\in
  T\setminus\{b_{\min}\}}w(b)
\,\,db_{\min} = O\Bigl(X^{\frac{n-1}k}\prod_{b\in T\setminus\{b_{\min}\}}w(b)\Bigr).\]
The second term of the integrand in \eqref{bigint} or \eqref{bigint2} can thus be
absorbed into the third term.
 
Finally, one easily computes the
integral of the third term in \eqref{bigint} and \eqref{bigint2} to be
$O(X^{(n-1)/k})$.  We thus obtain
\begin{equation*}
N^\ast(V^{(i)};X) = \Vol(\RR_X) + O(X^{(n-1)/k}),
\end{equation*}
as desired.
\end{proof}

Combining Propositions~\ref{hard}, \ref{hard2}, \ref{gzbigstab}, and \ref{nonzerob11} yields Theorem~\ref{thmcount}.

\subsection{Computation of the volume}\label{secvol}

In this subsection, we describe how to compute
the volume of $\mathcal R_X$ in $V^{(i)}\subset V(\R)$.

Let $m$ denote the number of independent invariants for the action of
$G_1$ on $V$.  Given an element $v\in V(\R)$, we may attach to $v$ a
vector $\vec a=\vec a(v)\in \R^m$ whose coordinates are the
independent invariants of $v$.  For example, in the case of
$V=2\otimes2\otimes 2\otimes2$, we have $\vec
a(v)=(a_2(v),a_4(v),a_4'(v),a_6(v))$.  For each $\vec a\in \R^m$, the
set $R^{(i)}$ contains at most one point $p^{(i)}(\vec a)$ having
invariant vector $\vec a$. Let $R^{(i)}(X)$ denote the set of all
those points in $R^{(i)}$ having height less than $X$. Then
$\Vol(\mathcal R_X)=\Vol(\FF\cdot R^{(i)}(X))$

The set $R^{(i)}$ is in canonical one-to-one correspondence with the
set $\{\vec a\in \R^{m}:\Delta(\vec a)>0\}$ or $\{\vec a\in \R^{m}:\Delta(\vec a)<0\}$ in accordance with whether 
$\Delta$ takes positive or negative values on $R^{(i)}$.
There is thus a
natural measure $d\vec a$ on each of these sets $R^{(i)}$, given by the standard Euclidean measure on $\{\vec a(v):v\in R^{(i)}\}$ viewed as a subset of $\R^m$.

We then have the following proposition, whose proof is identical to
that of \cite[Thm.~2.8]{BS}:

\begin{proposition}\label{vjac}
There exists a rational constant $\mathcal J$ such that, for any measurable function $\phi$
  on $V(\R)$, we have
\begin{equation}\label{Jac}
\frac{|\mathcal J|}{n_i}\int_{R^{(i)}}
\int_{G(\R)}\phi(g\cdot V^{(i)}(\vec a))\,dg\,d\vec a=\int_{V^{(i)}}\phi(v)dv.
\end{equation}
\end{proposition}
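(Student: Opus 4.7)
The plan is to analyze the parametrization map $\Phi : G(\R) \times R^{(i)} \to V^{(i)}$, $(g, v(\vec a)) \mapsto g \cdot v(\vec a)$, where $v(\vec a)$ denotes the section of $R^{(i)}$ over invariant vectors $\vec a$ constructed in Section~\ref{sec:integralreps} and \S 4.1. The goal is to show that $\Phi$ is a generically $n_i$-to-one smooth surjection whose Jacobian with respect to $dg\,d\vec a$ and $dv$ is a fixed rational constant $|\mathcal J|$; the factor $n_i$ in the statement then accounts for the fiber count.

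First I would analyze the fibers of $\Phi$. By the parametrization Theorem~\ref{thm:parametrizations} together with the construction of $R^{(i)}$ as a set of orbit representatives (one per invariant vector, with a second representative $w(\vec a)$ on those components where the fibers of $\overline\pi$ have size~$2$), every point $v \in V^{(i)}$ with $\Delta(v) \neq 0$ lies in the $G(\R)$-orbit of exactly one point of $R^{(i)}$. The $G(\R)$-stabilizer of such a generic point is identified with $E_{\vec a}[d](\R)$, which has order $n_i$, so $\Phi$ is surjective with generic fiber of size $n_i$, while points with strictly larger stabilizer lie in a proper subvariety of $V^{(i)}$ contributing measure zero.

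Next I would show that $J(g,\vec a) := |\det D\Phi(g,\vec a)|$ depends only on $\vec a$ and is in fact constant. Because $G$ is a finite central quotient of a product of copies of $\SL_2$ and $\SL_3$, it is semisimple and has no nontrivial algebraic characters; in particular, the representation $\rho : G(\R) \to \GL(V(\R))$ satisfies $|\det \rho(g)| = 1$, so translation by $g$ preserves Lebesgue measure on $V$. Combined with the left-invariance of the Haar measure $dg$, this gives $J(g,\vec a) = J(1,\vec a) =: J(\vec a)$. To prove constancy, I would exploit the $\R^\times_{>0}$-scaling on $V$: under $v \mapsto \lambda v$, the measure $dv$ scales by $\lambda^n$ with $n = \dim V$, and $d\vec a$ scales by $\lambda^{\sum_i d_i}$ where $d_i$ is the weighted degree of $a_i$, while $dg$ is scale-invariant. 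Inspection of Table~\ref{table:Invariants} reveals $\sum_i d_i = n$ in every case (e.g., $2+4+4+6=16$ for hypercubes and $6+9+12=27$ for Rubik's cubes), so $J(\vec a)$ is weighted-homogeneous of degree~$0$. Since the sections $v(\vec a)$ of Section~\ref{sec:integralreps} are algebraic in $\vec a$ with rational coefficients, $J(\vec a)$ is rational; weighted-homogeneity of degree~$0$ with strictly positive weights $d_i$ then forces $J \equiv |\mathcal J|$ for some constant $\mathcal J \in \Q$.

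The main obstacle is justifying that $J(\vec a)$ has no spurious poles along the discriminant locus, so that the weighted-homogeneity argument really does force constancy rather than leaving a ratio of weighted monomials in the $a_i$. This is complicated by the piecewise nature of the sections $v(\vec a)$, which depend on whether $L \cong \O(dO)$ or $L \cong \O((d-1)O + Q)$, and on whether the component $V^{(i)}$ is one where the fibers of $\overline\pi$ have size~$1$ or $2$. In practice, one handles this by computing $\mathcal J$ explicitly at one convenient point on each subpiece of $R^{(i)}$ using the formulas of Section~\ref{sec:integralreps} and checking that the values agree, following the route taken in \cite[Thm.~2.8]{BS} for the binary-quartic case; the adaptation to the remaining representations of Table~\ref{table:Invariants} is then a direct (if tedious) calculation.
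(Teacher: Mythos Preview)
Your overall strategy is the right one and coincides with the argument of \cite[Thm.~2.8]{BS} that the paper invokes: analyze the fibers, use $G$-invariance to reduce the Jacobian to a function $J(\vec a)$, and use the scaling $\sum_i d_i = n$ to conclude that $J$ has weighted degree~$0$.

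However, there is a genuine gap in the step where you conclude constancy. Weighted-homogeneity of degree $0$ does \emph{not} force a rational function to be constant: for instance, with weights $4$ and $6$ one has $a_4^3/a_6^2$ of weighted degree~$0$. Your ``obstacle'' paragraph recognizes this, but the proposed fix---computing $\mathcal J$ at one convenient point per subpiece and checking agreement---only verifies the value of the constant once constancy is already known; it does not establish constancy.

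The missing ingredient is that $J(\vec a)$ is a \emph{polynomial}, not merely a rational function. When the section $s(\vec a)$ is polynomial (as in part~(b) of each case in \S\ref{sec:integralreps}), the differential of $\Phi$ at $(e,\vec a)$ is the linear map $\mathfrak g\times\R^m\to V$, $(X,\vec b)\mapsto X\cdot s(\vec a)+Ds(\vec a)\vec b$, whose determinant is visibly polynomial in $\vec a$; a polynomial of weighted degree~$0$ with strictly positive weights is then forced to be constant. For the non-polynomial pieces $w(\vec a)$ (used on components with two real orbits), one observes that $J$ is section-independent: if $s_2(\vec a)=h(\vec a)\cdot s_1(\vec a)$, then $\Phi_2=\Phi_1\circ\Psi$ with $\Psi(g,\vec a)=(g\,h(\vec a),\vec a)$, and $\Psi$ preserves $dg\,d\vec a$ by right-invariance of Haar measure on the unimodular group $G$. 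Thus the constant computed from the polynomial section applies throughout, and rationality of $\mathcal J$ follows from the rational coefficients of that section.
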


\noindent We may use Proposition \ref{vjac} 
to give a convenient expression for the volume of the multiset $\mathcal R_X$:
\begin{eqnarray}\nonumber
\!\!\!\!\int_{\mathcal R_X}\!\!\!\!\!dv=\int_{\FF\cdot R^{(i)}(X)}\!\!\!\!\!dv&=&
|\mathcal J|\cdot\int_{R^{(i)}(X)}\int_{\FF}dg\,d\vec  a \\ 
\label{volexp} &=&|\mathcal J|\cdot \Vol(G(\Z)\backslash G(\R))\cdot \int_{R^{(i)}(X)}d\vec a.
\end{eqnarray}

\subsection{Congruence conditions}\label{sec:cong}

In this subsection, we prove the following version of Theorem \ref{thmcount} where we
count elements of $V(\Z)$ satisfying a {\em finite} set of congruence conditions:

\begin{theorem}\label{cong2}
Suppose $S$ is a subset of $V(\Z)$ defined by 
congruence conditions modulo finitely many prime powers. Then 
\begin{equation}\label{ramanujan}
N(S\cap V^{(i)};X)
  \;=\; N(V(\Z)^{(i)};X)\cdot
  \prod_{p} \mu_p(S)+o(X^{n/k}),
\end{equation}
where $\mu_p(S)$ denotes the $p$-adic density of $S$ in $V(\Z)$.
\end{theorem}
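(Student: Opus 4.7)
The plan is to decompose $S$ into cosets of a sublattice of $V(\Z)$ of bounded index and to rerun the counting argument of \S\ref{countsec} on each coset individually. Since $S$ is defined by congruences modulo finitely many prime powers, I may fix an integer $M$ such that $S$ is a disjoint union of cosets $B_0 + M \cdot V(\Z)$ of $M\cdot V(\Z)$ in $V(\Z)$, indexed by a subset $\mathcal B \subset V(\Z/M\Z)$. The Chinese Remainder Theorem gives $|\mathcal B|/M^n = \prod_p \mu_p(S)$ (with $\mu_p(S) = 1$ for $p \nmid M$). By the additivity of the integral formula~\eqref{avg} defining $N(\,\cdot\,;X)$ over disjoint unions of subsets of $V^{(i)}$, it then suffices to prove that for each $B_0 \in \mathcal B$,
\begin{equation*}
N\bigl((B_0 + M \cdot V(\Z)) \cap V^{(i)}; X\bigr) \;=\; M^{-n} c_i X^{n/k} + o(X^{n/k}).
\end{equation*}

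To establish this single-coset asymptotic, I would rerun the arguments of \S\ref{countsec} verbatim with $V(\Z)$ replaced throughout by the shifted sublattice $B_0 + M V(\Z)$. The crucial input is that Davenport's estimate (Proposition~\ref{davlem}) applies equally to any translate of a scaled lattice: the number of lattice points of $B_0 + M V(\Z)$ contained in a bounded semialgebraic region $\mathcal R$ is $M^{-n}\Vol(\mathcal R) + O\bigl(M^{-(n-1)}\Vol(\bar{\mathcal R}) + 1\bigr)$, which is precisely what one obtains by rescaling Davenport's estimate. Consequently, the main-term computation of Proposition~\ref{nonzerob11} now produces $M^{-n}\Vol(\RR_X) + O(X^{(n-1)/k})$, while the cusp estimate of Proposition~\ref{hard}, the reducibility estimate of Proposition~\ref{hard2}, and the big-stabilizer estimate of Proposition~\ref{gzbigstab} continue to give $o(X^{n/k})$ for the translated sublattice; each is proved by the same Davenport-type volume argument on the subcases enumerated in Tables~2(a)--(e), and the reducibility criteria of Lemmas~\ref{hyperred}--\ref{rubik2red} concern only which coordinates of $B$ vanish, a condition insensitive to which coset of $M V(\Z)$ one works on.

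Summing the resulting asymptotics over $B_0 \in \mathcal B$ and combining with Theorem~\ref{thmcount} and the identity $|\mathcal B|/M^n = \prod_p \mu_p(S)$ then yields~\eqref{ramanujan}. The main point requiring care---though not a genuine obstacle---is the uniformity of the implied constants in the fixed modulus $M$: all bounds ultimately descend from Davenport's lemma, whose implied constants depend only on the number and degrees of the polynomial inequalities cutting out the region, data which is unchanged by passage to a translated sublattice of bounded index. Hence the error terms remain $o(X^{n/k})$ uniformly in $B_0$, the contributions from the $|\mathcal B|$ cosets assemble as required, and~\eqref{ramanujan} follows.
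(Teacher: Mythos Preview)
Your proposal is correct and follows essentially the same approach as the paper: decompose $S$ into finitely many translates of $M\cdot V(\Z)$, rerun the counting argument of \S\ref{countsec} on each translate via Davenport's lemma (picking up the factor $M^{-n}$), and sum. The paper's proof is terser but identical in substance, invoking the same coset decomposition and the observation that $r\,m^{-n}=\prod_p\mu_p(S)$.
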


\begin{proof}
Suppose $S$ is defined by congruence conditions modulo some integer $m$. Then $S$ may be viewed as the union of (say) $r$ translates ${\mathcal L}_1,\ldots,{\mathcal L}_r$ of the lattice $m\cdot V(\Z)
$. For each such lattice translate ${\mathcal L}_j$, we may use formula \eqref{avg} and the discussion following that formula to compute $N(S;X)$, but where each $d$-dimensional volume is scaled by a factor of $1/m^d$ to reflect the fact that our new lattice has been scaled by a factor of $m$. For a fixed value of $m$, we thus obtain
\begin{equation}\label{lat}
N({\mathcal L}_j\cap V^{(i)};X) = m^{-n} \Vol(R_X) +o(X^{n/k}).
\end{equation}
Summing (\ref{lat}) over $j$, and noting that $rm^{-n} = \prod_p \mu_p(S)$, yields Theorem~\ref{cong2}.
\end{proof}

We will also have occasion to use the following weighted version of
Theorem \ref{cong2}; the proof is identical.

\begin{theorem}\label{cong3}
  Let $p_1,\ldots,p_r$ be distinct prime numbers. For $j=1,\ldots,r$,
  let $\phi_{p_j}:V(\Z)\to\R$ be a $G(\Z)$-invariant function on
  $V(\Z)$ such that $\phi_{p_j}(x)$ depends only on the congruence
  class of $x$ modulo some power $p_j^{a_j}$ of $p_j$.  Let
  $N_\phi(V(\Z)\cap V^{(i)};X)$ denote the number of irreducible
  $G(\Z)$-orbits in $V(\Z)\cap V^{(i)}$ having height bounded by $X$,
  where each orbit $G(\Z)\cdot B$ is counted with weight
  $\phi(B):=\prod_{j=1}^r\phi_{p_j}(B)$. Then 
\begin{equation}
N_\phi(V(\Z)\cap V^{(i)};X)
  = N(V(\Z)\cap V^{(i)};X)
  \prod_{j=1}^r \int_{B\in V({\Z_{p_j}})}\tilde{\phi}_{p_j}(B)\,dB+o(X^{n/k}),
\end{equation}
where $\tilde{\phi}_{p_j}$ is the natural extension of ${\phi}_{p_j}$
to $V(\Z_{p_j})$ by continuity, and $dB$ denotes the additive
measure on $V(\Z_{p_j})$ normalized so that $\int_{B\in
  V(\Z_{p_j})}dB=1$.
\end{theorem}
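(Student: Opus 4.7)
\textbf{Proof plan for Theorem \ref{cong3}.} The plan is to reduce the weighted count directly to Theorem \ref{cong2} by partitioning $V(\Z)$ into finitely many residue classes on which the weight $\phi$ is constant. Set $m=\prod_{j=1}^r p_j^{a_j}$. Since each $\phi_{p_j}$ depends only on the residue of $x$ modulo $p_j^{a_j}$, the product $\phi=\prod_j\phi_{p_j}$ depends only on the residue modulo $m$, and is in particular $G(\Z)$-invariant and finite-valued. Decompose
\[
V(\Z)=\bigsqcup_{\bar B\in V(\Z/m\Z)} S_{\bar B},\qquad S_{\bar B}:=\{B\in V(\Z):B\equiv \bar B\pmod m\},
\]
and write
\[
N_\phi(V(\Z)\cap V^{(i)};X)=\sum_{\bar B\in V(\Z/m\Z)}\phi(\bar B)\cdot N(S_{\bar B}\cap V^{(i)};X).
\]
Each $S_{\bar B}$ is defined by congruence conditions modulo finitely many prime powers, so Theorem~\ref{cong2} applies to each.

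Next, apply Theorem~\ref{cong2} to each $S_{\bar B}$ to obtain
\[
N(S_{\bar B}\cap V^{(i)};X)=N(V(\Z)\cap V^{(i)};X)\cdot\prod_p\mu_p(S_{\bar B})+o(X^{n/k}).
\]
For $p\notin\{p_1,\ldots,p_r\}$ one has $\mu_p(S_{\bar B})=1$, while for each $p=p_j$ the local density is $\mu_{p_j}(S_{\bar B})=p_j^{-na_j}$ (the $p_j$-adic volume of the single residue class of $\bar B$ modulo $p_j^{a_j}$ inside $V(\Z_{p_j})$). Multiplying through by $\phi(\bar B)$ and summing over $\bar B\in V(\Z/m\Z)$, the product structure $\phi=\prod_j\phi_{p_j}$ together with the Chinese Remainder factorization $V(\Z/m\Z)=\prod_j V(\Z/p_j^{a_j}\Z)$ yields
\[
\sum_{\bar B}\phi(\bar B)\prod_p\mu_p(S_{\bar B})
=\prod_{j=1}^r\Biggl(\sum_{\bar B_j\in V(\Z/p_j^{a_j}\Z)}\phi_{p_j}(\bar B_j)\,p_j^{-na_j}\Biggr)
=\prod_{j=1}^r\int_{V(\Z_{p_j})}\tilde\phi_{p_j}(B)\,dB,
\]
where the last equality uses that the additive Haar measure on $V(\Z_{p_j})$ with total mass $1$ assigns mass $p_j^{-na_j}$ to each residue class modulo $p_j^{a_j}$.

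Combining the last two displays gives the claimed asymptotic, up to a sum of $o(X^{n/k})$ error terms, one for each $\bar B\in V(\Z/m\Z)$. The only point requiring comment is the handling of these errors: since $m=\prod_j p_j^{a_j}$ is a fixed integer depending only on the data of the theorem, the number of residue classes $\#V(\Z/m\Z)=m^n$ is a bounded constant, so the aggregate error $m^n\cdot o(X^{n/k})$ is still $o(X^{n/k})$. This is the only nontrivial point in the argument, and it is automatic because the weights $\phi_{p_j}$ are supported at only finitely many primes with bounded conductors; in particular, no uniform sieve estimate is needed here. The result follows. \qed
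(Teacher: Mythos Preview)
Your proof is correct and takes essentially the same approach as the paper. The paper simply states that ``the proof is identical'' to that of Theorem~\ref{cong2}, and your argument---decomposing $V(\Z)$ into the finitely many residue classes modulo $m=\prod_j p_j^{a_j}$, applying Theorem~\ref{cong2} (whose proof already handles individual lattice translates that are not $G(\Z)$-invariant) to each class, and then summing using the Chinese Remainder factorization---is exactly how one makes that assertion precise.
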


\subsection{Proof of Propositions~\ref{hard2} and \ref{gzbigstab}}\label{sec:reducibility}

We may use the results of \S \ref{sec:cong} to prove Propositions~\ref{hard2} and \ref{gzbigstab} on estimates for reducible points in the main body and points with large stabilizer, respectively.  Indeed, to prove Proposition~\ref{hard2}, we note that if an element $B\in V(\Z)$ 
is reducible over $\Q$ then it also must be reducible modulo $p$ for every $p$.  

Let $S^\red$ denote the set of elements in $V(\Z)$ that are reducible over $\Q$, and let $S^\red_p$ denote the set of all elements in $V(\Z)$ that are reducible mod $p$.  Then $S^\red\subset \cap_p S^\red_p$.  Let $S^\red(Y)=\cap_{p<Y}S^\red_p$ for any positive integer $Y$, and let us use as before $V(\phi)$ to denote the set $B\in V(\Z)$ such that $b_{\min}\neq 0$.  Then the proof of Theorem~\ref{cong2} (without assuming Propositions~\ref{hard2} and \ref{gzbigstab}!) gives that
\begin{equation}\label{SYcount}
N^\ast(S^\red(Y)\cap V(\phi);X)
  \;\leq\;  N^\ast(V(\phi);X)\cdot
  \prod_{p<Y} \mu_p(S^\red_p)+o(X^{n/k}).
\end{equation}
Note that the inequality in \eqref{SYcount} also holds when the product is over subsets of primes $p < Y$.

To estimate $\mu_p(S^\red_p)$ in each case (for $p$ large enough), first recall that an element $v$ of $V(\Z)$ is reducible mod $p$ if and only if any one of the covariant binary quartic forms or ternary cubic forms coming from $v$, considered mod $p$, corresponds to a trivial Selmer element, i.e., has a root or a flex defined over $\Fp$, respectively. In each of the seven cases, we show that for infinitely many $p$, there exists $\delta \in (0,1)$ such that $$\mu_p(S^\red_p) \leq 1 - \delta + O(p^\beta)$$
where $\beta = -1/2$ or $-1$.
The latter five cases will use the first two cases, and some cases will need a weak form of the Hasse bound
$$\# E(\Fp) = p + O(\sqrt{p})$$
for elliptic curves $E$ to conclude that all of the fibers of the covariant binary quartic or ternary cubic maps are roughly the same size.

\begin{enumerate}
\item {\bf Binary quartic forms.} It is easy to check that $\frac{1}{4}(p^5-p^4+p^3-3p^2+2p)$ of the binary quartic forms in $\Fp$ are irreducible over $\Fp$ (out of $p^5$ total), so in this case, we have $\mu_p(S^\red_p) \leq 3/4 + O(1/p).$

\item {\bf Ternary cubic forms.} We will show that a positive density of smooth ternary cubic forms over $\Fp$ have Jacobians with a nontrivial rational $3$-torsion point, and most of those will not have a rational flex. We first claim that the density of the singular ternary cubics over $\Fp$ is $O(1/p)$; this follows from using the Grothendieck-Lefschetz trace formula to count points in the smooth locus of the moduli space of ternary cubics (there is no $1/\sqrt{p}$ term because the moduli space is rational and thus has vanishing $H^1$).

Note that since $\# E(\Fp)/3E(\Fp) = \#E[3](\Fp)$, each elliptic curve $E$ over $\Fp$ arises as the Jacobian of a ternary cubic form over $\Fp$ the same number of times (in fact, exactly $\# \GL_3(\Fp)$ times).
Now consider the degree $8$ forgetful map from the modular curve $Y_1(3)$ to the moduli space $\mathcal{M}_{1,1}$ of elliptic curves; over $\Fp$, the image consists of the elliptic curves over $\Fp$ with a nontrivial $3$-torsion point defined over $\Fp$, and this image must have density at least $1/8 + O(1/p)$ (the error term due to the cusps of the genus $0$ curve $X_1(3)$). Finally, if $E[3](\Fp) = 3$ or $9$, then a density of $2/3$ or $8/9$ (respectively) of the ternary cubics with Jacobian $E$ will not have a rational flex. Combining all of these proportions shows that the density of irreducible ternary cubics over $\Fp$ is at least $1/12 + O(1/p)$, so
$$\mu_p(S^\red_p) \leq 11/12 + O(1/p).$$

\item {\bf Bidegree $(2,2)$ forms.} A bidegree $(2,2)$ form is irreducible if and only if both covariant binary quartics are irreducible. Consider the map $\phi$ from bidegree $(2,2)$ forms to one of the covariant binary quartics. Given a bidegree $(2,2)$ form $v \in V(\Fp)$, if $f = \phi(v)$ is irreducible, then $f$ corresponds to a genus one curve $C$ (isomorphic to its Jacobian $E$) and a degree $2$ line bundle $L$ such that $L$ is not isomorphic to $\mathcal{O}(2Q)$ for a point $Q \in E(\Fp)$. Thus, the group $E(\Fp)/2E(\Fp)$ is nontrivial. Adding any nonzero point $P \in E(\Fp)$ in all but one (nonzero) coset of $2E(\Fp)$ to the line bundle $L$ gives a second line bundle $L'$ with $(C,L')$ corresponding to an irreducible binary quartic. Thus, a positive proportion (either $\frac{1}{2}-\frac{1}{\#E(\Fp)}$ or $\frac{3}{4}-\frac{1}{\#E(\Fp)}$) of the bidegree $(2,2)$ forms above $f$ are irreducible. Combining this with case 1 and the Hasse bound shows that for large enough $p$, we have
$$\mu_p(S^\red_p) \leq 7/8 + O(1/\sqrt{p}).$$

\item {\bf Rubik's cubes.} The argument for the case of Rubik's cubes is very similar to that for bidegree $(2,2)$ forms, replacing covariant binary quartics with covariant ternary cubics. Given an irreducible ternary cubic $f$ corresponding to a genus one curve $C$, degree $3$ line bundle $L$, and Jacobian $E$, we want to find points $P \in E(\Fp)$ such that $L+P$ and $L-P$ (as degree $3$ line bundles) are not isomorphic to $\mathcal{O}(3Q)$ for any $Q$. Again, we have that $E(\Fp)/3E(\Fp)$ is nontrivial (since $f$ is irreducible) so has size either $3$ or $9$. All nonzero points $P$ not in two of the nonzero cosets of $3E(\Fp)$ will thereby give an irreducible Rubik's cube (with three irreducible ternary cubics). For large enough $p$, we have
$$\mu_p(S^\red_p) \leq 35/36 + O(1/\sqrt{p}).$$

\item {\bf Doubly symmetric Rubik's cubes.} We combine the argument for Rubik's cubes with the observation that $2$-torsion points $P$ are always equal to $3P$. Thus, a doubly symmetric Rubik's cubes with one irreducible ternary cubic will be irreducible, and since there are either zero, one, or three nontrivial $2$-torsion points for any elliptic curve, we have $\mu_p(S^\red_p) \leq 35/36 + O(1/p).$
\item {\bf Triply symmetric hypercubes.} We combine the argument for bidegree $(2,2)$ forms with the observation that $3$-torsion points $P$ are always equal to $2(-P)$. So a triply symmetric hypercube with one irreducible binary quartic will be irreducible, and since an elliptic curve has zero, two, or eight nontrivial $3$-torsion points, we compute $\mu_p(S^\red_p) \leq  15/16 + O(1/p).$
\item {\bf Hypercubes.} We use a similar argument as the previous cases, but we now need to eliminate three cosets of $2E(\Fp)$. It is easy to check that the number of binary cubic forms over $\Fp$ with three distinct roots in $\Fp$ is $\frac{1}{6}p(p^2-1)(p-1)$, so $E(\Fp)/2E(\Fp)$ has order $4$ for $1/6 + O(1/p)$ of elliptic curves over $\Fp$. Thus, we obtain, for large enough $p$,
$$\mu_p(S^\red_p) \leq (1-1/4 \cdot 1/6 \cdot 1/4) + O(1/\sqrt{p}) = 95/96 + O(1/\sqrt{p}).$$
\end{enumerate}

Combining with (\ref{SYcount}), we see that
$$\lim_{X\to\infty}\frac{N^\ast(S^\red\cap V(\phi);X)}{X^{n/k}}
 \;\ll\;  \prod_{p<Y} \mu_p(S^\red_p) \;\ll \; \prod_{p<Y}\Bigl(1-\delta+O(p^\beta)\Bigr).
 $$
 When $Y$ tends to infinity, the product on the right tends to 0, proving Proposition~\ref{hard2}.
  
We may proceed similarly with Proposition~\ref{gzbigstab}.  If an element $B\in V(\Z)$ with nonzero discriminant has a nontrivial stabilizer in $G(\Q)$, then any of the covariant binary quartic forms or ternary cubic forms has a nontrivial stabilizer, or equivalently, the corresponding Jacobian has a rational $2$- or $3$-torsion point.
Let $S^\bigstab\subset V(\Z)$ denote the elements $B\in V(\Z)$ that have nontrivial stabilizers in $G(\Q)$ and let $S_p^\bigstab\subset V(\Z)$ denote the elements $B\in V(\Z)$ such that $B$ modulo $p$ has a nontrivial stabilizer in $G(\Fp)$.  Let $S^\bigstab(Y)=\cap_{p<Y} S^\bigstab_p$.  Then we claim that in each case, we have
\begin{equation} \label{eq:bigstab}
\mu_p(S^\bigstab_p) \leq (1-\delta') + O(p^\beta)
\end{equation}
for $\beta = -1/2$ or $-1$ and some $\delta' \in (0,1)$.

We need only compute $\delta'$ for binary quartics and ternary cubics; inequality \eqref{eq:bigstab} for the other cases will follow from the Hasse bound argument because inclusion in $V^{\bigstab}$ is determined by the stabilizer for any of the covariant forms. For binary quartic forms, note that $\frac{1}{3}p(p^2-1)^2$ of the $p^5$ binary quartic forms over $\Fp$ factor into an irreducible cubic factor and a linear factor over $\Fp$. All of the Jacobians of these curves have no $2$-torsion point over $\Fp$ since the cubic does not factor, so $\mu_p(S^\bigstab_p) \leq 2/3 + O(1/p)$. For ternary cubic forms, we want to find the density of ternary cubics whose Jacobians have no $3$-torsion point over $\Fp$. As above, consider the forgetful map ${Y}_1(3) \to \mathcal{M}_{1,1}$; both the source and the target over $\Fp$ are genus zero curves (with cusps), and since the fibers have order $0$, $2$, or $8$, we must have that at least $1/2 + O(1/p)$ of the points in $\mathcal{M}_{1,1}$ are not in the image of the map, i.e., $\mu_p(S^\bigstab_p) \leq 1/2 + O(1/p)$.

Finally, we have by the same argument as for Proposition \ref{hard2} that 
$$\lim_{X\to\infty}\frac{N^\ast(S^\bigstab\cap V(\phi);X)}{X^{n/k}}
 \;\ll\; \prod_{p<Y} \mu_p(S^\bigstab_p) \;\ll\; \prod_{p<Y}\Bigl(1-\delta'+O(p^\beta)\Bigr)
 $$
and letting $Y$ tend to infinity proves Proposition~\ref{gzbigstab}.

\section{Sieving to Selmer elements} \label{sec:sieveSelmer}

We have seen that locally soluble orbits of elements of $V(\Q)$ correspond to elements in the $d$-Selmer group $S(E)$ of elliptic curves $E$ in the family $F$, where $F=F_0$, $F_1$,
  $F_1(2)$, $F_1(3)$, or $F_2$ and $d=2$ or~$3$.  More precisely, recall from Theorem \ref{thm:parametrizations}(c) that irreducible such orbits correspond to elements of the Selmer group $S(E)$ that are not in the subgroup $S'=S'(E)$ given by the image in $S(E)$ of the marked points on $E$. 
  
Let $\Phi$ be a subfamily of $F$ that is defined by local congruence conditions modulo prime powers.  For each prime $p$ we assume that the elliptic curves over $\Z_p$, that the congruence conditions modulo powers of $p$ define, form a closed subset of $\Z_p^m$ with boundary of measure 0.
We use $\Phi^{\inv}$ to denote the set $\{\vec a\in \Z^m:\vec a = \vec a(E) \mbox{ for some } E\in \Phi\}$, and $\Phi^\inv_p$ to denote the
$p$-adic closure of $\Phi^{\inv}$ in $\Z_p^{m}$ by $\Phi^\inv_p$. We
say that such a subfamily $\Phi$ of elliptic curves over $\Q$ is {\em acceptable  at
  $p$} if $\Phi_p^\inv$ contains all elliptic curves $E$ in the family such that $p^2\nmid \Delta(E)$.  The subfamily $\Phi$ of elliptic curves is called {\em acceptable} if it is large at all but finitely
many primes $p$.  In this section, we prove Theorem~\ref{congmain} for this slightly more general definition of an acceptable subfamily, using an appropriate sieve applied to the counts of $G(\Z)$-orbits on $V(\Z)$ having bounded height as obtained in Section~7. 

\subsection{A weighted set $U(\Phi)$ in $V(\Z)$ corresponding to a large family $\Phi$}

Theorem \ref{thm:parametrizations}(c) implies that
non-$S'$ elements of the Selmer group of the Jacobian of the
elliptic curve $E(\vec a)\in F$ for $\vec a\in \Z^m$ are in bijective
correspondence with $G(\Q)$-equivalence classes of irreducible locally
soluble elements $B\in V(\Z)$ having invariants $M^ia_i$ and $M^i a'_i$ for all $i$;  
in this bijection, we have $H(B)=M^6H(C)$.  Let us write $\vec a_M$ to denote the vector $\vec a$ in which each $a_i$ and $a_i'$ are replaced by $M^ia_i$ and $M^ia_i'$, respectively.

In~\S\ref{countsec}, we computed the asymptotic number of
$G(\Z)$-equivalence classes of irreducible elements $B\in V(\Z)$
having bounded height.
In order to use this to compute the number of irreducible locally soluble $G(\Q)$-equivalence classes of elements $B\in V(\Z)$ having invariants in 
\begin{equation}\label{newinvts}
\{\vec a_M: \vec a\in \Phi^\inv\}
\end{equation}
and bounded height (where $\Phi$ is any large family), we need to count each 
$G(\Z)$-orbit $G(\Z)\cdot B$
with a weight of $1/n(B)$, where $n(B)$ is equal to the number of
$G(\Z)$-orbits inside the $G(\Q)$-equivalence class of $B$
in $V(\Z)$.  

To count the number of irreducible locally soluble $G(\Z)$-orbits having invariants in the set (\ref{newinvts}) and  bounded height, where each orbit $G(\Z)\cdot B$ is weighted by $1/n(B)$, it suffices to count the number of such $G(\Z)$-orbits of
 bounded height such that each orbit
$G(\Z)\cdot B$ is weighted instead by $1/m(B)$, where
$$m(B):=\sum_{B'\in O(f)} \frac{\#\Aut_\Q(B')}{\#\Aut_\Z(B')}\; = 
\sum_{B'\in O(f)} \frac{\#\Aut_\Q(B)}{\#\Aut_\Z(B')}\;
;$$ here $O(f)$ denotes a set of orbit representatives for the action of $G(\Z)$ on the $G(\Q)$-equivalence class of $B$ in $V(\Z)$, and $\Aut_\Q(B')$ (resp.,
$\Aut_\Z(B')$) denotes the stabilizer of $B'$ in $G(\Q)$ (resp.,
$G(\Z)$). The reason it suffices to weight by $1/m(B)$ instead of $1/n(B)$ is that we have shown in the
proof of Proposition~\ref{gzbigstab} that all but a negligible number $o(X^{n/k})$ of
$G(\Z)$-orbits having bounded height have trivial
stabilizer in $G(\Q)$ (and thus also in $G(\Z)$), while the number of elliptic curves in $\Phi$ of bounded height is $\gg X^{n/k}$.

We use $U(\Phi)$ to denote the weighted set of all locally soluble elements in $V(\Z)$ having invariants in the set (\ref{newinvts}), where each element of $B\in U(\Phi)$ is assigned a weight of $1/m(B)$.  Then we have concluded that the weighted number of irreducible $G(\Z)$-orbits of height less than $M^{6}X$ in $U(\Phi)$ is asymptotically equal to the number of elements in $S(E)\setminus S'(E)$ for elliptic curves $E$ of height less than $X$ in $\Phi$.  

The global weights $m(B)$ assigned to elements $B\in U(\Phi)$ are useful for the following reason.  
For a prime $p$ and any element $B\in V(\Z_p)$, 
define the local weight $m_p(B)$ by
$$m_p(B):=\sum_{B'\in O_p(B)} \frac{\#\Aut_{\Q_p}(B)}{\#\Aut_{\Z_p}(B')},$$ where $O_p(B)$ denotes 
a set of orbit representatives for the action of $G(\Z_p)$ on the $G(\Q_p)$-equivalence class of $B$ in $V(\Z_p)$, and 
$\Aut_{\Q_p}(B)$
(resp., $\Aut_{\Z_p}(B)$) denotes the stabilizer of $B$ in $G(\Q_p)$ (resp.,
$G(\Z_p)$). 
Using the fact that $G$ has class number one,
by an argument identical to \cite[Prop.~3.6]{BS},
we have the following identity:
 \begin{equation}\label{wtmult}
 m(B)=\prod_pm_p(B).
 \end{equation}
Thus the global weights of elements in $U(\Phi)$ are products of local weights, so we may express the global density of elements $U(\Phi)$ in $V(\Z)$ as products of local densities of the closures of the set $U(\Phi)$ in $V(\Z_p)$.  We consider these local densities next. 

\subsection{Local densities of the weighted sets $U(\Phi)$}

Suppose that $\Phi$ is a large subfamily of elliptic curves in $F$,
and for each prime $p$, let $\Phi_p$ denote the resulting family of
curves defined by congruence conditions over $\Z_p$ .  Let $U(\Phi)$
denote the associated weighted set in $V(\Z)$, and let $U_p(\Phi)$
denote the $p$-adic closure of $U(\Phi)$ in $V({\Z_p})$.  We can now
determine the $p$-adic density of $U_p(\Phi)$, where each element
$B\in U_p(\Phi)$ is weighted by $1/m_p(B)$,
in terms of a {\it local $(p$-adic$)$ mass} $M_p(V,\Phi)$ involving all
elements of $E(\Q_p))/d E(\Q_p)$ for curves $E$ in $\Phi$ over
$\Q_p$; the proof is identical to \cite[Prop.~3.9]{BS}:

\begin{proposition}\label{denel}
 Let $\mathcal J$ be the constant 
 of Proposition $\ref{vjac}$, and let $\Phi$ be any large subfamily of elliptic curves
 in $F$. Then
$$\int_{U_p(\Phi)}\frac{1}{m_p(v)}dv\,=\,|M^n\mathcal J|_p\cdot \Vol(G(\Z_p))\cdot
M_p(V,\Phi),$$
where
$$M_p(V,\Phi)\,:=\,\displaystyle{
\int_{E=E(\vec a)\in \Phi_p}\sum_{\sigma\in \textstyle\frac{E(\Q_p)}{d E(\Q_p)}}\frac{1}{\#E[d](\Q_p)}d\vec a.}$$
\end{proposition}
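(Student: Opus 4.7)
The plan is to mimic the argument of \cite[Prop.~3.9]{BS} by partitioning $U_p(\Phi)$ into $G(\Q_p)$-equivalence classes, applying the $p$-adic version of Proposition~\ref{vjac} on each class, and then using the parametrization of Theorem~\ref{thm:parametrizations} over $\Q_p$ to identify the resulting index set with $\bigcup_{E\in\Phi_p} E(\Q_p)/dE(\Q_p)$.

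First I would rewrite the integral as a sum over $G(\Q_p)$-orbits. For $v\in U_p(\Phi)$, let $[v]$ denote its $G(\Q_p)$-orbit in $V(\Q_p)$, and let $O_p(v)\subset[v]\cap V(\Z_p)$ be a set of $G(\Z_p)$-orbit representatives for the $G(\Z_p)$-orbits inside $[v]\cap V(\Z_p)$. By the definition of $m_p$,
\begin{equation*}
\int_{U_p(\Phi)}\frac{1}{m_p(v)}\,dv
\;=\;\sum_{[v]}\;\sum_{v'\in O_p(v)}\frac{\#\Aut_{\Z_p}(v')}{\#\Aut_{\Q_p}(v')}\int_{G(\Z_p)\cdot v'}dv,
\end{equation*}
where the outer sum ranges over those $G(\Q_p)$-orbits $[v]$ that meet $U_p(\Phi)$. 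Since the stabilizer in $G(\Z_p)$ of $v'$ is $\Aut_{\Z_p}(v')$, the orbit integral $\int_{G(\Z_p)\cdot v'}dv$ equals $\Vol(G(\Z_p))/\#\Aut_{\Z_p}(v')$ times the Jacobian factor from the $G$-action, so combining with the weight cancels the $\Aut_{\Z_p}$ terms and groups everything by $G(\Q_p)$-orbit.

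Next I would apply the $p$-adic analogue of Proposition~\ref{vjac}, namely the identity
\begin{equation*}
\int_{G(\Q_p)\cdot v_0}\phi(v)\,dv \;=\; \frac{|\mathcal J|_p}{\#\Aut_{\Q_p}(v_0)}\int_{G(\Q_p)}\phi(g\cdot v_0)\,dg
\end{equation*}
(obtained by the same Jacobian computation as in \S\ref{secvol}, valid over any local field not of residue characteristic $2$ or $3$ up to the usual factors of $2$ and $3$ absorbed in $M$). Combined with the change of variables $\vec a\mapsto \vec a_M=(M^i a_i)$, which multiplies the measure on invariants by $|M^n|_p$, this converts the $G(\Q_p)$-orbital integral over $[v]$ into $|M^n\mathcal J|_p\cdot\Vol(G(\Z_p))/\#\Aut_{\Q_p}(v)$ times a measure on the invariant vectors $\vec a\in\Phi_p^{\inv}$ that parametrize the orbits.

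Finally I would identify the index set and the $\Aut_{\Q_p}$-factor using Theorem~\ref{thm:parametrizations}(b). Over $\Q_p$, locally soluble $G(\Q_p)$-orbits above a fixed elliptic curve $E=E(\vec a)\in\Phi_p$ are in bijection with $S_d(E/\Q_p)=E(\Q_p)/dE(\Q_p)$, and the $G(\Q_p)$-stabilizer of each such orbit is $E[d](\Q_p)$ (the generic stabilizer identified at the end of the first subsection of this section). Substituting gives
\begin{equation*}
\int_{U_p(\Phi)}\frac{1}{m_p(v)}\,dv \;=\; |M^n\mathcal J|_p\cdot\Vol(G(\Z_p))\cdot\int_{E(\vec a)\in\Phi_p}\sum_{\sigma\in E(\Q_p)/dE(\Q_p)}\frac{1}{\#E[d](\Q_p)}\,d\vec a,
\end{equation*}
which is exactly the claimed formula. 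The main technical point to be careful about is the simultaneous bookkeeping of the three Jacobian-type factors, namely the scaling by $M$ in the invariants, the constant $\mathcal J$ from the $V(\Q_p)$-to-$G(\Q_p)\times(\text{invariants})$ Jacobian, and the stabilizer $\#E[d](\Q_p)$; each is local-constant on $\Phi_p$, so the integral makes sense and the identity can be checked orbit-by-orbit before integrating.
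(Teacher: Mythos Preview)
Your approach is exactly the one the paper intends (it simply cites \cite[Prop.~3.9]{BS}): decompose $U_p(\Phi)$ into $G(\Q_p)$-orbits, use the $p$-adic Jacobian identity to compute each orbit's weighted volume, and identify the orbits over a fixed $\vec a$ with $E(\Q_p)/dE(\Q_p)$ with stabilizer $E[d](\Q_p)$.

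However, your first displayed equation is not correct as written. The function $1/m_p$ is constant on the \emph{entire} $G(\Q_p)$-orbit $[v]$, so the decomposition should read
\[
\int_{U_p(\Phi)}\frac{1}{m_p(v)}\,dv
\;=\;\sum_{[v]}\frac{1}{m_p(v)}\sum_{v'\in O_p(v)}\int_{G(\Z_p)\cdot v'}dv,
\]
not the version with $\#\Aut_{\Z_p}(v')/\#\Aut_{\Q_p}(v')$ inserted termwise. Your version, if taken literally and combined with the orbit-volume formula $\int_{G(\Z_p)\cdot v'}dv=|\mathcal J|_p\,\Vol(G(\Z_p))/\#\Aut_{\Z_p}(v')$ (per fiber $d\vec a$), would overcount by a factor of $|O_p(v)|$. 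The correct cancellation is that
\[
\frac{1}{m_p(v)}\sum_{v'\in O_p(v)}\frac{1}{\#\Aut_{\Z_p}(v')}
\;=\;\frac{1}{\#\Aut_{\Q_p}(v)},
\]
which is immediate from the definition $m_p(v)=\#\Aut_{\Q_p}(v)\sum_{v'}1/\#\Aut_{\Z_p}(v')$. Once you correct this line, the rest of your argument (the $|M^n|_p$ from the scaling $\vec a\mapsto\vec a_M$, and the identification $\#\Aut_{\Q_p}(v)=\#E[d](\Q_p)$) goes through and matches the paper.
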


In the analogous manner, if $\Phi$ is a large subfamily of $F$,
then we may define $M_p(\Phi)$ to be the measure of $\Phi_p^\inv$
with respect to the measure $d\vec a$ on $\Z_p^{m}$, where the
measure $d\vec a$ on $\Z_p^m$ is normalized so that the total measure is
$1$.  That is, we have
\begin{equation}\label{ecden}
M_p(\Phi)=\int_{E=E(\vec a)\in \Phi_p}d\vec a.
\end{equation}
In Section 9, we will be interested in comparing the masses $M_p(V,\Phi)$ and $M_p(\Phi)$.  

\subsection{Squarefree conditions} \label{sec:squarefreeconditions}

In this section, we describe conditions for elements in $V(\Z)$ that will be removed in the sieve for Selmer elements. For example, we show that conditions like insolubility at $p$ imply that $p^2$ divides the discriminant (or a specific factor of it).

For two of the cases, the discriminant polynomial factors as a polynomial with repeated factors, so sieving naively for elements with squarefree discriminant would remove all elements. Specifically, for Case 5 (doubly symmetric Rubik's cubes), for an elliptic curve in $F_1(2)$, we have a factorization of the discriminant $\Delta = 16a_4^2(-4a_4+a_2^2)$ (see \cite[\S 5.2]{BH}); let $\alpha(v) = a_4$ and $\Delta'(v) = -4a_4+a_2^2$, which are both degree $12$ invariants of $V$. Similarly, for Case 6 (triply symmetric hypercubes), the discriminant polynomial for an elliptic curve in $F_1(3)$ factors as a rational multiple of $a_3^3(a_1^3-27 a_3)$ \cite[\S 6.3]{BH}; in this case, let $\alpha(v) = a_3$ and $\Delta'(v) = a_1^3-27 a_3$, which are degree $6$ invariants of $V$. Recall that we define the {\em reduced discriminant} $\Delta_{\red}(v)$ to be the squarefree part of the discriminant, so it is $\alpha(v)\Delta'(v)$ for these two cases (and just $\Delta(v)$ for the other cases).

We will use the following definition repeatedly in the sequel:

\begin{definition}
For any integer polynomial $g(t_1,\ldots, t_r)$ where $p^2$ divides $g(\vec{b})$ with $\vec{b} \in \Z^r$, we say that $g(\vec{b})$ is a multiple of $p^2$ for ``mod $p$ reasons'' if $p^2 \mid g(\vec{b}')$ for all $\vec{b}' \equiv \vec{b} \pmod{p}$, and for ``mod~$p^2$ reasons'' otherwise.
\end{definition}

\begin{proposition} \label{prop:psquareddivides}
Let $v \in V(\Z)$. If the covariant binary quartic or ternary cubic forms associated to $v$ are insoluble at $p$ $($i.e., do not have a $\Qp$-point$)$ or if $m_p(v) \neq 1$, then 
\begin{enumerate}
\item[\rm{(a)}] $p^2$ divides the discriminant $\Delta(v)$, and
\item[\rm{(b)}] in Cases $5$ and $6$, either $p^2$ divides $\alpha(v)$ for mod $p$ reasons, or $p$ divides both $\alpha(v)$ and $\Delta'(v)$, or $p^2$ divides $\Delta'(v)$ for mod $p^2$ reasons.
\end{enumerate}
\end{proposition}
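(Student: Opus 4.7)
The plan is to establish the contrapositive: assume $p^2 \nmid \Delta(v)$ in case (a), or in Cases 5 and 6 that none of the three divisibility patterns of (b) occurs. I will show that every covariant binary quartic or ternary cubic attached to $v$ has a $\Qp$-point and that $m_p(v) = 1$.

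First I would dispatch the good reduction range $p \nmid \Delta(v)$. The invariants $\vec{a}(v)$ determine an elliptic curve $E/\Zp$ that is smooth at $p$, so every $E$-torsor of good reduction is automatically locally soluble (Hensel-lift a point of $E(\Fp)$ guaranteed by Lang's theorem), and hence the covariant forms have $\Qp$-points. For the weight $m_p(v)$, Theorem~\ref{thm:parametrizations}(a)--(b) identifies the locally soluble $G(\Qp)$-orbits in $V(\Qp)$ with invariants $\vec{a}$ with the group $E(\Qp)/dE(\Qp)$, which has order $\#E[d](\Qp)$ in the good-reduction case. The explicit integral representatives of Section~\ref{sec:integralreps} exhibit exactly that many distinct $G(\Zp)$-orbits, and at each such representative one has $\#\Aut_{\Qp} = \#\Aut_{\Zp}$ (by the \'etaleness of $E[d]$ over $\Zp$ when $p \nmid d$, and by the formal-group analysis underlying Lemma~\ref{lem:almostintegral} when $p = d$). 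An orbit count then forces every $G(\Zp)$-orbit to be represented by one of these models, so each contributes a factor of $1$ to $m_p(v)$.

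Next I would treat $v_p(\Delta(v)) = 1$, where $E$ has multiplicative (nodal) reduction at $p$. The Tate-component calculation yields $\#E(\Qp)/dE(\Qp)$ as $\#E[d](\Qp)$ times a component-group factor, and the additional $G(\Zp)$-orbits produced by Lemma~\ref{lem:almostintegral} and the explicit constructions of Section~\ref{sec:integralreps} exactly match this factor; again every orbit is locally soluble with trivial weight. For part (b) in Cases 5 and 6, the factorization $\Delta = \alpha \cdot \Delta'$ reflects the two distinct ways $E$ can degenerate: $\alpha = 0$ means the marked $d$-torsion specializes to the singular point of the reduction, while $\Delta' = 0$ means the node arises away from the marked torsion. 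Forbidding the three divisibility patterns in (b) is precisely the condition that at most one of $\alpha, \Delta'$ is divisible by $p$, and only to first order; in that regime the reduction is multiplicative and the previous argument applies factorwise. Conversely, if any of the three patterns holds, the reduction becomes additive, and the orbit-count discrepancy produces either local insolubility or a nontrivial stabilizer weight.

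The main obstacle will be the orbit-count matching in the multiplicative case: one must verify that every $G(\Zp)$-orbit of $V(\Zp)$ over a given invariant vector $\vec{a}$ with $v_p(\Delta) = 1$ lies in the locally soluble image of Section~\ref{sec:integralreps}, with none missing and none carrying extra stabilizer. The cleanest route is to combine Proposition~\ref{vjac} (to compute the $\Zp$-mass of $V(\Zp)$ over fixed invariants) with the Tate mass formula for $\#E(\Qp)/dE(\Qp)$, and verify that the two agree. The residue characteristics $p = 2$ and $p = 3$ need separate care because the almost-integral representatives of Lemma~\ref{lem:almostintegral} carry bounded denominators; this is handled by performing the match at each explicit representative in Section~\ref{sec:integralreps} and absorbing the residue-$p$ defects into the bounded constant $M$ appearing in our integral-model theorem.
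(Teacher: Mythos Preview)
Your contrapositive strategy differs substantially from the paper's approach, and in part (b) it contains a real gap.

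The paper argues in the forward direction. For part (a) it simply notes that the discriminant of $v$ equals the discriminant of any covariant binary quartic or ternary cubic, so the known results for those forms (from \cite{BS},\cite{BS2}) transfer immediately; the $m_p$ claim follows because a nontrivial element of $G(\Qp)\setminus G(\Zp)$ moving $v$ inside $V(\Zp)$ restricts to one moving a covariant form. For part (b) the paper is explicitly computational: it classifies, up to coordinate change, the possible shapes of $v$ when a covariant form is insoluble (e.g., in Case~5 the cubic factors linearly and a basis change forces $v\equiv(E_{11},E_{22},E_{33})$ mod $p$), and when $m_p(v)\neq1$ it puts the offending non-integral element $(\gamma_1,\gamma_2)$ into diagonal form and reads off which coefficients of $v$ are forced to be divisible by which powers of $p$. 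Each subcase is then checked to produce one of the three divisibility patterns. No orbit-counting or mass-formula machinery is invoked.

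Your plan runs into trouble at the sentence ``Forbidding the three divisibility patterns in (b) is precisely the condition that at most one of $\alpha,\Delta'$ is divisible by $p$, and only to first order.'' This is false. For instance, if $p\nmid\alpha(v)$ but $p^2\mid\Delta'(v)$ \emph{for mod $p$ reasons} (i.e., $p^2\mid\Delta'(v')$ for every $v'\equiv v\pmod p$), then none of the three patterns is triggered: pattern~1 needs $p\mid\alpha$, pattern~2 needs $p\mid\alpha$, and pattern~3 specifically requires the divisibility to be for mod $p^2$ reasons. Yet $\Delta'$ is divisible by $p^2$, so $\Delta$ is divisible by a high power of $p$ and $E$ has additive, not multiplicative, reduction. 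Your ``multiplicative reduction'' case does not cover this, and no argument you have sketched applies. The whole point of the dichotomy in (b) --- mod~$p$ reasons for $\alpha$ versus mod~$p^2$ reasons for $\Delta'$ --- is that it is exactly what the later $\W_p^{(1)}/\W_p^{(2)}$ sieve in Proposition~\ref{propunif} needs, and it is finer than reduction type.

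Even for part (a), your orbit-count / mass-matching argument for $m_p(v)=1$ is the part you yourself flag as the main obstacle, and it really is one: you would be reproving a Jacobian-change-of-variables computation (Proposition~\ref{vjac}--style) together with a case analysis over each reduction type. The paper sidesteps this entirely by reducing to the covariant binary quartic or ternary cubic, for which the statement is already known.
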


\begin{proof}
For binary quartics and ternary cubics (Cases 1 and 2), this result is proved in \cite[Proposition 3.18]{BS} and \cite[Proposition 38]{BS2}, respectively.

Part (a) follows directly from Cases 1 and 2, since the discriminant of a bidegree $(2,2)$ form, a Rubik's cube, or a hypercube coincide with the discriminant of any of the covariant binary quartics or ternary cubics. For example, if a bidegree $(2,2)$ form (or a hypercube) $v$ gives rise to a covariant binary quartic $f$ that is insoluble at $p$, then $\Delta(v) = \Delta(f)$ is divisible by $p^2$. Furthermore, if one of the covariant binary quartics $f$ of $v \in V(\Z)$ has $m_p(f) \neq 1$, then there is an element of $\SL_2(\Qp) \setminus \SL_2(\Zp)$ that takes $f$ to another integral binary quartic. By taking the identity in all other factors of $\SL_2(\Qp)$, we then obtain a non-integral element of $\SL_2(\Qp)^r$ (for $r = 2$ or $4$, respectively) taking $v$ to another element of $V(\Zp)$, so $m_p(v) \neq 1$. The argument for Rubik's cubes is analogous.

For a doubly symmetric Rubik's cube $v$ (Case 5), if any of the covariant ternary cubics of $v$ is insoluble at $p$, then all the covariant cubics are. Let $f$ be the covariant cubic $\det(Ax + By + Cz)$, when we view $v$ as a triple of symmetric matrices $(A,B,C)$. By the argument for ternary cubics (see \cite[Proposition 38]{BS2}), we find that insolubility of $f$ implies that $f$ modulo $p$ factors over $\overline{\Fp}$ into linear factors. The three singularities $[x:y:z]$ of the curve $f = 0$ modulo $p$ correspond to where the rank of the matrix $Ax + By + Cz$ (modulo $p$) drops by $2$. Thus, a change of coordinates (over $\overline{\Fp}$) will take $v$ to the triple $(E_{11},E_{22},E_{33})$ modulo $p$, where $E_{ij}$ is the $3 \times 3$ matrix with a $1$ in the $ij$th entry and $0$ elsewhere. An easy explicit computation shows that $p^2$ divides $\alpha(v)$ for $v$ congruent to $(E_{11},E_{22},E_{33})$ modulo $p$ (so $p^2$ divides $\alpha(v)$ for ``mod $p$ reasons'').

If a doubly symmetric Rubik's cube $v = (A,B,C)$ has $m_p(v) \neq 1$, then there exists a nontrivial element $\gamma = (\gamma_1, \gamma_2) \in \GL_3(\Qp)^2$, not in $\GL_3(\Zp)^2$, such that $\gamma(v) \in V(\Zp)$ and $(\det \gamma_1) (\det \gamma_2)^2 = 1$ (where, say, $V = V_1 \otimes \Sym_2(V_2)$ and $\gamma_1$ acts on $V_1$ and $\gamma_2$ acts on $V_2$). 
Without loss of generality, by scaling, we may take $\gamma_2$ to have determinant $1$, $p^{-1}$, or $p^{-2}$ (so $\gamma_1$ has determinant $1$, $p^2$, or $p^4$, respectively).

First suppose $\gamma_2$ has determinant $1$. If $\gamma_1$ is nontrivial, then $\gamma_1$ also takes the covariant ternary cubic $f = \det(Ax + By + Cz)$ to an integral ternary cubic form $f'$. A change of basis puts $\gamma_1$ into the form $\left( \begin{smallmatrix} p^r &  & \\ & p^s & \\ & & p^t \end{smallmatrix} \right)$, where $r+s+t = 0$ and $r \leq s \leq t$ with at least one nonzero. Then either $f$ or $f'$ has a linear factor when reduced modulo $p$; assume without loss of generality $f$ factors into a linear and a quadratic factor modulo $p$. Then the curve $f = 0$ modulo $p$ has at least two singularities (over $\overline{\Fp}$), corresponding to where the rank of the matrix $Ax + By + Cz$ modulo $p$ drops by $2$. As above, a change of coordinates (over $\overline{\Fp}$) will take $v$ to the triple $(E_{11},E_{22},C')$ modulo $p$ for some symmetric matrix $C'$. An explicit computation\footnote{This computation may be done without having an explicit formula for $\alpha$ by computing the usual invariants for the ternary cubic $f$, which are degree $12$ and $18$ in the entries of $V$, and comparing them modulo low powers of $p$ to the degree $6$ and $12$ $G$-invariants of $V$, the latter of which is $\alpha$.} shows that $p^2$ divides $\alpha(v)$ for $v$ congruent to $(E_{11},E_{22},C')$ modulo $p$, so $p^2$ divides $\alpha(v)$ for mod $p$ reasons. Now if $\gamma_1$ is trivial, then $\gamma_2$ must be nontrivial, and $\gamma_2$ takes the other covariant ternary cubic $g$ to an integral ternary cubic. Again, we may take $\gamma_2$ to be of the form $\left( \begin{smallmatrix} p^r &  & \\ & p^s & \\ & & p^t \end{smallmatrix} \right)$, where $r+s+t = 0$ and $r \leq s \leq t$ with at least one nonzero. This implies that either $v$ or $\gamma_2(v)$, up to an appropriate change of coordinates, has the following factors of $p$ in each of the three matrices: $\left( \begin{smallmatrix} p^2 & p & p \\ p &  & \\ p & &  \end{smallmatrix} \right)$. Then the ternary cubic $f$ is a multiple of $p^2$, so both the invariants $\alpha(v)$ and $\Delta'(v)$ are divisible by $p^2$.

If $\gamma_2$ has determinant $p^{-u}$ for $u = 1$ or $2$, then we can similarly change the basis to make $\gamma_1$ a diagonal matrix $(p^r, p^s, p^t)$ with $r+s+t = 2u$ and $r \leq s \leq t$. To examine how $\gamma$ acts on the coefficients of the covariant ternary cubic form $f(x,y,z)$, first note that $\gamma_2$ sends $f$ to $p^{-u} f$. We claim that that either $\gamma(f)$ modulo $p$ is divisible by $x$, or $f$ modulo $p$ is divisible by $z$, or $\gamma(f)$ is a multiple of $p$. This is a straightforward computation: for $u = 1$, if $s \geq 1$, then under the action of $\gamma$, the coefficients of $x y z$, $y^3$, $y^2 z$, $y z^2$, and $z^3$ in $f$ are all multiplied by positive powers of $p$, so $\gamma(f)$ modulo $p$ is divisible by $x$. If $s \leq 0$, then the coefficients of $x^3$, $x^2 y$, $x y^2$, and $y^3$ are multiplied by negative powers of $p$ by the action of $\gamma$, so since $f$ is integral, it must be divisible by $z$ modulo $p$. For $u = 2$, if $r \leq 0$ and $s \geq 1$, then $x$ divides $\gamma(f)$ modulo $p$; if $r \leq 0$ and $s \leq 0$, then $z$ divides $f$ modulo $p$; and if $(r,s,t) = (1,1,2)$ (the only remaining case), then $p$ divides $\gamma(f)$. Now we may use the arguments from the case of $\det \gamma_2 = 1$, since either $f$ or $\gamma(f)$ has a linear factor when reduced modulo $p$ or $\gamma(f)$ is a multiple of $p$.

For triply symmetric hypercubes $v$ (Case 6), first suppose the covariant binary quartics arising from $v$ are insoluble at $p$. Then they must be a square of a quadratic polynomial modulo $p$ (possibly a fourth power of a linear factor). Viewing $v$ as a pair of binary cubic forms $(A,B)$ in variables $t$ and $u$, we have that the pencil of binary cubic forms has two points where the cubic is in fact the cube of a linear form modulo $p$. In other words, up to appropriate changes of coordinates, we have $A = t^3$ and $B = u^3$ modulo $p$. It is trivial to check in the case that the invariant $\alpha(v)$ is divisible by $p^2$, and for mod $p$ reasons.

If a triply symmetric hypercube $v$ has $m_p(v) \neq 1$, then there exists a nontrivial element $\gamma = (\gamma_1, \gamma_2) \in \GL_2(\Qp)^2$, not in $\GL_2(\Zp)^2$, such that $\gamma(v) \in V(\Zp)$ and $(\det \gamma_1)(\det \gamma_2)^3 = 1$ (where, say, $V = V_1 \otimes \Sym_3(V_2)$ and $\gamma_1$ acts on $V_1$ and $\gamma_2$ acts on $V_2$). A change of basis puts $\gamma_1$ into the form $\left( \begin{smallmatrix} p^r &  \\ & p^s \end{smallmatrix} \right)$ and $\gamma_2$ into the form $\left( \begin{smallmatrix} p^t &  \\ & p^u \end{smallmatrix} \right)$ for integers $r, s, t, u$. Since the action of the diagonal matrices $(p^3 \mathrm{Id}_2, p \mathrm{Id}_2)$ on $V_1 \otimes \Sym_3(V_2)$ is trivial, we may assume $\gamma_2$ has determinant $1$ or $p^{-1}$.

If $\gamma_2$ has determinant $1$, so does $\gamma_1$, and $r = -s$ and $t = -u$. First suppose $\gamma_1$ is nontrivial, so without loss of generality, we may take $r > 0$. Then $\gamma_1$ takes the covariant binary quartic $f = \Disc(Ax+By)$ to an integral binary quartic form $f'$. The binary cubic $B$ is thus a multiple of $p$, in which case it is easy to see that $p^3$ divides both $\alpha(v)$ and $\Delta'(v)$.
If instead $\gamma_1$ is trivial and $\gamma_2$ is nontrivial, then since we may take $t > 0$, we find that the binary cubic forms $A$ and $B$ must have multiple factors of $p$ in their coefficients, namely both are of the form $c_3 X^3 + c_2 X^2 Y +  p^r  c_1 X Y^2 + p^{3r}  c_0 Y^3$ for $c_i \in \Zp$. Again, this immediately implies that both $\alpha(v)$ and $\Delta'(v)$ are divisible by $p^3$.

If $\gamma_2$ has determinant $p^{-1}$, then we have $r = 3-s$ and $u = -t-1$. Then the action of $(\gamma_1, \gamma_2)$ on a pair of binary cubics $(A,B)$ with integral coefficients $((a_0,a_1,a_2, a_3),(b_0,b_1,b_2,b_3))$ produces a pair of cubics whose corresponding (integral) coefficients are scaled by the following powers of $p$:
$$((r+3t, r+t-1,r-t-2,r-3t-3),(-r+3t+3,-r+t+2,-r-t+1,-r-3t)).$$
Note that the powers for the coefficients of $B$ are negated and reversed of those for $A$. When any of the powers are negative, we find that the corresponding coefficient is divisible by the negative of that power of $p$. For example, if $r = 2$ and $t = 1$, then the integrality of both $(A,B)$ and $\gamma(A,B)$ implies that $p$ divides $a_2$, $p^4$ divides $a_3$, $p^2$ divides $b_2$, and $p^5$ divides $b_3$. Explicit computations with the invariants $\alpha$ and $\Delta'$ in this case give the following implications:
\begin{enumerate}
\item[(i)] if $p \mid a_2, a_3, b_2, b_3$, then $p^2 \mid \alpha$ and $p^2 \mid \Delta'$
\item[(ii)] if $p \mid a_3, b_1, b_2, b_3$, then $p \mid \alpha$ and $p \mid \Delta'$
\item[(iii)] if $p \mid b_0, b_1, b_2, b_3$ (i.e., $B$ is a multiple of $p$), then $p^3 \mid \alpha$ and $p^3 \mid \Delta'$
\end{enumerate}
Without loss of generality, we may assume $r \geq 2$ and $t \geq 0$. Then one of the above three cases holds unless $r = 3t + 3$ or $r = t + 2$. If $r = 3t + 3$, then $p^i$ divides $b_i$ for $i = 1, 2, 3$, so we compute that $p^2$ divides $\alpha$. If $r = t+2$, then $p$ divides $a_3$, $p$ divides $b_2$, and $p^2$ divides $b_3$; in this case, we compute that $p^2$ divides $\Delta'$, and for mod $p^2$ reasons.
\end{proof}

\subsection{Uniformity estimates and a squarefree sieve} \label{sec:uniformity}

To obtain the cases of equality in Theorem~\ref{thm:SelmerAverages}, we require a more general version of Theorem~\ref{cong3}, namely one that counts weighted elements of $V(\Z)$, where
the weight functions are defined by appropriate infinite sets of
congruence conditions. 
A function $\phi:V({\Z})\to[0,1]\in\R$ is said to be {\it defined
  by congruence conditions} if, for all primes $p$, there exist
functions $\phi_p:V({\Z_p})\to[0,1]$ satisfying the following
conditions:
\begin{itemize}
\item[(1)] For all $B\in V(\Z)$, the product $\prod_p\phi_p(B)$ converges to $\phi(B)$.
\item[(2)] For each prime $p$, the function $\phi_p$ is 
locally constant outside some closed set $S_p \subset V({\Z_p})$ of measure zero.
\end{itemize}
We say that such a function $\phi$ is {\it acceptable} if for
sufficiently large primes $p$, we have $\phi_{p}(B)=1$ whenever
$p^2$ divides $\Delta_{\red}(B)$. 

Our purpose in this section is to prove the following generalization of Theorem 
\ref{cong3}, which allows for certain infinite sets of congruence conditions:
\begin{theorem}\label{thsqfreetc}
  Let $\phi:V(\Z)\to[0,1]$ be an acceptable function that is defined by
  congruence conditions via the local functions $\phi_{p}:V({\Z_p})\to[0,1]$. Then, with
  notation as in Theorem~$\ref{cong3}$, we have:
\begin{equation}
N_\phi(V(\Z)^{(i)};X)
  \leq N(V(\Z)^{(i)};X)
  \prod_{p} \int_{B\in V({\Z_{p}})}\phi_{p}(B)\,dB+o(X^{n/k}),
\end{equation}
with equality in Cases \equalcases.
\end{theorem}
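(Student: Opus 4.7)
The plan is a classical truncate-and-tail squarefree sieve. For the upper bound, I would approximate $\phi$ by a product over primes $p<Y$ and show the truncation error is negligible uniformly in $X$; the equality cases then follow from the same tail estimate applied in reverse.

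First, for each $Y>0$, decompose $\phi = \phi^{<Y}\cdot\phi^{\geq Y}$, where $\phi^{<Y}(B) := \prod_{p<Y}\phi_p(B)$ is defined by finitely many congruence conditions and $\phi^{\geq Y}(B) := \prod_{p\geq Y}\phi_p(B) \leq 1$. Theorem~\ref{cong3} applies directly to $\phi^{<Y}$ to give
\begin{equation*}
N_{\phi^{<Y}}(V(\Z)^{(i)};X) = N(V(\Z)^{(i)};X)\cdot \prod_{p<Y}\int_{V(\Z_p)}\phi_p(B)\,dB + o_Y(X^{n/k}),
\end{equation*}
and the pointwise bound $\phi\leq\phi^{<Y}$ yields $N_\phi(V(\Z)^{(i)};X) \leq N_{\phi^{<Y}}(V(\Z)^{(i)};X)$. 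Acceptability forces $1-\int_{V(\Z_p)}\phi_p(B)\,dB = O(p^{-2})$, so the full product over $p$ converges, and the desired upper bound will follow once the truncation discrepancy is controlled uniformly in $X$.

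The main obstacle is precisely this uniformity estimate. Since $\phi$ is acceptable, for all but finitely many $p$ the inequality $\phi_p(B)<1$ forces $p^2\mid\Delta_{\red}(B)$, so it suffices to show
\begin{equation*}
\limsup_{X\to\infty} X^{-n/k}\cdot\#\{B\in V(\Z)^{(i)} : H(B)<X,\; p^2\mid\Delta_{\red}(B)\text{ for some }p\geq Y\} \longrightarrow 0 \text{ as } Y\to\infty.
\end{equation*}
Following the strategy of \cite{geosieve,BS2}, I would split the $p$-sum into three regimes. For $Y\leq p\leq X^{1/(2k)}$, Proposition~\ref{prop:psquareddivides} asserts that the ``mod $p$'' locus on which $p^2\mid\Delta_{\red}$ is cut out by a codimension-$\geq 2$ condition modulo $p$, contributing $O(X^{n/k}/p^2)$ per prime and hence $O(X^{n/k}/Y)$ in total. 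The genuinely ``mod $p^2$'' locus, which by Proposition~\ref{prop:psquareddivides}(b) arises only in Cases~5 and~6 and only via the factors $\alpha$ or $\Delta'$, is handled by the mod-$p^2$-to-mod-$p$ transformation of \cite{geosieve,BS2}, which replaces the mod-$p^2$ condition by a mod-$p$ condition on a modified integral model where the standard sieve again applies. For large primes $p>X^{1/(2k)}$, the quantitative Ekedahl--Poonen geometric sieve of \cite{Ekedahl,Po,geosieve} applied to the discriminant subvariety (or, in Cases~5 and~6, to each of its irreducible components $\{\alpha=0\}$ and $\{\Delta'=0\}$) yields a bound $o(X^{n/k})$ uniformly in $p$. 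The combinatorial content of Proposition~\ref{prop:psquareddivides}(b) is exactly what makes the mod-$p^2$-to-mod-$p$ swap admissible here, and is the technically delicate ingredient.

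For equality in Cases~\equalcasesnofactor{} and Case~5, the trivial pointwise inequality $\phi \geq \phi^{<Y}\cdot\mathbf{1}_{\phi^{\geq Y}=1}$ reduces the matching lower bound to exactly the same tail set treated above: the difference $N_{\phi^{<Y}} - N_\phi$ is bounded by the count of $B \in V(\Z)^{(i)}$ of height ${<}X$ with $p^2 \mid \Delta_{\red}(B)$ for some $p\geq Y$. For Cases~1, 2, and~4, $\Delta_{\red}=\Delta$ is a single irreducible polynomial and the argument reduces to that of \cite{BS,BS2} and the standard geometric sieve of \cite{geosieve}. Case~5 is the most delicate because $\Delta = 16\alpha^2\Delta'$ is reducible: one applies the tail estimate separately to the factors $\alpha$ and $\Delta'$ of $\Delta_{\red}$, invoking the mod-$p^2$-to-mod-$p$ transformation to dispose of the mod-$p^2$ contributions flagged by Proposition~\ref{prop:psquareddivides}(b). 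In Cases~3, 6, and~7 the geometric sieve as currently available is not strong enough to yield a matching lower bound, which is why only the upper bound is stated for those cases.
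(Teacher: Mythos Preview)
Your overall strategy—truncate to primes $p<Y$, apply Theorem~\ref{cong3}, and control the tail via a squarefree-sieve uniformity estimate—matches the paper's approach, and your upper-bound argument is correct. However, your handling of the tail estimate contains a genuine gap and a misattribution.

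First, Proposition~\ref{prop:psquareddivides} does not say what you claim. That proposition shows that \emph{insolubility at $p$ or $m_p(v)\neq 1$} forces $p^2\mid\Delta(v)$ (with refined information in Cases~5 and~6); its role is to certify that the weight function arising in the Selmer sieve is acceptable, so that Theorem~\ref{thsqfreetc} can be \emph{applied} to it later. It is not a structural statement about the locus $\{p^2\mid\Delta_{\red}\}$ and plays no part in the proof of Theorem~\ref{thsqfreetc} itself. The relevant tail estimate is Proposition~\ref{propunif}.

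Second, and more seriously, your assertion that ``the genuinely `mod $p^2$' locus \ldots arises only in Cases~5 and~6'' is false. The set $\W_p^{(2)}$ of elements with $p^2\mid\Delta'$ for mod~$p^2$ reasons is nonempty in \emph{every} case, and the geometric sieve alone does not control it: $\W_p^{(2)}$ is not cut out by a codimension-$2$ condition modulo~$p$. Your claim that Case~4 ``reduces to that of \cite{BS,BS2}'' is therefore incorrect—\cite{BS} handles Case~1 and \cite{BS2} handles Case~2, but Case~4 (Rubik's cubes) is new. The paper's main technical contribution here is the construction, for Cases~4 and~5, of a discriminant-preserving map $\phi$ from $G(\Z)$-orbits on $\W_p^{(2)}$ to $G(\Z)$-orbits on $\W_p^{(1)}$ that is at most $3$-to-$1$; it is built by analyzing the covariant ternary cubic form and applying the transformation~\eqref{eqmatgamma}. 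This reduces the $\W_p^{(2)}$ estimate to the already-established $\W_p^{(1)}$ estimate. Without this step, the equality in Case~4 does not follow. (The paper also needs the truncated-fundamental-domain device $\FF^{(\varepsilon)}$ to apply \cite[Theorem~3.3]{geosieve}, since the full fundamental domain is unbounded; you do not mention this, though it is relatively standard.)
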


To prove Theorem~\ref{thsqfreetc}, we follow the method of \cite{geosieve} to
establish the following tail estimate:

\begin{proposition}\label{propunif} Let $\W_p(V)$ be the set of $v \in V(\Z)$ such that $p^2$ divides $\Delta_{\red}(v)$.
 In Cases \equalcases, for any fixed $\varepsilon>0$, we have
\begin{equation}\label{tailestimate}
N(\cup_{p>Y}\W_p(V);X)=O_\varepsilon(X^{n/k}/(Y\log Y)+X^{(n-1)/k})+O(\varepsilon X^{n/k}).
\end{equation}
\end{proposition}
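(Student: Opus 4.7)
The plan is to adapt the geometric sieve of Ekedahl and Poonen, in the form developed for coregular representations in \cite{geosieve, BS, BS2, squarefree-BSW, squarefree-BSW2}. By Proposition~\ref{prop:psquareddivides} (together with the analogous analyses for Cases~1 and~2 carried out in \cite{BS, BS2}), for each prime $p$ the set $\W_p(V)$ decomposes as a union $\W_p^{(1)}(V) \cup \W_p^{(2)}(V)$, where $\W_p^{(1)}(V)$ consists of elements $v$ for which $p^2 \mid \Delta_\red(v)$ for mod $p$ reasons---i.e., $v \bmod p$ lies in a closed subvariety $Z_p \subset V_{\Fp}$ cut out by at least two independent equations---while $\W_p^{(2)}(V)$ consists of those for mod $p^2$ reasons. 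I would estimate the two contributions separately.

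For $\W_p^{(1)}(V)$, the first task is to verify that $Z_p$ has codimension at least $2$ in $V_{\Fp}$, uniformly for all sufficiently large $p$. For Cases~1 and~2 this is the classical computation in \cite{BS, BS2}; for Cases~4 and~5 it follows from the explicit descriptions in Proposition~\ref{prop:psquareddivides}, where the mod $p$ bad loci (such as triples of symmetric matrices with a common rank drop at more than one point) are manifestly cut out by at least two algebraic conditions. Having established this, I would split the range $p > Y$ at a threshold $M = M(\varepsilon)$. For $Y < p \le M$, Theorem~\ref{cong2} applied to the mod $p$ conditions defining $\W_p^{(1)}(V)$ yields $N(\W_p^{(1)}(V);X) \ll X^{n/k}/p^2 + X^{(n-1)/k}$, and summing via $\sum_{p>Y} 1/p^2 \ll 1/(Y\log Y)$ produces the main $X^{n/k}/(Y\log Y)$ term together with an $O_\varepsilon(X^{(n-1)/k})$ error. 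For $p > M$, the geometric sieve of \cite{geosieve}, applied to a global codimension-$2$ subvariety of $V$ whose mod $p$ reductions contain the $Z_p$ for all sufficiently large $p$, bounds the total contribution by $O(X^{n/k}/M) + O(X^{(n-1)/k+\varepsilon})$; choosing $M$ sufficiently large in terms of $\varepsilon$ makes this at most $\varepsilon X^{n/k} + O(X^{(n-1)/k+\varepsilon})$.

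For $\W_p^{(2)}(V)$, the strategy introduced in \cite[\S4]{BS2} and refined in \cite{geosieve, squarefree-BSW, squarefree-BSW2} is to convert each mod $p^2$ condition into a mod $p$ condition on an auxiliary space via explicit $G(\Q)$-transformations. For each reduction type identified in Proposition~\ref{prop:psquareddivides}(b) (and its analog for Case~4), one exhibits a canonical element $\gamma_v \in G(\Q)$, depending only on $v \bmod p$, such that $\gamma_v \cdot v \in V(\Z)$ has invariants rescaled by prescribed negative powers of $p$. Since the $G(\Z)$-orbit count is thereby moved but not duplicated beyond an absolutely bounded overlap, it suffices to count these rescaled representatives, which now lie in a codimension-$\geq 2$ subvariety mod $p$ of the ambient space, so the codimension-$2$ geometric sieve argument of the previous paragraph applies verbatim and yields a bound of the same shape.

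The main obstacle will be carrying out the explicit mod $p^2$-to-mod $p$ transformations in Cases~4 and~5: for each bad reduction type listed in Proposition~\ref{prop:psquareddivides}(b), together with the analogous classification for Rubik's cubes, one must exhibit a specific element of $G(\Q)$ that rescales the locus into a manifestly codimension-$2$ condition on the transformed coordinates, and verify that overlaps between the various subcases contribute only to the acceptable error terms. This parallels, but is considerably more elaborate than, the binary quartic and ternary cubic treatments in \cite{BS, BS2}, because the number of bad reduction types is larger and the rescaling exponents are governed by the weights of the diagonal torus acting on the higher-dimensional representations $\Q^3\otimes\Q^3\otimes\Q^3$ and $\Q^3\otimes\Sym_2\Q^3$.
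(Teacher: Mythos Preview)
Your overall plan---split $\W_p(V)$ into a ``mod $p$'' piece $\W_p^{(1)}$ and a ``mod $p^2$'' piece $\W_p^{(2)}$, handle the first by the Ekedahl--Poonen geometric sieve, and reduce the second to the first by a $G(\Q)$-transformation---matches the paper's. Your treatment of $\W_p^{(1)}$ is essentially correct; the paper packages it slightly differently, truncating the Siegel domain to a bounded $\FF^{(\varepsilon)}$ (so that \cite[Thm.~3.3]{geosieve} applies directly to the homogeneously expanding region $\FF^{(\varepsilon)}\cdot R^{(i)}(X)$) and absorbing the complement into the $O(\varepsilon X^{n/k})$ term, rather than splitting at a threshold $M$.

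Where your proposal diverges is the handling of $\W_p^{(2)}$. You describe a transformation $\gamma_v$ that \emph{rescales the invariants by negative powers of $p$}, followed by another application of the codimension-$2$ sieve. The paper does something cleaner and different in kind: the transformation it uses (multiply $B$ by $p$, then act by $\mathrm{diag}(1,1,p^{-1})$ in each $\SL_3$-factor) lies in $G(\Q)$ itself, and therefore \emph{preserves} all invariants, including height and discriminant. The point is that this $G(\Q)$-element carries $\W_p^{(2)}$ into $\W_p^{(1)}$, and the resulting map on $G(\Z)$-orbits is at most $3$-to-$1$ (the preimages are indexed by linear factors of the reduced covariant ternary cubic). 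Combined with the elementary observation that, once one has reduced to $Y>X^{1/(2k)}$, any discriminant $|\Delta|<X$ can have at most $2k$ prime factors exceeding $Y$, one obtains $N(\cup_{p>Y}\W_p^{(2)};X)\le 6k\cdot N(\cup_{p>Y}\W_p^{(1)};X)$ directly, with no second sieve required.

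Two further remarks. First, this argument proceeds uniformly via the covariant ternary cubic and does not require the case-by-case analysis of Proposition~\ref{prop:psquareddivides}(b) that you anticipate as the main obstacle; that proposition serves a different purpose (showing that insolubility or $m_p\neq 1$ forces $p^2\mid\Delta_\red$, so that the weight function on $U(\Phi)$ is acceptable). Second, your description ``invariants rescaled by negative powers of $p$'' is closer to the $Q$-invariant embedding of \cite{squarefree-BSW}, which the paper does invoke---but only later, for the separate problem of counting elliptic curves in $F_2$ with squarefree discriminant (Proposition~\ref{prop:denombound}), not here.
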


We expect Proposition \ref{propunif} to hold also for Cases 3, 6, and 7 (which together with Theorem \ref{ufcount} would imply that the upper bounds in Theorem \ref{thm:SelmerAverages} are exact averages for those cases).

\begin{proof}

If $p^2\leq X^{1/k}$, then the counting method of \S\ref{sec:cong}, 
with the relevant congruence conditions modulo~$p^2$ imposed, immediately yields
 the individual estimate $N(\W_p(V);X)=O(X^{n/k}/p^2)$ (noting that $R^{(i)}(X)=X^
{1/k}R^{(i)}(1)$).
Hence, to prove Proposition~\ref{propunif}, it suffices to assume that $Y>X^{1/(2k)}$.

To simplify notation, let $\Delta' = \Delta$ for Cases 1, 2, and 4.
Let $\W_p^{(2)}$ denote the set of $B\in V(\Z)$ such that $p^2\mid \Delta'(B)$ for ``mod $p^2$ reasons'', i.e., such that there exists $B' \equiv B \pmod{p}$ such that $p^2 \nmid
\Delta'(B')$. Let $\W^{(1)}_p:=\W_p(V) \backslash\W^{(2)}_p$; for $B \in \W^{(1)}_p$, we have that $p^2$ divides $\Delta'(B)$ for mod $p$ reasons or (only relevant in Case 5) either $p$ divides both $\alpha(B)$ and $\Delta'(B)$ or $p^2$ divides $\alpha(B)$. In Case 5, it is easy to check that if $p^2$ divides $\alpha(B)$ for mod $p^2$ reasons, then $p$ divides $\Delta'(B)$ also.
The two sets $\W_p^{(1)}$ and $\W_p^{(2)}$ are preserved under $G(\Z)$-transformations.

For any $\varepsilon>0$, let
  $\FF^{(\varepsilon)}\subset\FF$ denote the subset of
  elements $na(t_i,u_i)k\in\FF$ such that $t_i$ and $u_i$ are
  bounded above by an appropriate constant to ensure that
  $\Vol(\FF^{(\varepsilon)})=(1-\varepsilon)\Vol(\FF)$. Then
  $\FF^{(\varepsilon)}\cdot R^{(i)}(X)$ is a bounded domain in
  $V(\R)$ that expands homogeneously with
  $X$. By~\cite[Theorem~3.3]{geosieve}, we have
\begin{equation}\label{e1}
\#\{\FF^{(\varepsilon)}\cdot R^{(i)}(X)\bigcap (\cup_{p>Y}\W^{(1)}_p)\}=O_\varepsilon(X^{n/k}/(Y\log Y)+X^{(n-1)/k}).
\end{equation}
Furthermore, the results of \S\ref{sec:mainterm} imply that
\begin{equation}\label{e2}
\#\{(\FF\backslash\FF^{(\varepsilon)})\cdot R^{(i)}(X)\bigcap V(\Z)^
\irr\}=O(\varepsilon X^{n/k}).
\end{equation}
Combining the two estimates (\ref{e1}) and (\ref{e2}) 
yields \eqref{tailestimate} with $\W_p$ replaced with $\W^{(1)}_p$.

Proposition~\ref{propunif} is already known in Cases 1 and 2 of Table \ref{table:Invariants} (cf.\ \cite{BS, BS2}), so we prove the estimate for $\W^{(2)}_p$ only for Cases 4 and 5, where the elements of $V(\Z)$ are (possibly symmetric) $3\times 3\times 3$ matrices.

Suppose $B$ belongs to $\W^{(2)}_p$. Let $f(x,y,z)=\det(B_{1jk}x+B_{2jk}y+B_{3jk}z)$ 
be the first of the three ternary cubic forms arising from $B$; then $\Delta(B) = \Delta(f)$. 
Note that the discriminant of $f$ must also be a multiple of $p^2$ for mod~$p^2$ reasons (for otherwise $B$ would then be in 
$\W_p^{(1)}$).    In \cite[Prop.~25]{BS2}, it was shown that if a ternary cubic form $f$ has discriminant a multiple of $p^2$ for mod~$p^2$ reasons, then 
there is an $\SL_3(\Z)$-transformation taking $f$ to $f'$, such that $p$ divides the $xz^2$- and $yz^2$-coefficients and $p^2$ divides the
$z^3$-coefficient of $f'(x,y,z)$.  Let $B'$ be the result of the corresponding $\SL_3(\Z)$-transformation on $B$; then $f'$ is the first ternary cubic form arising from $B'$. 

Since the $z^3$-coefficient of $f'$ is a multiple of $p^2$, we see that the determinant of the $3\times 3$ matrix $(B'_{3jk})$ is a multiple of $p^2$, and it must be so for 
mod $p^2$ reasons.  It follows that the matrix $(B'_{3jk})$ modulo $p$ has rank 2. By an $\SL_3(\Z) \times \SL_3(\Z)$-transformation in Case 5, or simply
an $\SL_3(\Z)$-transformation in Case 6, we may obtain an element $B''\in V(\Z)$ from $B'$ such that: a) the last row and column of $(B''_{3jk})$ is a multiple of $p$; b) the determinant of the $2\times 2$ matrix $(B''_{3jk})_{1 \leq j,k \leq 2}$ is coprime to $p$; and c) $B''_{333}$ is a multiple of $p^2$.  Note that the first of the associated ternary cubic forms of $B''$ remains $f'$.  The fact that $p$  divides the coefficients of $xz^2$ and $yz^2$ implies that $B''_{133}$ and $B''_{233}$ are also multiples of $p$. 

Define the element $B'''$ by
\begin{equation}\label{eqmatgamma}
    \left(\left(\begin{smallmatrix}
      1&&\\&1&\\&&p^{-1}
    \end{smallmatrix}\right),\left(\begin{smallmatrix}
      1&&\\&1&\\&&p^{-1}
    \end{smallmatrix}\right),\left(\begin{smallmatrix}
      1&&\\&1&\\&&p^{-1}
    \end{smallmatrix}\right)\right)
    \cdot pB \mbox{ or }     \left(\left(\begin{smallmatrix}
      1&&\\&1&\\&&p^{-1}
    \end{smallmatrix}\right),\left(\begin{smallmatrix}
      1&&\\&1&\\&&p^{-1}
    \end{smallmatrix}\right)\right)\cdot pB
\end{equation}
depending on whether we are in Case 5 or 6, respectively.  Then $B'''$ has the same discriminant as $B$ and is in $\W_p^{(1)}$, because its first associated 
ternary cubic form $f'''$ has its $x^3$-, $x^2y$-, $xy^2$-, and $y^3$-coefficients divisible by~$p$. 

We therefore have obtained a discriminant-preserving map $\phi$ from $G(\Z)$-orbits on
$\W_p^{(2)}$ to $G(\Z)$-orbits on $\W_p^{(1)}$. The following lemma
states that this map is at most $3$ to $1$:
\begin{lemma}
Given a $G(\Z)$-orbit on $\W_p^{(1)}$, there are at most three
$G(\Z)$-orbits on $\W_p^{(2)}$ that map to it under $\phi$.
\end{lemma}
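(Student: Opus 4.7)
My plan is to convert the fiber count into a purely local $p$-adic orbit-counting problem. The map $\phi$ was defined by composing a $G(\Z)$-transformation $B\mapsto B''$ with the fixed rational element $\Gamma\in G(\Q_p)$ appearing in \eqref{eqmatgamma} (which acts by $B''\mapsto\Gamma\cdot pB''=B'''$). Since $G$ has class number one, two $G(\Z)$-classes $[B_1'']$ and $[B_2'']$ in $\W_p^{(2)}$ map to the same $G(\Z)$-class of $B'''$ precisely when the equation $\Gamma\cdot pB_1''=h\cdot\Gamma\cdot pB_2''$ admits a solution $h\in G(\Z_p)$. Thus the fiber of $\phi$ over $[B''']$ is in bijection with $G(\Z_p)$-orbits on
\begin{equation*}
\{B\in V(\Z_p)\cap\W_p^{(2)}:\ \Gamma\cdot pB\in G(\Z_p)\cdot B'''\}.
\end{equation*}

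Fixing a distinguished preimage $B_0''$, any other preimage in $V(\Z_p)$ has the form $(\Gamma^{-1}h\Gamma)\cdot B_0''$ for some $h\in G(\Z_p)$, and distinct $G(\Z_p)$-orbits correspond to distinct double cosets in $G(\Z_p)\backslash(\Gamma G(\Z_p)\Gamma^{-1}\cap G(\Q_p))/\mathrm{Stab}_{G(\Z_p)}(B_0'')$ consistent with integrality. Using a Cartan decomposition of each $\SL_3(\Q_p)$ factor together with the diagonal form $\mathrm{diag}(1,1,p^{-1})$ of $\Gamma$, the conjugation $h\mapsto\Gamma^{-1}h\Gamma$ preserves integrality only when $\bar h\pmod p$ lies in the standard maximal parabolic $P(\F_p)\subset\SL_3(\F_p)$ stabilizing the hyperplane $\mathrm{span}(e_1,e_2)\subset\F_p^3$; all other $h$'s push $(\Gamma^{-1}h\Gamma)\cdot B_0''$ out of $V(\Z_p)$ because of the $p^{-2}$-scaling on the $(3,3,3)$ entry.

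The fiber is thereby parametrized by a subset of $P(\F_p)\backslash\SL_3(\F_p)$, which is canonically identified with the set of lines in $\F_p^3$. The integrality condition on $(\Gamma^{-1}h\Gamma)\cdot B_0''$ cuts this set down sharply: only those lines along which the rank-$2$ reduction of the slice $(B_0'')_{3jk}\bmod p$ actually degenerates contribute. Given the explicit normal form for $B_0''$ established in Proposition~\ref{prop:psquareddivides} (in particular, that the last row and column of $(B_0'')_{3jk}$ are multiples of $p$ while the upper-left $2\times 2$ minor is a unit mod $p$), a direct computation should show that there are at most three such admissible lines, and hence at most three $G(\Z_p)$-orbits in the fiber.

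The main obstacle I anticipate is the explicit verification in the doubly symmetric case, where the symmetrization of two tensor factors modifies the enumeration of integral $G(\Z_p)$-orbits and the stabilizer subgroup has a less transparent action. I would handle this by embedding the doubly symmetric space into the ambient Rubik's cube space $W_1\otimes W_2\otimes W_2$, using the Rubik's cube count as an upper bound, and then carefully tracking which of the resulting orbits are identified under the $S_2$-symmetry. The bound of $3$ should emerge naturally as the count of $P(\F_p)$-orbits on the rank-$2$ degeneracy locus of the $B_0''$-slice, with case analysis confirming that no further identifications or obstructions appear.
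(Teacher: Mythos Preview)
Your double-coset framework is a reasonable way to localize the problem, but the proposal has a genuine gap at the decisive step: the bound $3$ is asserted, not derived. You claim that integrality of $(\Gamma^{-1}h\Gamma)\cdot B_0''$ cuts the coset space $P(\F_p)\backslash\SL_3(\F_p)$ down to ``at most three admissible lines'' along which the rank-$2$ slice $(B_0'')_{3jk}\bmod p$ ``degenerates'', but a rank-$2$ matrix over $\F_p$ has a one-dimensional kernel---a single line, not three---so this cannot be the source of the bound. More structurally, $\Gamma$ acts nontrivially in all three $\SL_3$ factors (both factors in Case~5), so the parabolic reduction must be done in each factor; the relevant coset space is then a product, and you give no mechanism for collapsing it to at most three $G(\Z_p)$-orbits. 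The phrases ``a direct computation should show'' and ``should emerge naturally'' confirm that the key step has not actually been carried out.

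The paper takes a different and more direct route that makes the $3$ visible immediately. It observes that for any preimage $B''\in\W_p^{(2)}$ normalized as in the construction, the first covariant ternary cubic of $B''$ has a nodal singularity at $[0:0:1]$ modulo $p$, and hence after applying \eqref{eqmatgamma} the first covariant cubic $f'''$ of $B'''$ acquires $z$ as a linear factor modulo $p$. Conversely, the inverse transformation \eqref{eqmatgammainverse} applied to a $G(\Z)$-translate of $B'''$ can be integral only if $z$ divides the first covariant cubic of that translate modulo $p$, i.e., only if the first-factor $\SL_3(\Z)$-action moves some linear factor of $f'''\bmod p$ to $z$. A ternary cubic has at most three linear factors over $\F_p$ (and if $f'''\equiv 0\pmod p$ the result lands in $\W_p^{(1)}$, not $\W_p^{(2)}$), so there are at most three preimages. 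The ``$3$'' is thus the degree of the covariant cubic form---an invariant-theoretic source that your parabolic/line-counting setup does not identify.
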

\begin{proof}
Let $B$ be an element of $\W_p^{(2)}$ and $f$ its first associated ternary cubic form, i.e., 
$f(x,y,z)=\det(B_{1jk}x+B_{2jk}y+B_{3jk}z)$. 
 If the reduction of $f\in\W^{(2)}_p$ modulo $p$ has a nodal
singularity at $[0:0:1]\in\P^2(\F_p)$ (i.e., its $xz^2$-, $yz^2$-, and $z^3$-coefficients vanish modulo $p$), then the first associated ternary cubic form
$f'''$ of the $3\times3\times3$ matrix \eqref{eqmatgamma}, when reduced modulo $p$, has $z$ as a factor. 
Moreover, for such an $f'''$, the first associated ternary cubic form 
\begin{equation}\label{eqmatgammainverseprep}
    \left(\begin{smallmatrix}
      1&&\\&1&\\&&p
    \end{smallmatrix}\right)\cdot p^{-1}f'''
\end{equation}
of the $3\times3\times3$ matrix 
\begin{equation}\label{eqmatgammainverse}
    \left(\left(\begin{smallmatrix}
      1&&\\&1&\\&&p
    \end{smallmatrix}\right),\left(\begin{smallmatrix}
      1&&\\&1&\\&&p
    \end{smallmatrix}\right),\left(\begin{smallmatrix}
      1&&\\&1&\\&&p
    \end{smallmatrix}\right)\right)
    \cdot p^{-1}B \mbox{ or }     \left(\left(\begin{smallmatrix}
      1&&\\&1&\\&&p
    \end{smallmatrix}\right),\left(\begin{smallmatrix}
      1&&\\&1&\\&&p
    \end{smallmatrix}\right)\right)\cdot p^{-1}B
\end{equation}
can be integral only if the $x^3$-, $x^2y$-, $xy^2$-, and
$y^3$-coefficients of $f'''$ are zero modulo $p$.  Therefore, the preimages
under $\phi$ of the $G(\Z)$-orbit of $B\in\W_p^{(1)}$ are
associated to 
linear factors of the reduction of $f'''$ modulo $p$. The reduction of
$f'''$ modulo $p$ has at most $3$ linear factors, unless $f'''\equiv
0\pmod{p}$, in which case
\eqref{eqmatgammainverse} belongs to $\W_p^{(1)}$. Thus, the map
$\phi:G(\Z)\backslash\W_p^{(2)}\to G(\Z)\backslash\W_p^{(1)}$
is at most $3$ to~$1$, and the lemma follows.
\end{proof}

Therefore, since discriminants less than $X$ can have at most $2k$ distinct prime factors $p> Y > X^{1/(2k)}$, we~obtain
\begin{equation}
N(\cup_{p>Y}\W_p^{(2)};X)\leq
3\cdot 2k\cdot N(\cup_{p>Y}\W_p^{(1)};X)=O_\varepsilon(X^{n/k}/(Y\log Y)+X^{(n-1)/k})+O(\varepsilon X^{n/k}).
\end{equation}
This concludes the proof of the proposition.
\end{proof}

\subsection{Weighted count of elements in $U(\Phi)$ having bounded height}

For a large family $\Phi$, we may now describe the asymptotic number of $G(\Z)$-orbits in $U(\Phi)$ having bounded height.  

\begin{theorem}\label{ufcount}
Let $\Phi$ be any large subfamily of $F$.  Then $N(U(\Phi);M^{6}X)$, the weighted number of $G(\Z)$-orbits in $U(\Phi)$ having height less than $M^6X$, is given by
\begin{equation}\label{eval}
N(U(\Phi);M^6X) \leq M^{6n/k}\cdot\displaystyle{\sum_{i=1}^{N} \frac{\Vol({\mathcal R_X})}{n_i}\cdot
\prod_p\int_{U_p(\Phi)}\frac{1}{m_p(v)}dv\cdot X^{n/k}}+o(X^{n/k}),
\end{equation}
with equality in Cases \equalcases.
\end{theorem}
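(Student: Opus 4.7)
The plan is to express $N(U(\Phi); M^6X)$ as a weighted count of $G(\Z)$-orbits in $V(\Z)$ and then invoke the squarefree sieve of Theorem~\ref{thsqfreetc}. Set $\phi(B) := \mathbf{1}_{U(\Phi)}(B)/m(B)$ on $V(\Z)$; by the product formula~\eqref{wtmult} this factors as $\phi = \prod_p \phi_p$ with $\phi_p(B) = \mathbf{1}_{U_p(\Phi)}(B)/m_p(B)$, and each $\phi_p$ is locally constant on $V(\Zp)$ off the (measure-zero) discriminant locus. Combining Proposition~\ref{gzbigstab} with the passage from $1/n(B)$- to $1/m(B)$-weighting justified in the discussion preceding the theorem, one has $N(U(\Phi); M^6X) = \sum_{i=1}^{N} N_\phi(V(\Z)^{(i)}; M^6X) + o(X^{n/k})$.

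The crux is to verify that $\phi$ is \emph{acceptable} in the sense of \S\ref{sec:uniformity}, i.e., $\phi_p(B) = 1$ for all but finitely many $p$ whenever $p^2 \nmid \Delta_{\red}(B)$. At any prime $p$ where $\Phi$ is large, largeness supplies $\vec a(B) \in \Phi_p^\inv$ whenever $p^2 \nmid \Delta_{\red}(B)$, and Proposition~\ref{prop:psquareddivides}(a) (contrapositively) simultaneously gives $m_p(B) = 1$ and local $\Qp$-solubility of the associated genus one curve. In Cases $5$ and $6$, where $\Delta_{\red} = \alpha\Delta'$, one must also invoke Proposition~\ref{prop:psquareddivides}(b) and trace through its several disjunctive failure modes to confirm that each forces $p^2 \mid \alpha(B)\Delta'(B)$; this verification is the main technical step, and (together with the restricted availability of Proposition~\ref{propunif}) is precisely what restricts the equality assertion of Theorem~\ref{thsqfreetc}, and hence of the present theorem, to Cases \equalcases.

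With acceptability in hand, Theorem~\ref{thsqfreetc} applied in each component yields
$$ N_\phi(V(\Z)^{(i)}; M^6X) \leq N(V(\Z)^{(i)}; M^6X)\,\prod_p \int_{V(\Zp)} \phi_p(B)\,dB + o(X^{n/k}), $$
with equality in Cases \equalcases. Theorem~\ref{thmcount} together with the homogeneity $\Vol(\mathcal R_{M^6X}) = M^{6n/k}\Vol(\mathcal R_X)$ gives $N(V(\Z)^{(i)}; M^6X) = M^{6n/k}\Vol(\mathcal R_X)/n_i + o(X^{n/k})$, while by construction $\int_{V(\Zp)} \phi_p(B)\,dB = \int_{U_p(\Phi)} m_p(v)^{-1}\,dv$. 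Summing over $i \in \{1,\ldots,N\}$ then delivers the bound~\eqref{eval}.
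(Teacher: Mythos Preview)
Your proposal is correct and is essentially the paper's own argument, just packaged differently. The paper does not invoke Theorem~\ref{thsqfreetc} as a black box; instead it carries out the same truncate-at-$Y$ argument explicitly: apply Theorem~\ref{cong3} to the finite set of congruence conditions $\cap_{p<Y}U_p(\Phi)$, let $Y\to\infty$ to get the upper bound~\eqref{eq371}, and for the equality cases use the containment $V(\Z)\cap\bigcap_{p<Y}U_p(\Phi)\subset U(\Phi)\cup\bigcup_{p>Y}\W_p$ together with Proposition~\ref{propunif}. Your route via Theorem~\ref{thsqfreetc} amounts to the same thing, since that theorem is itself proved by precisely this truncation-plus-tail-estimate scheme.

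One small clarification on your write-up: the acceptability verification via Proposition~\ref{prop:psquareddivides} actually goes through in \emph{all} seven cases (your disjunctive check in Cases~5 and~6 that each failure mode forces $p^2\mid\alpha\Delta'=\Delta_{\red}$ is fine). What restricts the equality assertion to Cases \equalcases\ is solely the availability of the tail estimate Proposition~\ref{propunif}, which is only established in those cases; the acceptability of $\phi$ is not the bottleneck.
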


\begin{proof}
By Theorem~\ref{thmcount}, Theorem~\ref{cong3}, and the multiplicativity of weights (\ref{wtmult}), 
  it follows that for any  fixed positive integer $Y$, we have
  \[
  \lim_{X\rightarrow\infty}\frac{N(V(\Z)\cap[\cap_{p<Y} U_p(\Phi)];M^{6}X)}{(M^6X)^{n/k}}=
   \sum_{i=1}^{N} 
  \frac{\Vol({\mathcal R_1})}
  {n_i}\cdot\prod_{p<Y}\int_{U_p(\Phi)}\frac{1}{m_p(v)}dv,\]
where $V(\Z)\cap[\cap_{p<Y} U_p(\Phi)]$ is viewed as a weighted set in which each element $B$ is weighted by $1/m(B)$.
  Letting $Y$ tend to infinity, we obtain that
  \begin{equation}\label{eq371}\limsup_{X\rightarrow\infty}
  \frac{N(U(\Phi);M^{6}X)}{X^{n/k}}\leq M^{6n/k}\cdot
  \sum_{i=1}^{N} \frac{\Vol({\mathcal R_X})}{n_i}\cdot
\prod_{p}\int_{U_p(\Phi)}\frac{1}{m_p(v)}dv.
\end{equation}

  To obtain a lower bound for $N(U(\Phi);M^6X)$ in Cases \equalcases, we note
  that
  $$V(\Z)\cap \bigcap_{p<Y}U_p(\Phi) \subset \Bigl(U(\Phi)\cup
 \bigcup_{p>Y}W_p\Bigr)$$
(even as weighted sets, since all weights in $U_p(\Phi)$ are less than 1.)
  Hence, by the uniformity estimate of Proposition \ref{propunif}, we have, for any $\varepsilon > 0$, that
  \[
  \liminf_{X\rightarrow\infty}
  \frac{N(U(\Phi);M^{6}X)}{(M^6X)^{n/k}}\geq 
  \sum_{i=1}^{N} \frac{\Vol({\mathcal R_X})}{n_i}\cdot
  \prod_{p<Y}\int_{U_p(\Phi)}\frac{1}{m_p(v)}dv - O_\varepsilon \left(\frac{1}{Y \log Y}\right) - O(\varepsilon).
 \]
  Letting $Y$ tend to infinity then yields
  \begin{equation} \label{eq:lastestimate}
  \liminf_{X\rightarrow\infty}
  \frac{N(U(\Phi);M^{6}X)}{(M^6X)^{n/k}}\geq 
  \sum_{i=1}^{N} \frac{\Vol({\mathcal R_X})}{n_i}\cdot
  \prod_{p}\int_{U_p(\Phi)}\frac{1}{m_p(v)}dv - O(\varepsilon).
  \end{equation}
  Since \eqref{eq:lastestimate} holds for any $\varepsilon > 0$, we conclude that equality holds in \eqref{eval} in these cases.
\end{proof}

It remains to evaluate expression (\ref{eval}) in terms of the total number of elliptic curves in $\Phi$ having height less than $X$.

\section{Proof of Theorem \ref{thm:SelmerAverages}}

\subsection{The number of elliptic curves of bounded height in a large subfamily}

In this subsection, we give an estimate for the number of elliptic curves of bounded height in any large subfamily $\Phi\subset F$, where $F=F_0$, $F_1$, $F_1(2)$, $F_1(3)$, or $F_2$.  Counting such elliptic curves involves understanding, in particular, the count of elliptic curves of bounded height in $F$ having squarefree discriminant. These asymptotic counts of elliptic curves of bounded height in large subfamilies $\Phi$ appear in the denominator when computing the average sizes of Selmer groups in these subfamilies.  

Specifically, we prove the following theorem. 

\begin{theorem}\label{ccount}
Let $\Phi$ be any large subfamily of $F$.  Then the number of elliptic curves $E$ in $\Phi$ with $H(E)<X$ is
given by
\[\int_{{H(\vec a)<X}}d\vec a\;\cdot \prod_p M_p(\Phi)\cdot 
X^{n/k} + o(X^{n/k}) .\]
\end{theorem}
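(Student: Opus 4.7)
The plan is to count lattice points $\vec a \in \Z^m$ with $H(\vec a) < X$ satisfying the local conditions defining $\Phi$. The main term will come from the real volume times the product of $p$-adic densities, and the error term will come from a combination of lattice point discrepancy and a squarefree sieve to handle the infinite Euler product.

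First I would count all lattice points $\vec a \in \Z^m$ with $H(\vec a) < X$ regardless of $\Phi$: the region $\{H(\vec a)<X\}$ is a bounded semialgebraic set which scales homogeneously with $X$, so Davenport's lemma (Proposition~\ref{davlem}) yields the count as the Euclidean volume $\int_{H(\vec a)<X}d\vec a$ plus a lower-order error, and the subset with $\Delta(\vec a) = 0$ lies on a proper subvariety and contributes negligibly.

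Next I would perform a finite truncation. For a prime bound $Y$, the subset of $\vec a$ satisfying the congruences defining $\Phi$ modulo prime powers of primes $p<Y$ is cut out by finitely many congruences, and a standard inclusion-exclusion yields count $\int_{H(\vec a)<X}d\vec a \cdot \prod_{p<Y} M_p(\Phi)$ plus an error that depends on $Y$ but is of strictly lower order in $X$. Because $\Phi$ is large, for each prime $p \geq Y$ (with finitely many exceptions) the set $\Phi_p$ contains all $\vec a$ with $p^2 \nmid \Delta_{\red}(\vec a)$. Hence the count over $\Phi$ differs from this truncated count by at most the number of $\vec a$ with $H(\vec a)<X$ such that $p^2 \mid \Delta_{\red}(\vec a)$ for some $p \geq Y$; equivalently, at those primes $M_p(\Phi) = 1 - O\!\bigl(\int_{p^2 \mid \Delta_{\red}}d\vec a\bigr)$, and the infinite product over $p \geq Y$ converges to $1$ as $Y \to \infty$.

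The crux is therefore the uniformity estimate
\[
\#\{\vec a \in \Z^m : H(\vec a) < X,\; p^2 \mid \Delta_{\red}(\vec a) \text{ for some } p \geq Y\} \;=\; o_Y\bigl(\textstyle\int_{H(\vec a)<X}d\vec a\bigr),
\]
with the $o_Y$ tending to zero as $Y \to \infty$. For the families $F_0$, $F_1$, $F_1(2)$, and $F_1(3)$, this is a routine application of the Ekedahl--Bhargava--Poonen geometric sieve \cite{Ekedahl,Po,geosieve}, split into the ``mod $p$'' range (handled directly by the geometric sieve) and the ``mod $p^2$'' range (handled by a discriminant-preserving map reducing to the mod-$p$ case, in the spirit of Proposition~\ref{propunif}). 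The main obstacle is $F_2$: here the reduced discriminant has degree too large relative to the number $m=4$ of invariants for Ekedahl's naive sieve to give the required saving. To handle this I would embed $F_2$ into the cusp region of the larger space of hypercubes (line~7 of Table~\ref{table:Invariants}), which carries additional polynomial $G$-invariants; by transferring the squarefree condition on $\Delta_{\red}$ to a condition on these additional invariants -- the ``Q-invariants'' of the Bhargava--Shankar--Wang method \cite{squarefree-BSW,squarefree-BSW2} -- one controls the tail count by the number of integer hypercubes of bounded height whose Q-invariant is small, which satisfies the required bound. Combining all three steps and letting $Y \to \infty$ yields the stated asymptotic.
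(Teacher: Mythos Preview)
Your proposal is essentially correct and follows the paper's own approach closely: reduce Theorem~\ref{ccount} to a uniformity estimate (the paper's Proposition~\ref{prop:denombound}), handle the easier families by standard squarefree-sieve methods, and treat $F_2$ via the $Q$-invariant technique inside the space of hypercubes.

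A few minor discrepancies are worth noting. For $F_0$, $F_1$, and $F_1(2)$, the paper does not redo the mod~$p$/mod~$p^2$ sieve on the invariant space $\Z^m$ directly; instead it embeds $\Z^m$ into $V(\Z)$ via the Kostant sections \eqref{eq:BQformula-2O}, \eqref{eq:RCformula-3O}, \eqref{eq:2SymRCformula-3O} and simply \emph{cites} Proposition~\ref{propunif}, which it has already established on the ambient space. For $F_1(3)$ the paper does not invoke the geometric sieve at all: since $\Delta'(a_1,a_3)=a_1^3-27a_3$ is linear in $a_3$, a two-line direct count suffices. Your stated reason for why $F_2$ is hard (``degree too large relative to $m$'') is not quite the real obstruction; the Ekedahl sieve handles the mod~$p$ part in any case, and the difficulty is specifically in producing a discriminant-preserving transformation on $\Z^m$ that reduces mod~$p^2$ reasons to mod~$p$ reasons---no such transformation is visible on the $4$-dimensional invariant space, which is why one passes to the $16$-dimensional hypercube space. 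Finally, you have the direction of the $Q$-invariant bound reversed: one needs to bound hypercubes of height $<X$ with $Q$-invariant \emph{larger} than $Y$ (not small), and the paper's Proposition~\ref{prop:qestimate}(b) shows this is $O_\varepsilon(X^{2/3+\varepsilon}/Y)$.
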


As with the proof of Theorem \ref{thsqfreetc}, to obtain Theorem \ref{ccount} it suffices to prove:

\begin{proposition} \label{prop:denombound}
For any family $F=F_0,F_1,F_1(2),F_1(3),$ or $F_2$, we have
\begin{equation*}
\#\{\vec{a} \in F : H(\vec{a})<X \text{ and } p^2 \mid \Delta'(\vec{a})
\text{ for some $p>Y$}\} = O_\varepsilon(X^{n/k+\varepsilon}/Y)+o(X^{n/k}).
\end{equation*}
\end{proposition}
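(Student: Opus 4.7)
The plan is to split the primes $p > Y$ into a ``moderate'' range $Y < p \leq X^{1/(2k)}$ and a ``large'' range $p > X^{1/(2k)}$, and to handle each range by a different mechanism. Throughout, one may assume without loss that $\Delta'$ is not identically zero modulo $p$, which holds for all sufficiently large $p$.

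For the moderate range, I would sieve directly on residue classes modulo $p^2$. A standard analysis, separating $\vec a \in (\Z/p\Z)^n$ into smooth and singular points of the reduction $\{\Delta' \equiv 0 \bmod p\}$, shows that the density of $\vec a \in (\Z/p^2\Z)^n$ with $p^2 \mid \Delta'(\vec a)$ is $O(1/p^2)$: for a smooth point of the reduction, exactly $p^{n-1}$ out of the $p^n$ lifts satisfy the congruence, while the singular locus has codimension at least $2$ and so contributes at most $O(p^{n-2}) \cdot p^n / p^{2n} = O(1/p^2)$. Combining with Davenport's estimate (Proposition~\ref{davlem}) to control the boundary error in the box $\{H(\vec a) < X\}$ yields
\[
\#\{\vec a : H(\vec a) < X,\ p^2 \mid \Delta'(\vec a)\} \;\ll\; \frac{X^{n/k}}{p^2} + X^{(n-1)/k}.
\]
Summing over $Y < p \leq X^{1/(2k)}$ gives a contribution of $O(X^{n/k}/Y) + o(X^{n/k})$, where the small-prime loss from codimension tracking is absorbed into the $X^\varepsilon$ factor of the first term in the proposition.

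For the large range, the input is the quantitative geometric (Ekedahl--Poonen) sieve of \cite{Ekedahl, Po, geosieve}: provided the singular locus of $\{\Delta' = 0\}$ in affine $n$-space has codimension at least $2$, the number of $\vec a$ in a box of side $X^{1/k}$ with $p^2 \mid \Delta'(\vec a)$ for some $p > X^{1/(2k)}$ is $O_\varepsilon(X^{n/k}/(X^{1/(2k)} \log X)) + O(X^{(n-1)/k+\varepsilon}) = o(X^{n/k})$. For $F_0$, $F_1$, $F_1(2)$, and $F_1(3)$, the codimension hypothesis can be verified by inspection of the explicit defining polynomial $\Delta'$, after which the two ranges combine to prove the proposition.

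The main obstacle is the family $F_2$. Here the parameter space is higher dimensional, and the height function $\max\{|a_i|^{12/i}, |a_2'|^6, |a_2''|^6\}$ weights the coordinates asymmetrically, so one cannot apply the geometric sieve in its cleanest form directly to the natural discriminant polynomial on $F_2$. I would handle this by extending the ``$Q$-invariant method'' of \cite{squarefree-BSW, squarefree-BSW2}: embed $F_2$ into the cusp region of the larger representation in Case~7 of Table~\ref{table:Invariants} (the space of hypercubes), and use the additional polynomial invariants available on this ambient space to replace the $p^2$-divisibility of $\Delta'(\vec a)$ by a conjunction of mod-$p$ conditions on auxiliary polynomials whose joint vanishing locus is of codimension at least~$2$. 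The quantitative geometric sieve then applies uniformly to this enlarged data, yielding the required $o(X^{n/k})$ tail bound on the large prime range and completing the proof.
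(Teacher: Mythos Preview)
For $F_0$, $F_1$, $F_1(2)$, and $F_1(3)$, your direct-sieve plan is a legitimate alternative to the paper's. The paper does not sieve on the low-dimensional parameter space $\R^m$; for $F_0$, $F_1$, $F_1(2)$ it embeds $F$ into the coregular representation $V$ via the Kostant sections \eqref{eq:BQformula-2O}, \eqref{eq:RCformula-3O}, \eqref{eq:2SymRCformula-3O} of \S\ref{sec:integralreps} and simply quotes the already-established tail estimate Proposition~\ref{propunif} on $V(\Z)$, while for $F_1(3)$ it gives a two-line count exploiting that $\Delta'=a_1^3-27a_3$ is linear in~$a_3$. Your route trades this reuse for self-containment. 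One caveat: the sieve of \cite{geosieve} as usually stated is for homogeneously scaled regions, and your large-prime step must also address the ``mod~$p^2$ reasons'' case (smooth $\F_p$-points of $\{\Delta'=0\}$), which the codimension-$2$ hypothesis on the singular locus alone does not cover; both issues are routine here by fibration over the fastest-growing coordinate, but they deserve acknowledgment.

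For $F_2$, your sketch names the right method but misdescribes its mechanism, and this is a genuine gap. The $Q$-invariant argument does \emph{not} ``replace the $p^2$-divisibility of $\Delta'$ by a conjunction of mod-$p$ conditions on auxiliary polynomials'' and then rerun a geometric sieve. What the paper actually does is: define $Q(B)=b_{2111}$ on the cusp subspace $V_0\subset V$ of hypercubes (so that $Q(B)^2\mid\Delta(B)$ automatically); show that $Q$ is a well-defined $G(\Q)$-orbit invariant on distinguished hypercubes whose associated elliptic curve has $E(\Q)[2]=0$; prove, via explicit lower-triangular unipotent transformations followed by a diagonal rescaling, that whenever $p^2\mid\Delta(\vec a)$ for mod-$p^2$ reasons the section \eqref{eq:QinvHCembed} is $G(\Q)$-equivalent to an element of $V_0(\Z)$ with $Q$-invariant exactly~$p$; and finally bound the number of distinguished $G(\Z)$-orbits with $|Q|>Y$ and height $<X$ by the averaging/cusp-counting method of \S\ref{avgsec}, inserting the factor $|b_{2111}b_{1122}b_{1212}b_{1221}|\geq Y$ into the integral over the region where $b_{1111}=b_{1112}=b_{1121}=b_{1211}=0$. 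No geometric sieve appears in this last step; the saving of $1/Y$ comes purely from the cusp integral. Your final sentence therefore points in the wrong direction for the hard part of the argument.
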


\begin{proof}[Proof of Proposition $\ref{prop:denombound}$ for $F = F_0, F_1,$ and $F_1(2)$]
We embed $F$ into the space of binary quartic forms, Rubik's cubes, or doubly symmetric Rubik's cubes (Cases 1, 4, or 5 of Table \ref{table:Invariants}) according to whether 
$F= F_0$, $F_1$, or $F_1(2)$, respectively, via the Kostant sections \eqref{eq:BQformula-2O}, \eqref{eq:RCformula-3O}, or \eqref{eq:2SymRCformula-3O} from Section~\ref{sec:integralreps}.
The estimate in Proposition \ref{propunif} then yields the desired result. \end{proof}

\begin{proof}[Proof of Proposition $\ref{prop:denombound}$ for $F=F_1(3)$]
In this case, the polynomial
$\Delta'(a_1,a_3)$
is simply $a_1^3-27a_3$.  If $a_1=O(X^{1/12})$ is fixed, then the number
of values of $a_3=O(X^{1/4})$ such that $p^2 \mid (a_1^3-27a_3)$ is clearly at
most $O(X^{1/4}/p^2+1)$.  Since $p \ll X^{1/8}$, we have
\begin{align*}
\#\{\vec{a} \in F: H(\vec{a})<X \text{ and } p^2 \mid \Delta'(\vec{a})
\text{ for some $p>Y$}\}
 &= \sum_{Y<p\ll X^{1/8}} O(X^{1/12})O(X^{1/4}/p^2+1)  \\
 &= \,O\left(X^{1/3}/(Y\log Y)+X^{5/24}\right),
 \end{align*}
yielding the desired estimate in this case.
\end{proof}

For the family $F=F_2$, the proof of Proposition \ref{prop:denombound} is more involved and comprises the remainder of this subsection. We follow a method analogous to that in~\cite{squarefree-BSW,squarefree-BSW2} (i.e., the ``$Q$-invariant method''), where the density of monic integer polynomials having squarefree discriminant was determined.  Specifically, we embed elements $\vec{a}\in F_2$ whose discriminant is a multiple of $p^2$ for mod $p^2$ reasons into the space $V(\Z)$ of hypercubes so that the invariants match and so that the image contains only hypercubes having ``$Q$-invariant'' equal to~$p$.  We then bound the number of elements lying in the image of this map that have $Q$-invariant greater than $Y$ and height less than $X$, using a variant of the averaging method, thus yielding the desired uniformity estimate. 

 Let $V$ denote the space of hypercubes and $G$ the algebraic group $\SL_2^4/\mu_2^3$ as in Case~7 of Table~\ref{table:Invariants}. 
Let $V_0\subset V$ be the subspace of hypercubes  $B = (b_{ijk\ell})$ such that $b_{1111} = b_{1112} = b_{1121} = b_{1211} = 0$, i.e., those of the form
	\begin{equation} \label{eq:semidistinguishedHC}
	\begin{array}{cc}
	\left(
		\begin{array}{cc}
		\ \ 0\ \ & 0 \\
		\ \ 0\ \ & b_{1122}
		\end{array}
	\right) &
	\left(
		\begin{array}{cc}
		\ \ 0\ \ & b_{1212} \\
		b_{1221} & b_{1222}
		\end{array}
	\right) \\[.175in]
	\left(
		\begin{array}{cc}
		b_{2111} & b_{2112} \\
		b_{2121} & b_{2122}
		\end{array}
	\right) &
	\left(
		\begin{array}{cc}
		b_{2211} & b_{2212} \\
		b_{2221} & b_{2222}
		\end{array}
	\right)
	\end{array},
	\end{equation}
where $b_{ijk\ell}$ are any elements in the base ring/field. Then the discriminant polynomial on $V_0$ factors over $\Q$ as $b_{2111}^2 b_{1122}^2 b_{1221}^2 b_{1212}^2$ times an irreducible polynomial of degree $16$. We call an integer hypercube {\em distinguished} if it is $G(\Z)$-equivalent to a nondegenerate hypercube in $V_0(\Z)$. Lemma~\ref{hyperred}(ii) shows that all distinguished hypercubes are reducible. 

The space $V_0$ is fixed by $U^4 \subset \SL_2^4$, where $U$ denotes the group of lower triangular unipotent matrices. We define the {\em $Q$-invariant} (named after the analogous invariant in \cite{squarefree-BSW}) of a hypercube $B \in V_0$ to be $Q(B) := b_{2111}$; we thus have that $Q(B)^2$ divides the discriminant $\Delta$ of $B$. Note that the $Q$-invariant is a degree $1$ invariant under the action of $U^4$ on $V_0$. In order to define the $Q$-invariant for more general distinguished hypercubes in $V(\Q)$, we show that it is well-defined on ``most'' $G(\Q)$-orbits:

\begin{proposition}
Let $B \in V_0(\Q)$ be a hypercube whose associated elliptic curve $E$ satisfies \linebreak $E(\Q)[2]\neq0$. Then for any $B' \in V_0(\Q)$ that is $G(\Q)$-equivalent to $B$, we have $Q(B') = Q(B)$.
\end{proposition}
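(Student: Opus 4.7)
The plan is to prove the proposition by extracting additional geometric structure from the vanishing conditions defining $V_0$ and then showing that $Q(B)$ depends only on data intrinsic to the $G(\Q)$-orbit. First I would interpret the condition $B\in V_0(\Q)$ geometrically via Theorem \ref{thm:parametrizations}. Concretely, I would compute that the coefficient of $x^4$ in each of the four covariant binary quartic forms $f_1,\ldots,f_4$ of $B$ equals, up to sign, Cayley's hyperdeterminant of the corresponding $2\times 2\times 2$ slice of $B$; the simultaneous vanishing $b_{1111}=b_{1112}=b_{1121}=b_{1211}=0$ makes each such slice lie in the tangent cone to the Segre variety at $e_2\otimes e_2\otimes e_2$, forcing the relevant hyperdeterminant and hence the $x^4$-coefficient of each $f_i$ to vanish. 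Thus each $f_i$ has $[1:0]\in\P^1(\Q)$ as a rational root, producing a rational ramification point of each of the four double covers $C\to\P^1$ arising from the line bundles $L,L\otimes P_1,L\otimes P_2,L\otimes P_1\otimes P_2$. Taking differences yields a rational $2$-torsion point $T\in E(\Q)[2]$, which accounts precisely for the hypothesis $E(\Q)[2]\neq 0$.

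Next, by Lemma \ref{hyperred}(ii) the nondegenerate element $B\in V_0(\Q)$ is reducible, so Theorem \ref{thm:parametrizations}(c) gives that the associated torsor $C$ is trivial and the triple is of the form $(E,E,L)$ with $L\cong\mathcal O(2P)$ for some rational $P\in E(\Q)$. I would then set up a canonical representative of the $G(\Q)$-orbit of $B$ inside $V_0(\Q)$ using the tuple $(E,L,T)$: starting from an explicit Kostant-type model as in \S\ref{sec:integralreps} and translating by the rational $2$-torsion point $T$, I would produce an element $B(E,L,T)\in V_0(\Q)$ whose invariants agree with those prescribed by $E$ and whose Q-invariant can be written down directly as a polynomial expression in the coefficients $(a_1,a_2,a_2',a_2'',a_3)$ and the affine coordinates of $T$.

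The main step is then to show that, on the intersection of a single $G(\Q)$-orbit with $V_0(\Q)$, the value $Q(B)$ depends only on the orbit, not on the particular representative $B$ or on the choice of $T$. The orbit determines $(E,L)$ by Theorem \ref{thm:parametrizations}, so what must be shown is that moving $T$ within the finite set of $2$-torsion points compatible with the $V_0$ structure, or moving $B$ within the stabilizer of $V_0$ in $G(\Q)$ (a suitable parabolic together with its Weyl-type extensions), does not alter $Q(B)$. The unipotent radical preserves $Q$ by direct computation (as noted in the setup of the proposition); for the remaining Levi and Weyl-type contributions, I would match $Q(B(E,L,T))$ to $Q(B(E,L,T'))$ for distinct rational $2$-torsion points $T,T'$ by a cancellation argument, exploiting the specific polynomial shape of the model together with the relations among the $a$-invariants of $E\in F_2$.

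Finally, combining these pieces, two elements $B,B'\in V_0(\Q)$ that are $G(\Q)$-equivalent correspond to the same $(E,L)$ and therefore to the same value $Q$ computed from the explicit formula, giving $Q(B)=Q(B')$. The hardest part of this program is the third step: producing the explicit formula for $Q$ in terms of $(E,L,T)$ and verifying its insensitivity to $T$, since this requires a careful accounting of how elements of $G(\Q)$ taking $V_0$ to $V_0$ interact with the distinguished coordinate $b_{2111}$, and it is precisely here that the hypothesis $E(\Q)[2]\neq 0$ is used to guarantee that the orbit admits a rigid normalization.
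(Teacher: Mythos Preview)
There are two genuine gaps in your proposal, and together they undermine the entire strategy.

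First, the hypothesis in the displayed statement is a typo: the intended (and actually used) condition is $E(\Q)[2]=0$, i.e., $E$ has \emph{no} nontrivial rational $2$-torsion. This is clear from the paper's own proof, which invokes irreducibility of the binary cubic $f(w_1,w_2)/w_2$, and from the surrounding argument, where curves with $E(\Q)[2]\neq 0$ are shown to be negligible and then discarded, while the $Q$-invariant is applied to the remaining curves with trivial rational $2$-torsion. Your whole plan is built around extracting a nontrivial $T\in E(\Q)[2]$ and normalizing with respect to it; under the correct hypothesis no such $T$ exists, so your third step has nothing to work with.

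Second, your opening computational claim is false. The vanishing $b_{1111}=b_{1112}=b_{1121}=b_{1211}=0$ forces the $w_1^4$-coefficient of only the \emph{first} covariant quartic $f_1$ to vanish, not all four. Slicing in direction~$1$ gives the cube $C_1=(b_{1jkl})$, whose four entries $c_{111},c_{112},c_{121},c_{211}$ are all zero, and one checks each of its three covariant binary quadratics has the form $c_i z_2^2$, so $\Disc(C_1)=0$. But slicing in direction~$2$ (or $3$, or $4$) gives a cube containing $b_{2111}=Q(B)$ as one of those four corner entries; its hyperdeterminant is generically nonzero (for direction $2$ the leading coefficient of $f_2$ works out to $(b_{2111}b_{1122})^2$). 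So there is no forced rational root of $f_2,f_3,f_4$, and your construction of $T$ via differences of ramification points does not go through.

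The paper's actual argument is short and direct. Since only $f_1$ has $[1:0]$ as a root and (by $E(\Q)[2]=0$) the residual cubic $f_1/w_2$ is irreducible over $\Q$, any element of the first $\SL_2(\Q)$-factor carrying $B$ to $B'\in V_0(\Q)$ must fix $[1:0]$ and hence be lower triangular. For the other three factors one uses instead the three binary \emph{quadratics} of the top cube $C_1$, each of which equals $c_i z_2^2$ and so has $[1:0]$ as its unique (double) root; this forces $g_2,g_3,g_4$ to be lower triangular as well. Thus $B$ and $B'$ differ by an element of $U^4$, and $Q(B)=b_{2111}$ is visibly $U^4$-invariant on $V_0$.
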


\begin{proof}
For a hypercube $B \in V_0(\Q)$, the first covariant binary quartic form $f(w_1,w_2)$ has $w_1^4$-coefficient equal to $0$. Indeed, if $C_1$ denotes the cube formed by the top two $2\times 2$ matrices in (\ref{eq:semidistinguishedHC}), and $C_2$ the cube formed by the bottom two $2\times 2$ matrices, then $f(w_1,w_2):=\Disc(C_1 w_1 + C_2 w_2)$ has $w_1^4$-coefficient 0 because $\Disc(C_1)=0$. This is a reflection of the fact that $B$ is reducible and corresponds to the trivial $2$-Selmer element of its associated elliptic curve $E$. Since $E(\Q)[2] = 0$, the binary cubic form $f(w_1,w_2)/w_2$ is irreducible over $\Q$, and so the only $\SL_2$-transformations of $f$ that preserve this initial zero coefficient are those in $U$. 
If $B' \in V_0(\Q)$ is $G(\Q)$-equivalent to $B$, then the corresponding covariant binary quartic form $f'(w_1,w_2)$ also has the property that $f'(w_1,w_2)$ has $w_1^4$-coefficient equal to $0$, and so $f$ and $f'$ must be $\SL_2(\Q)$-equivalent and thus $U$-equivalent.

In the other three directions, we claim that $B$ and $B'$ again differ only by transformations in $U$. Indeed, consider again the top cube $C_1$ of $B$. 
Its three covariant binary {\em quadratic} forms $q_i(z_1,z_2)$ for $i = 1, 2, 3$ (see \cite{hcl1} for the construction of three binary quadratics from a $2 \times 2 \times 2$ cube) are all of the form $c_i z_2^2$, i.e., have first and second coefficient both zero. Since a binary quadratic form can have only one double root, the only linear transformations that preserve the two zero coefficients of $q_i(z_1,z_2)$ are unipotent. Therefore, the two hypercubes $B$ and $B'$ must be $U^4$-equivalent, and so their $Q$-invariants are equal.
\end{proof}

We may thus define the $Q$-invariant of any hypercube $B' \in V(\Q)$ whose associated elliptic curve $E$ satisfies $E(\Q)[2]\neq 0$ by setting
$Q(B') := Q(B)$, where $B\in V_0(\Q)$ is any element that is $G(\Q)$-equivalent to $B'$.

Now recall that a curve in the family $F_2$ is of the form 
\begin{equation} \label{eq:F2eqn}
y^2+a_1 x y + a_3 y = (x-a_2)(x-a_2')(x-a_2'')
\end{equation}
where $a_i\in\Z$, $a_2 + a_2'+a_2'' = 0$, and $\Delta(\vec{a})\neq0$. 
We embed $F_2$ into the space of hypercubes (see \eqref{eq:hypmin2O} in Case 7(b) in \S \ref{sec:integralreps}) by sending
the curve with invariants $(a_1, a_2, a_2', a_2'', a_3)$ to the hypercube
	\begin{equation} \label{eq:QinvHCembed}
	\begin{array}{cc}
	\left(
		\begin{array}{cc}
		0 & 0 \\
		0 & 1
		\end{array}
	\right) &
	\left(
		\begin{array}{cc}
		0 & 1 \\
		1 & a_1
		\end{array}
	\right) \\[.175in]
	\left(
		\begin{array}{cc}
		1 & 0 \\
		0 & -a_2
		\end{array}
	\right) &
	\left(
		\begin{array}{cc}
		0 & -a_2' \\
		-a_2'' & a_3
		\end{array}
	\right)
	\end{array}
	\end{equation}
having the same invariants. In particular, the discriminant of the hypercube \eqref{eq:QinvHCembed} (equal to the discriminant of the covariant binary quartics) is the discriminant $\Delta$ of the curve \eqref{eq:F2eqn}. Note that hypercubes of the form \eqref{eq:QinvHCembed} lie in the subspace $V_0$, giving the desired map from $F_2$ to distinguished hypercubes.

\begin{proposition}
If $p^2$ divides the discriminant $\Delta$ of an elliptic curve \eqref{eq:F2eqn} for mod $p^2$ reasons, then there exists a hypercube in $V_0(\Z)$ 
that is $G(\Q)$-equivalent to \eqref{eq:QinvHCembed} 
whose $Q$-invariant is $p$.
\end{proposition}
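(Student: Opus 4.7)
The plan is to construct $\gamma \in G(\Q)$ explicitly in the form $\gamma = T\cdot u$, where $u \in U^4(\Z) \subset G(\Z)$ is a product of lower-unipotent integer transformations, one in each of the four $\SL_2$-factors of $G$, and $T \in G(\Q)$ is the rational diagonal transformation
\[T \,=\, \bigl(\mathrm{diag}(p^{-1},\,p^{-2}),\;\mathrm{diag}(p,1),\;\mathrm{diag}(p,1),\;\mathrm{diag}(p,1)\bigr).\]
Since $\det T = 1$, one has $T \in G(\Q)$, and since $u \in U^4(\Z)$ preserves both $V_0$ and the $Q$-invariant, the element $\gamma\cdot B_{\vec a}$ will automatically lie in $V_0$ with $Q$-invariant~$p$ provided its entries are integral; here $B_{\vec a}$ denotes the canonical embedding \eqref{eq:QinvHCembed}.

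A direct calculation shows that $T$ multiplies $b_{2111}$ by $p$, divides $b_{1222},\,b_{2122},\,b_{2212},\,b_{2221}$ by $p$, divides $b_{2222}$ by $p^2$, and fixes the other six $V_0$-entries. Hence $T\cdot(u\cdot B_{\vec a})$ is integral precisely when the corresponding entries of $u\cdot B_{\vec a}$ carry the appropriate divisibilities. Parametrizing $u$ by $\vec s = (s_1,s_2,s_3,s_4) \in \Z^4$ and computing directly, the required divisibilities become the congruence system
\begin{align*}
s_2 + s_3 + s_4 + a_1 &\equiv 0 \pmod p,\\
s_1 + s_3 s_4 - a_2 &\equiv 0 \pmod p,\\
s_1 + s_2 s_4 - a_2' &\equiv 0 \pmod p,\\
s_1 + s_2 s_3 - a_2'' &\equiv 0 \pmod p,\\
\Phi(\vec s;\vec a) &\equiv 0 \pmod{p^2},
\end{align*}
where $\Phi(\vec s;\vec a)$ denotes the explicit polynomial giving the $(2,2,2,2)$-entry of $u\cdot B_{\vec a}$.

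The next step is to solve the first four congruences modulo $p$. Eliminating $s_1$ yields the compatibility relations $s_4(s_3-s_2) \equiv a_2-a_2' \pmod p$ together with its two cyclic variants; combined with $s_2+s_3+s_4 \equiv -a_1 \pmod p$, these form a symmetric system on $(s_2,s_3,s_4) \in \F_p^3$. I will show this system is solvable exactly when $p \mid \Delta(\vec a)$, using that (for $p>3$) the latter forces two of $a_2,a_2',a_2''$ to coincide modulo $p$, and then analyzing the remaining quadratic conditions in each case. For the final congruence, the crucial observation is that each partial derivative $\partial\Phi/\partial s_i$ equals modulo $p$ one of the left-hand sides of the preceding congruences; hence all these partials vanish modulo $p$ at any mod-$p$ solution, and $\Phi(\vec s;\vec a)\pmod{p^2}$ becomes independent of the choice of lift of $\vec s$ from $\F_p^4$ to $\Z^4$. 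An explicit algebraic identification will then show that this residue agrees, up to a $p$-adic unit, with $\Delta(\vec a)$ modulo $p^2$, so that the fifth congruence holds precisely when $p^2 \mid \Delta(\vec a)$ for mod-$p^2$ reasons. Taking any integer lift of the resulting mod-$p^2$ solution then produces the desired element of $V_0(\Z)$ with $Q$-invariant~$p$.

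The hard step will be the last identification of $\Phi(\vec s;\vec a)$ with $\Delta(\vec a)$ modulo $p^2$: this is a purely algebraic identity, but verifying it requires a careful computation of the discriminant of $T\cdot u\cdot B_{\vec a}$ in terms of its entries, and a systematic use of the mod-$p$ relations among the $s_i$ imposed by the first four congruences to reduce that expression to the residue of $\Phi$.
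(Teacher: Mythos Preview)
Your overall framework is sound and in fact coincides with the paper's: the transformation $T$ you write down is, modulo the center of $G$, exactly the operation ``multiply by $p^2$ and act by $\mathrm{diag}(1,p^{-1})$ in all four directions'' that the paper uses, and your computation of which entries of $uB_{\vec a}$ must be divisible by $p$ or $p^2$ is correct. The observation that $\partial\Phi/\partial s_i$ reproduces the left-hand sides of the first four congruences is also correct and genuinely useful.

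However, your proposed method for solving the congruence system has a real gap. You assert that the first four congruences are solvable modulo $p$ ``exactly when $p\mid\Delta(\vec a)$,'' and plan to prove this using that $p\mid\Delta$ forces two of $a_2,a_2',a_2''$ to coincide modulo $p$. Both claims fail. The first four congruences involve only $a_1,a_2,a_2',a_2''$ and not $a_3$, whereas $\Delta$ depends on $a_3$; so their solvability cannot be characterized by $p\mid\Delta$. And $p\mid\Delta$ does \emph{not} force a coincidence among the $a_2$'s: for instance $(a_1,a_2,a_2',a_2'',a_3)=(1,0,1,-1,1)$ gives $\Delta=-28$, so $7\mid\Delta$ while $0,1,-1$ are distinct modulo $7$. (In this example your first four congruences \emph{are} solvable modulo $7$, with $(s_1,s_2,s_3,s_4)\equiv(3,4,6,3)$, so the conclusion you want may still hold---but not for the reason you give.) Consequently your plan for the ``hard step'' is also mis-specified: since the first four congruences admit in general several mod-$p$ solutions $\vec s$, the residue of $\Phi(\vec s;\vec a)$ modulo $p^2$ is not a single well-defined quantity to be identified with $\Delta$; one must show that \emph{some} solution makes $\Phi\equiv 0\pmod{p^2}$.

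The paper avoids this by working sequentially rather than simultaneously. It first chooses $u_1$ alone (your $s_1$) so that the first covariant binary quartic $f$ acquires $p^2\mid(\text{last coefficient})$ and $p\mid(\text{penultimate coefficient})$; this is exactly where the hypothesis ``$p^2\mid\Delta$ for mod-$p^2$ reasons'' enters, and it is where $a_3$ plays its role. Only then does it choose $u_2,u_3,u_4$ to normalize the three covariant binary quadratics of the resulting bottom cube. This decoupling turns the problem into two standard minimization steps (one for a binary quartic, one for a $2\times2\times2$ cube) and sidesteps the coupled quartic system you would otherwise have to solve. If you want to push through your direct approach, the correct organizing principle is that at a simultaneous mod-$p$ solution one has $g(s_1)\equiv 0$, where $g(x)=(x-a_2)(x-a_2')(x-a_2'')+\tfrac14(a_1x+a_3)^2$ is the $2$-division polynomial; you would then need to show that the \emph{double} root of $g$ (which exists precisely under the mod-$p^2$ hypothesis) lifts to a full solution of all five congruences.
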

\begin{proof}
Suppose $p^2$ divides $\Delta$ for mod $p^2$ reasons. Then all the covariant binary quartic forms also have discriminant (equal to $\Delta$) a multiple of $p^2$ for mod $p^2$ reasons. There exists a lower triangular unipotent transformation from $U(\Z)$ that transforms such a binary quartic to one with last coefficient divisible by $p^2$ and second-to-last coefficient divisible by $p$. Apply such a transformation to \eqref{eq:QinvHCembed} in the first direction, thus adding an integer multiple of the top two matrices to the bottom two matrices; then the top row is unchanged and the bottom two matrices take the form
	\begin{equation} \label{eq:Qinvcubetransf1}
	\begin{array}{cc}
	\left(
		\begin{array}{cc}
		1 & 0 \\
		0 & \ast
		\end{array}
	\right) &
	\left(
		\begin{array}{cc}
		0 & \ast \\
		\ast & \ast
		\end{array}
	\right)
	\end{array},
	\end{equation}
where $\ast$ represents any integer. Because the transformed binary quartic form $f$ has last coefficient divisible by $p^2$, the cube \eqref{eq:Qinvcubetransf1} has discriminant a multiple of $p^2$ (for mod $p^2$ reasons also). Thus, an appropriate lower triangular unipotent transformation moves any one of its three covariant binary quadratic forms $q_i$ to one with last coefficient divisible by $p^2$ and middle coefficient divisible by $p$. Thus, by using such lower triangular unipotent linear transformations in these three directions of the cube in order to render $q_1$, $q_2$, and $q_3$ in this form, we obtain a cube of the following form:
	\begin{equation} \label{eq:Qinvcubetransf2}
	\begin{array}{cc}
	\left(
		\begin{array}{cc}
		1 & \ast \\
		\ast & p\ast
		\end{array}
	\right) &
	\left(
		\begin{array}{cc}
		\text{$p$-adic unit} & p* \\
		p* & p^2*
		\end{array}
	\right)
	\end{array}
	\end{equation}
Indeed, note that if the upper left entry of the second matrix were not a unit, then the discriminant of the cube \eqref{eq:Qinvcubetransf1} would be a multiple of $p^2$ for mod $p$ reasons. Since $p^2$ divides the discriminant of the cube \eqref{eq:Qinvcubetransf2}, the lower right entry of the second matrix must then be divisible by $p^2$. And, because $p$ divides the middle coefficient of the binary quadratic form $q_1$, the lower right entry of the left matrix of \eqref{eq:Qinvcubetransf2} must be divisible by $p$. 

Finally, we note that elements of $U^4$ do not change most of the top row of \eqref{eq:QinvHCembed}; only the (1222)-entry (where $a_1$ originally was) can be changed. In fact, after all the transformations, that entry is divisible by $p$, since the binary quartic form $f$ is not changed by any of the last three transformations, and its second-to-last coefficient is now divisible by $p$.

Therefore, after the lower triangular unipotent transformations described above in the four directions, we obtain a hypercube of the form

	\begin{equation} \label{eq:Qinvtransf3}
	\begin{array}{cc}
	\left(
		\begin{array}{cc}
		0 & 0 \\
		0 & 1
		\end{array}
	\right) &
	\left(
		\begin{array}{cc}
		0 & 1 \\
		1 & p\ast
		\end{array}
	\right) \\[.175in]
	\left(
		\begin{array}{cc}
		1 & \ast \\
		\ast & p\ast
		\end{array}
	\right) &
	\left(
		\begin{array}{cc}
		\ast & p\ast \\
		p\ast & p^2\ast
		\end{array}
	\right)
	\end{array}.
	\end{equation}
Multiplying the hypercube \eqref{eq:Qinvtransf3} by $p^2$ and acting by $(\begin{smallmatrix} 1 & 0 \\ 0 & p^{-1} \end{smallmatrix})$ in all four directions gives the hypercube
	\begin{equation} \label{eq:Qinvtransfinal}
	\begin{array}{cc}
	\left(
		\begin{array}{cc}
		0 & 0 \\
		0 & 1
		\end{array}
	\right) &
	\left(
		\begin{array}{cc}
		0 & 1 \\
		1 & \ast
		\end{array}
	\right) \\[.175in]
	\left(
		\begin{array}{cc}
		p & \ast \\
		\ast & \ast
		\end{array}
	\right) &
	\left(
		\begin{array}{cc}
		\ast & \ast \\
		\ast & \ast
		\end{array}
	\right)
	\end{array}
	\end{equation}
in $V_0(\Z)$ with $Q$-invariant $p$.
\end{proof}

To complete the proof of Proposition \ref{prop:denombound}, we now show that, when $G(\Z)$-equivalence classes of distinguished hypercubes are ordered by height, $100\%$ have a well-defined $Q$-invariant.  We then asymptotically count the number of $G(\Z)$-equivalence classes of distinguished hypercubes in $V(\Z)$ having bounded height and large $Q$-invariant.

\begin{proposition}\label{prop:qestimate} \ 

\begin{enumerate}
\item[{\rm (a)}] The number of elliptic curves $E\in F_2$ such that $E(\Q)[2]\neq 0$ and $H(E)< X$ is $O_\varepsilon(X^{7/12+\varepsilon})$.

\item[{\rm (b)}]  Let $h$ take a random value in $G_0$ uniformly with respect to the Haar measure $dg$.  Then the expected number of distinguished elements $B\in\FF h R^{(i)}\cap V(\Z)$ such that
the elliptic curve $E$ corresponding to $B$ has trivial $2$-torsion over $\Q$, the $Q$-invariant of $B$ is larger than $Y$, and $H(B)< X$ is $O_\varepsilon(X^{2/3+\varepsilon}/Y)+o(X^{2/3})$.

\end{enumerate}
\end{proposition}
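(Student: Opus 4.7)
For part (a), my plan is to parametrize pairs $(E,P)$ with $E=(a_1,a_2,a_2',a_3)\in F_2$ of bounded height and $P=(x_0,y_0)$ a rational $2$-torsion point. From the curve equation together with the $2$-torsion condition $2y_0+a_1x_0+a_3=0$, I would derive $a_3=-2y_0-a_1x_0$ and $y_0^2=-(x_0-a_2)(x_0-a_2')(x_0+a_2+a_2')$. Since each $E$ admits at most three such torsion points, bounding the number of tuples $(a_1,a_2,a_2',x_0,y_0)$ suffices. The $2$-division polynomial has integer coefficients and leading coefficient $4$, so $x_0\in\tfrac{1}{4}\Z$; after a bounded rescaling I may take $x_0\in\Z$. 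The height bounds force $|a_1|\ll X^{1/12}$, $|a_2|,|a_2'|\ll X^{1/6}$, and (via $|y_0|^2=|{-f(x_0)}|\ll X$ together with $|a_3|\ll X^{1/2}$) $|x_0|\ll X^{1/3}$.

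I would then split into two regimes. When $|x_0|\leq X^{1/6}$, with $y_0$ determined up to sign by the cubic equation, the trivial parameter count gives $O(X^{1/12}\cdot X^{1/6}\cdot X^{1/6}\cdot X^{1/6})=O(X^{7/12})$ tuples, and in this range $|a_3|\ll X^{1/4}$ automatically. When $X^{1/6}<|x_0|\leq X^{1/3}$, for each fixed $(a_2,a_2')$ the equation $y_0^2=-f(x_0)$ cuts out an elliptic curve, and a uniform-in-family $O_\varepsilon(X^\varepsilon)$ bound on its integer points of bounded height (via, e.g., Bombieri--Pila, Heath-Brown's determinant method, or the Helfgott--Venkatesh argument) gives $O_\varepsilon(X^\varepsilon)$ admissible $x_0$ per $(a_2,a_2')$; summing over $(a_1,a_2,a_2')$ contributes $O_\varepsilon(X^{1/12+1/3+\varepsilon})=O_\varepsilon(X^{5/12+\varepsilon})$. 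Combining the two regimes yields the desired $O_\varepsilon(X^{7/12+\varepsilon})$.

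For part (b), my plan is to apply the $Q$-invariant method. The hypothesis $E(\Q)[2]=0$ makes the $Q$-invariant $G(\Q)$-invariant on the orbit, and by the integral-representative results of \S\ref{sec:integralreps}, each such $G(\Z)$-orbit of a distinguished element has a representative (up to bounded denominators) in $V_0(\Z)$, unique modulo the stabilizer $U^4(\Z)\subset G(\Z)$ of $V_0$; on this representative, the $Q$-invariant equals the coordinate $b_{2111}$. The desired count therefore reduces to one of $U^4(\Z)$-orbits on $V_0(\Z)\cap\{H<X,\ |b_{2111}|>Y\}$, which I would estimate by the averaging technique of \S\ref{avgsec} (applied with $V_0$ and $U^4$ in place of $V$ and $G$) together with Davenport's lemma (Proposition~\ref{davlem}) to express the count as an integral of lattice-point counts over a Siegel fundamental domain. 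The coordinate $b_{2111}$ has weight $s_1s_2^{-1}s_3^{-1}s_4^{-1}$ in the $\SL_2^4$-Cartan $s=(s_1,s_2,s_3,s_4)$, so the condition $|b_{2111}|>Y$ restricts the Siegel variables in a way that, when integrated against the Haar-measure factor $\prod_i s_i^{-2}\,d^\times s_i$, produces a saving of $1/Y$ in the main term (the familiar $\int_Y^\infty s^{-2}\,ds=1/Y$ phenomenon). The remaining boundary and cuspidal contributions, analogous to the subcase analysis of Table~2(a) in Section~\ref{countsec}, absorb into the $o(X^{2/3})$ error term, giving the bound $O_\varepsilon(X^{2/3+\varepsilon}/Y)+o(X^{2/3})$.

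The hardest part in (a) will be obtaining a genuinely uniform-in-family $X^\varepsilon$ bound on integer points of bounded height on the family of elliptic curves $y^2=-(x-a_2)(x-a_2')(x+a_2+a_2')$. The hardest part in (b) will be carrying out the reduction from $G(\Z)$-orbits on distinguished elements to $U^4(\Z)$-orbits on $V_0(\Z)$ (so that the $Q$-invariant is literally a coordinate), and then verifying that the averaging computation, adapted to the subspace $V_0$, cleanly produces the $1/Y$ saving from integrating the $|b_{2111}|>Y$ constraint along the appropriate cusp direction.
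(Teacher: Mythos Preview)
Your approach is different from the paper's but essentially works, and can in fact be simplified. Your bound $|x_0|\ll X^{1/3}$ is not sharp: from $a_3=-2y_0-a_1x_0$ with $|a_3|\leq X^{1/4}$ and $|a_1|\leq X^{1/12}$ you get $|y_0|\ll X^{1/4}+X^{1/12}|x_0|$, while for $|x_0|\gg X^{1/6}$ the relation $y_0^2=-(x_0-a_2)(x_0-a_2')(x_0+a_2+a_2')$ forces $|y_0|\gg |x_0|^{3/2}$; combining yields $|x_0|\ll X^{1/6}$. Hence your second regime is empty and the integral-points machinery is unnecessary: the first regime alone already gives the count $O(X^{1/12}\cdot X^{1/6}\cdot X^{1/6}\cdot X^{1/6})=O(X^{7/12})$, even without the $\varepsilon$. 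The paper instead passes to short Weierstrass form, observes that an integer root $r$ of the resulting cubic must divide its constant term $c$, splits on $c=0$ versus $c\neq0$, and uses the divisor bound; this is where their $\varepsilon$ enters. Your parametrization is arguably cleaner once you drop the superfluous regime.

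\textbf{Part (b).} You have the right broad idea (the $Q$-invariant method, locating the count in the cusp), but two aspects of the execution are off.

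First, the reduction you describe---to $U^4(\Z)$-orbits on $V_0(\Z)$---is not what is needed and has a gap: two elements of $V_0(\Z)$ in the same $G(\Q)$-orbit are $U^4(\Q)$-equivalent when $E(\Q)[2]=0$, but $U^4(\Q)$-equivalence of integral elements does not descend to $U^4(\Z)$-equivalence. The paper avoids this by never leaving the fundamental domain $\FF h R^{(i)}$. It first discards distinguished elements with $b_{1111}\neq0$ via Proposition~\ref{hard2} (these are reducible, so contribute $o(X^{2/3})$), then discards those with $b_{1111}=0$ but at least two of $b_{1112},b_{1121},b_{1211},b_{2111}$ nonzero via the case analysis of Table~2(a) (contributing $O(X^{15/24})$). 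What remains are elements with $b_{1111}=b_{1112}=b_{1121}=b_{1211}=0$ and $b_{2111},b_{1122},b_{1212},b_{1221}\neq0$, i.e.\ elements of $V_0(\Z)$ sitting in the cusp of the fundamental domain, with $Q=b_{2111}$.

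Second, the $1/Y$ saving does not arise from an integral of the form $\int_Y^\infty s^{-2}\,ds$. Rather, one takes $\pi=b_{2111}b_{1122}b_{1212}b_{1221}$, notes $|\pi|\geq Y$ since $|b_{2111}|>Y$ and the other three factors are $\geq1$, and inserts $|\pi|/Y\geq1$ into the integrand of~(\ref{estv2s}). One computes $\prod_{b\in T\setminus T_0}w(b)=s_1^4s_2^2s_3^2s_4^2$ and $w(\pi)=s_1^{-2}$, so together with the Haar factor $s_1^{-2}s_2^{-2}s_3^{-2}s_4^{-2}$ the integrand becomes identically $1$. The nonemptiness conditions on $T_1$ bound $s_1\ll X^{2/24}$ and $s_2,s_3,s_4\ll X^{1/24}$, whence the integral is $O(\log^4 X)$ and the total is $\tfrac{1}{Y}X^{2/3}\log^4 X$.
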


\begin{proof}  \ 
\begin{enumerate}
\item[(a)] For an elliptic curve in $F_2$ to have nontrivial $2$-torsion, the corresponding cubic $f(x)$ when~\eqref{eq:F2eqn} is put into short Weierstrass form over $\Z$ must have an integer root. Explicitly, 
$$ f(x) = (x-4a_2)(x-4a_2')(x-4a_2'') + (a_1x+4a_3)^2, $$
whose constant term is $$c=-64a_2a_2'(a_2+a_2')+16a_3^2.$$   We show that the total number of choices for $a_1=O(X^{2/24})$, $a_2,a_2'=O(X^{4/24})$ and $a_3=O(X^{6/24})$ such that $f(x)$ has an integer root is $o(X^{2/3})$.

We consider two cases, $c=0$ and $c\neq 0$.  If $c=0$, then $a_3=\pm2\sqrt{a_2a_2'(a_2+a_2')}$, so $a_3$ is determined up to at most choices by $a_2,a_2'$.  Hence there are at most $O(X^{2/24}\cdot X^{4/24}\cdot X^{4/24})=O(X^{5/12})$ choices for $a_1,a_2,a_2',a_3$ if $c=0$.

We now assume we are in the case where $c\neq 0$.  Fix choices of $a_2,a_2'=O(X^{4/24})$ and $a_3=O(X^{6/24})$ such that $c\neq 0$.
Then any root $r$ must divide $c$; thus there are at most $O_\varepsilon(X^\varepsilon)$ possibilities for $r$. Once $r$ is fixed, this also then determines $a_1$ up to at most two possibilities via the equation $f(r)=0$. We conclude that the total number of possibilities for $a_2,a_2',a_3,a_1$ such that $c\neq 0$ and $f(x)$ has an integer root is $O_\varepsilon(X^{4/24}\cdot X^{4/24}\cdot X^{6/24}\cdot X^\varepsilon)=O_\varepsilon(X^{7/12+\varepsilon})$. 

\item[(b)] We first observe that, by Proposition~\ref{hard2}, the expected number of distinguished elements $B\in\FF h R^{(i)}\cap V(\Z)$ such that $b_{1111}\neq 0$ is $o(X^{2/3})$. Thus we may restrict to counting only elements $B$ such that $b_{1111}=0$.  In that case, Proposition~\ref{hard} and Table 2(a) shows that the expected number of elements $B\in\FF h R^{(i)}\cap V(\Z)$ such that $b_{1111}=0$ but at least two of $b_{1112},b_{1121},b_{1211},b_{2111}$ are nonzero is $O(X^{(n-1)/k})$.  Hence we may further restrict to counting only elements $B$ such that $b_{1111}=0$ and at least three
of $b_{1112},b_{1121},b_{1211},b_{2111}$ are $0$.  In that case, for $B$ to be distinguished, we must in fact have
$b_{1111}=b_{1112}=b_{1121}=b_{1211}=0$, i.e., $B\in V_0(\Z)$; furthermore, we must have that $b_{2111},b_{1122},b_{1212},b_{1221}\neq 0$, for otherwise $\Delta(B)$ would be zero.

We must thus bound the expected number of elements $B\in\FF h
R^{(i)}\cap V(\Z)$ such that the elements of
$T_0=\{b_{1111},b_{1112},b_{1121},b_{1211}\}$ are all $0$, the
elements of $T_1=\{b_{2111},b_{1122},b_{1212},b_{1221}\}$ are all
nonzero, and $H(B)< X$. This would have been the next natural case
$\CC$ in Table 2(a), had it not led to reducibility.  Furthermore, we may 
assume that $|b_{2111}|>Y$, since we only wish to count those elements $B$
whose $Q$-invariant is larger than $Y$. We can now estimate
$N(V(\CC);X)$ exactly as in (\ref{estv1s}) and (\ref{estv2s})---but noting that we may bound
the $s_i$'s from above by a power of $X$, for otherwise $\H$ would have
no integer points with all elements of $T_1$ nonzero. In fact, we
easily calculate that we must have $s_1\ll X^{2/k}$ and
$s_2,s_3,s_4\ll X^{1/k}$ for the region $\H$ to have any such integer points.  Let us take $\pi=\prod_{t\in T_1} t = b_{2111}b_{1122}b_{1212}b_{1221}$ in
(\ref{estv2s}).  Then $|\pi|\geq Y$, and therefore
\begin{align*}\label{distcountincusp}
N(V(\CC);X) &\ll \frac1Y \int_{s_1,s_2,\cdots=c}^{X^{2/k}} X^{\frac{n-|T_0| + \#\pi}{k}} \prod_{b\in T\setminus T_0} w(b)\;w(\pi)
\,\, s_1^{-2}s_2^{-2}\cdots \,\,\, d^\times\! s_2\,d^\times\!s_1 \\
&=\frac1Y \int_{s_1,s_2,\cdots=c}^{X^{2/24}} X^{\frac{16-4+4}{24}} (s_1^4s_2^2s_3^2s_4^2)  \; (s_1^{-2}) \; s_1^{-2}s_2^{-2}s_3^{-2}s_4^{-2} d^\times\! s_4\,d^\times\! s_3\,d^\times\! s_2\,d^\times\!s_1 \\
&\ll \frac1Y X^{2/3}\log^4(X),
\end{align*}
yielding part (b).
\qedhere \end{enumerate}
\end{proof}

\noindent
Proposition~\ref{prop:denombound} for $F=F_2$ now follows from Proposition~\ref{prop:qestimate}.

\subsection{Evaluation of the average size of the $2$-Selmer group}\label{finalsec}

We now have the following theorem, from which Theorem \ref{congmain} 
(and thus Theorem~\ref{thm:SelmerAverages})
% and \ref{main22}) 
will be seen to follow.
\begin{theorem}\label{main2}
Let $\Phi$ be any large subfamily of $F$. Then we have
\begin{eqnarray*}\label{eqthsec5}
\displaystyle\lim_{X\to\infty}\frac{\displaystyle\sum_{\substack{E\in \Phi\\H(E)<X
}}\#(S(E)\setminus S'(E))}{\displaystyle\sum_{\substack{E\in \Phi\\H(E)<X}}1}&=&
\frac{|\mathcal J|\cdot \Vol(G(\Z)\backslash G(\R))\cdot \displaystyle{\sum_{i=1}^{N} \frac1{n_i}
\cdot \int_{{H(\vec a)<X}\atop{\pm\Delta(\vec a)>0}}d\vec a}}{\displaystyle{\sum_{i=1}^n\int_{{H(\vec a)<X}\atop{\pm\Delta(\vec a)>0}}d\vec a}} \\[.15in]
& & \cdot\, \displaystyle\prod_p\frac{\displaystyle{|\mathcal J|_p\cdot \Vol(G(\Z_p)) \cdot
\int_{C=C(\vec a)\in \Phi_p}\sum_{\sigma\in \textstyle\frac{E(\Q_p)}{d E(\Q_p)}}\frac{1}{\#E[d](\Q_p)}d\vec a}}{\displaystyle{\int_{C=C(\vec a)\in \Phi_p}d\vec a}} .
\end{eqnarray*}
\end{theorem}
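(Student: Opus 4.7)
The strategy is to assemble the theorem from the pieces established in Sections~7--9 by: (i) identifying the numerator as a count of irreducible integral orbits of bounded height, (ii) rewriting the local factors via the mass formula, and (iii) comparing with the count of elliptic curves of bounded height in $\Phi$. The first step uses Theorem~\ref{thm:parametrizations}(c) together with the minimization/integral representative result of Section~\ref{sec:integralreps}: for each $E\in\Phi$ with $H(E)<X$, the non-$S'$ Selmer elements correspond bijectively to $G(\Q)$-equivalence classes of irreducible locally soluble elements of $V(\Z)$ with invariants in $\{\vec a_M:\vec a\in\Phi^{\inv},\;H(\vec a)<X\}$. Weighting each $G(\Z)$-orbit by $1/m(B)$ (which equals $1/n(B)$ outside the negligible set of orbits with nontrivial stabilizer, by Proposition~\ref{gzbigstab}), the numerator equals $N(U(\Phi);M^{6}X)+o(X^{n/k})$.

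The heart of the argument is then Theorem~\ref{ufcount}, which gives
\[
N(U(\Phi);M^{6}X)\;\leq\; M^{6n/k}\cdot\sum_{i=1}^{N}\frac{\Vol(\mathcal R_X)}{n_i}\cdot\prod_p\int_{U_p(\Phi)}\frac{dv}{m_p(v)}+o(X^{n/k}),
\]
with equality in Cases \equalcases\ by the uniformity estimate of Proposition~\ref{propunif}. I would plug in the volume expression $\Vol(\mathcal R_X)=|\mathcal J|\cdot\Vol(G(\Z)\backslash G(\R))\cdot\int_{R^{(i)}(X)}d\vec a$ from \eqref{volexp}, and Proposition~\ref{denel} to replace each local integral by $|M^n\mathcal J|_p\cdot\Vol(G(\Z_p))\cdot M_p(V,\Phi)$. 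The factor $M^{6n/k}$ arises from the height rescaling ($\int_{R^{(i)}(M^{6}X)}d\vec a=M^{6n/k}\int_{R^{(i)}(X)}d\vec a$), and the product formula $|\mathcal J|\cdot\prod_p|\mathcal J|_p=1$ together with $|M^n|\cdot\prod_p|M^n|_p=1$ cancels all the spurious $M$ and $\mathcal J$ factors appearing globally versus locally; what remains is a single global $|\mathcal J|$ on the numerator side coming from Proposition~\ref{vjac}.

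For the denominator, Theorem~\ref{ccount} together with Proposition~\ref{prop:denombound} gives the number of $E\in\Phi$ with $H(E)<X$ as $\sum_i\int_{H(\vec a)<X,\;\pm\Delta>0}d\vec a\cdot\prod_p M_p(\Phi)+o(X^{n/k})$, with the sign of the discriminant determining which real components $V^{(i)}$ contribute to the numerator sum. Taking the ratio, the $\prod_p M_p(\Phi)$ in the denominator combines with the $M_p(V,\Phi)$ in the numerator to yield the ratio $M_p(V,\Phi)/M_p(\Phi)$ that appears in the stated formula, and the global volumes of $R^{(i)}(X)$ in the numerator divided by $\int_{H(\vec a)<X} d\vec a$ in the denominator produces the stated global ratio.

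The main obstacle will be the careful bookkeeping of the three layers of normalization: the factor $M$ coming from integral representatives, the Jacobian constant $\mathcal J$ (both global and $p$-adic), and the Euclidean measures on $V^{(i)}$ versus on the invariant space $\R^m$; these must be tracked through the product formula simultaneously with the height rescaling to make the $M^{6n/k}$ disappear cleanly. Once this accounting is done, the upper bound follows directly, and equality in Cases \equalcases\ follows because the inequality in Theorem~\ref{ufcount} is an equality there thanks to Proposition~\ref{propunif}.
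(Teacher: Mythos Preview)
Your proposal is correct and follows the paper's own proof, which is the one-sentence combination of Proposition~\ref{denel}, Theorem~\ref{ufcount}, expression~\eqref{volexp}, and Theorem~\ref{ccount}; you have in fact supplied considerably more detail than the paper does.

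One small point of over-eagerness: you invoke the product formula $|\mathcal J|\cdot\prod_p|\mathcal J|_p=1$ to cancel the Jacobian constants, but the theorem as stated retains both the archimedean $|\mathcal J|$ and each $|\mathcal J|_p$ explicitly. No cancellation is performed at the level of Theorem~\ref{main2}; the displayed right-hand side is just the raw output of plugging \eqref{volexp} and Proposition~\ref{denel} into Theorem~\ref{ufcount} and dividing by Theorem~\ref{ccount}. The product-formula simplification you describe is carried out only in the paragraph following the theorem, where $|\mathcal J|\cdot\Vol(G(\Z)\backslash G(\R))\cdot\prod_p|\mathcal J|_p\cdot\Vol(G(\Z_p))$ collapses (together with Lemma~\ref{lembk}) to the Tamagawa number $\tau(G)$. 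This does not affect the correctness of your argument.
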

\begin{proof}
This follows by combining Proposition~\ref{denel}, Theorem~\ref{ufcount} and expression (\ref{volexp}) for the volume $\Vol(\RR_X)$, and Theorem~\ref{ccount}.
\end{proof}

\noindent
In order to evaluate the right hand side of the expression in Theorem~\ref{main2}, we
use the following fact (see \cite[Lemma~3.1]{BK}):
\begin{lemma}\label{lembk}
Let $J$ be an abelian variety over $\Q_p$ of dimension $n$. Then
$$\#(J(\Q_p)/d J(\Q_p))=
\left\{\begin{array}{cl}
\#J[d](\Q_p) & {\mbox{\em if }} p\neq d;\\[.075in]
d^n\cdot\#J[d](\Q_p)& {\mbox{\em if }} p= d.
\end{array}\right.$$
\end{lemma}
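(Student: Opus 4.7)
\medskip

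\noindent\textbf{Proof plan.} The plan is to exploit the fact that $J(\Q_p)$ is a compact abelian $p$-adic analytic group of dimension $n$, and then to compare multiplication-by-$d$ on $J(\Q_p)$ with multiplication-by-$d$ on an open subgroup isomorphic to $\Z_p^n$. The asymmetry between the two cases $p \neq d$ and $p = d$ will come out of a single computation via the snake lemma.

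First I would identify an open subgroup $G_0 \subset J(\Q_p)$ of finite index that is isomorphic, as a topological group, to $\Z_p^n$. The standard way to do this is to use the formal group $\hat{J}$ of $J$: for $k$ sufficiently large, the formal logarithm provides an isomorphism between the $k$-th congruence subgroup $\hat{J}(p^k\Z_p)$ and $(p^k\Z_p)^n$, and this subgroup has finite index in $J(\Q_p)$ (since $J$ is proper, so $J(\Q_p)$ is compact). Set $F := J(\Q_p)/G_0$, a finite abelian group.

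Next, I would apply the snake lemma to multiplication by $d$ acting on the short exact sequence $0 \to G_0 \to J(\Q_p) \to F \to 0$, yielding the six-term exact sequence
\begin{equation*}
0 \to G_0[d] \to J[d](\Q_p) \to F[d] \to G_0/dG_0 \to J(\Q_p)/dJ(\Q_p) \to F/dF \to 0.
\end{equation*}
Since $G_0 \cong \Z_p^n$ is torsion-free, $G_0[d] = 0$. Since $F$ is a finite abelian group, $\#F[d] = \#(F/dF)$. Taking the alternating product of cardinalities and canceling these equal factors gives
\begin{equation*}
\#(J(\Q_p)/dJ(\Q_p)) \;=\; \#J[d](\Q_p) \cdot \#(G_0/dG_0).
\end{equation*}

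Finally, a direct computation on $\Z_p^n$ gives $\#(G_0/dG_0) = |d|_p^{-n}$, which equals $1$ when $p \nmid d$ and $d^n$ when $p = d$. Combining yields the two cases of the lemma. There is no real obstacle here: the only point requiring care is the identification of the finite-index subgroup $G_0 \cong \Z_p^n$, which is a standard consequence of the theory of formal groups of abelian varieties together with compactness of $J(\Q_p)$.
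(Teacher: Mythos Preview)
Your proof is correct and follows essentially the same approach as the paper: identify a finite-index subgroup $M\cong\Z_p^n$ of $J(\Q_p)$ via the formal group, apply the snake lemma to multiplication by $d$ on $0\to M\to J(\Q_p)\to H\to 0$, and use $\#H[d]=\#(H/dH)$ together with the computation of $\#(M/dM)$. If anything, your write-up is slightly more explicit in identifying the final factor as $|d|_p^{-n}$, which makes the two cases of the lemma transparent.
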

\begin{proof}
  It follows from the theory of formal groups 
  that there exists a subgroup
  $M\subset J(\Q_p)$ of finite index that is isomorphic to $\Z_p^n$. Let
  $H$ denote the finite group $J(\Q_p)/M$.
 Then by applying the snake lemma to the following diagram
$$\xymatrix{
0\ar[d]\ar[r]&M\ar[d]^{[d]}\ar[r]&J(\Q_p)\ar[d]^{[d]}\ar[r]&H\ar[d]^{[d]}\ar[r]&0
\ar[d]\\
0\ar[r]&M\ar[r]&J(\Q_p)\ar[r]&H\ar[r]&0}$$
we obtain the exact sequence
$$0\to M[d]\to J(\Q_p)[d]\to H[d]\to M/ d M\to J(\Q_p)/d J(\Q_p)\to H/d H\to 0.$$
Since $H$ is a finite group and $M$ is isomorphic to $\Z_p^n$, Lemma \ref{lembk}  follows.
\end{proof}

The expression on the right hand side in Theorem~\ref{main2} thus
reduces simply to the Tamagawa number
$\tau(G)=\Vol(G(\Z)\backslash G(\R))\cdot \prod_p
\Vol(G(\Z_p))$ of $G$.  This gives the average number of elements in $S(E)\setminus S'(E)$ over all elliptic curves
$E\in \Phi$.  

Meanwhile, the size of $S'(E)$ is 1 except in the cases where $F=F_i$ for $i\in\{1,2\}$.  In that case, as we will prove in the next section (Prop.\ \ref{inforder}), we have for 100\% of elliptic curves $E\in F_i$ that $|S'(E)|=d^i$.  
This completes the proof of
Theorem~\ref{congmain}, and thus of Theorem~\ref{thm:SelmerAverages}.

\section{The marked points in $F_1$ and $F_2$ are independent} \label{sec:markedpts}

In this section, we prove that the marked points in the families $F_1$ and $F_2$ are non-torsion and ``independent'' asympototically $100\%$ of the time.  More precisely, we prove the following theorem. 

\begin{theorem}\label{inforder}
Let $j \in \{1,2\}$ and $d$ any positive integer.  For an elliptic curve $E\in F_j$, let $S(E)$ denote the $d$-Selmer group of $E\in F_i$ and $S'(E)$ the subgroup in $S(E)$ generated by the images of the marked points on $E$.  Then, when elliptic curves $E\in F_i$ are ordered by height, $100\%$ of these curves $E$ have the property that $|S'(E)|=d^j$.
\end{theorem}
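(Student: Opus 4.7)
The plan is to show that the failure set $\mathcal B := \{E\in F_j : |S'(E)|<d^j\}$ is a thin subset of $F_j(\Q)$ in the sense of Serre, hence has density $0$ when curves are ordered by height. The case $d=1$ is trivial, so assume $d\ge 2$.

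Since $E(\Q)/dE(\Q)\hookrightarrow S_d(E)$, the condition $|S'(E)|<d^j$ holds iff some nontrivial integer combination $\sum_i\epsilon_iP_i$ (with $\vec\epsilon\in(\Z/d\Z)^j\setminus\{0\}$) lies in $dE(\Q)$. Thus $\mathcal B$ is the finite union over $\vec\epsilon$ of
\[
\mathcal B_{\vec\epsilon}=\bigl\{E\in F_j : \exists\,Q\in E(\Q) \text{ with } dQ=\textstyle\sum_i\epsilon_iP_i\bigr\}.
\]

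For each $\vec\epsilon$, consider the incidence variety $W_{\vec\epsilon}=\{(E,Q) : dQ=\sum\epsilon_iP_i\}$ inside the universal elliptic scheme $\mathscr E\to F_j$, with projection $\pi : W_{\vec\epsilon}\to F_j$; this is a finite cover of degree $d^2$ (the degree of multiplication-by-$d$) away from a lower-dimensional locus. Then $\mathcal B_{\vec\epsilon}=\pi(W_{\vec\epsilon}(\Q))$, and by the classical thin-set formalism this image is a thin subset of $F_j(\Q)$ whenever $\pi$ is geometrically irreducible.

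To verify geometric irreducibility, one checks transitivity of the Galois action on the $d^2$ geometric preimages, which follows from the $d$-torsion Galois representation of the universal elliptic curve over $\kappa(F_j)$ being sufficiently large; in each family $F_j$ this can be confirmed by specialization to an explicit $E\in F_j(\Q)$ for which the divisibility $\sum\epsilon_iP_i\in dE(\Q)$ manifestly fails (e.g., by $p$-descent on a concrete low-height example). Finally, by Serre's theorem on thin sets, thin subsets of $F_j(\Q)\subset\Q^N$ have density $0$ when ordered by height, with quantitative bound $O_\epsilon(X^{n/k-\delta+\epsilon})$ for some $\delta>0$, so $\#(\mathcal B\cap\{H<X\})=o(X^{n/k})$, proving the theorem. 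The main obstacle is the geometric-irreducibility verification: establishing that the universal family has $d$-torsion Galois image large enough to act transitively on the $d^2$ preimages, which is essentially a generic-Galois-image statement for the universal elliptic curves in $F_1$ and $F_2$ and requires a separate check (via specialization) in each case.
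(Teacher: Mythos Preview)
Your thin-set approach differs from the paper's, which combines Hilbert irreducibility and Mazur's theorem (to get trivial rational torsion for $100\%$ of curves, hence infinite-order marked points) with, for $j=2$, an explicit sieve over primes $p$: it bounds the density of curves whose marked points generate a cyclic subgroup of $E(\F_p)$ uniformly below $1$ (via Vl\u{a}du\c{t}'s estimate on the proportion of $E/\F_p$ with $E(\F_p)$ cyclic) and then applies the Chinese remainder theorem over growing sets of primes. Your route is more structural and would in principle yield a power saving; the paper's is hands-on and avoids any function-field Mordell--Weil input.

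That said, your sketch has two gaps. First, what makes $\pi(W_{\vec\epsilon}(\Q))$ thin is not geometric irreducibility of $W_{\vec\epsilon}$ but merely the absence of a rational section over $F_j$, i.e., $\sum_i\epsilon_iP_i\notin dE(K)$ for the generic curve over $K=\Q(F_j)$; once no component has degree $1$, each $\Q$-component is either geometrically irreducible of degree $\ge 2$ (type-2 thin) or geometrically reducible (so its $\Q$-points lie in a proper closed subset, type-1 thin). Geometric irreducibility can genuinely fail for composite $d$, since $[d]$ factors through $[d']$ for every $d'\mid d$ and $W_{\vec\epsilon}$ correspondingly breaks up. Your specialization test (find one $E_0$ where divisibility fails) \emph{does} verify the absence of a rational section---a section would specialize---but it does \emph{not} verify transitivity of Galois on the $d^2$ preimages; you are checking the right condition under the wrong name. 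Second, and more substantively, the theorem is asserted for \emph{all} $d\ge 2$, while a single ``$p$-descent on a concrete low-height example'' only excludes divisibility by that one prime. To handle every $d$ with one specialization you must exhibit an $E_0\in F_j(\Q)$ whose marked points generate a \emph{saturated} rank-$j$ subgroup of $E_0(\Q)$ (for instance an $E_0$ with $E_0(\Q)\cong\Z^j$ generated by the marked points, with the full Mordell--Weil group actually computed), or else prove directly that the generic Mordell--Weil group $E(\Q(F_j))$ is free of rank $j$ on the marked points. As written, your argument does not close this, and without it the conclusion is established only for finitely many $d$ at a time.
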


Theorem~\ref{inforder} implies that, for $100\%$ of the curves $E$ in $F_j$, the subgroup in $E(\Q)$ generated by the marked points has rank $j$. Thus, when elliptic curves $E\in F_j$ are ordered by height, $100\%$ of these curves $E$ have rank at least $j$.  

\begin{proof}
Let $j\in\{1,2\}$.  Let $n_1=3$ and $k_1=4$, and $n_2=4$ and $k_2=6$.  Then the total number of elliptic curves $E\in F_j$ with height less than $X$ is $\sim c_j X^{n_j/k_j}$ for a positive constant $c_j$.  This is seen simply by counting lattice points in a certain bounded region in $\R^{n_j}$.

We first note that asympotically $100\%$ of the elliptic curves in $F_j$ have trivial rational torsion. The generic elliptic curve for each $F_j$ (over $\Q(\{a_i\})$) has no nonzero rational $\ell$-torsion for primes $\ell \leq 7$. By the Hilbert irreducibility theorem, the same holds for asymptotically $100\%$ of the curves in $F_j$, and Mazur's theorem implies that asymptotically $100\%$ of the curves in $F_j$ have trivial rational torsion. This immediately implies that the marked points on $100\%$ of the curves in $F_j$ have infinite order.

We now show that in $F_2$, the two marked points $P_1$ and $P_2$ are independent asymptotically $100\%$ of the time as well. The argument is similar to the argument in \S \ref{sec:reducibility} proving that reducible elements in $V(\Q)$ are rare, namely reducing modulo $p$ and showing that the desired property is rare enough modulo $p$. Here, we take dependence to mean that the two points generate a cyclic subgroup, and we restrict our attention to the set $F_2^{\textrm{nt}}$ of curves in $F_2$ that have trivial rational torsion. Note that dependence of two points on $E(\Q)$ implies dependence after reducing $E$ modulo any prime.

Let $S^\mathrm{dep}$ denote the set of curves $E$ in $F_2^{\textrm{nt}}$ where $P_1$ and $P_2$ are dependent, and let $S^\mathrm{dep}_p$ denote the set of curves $E$ in $F_2^{\textrm{nt}}$ where $P_1$ and $P_2$ are dependent modulo the prime $p$ (i.e., dependent as points of $E(\Fp)$).  Then $S^\mathrm{dep} \subset \cap_p S^\mathrm{dep}_p$.

Let $S^\mathrm{cyc}_p$ denote the set of curves in $F_2$ where the reduction modulo $p$ is a cyclic group. Let $T^\mathrm{cyc}_p$ denote the set of (isomorphism classes of) elliptic curves $E$ over $\Fp$ where $E(\Fp)$ is a cyclic group. Note that for a curve $E \in S^\mathrm{cyc}_p \cap F_2^{\textrm{nt}}$, we have $E \in S^\mathrm{dep}_p$ and the reduction of $E$ modulo $p$ lies in $T^\mathrm{cyc}_p$. For a fixed prime $p$, Vl{\v{a}}du{\c{t}} \cite{vladut} shows that the probability of $E(\Fp)$ being cyclic for $E$ an elliptic curve over $\Fp$ is
$$
\mu_p(T^{\mathrm{cyc}}_p) := \prod_{\textrm{primes } \ell \mid p-1} \left( 1 - \frac{1}{\ell(\ell^2-1)} \right) + O(p^{-1/2+\varepsilon})
$$
which is $\leq  5/6 + O(p^{-1/2+\varepsilon})$ for $p \neq 2$. Since the constant in the error term does not depend on $p$, for sufficiently large $p$, we must have $\mu_p(T_p^\mathrm{cyc}) \leq 5/6+\delta$ for any $\delta > 0$.

In order to use the above bound on $\mu_p(T^{\mathrm{cyc}}_p)$ to compute the probability that a curve $E \in F_2$ lies in $S^{\mathrm{cyc}}_p$, recall that the set of curves in $F_2$ modulo $p$ is the same as elliptic curves modulo $p$, with multiplicity given by the number of pairs of distinct non-identity points on the curve. Using (a weak version of) the Weil bound, we thus find that the density of $S^{\mathrm{cyc}}_p$ in $F_2$ is bounded above by
$$\frac{(5/6 + \delta)(p+O(\sqrt{p}))^2}{(5/6 + \delta)(p+O(\sqrt{p}))^2 + (1/6-\delta)(p+O(\sqrt{p}))^2}$$
for any $\delta > 0$. For sufficiently large $p$, we thus find that this density of $S^{\mathrm{cyc}}_p$ in $F_2$ is at most $5/6 + \delta'$ for any $\delta' > 0$.

If $E \in S^\mathrm{dep}_p$ but not in $S^\mathrm{cyc}_p$, then the group $E(\Fp)$ is the product of two nontrivial cyclic groups, and the probability that the two marked points are dependent in $E(\Fp)$ is bounded above by $1/2$. Thus, combining this with the density of $S^{\mathrm{cyc}}_p$, for sufficiently large $p$ we find that the density of $S^\mathrm{dep}_p$ in $F_2$ is at most
$$ (5/6+\delta') + 1/2(1/6-\delta') = 11/12 + \delta'/2$$
for any $\delta' > 0$.

By the Chinese remainder theorem, for any finite set $\Sigma$ of sufficiently large primes $p$, the density of curves in $F_2$ where the two marked points are dependent modulo $p$ for all $p \in \Sigma$ is at most $\prod_{p \in \Sigma} (11/12+\delta'/2)$. Letting the size of $\Sigma$ approach infinity yields the result.
\end{proof}

\subsection*{Acknowledgments}

We are grateful to Bhargav Bhatt, John Cremona, Aise Johan de Jong, Tom Fisher, Benedict 
Gross, Catherine O'Neil, Arul Shankar, Christopher Skinner, and Xiaoheng Wang
for helpful conversations. The first author was partially supported
by a Simons Investigator Grant and NSF Grant~DMS-1001828.  The second
author was partially supported by NSF Grants~DMS-1701437 and DMS-1844763, the Sloan Foundation, and the Minerva Research Foundation.

\newpage
\bibliography{cofreecountbib}
\bibliographystyle{amsalpha2}

\end{document}